\definecolor{gray}{gray}{0.7}
\definecolor{Gray}{gray}{0.3}
\numberwithin{equation}{section}
\theoremstyle{break}
 \newtheorem{theorem}{Theorem}[section]
 \newtheorem{proposition}[theorem]{Proposition}
 \newtheorem{corollary}[theorem]{Corollary}
 \newtheorem{lemma}[theorem]{Lemma}
 \theoremstyle{definition}
 \newtheorem{definition}[theorem]{Definition}
 \newtheorem{remark}[theorem]{Remark}
 \newtheorem{example}[theorem]{Example}
\def\C{\mathbb C}
\def\gl{\mathfrak{gl}}
\def\mm{m}
\def\ZZ{\mathcal{Z}}
\def\hE{^h E}
\def\hmE{^{h_m} E}
\def\Cyclic{\mathfrak C}
\def\SS{\mathfrak S}
\def\QH{QH}
\def\XX{A}
\DeclareMathOperator{\Span}{span}
\DeclareMathOperator{\Hilb}{Hilb}
\DeclareMathOperator{\Spec}{Spec}
\DeclareMathOperator{\Sing}{Sing}
\DeclareMathOperator{\GL}{GL}
\DeclareMathOperator{\Hess}{Hess}
\DeclareMathOperator{\Pet}{Pet}
\newcommand{\Fl}{Fl}
\begin{document}

\title[Coordinate rings of Hessenberg varieties]{Coordinate rings of regular nilpotent Hessenberg varieties in the open opposite Schubert cell}
\author [T. Horiguchi]{Tatsuya Horiguchi}
\address{National institute of technology, Ube college, 2-14-1, Tokiwadai, Ube, Yamaguchi, 755-8555, Japan}
\email{tatsuya.horiguchi0103@gmail.com}

\author [T. Shirato]{Tomoaki Shirato}
\address{National institute of technology, Ube college, 2-14-1, Tokiwadai, Ube, Yamaguchi, 755-8555, Japan}
\email{stomoaki@ube-k.ac.jp}

\subjclass[2020]{Primary 14M15, 14N35, 14B05}

\keywords{flag varieties, Hessenberg varieties, quantum cohomology, cyclic quotient singularities.} 

\begin{abstract}
Dale Peterson has discovered a surprising result that the quantum cohomology ring of the flag variety $\GL_n(\C)/B$ is isomorphic to the coordinate ring of the intersection of the Peterson variety $\Pet_n$ and the opposite Schubert cell associated with the identity element $\Omega_e^\circ$ in $\GL_n(\C)/B$.
This is an unpublished result, so papers of Kostant and Rietsch are referred for this result.
An explicit presentation of the quantum cohomology ring of $\GL_n(\C)/B$ is given by Ciocan--Fontanine and Givental--Kim. 
In this paper we introduce further quantizations of their presentation so that they reflect the coordinate rings of the intersections of regular nilpotent Hessenberg varieties $\Hess(N,h)$ and $\Omega_e^\circ$ in $\GL_n(\C)/B$.
In other words, we generalize the Peterson's statement to regular nilpotent Hessenberg varieties via the presentation given by Ciocan--Fontanine and Givental--Kim. 

As an application of our theorem, we show that the singular locus of the intersection of some regular nilpotent Hessenberg variety $\Hess(N,h_m)$ and $\Omega_e^\circ$ is the intersection of certain Schubert variety and $\Omega_e^\circ$ where $h_m=(m,n,\ldots,n)$ for $1<m<n$. 
We also see that $\Hess(N,h_2) \cap \Omega_e^\circ$ is related with the cyclic quotient singularity.
\end{abstract}

\maketitle

\setcounter{tocdepth}{1}

\tableofcontents

\section{Introduction}
\label{section:introduction}

Hessenberg varieties are subvarieties of the full flag variety introduced by De Mari and Shayman and studied by De Mari, Procesi, and Shayman \cite{dMPS, dMS}.
These varieties lie in a fruitful intersection of algebraic combinatorics and representation theory, such as hyperplane arrangements (\cite{AHMMS, EHNT1, SomTym}), Stanley's chromatic symmetric functions (\cite{BroCho, Guay-Paquet, KiemLee, ShaWac}), Postnikov's mixed Eulerian numbers (\cite{BST, Hor, NadTew}), and toric orbifolds associated with partitioned weight polytopes (\cite{BalCro, HMSS}) in a recent development. 
In this paper we generalize a result discovered by Dale Peterson to regular nilpotent Hessenberg varieties in type $A$ via the explicit presentation of the quantum cohomology ring of flag varieties given by Ciocan--Fontanine and Givental--Kim.

Let $n$ be a positive integer. 
The (full) flag variety $\Fl(\C^n)$ is the collection of nested sequences of linear subspaces $V_{\bullet} \coloneqq (V_1 \subset V_2 \subset \cdots \subset V_n = \C^n)$ in $\C^n$ where each $V_i$ denotes an $i$-dimensional subspace of $\C^n$. 
Let $N$ be the regular nilpotent matrix in Jordan canonical form, that is, the nilpotent matrix in Jordan form with exactly one Jordan block. 
The \emph{Peterson variety} $\Pet_n$ is defined to be the subvariety of the flag variety $\Fl(\C^n)$ as follows:
\begin{align*}
\Pet_n \coloneqq \{V_\bullet \in \Fl(\C^n) \mid NV_i \subset V_{i+1} \ \textrm{for all} \ i=1,2,\ldots,n-1 \}.
\end{align*}
Dale Peterson has discovered a surprising connection between a geometry of the Peterson variety $\Pet_n$ and the quantum cohomology\footnote{We work with quantum (and ordinary) cohomology with coefficients in $\C$ throughout this paper.} of the flag variety $\Fl(\C^n)$, as explained below.  
Let $B^{-}$ be the set of lower triangular matrices in the general linear group $\GL_n(\C)$.
Let $\Omega_e^\circ$ be the opposite Schubert cell associated with the identity element $e$, which is the $B^{-}$-orbit of the standard flag $F_\bullet=(F_i)_{i}$ where $F_i=\Span_\C\{e_1,\ldots,e_i \}$ and $e_1,\ldots,e_n$ are the standard basis of $\C^n$. 
Note that $\Omega_e^\circ$ is the open chart around the standard flag $F_\bullet=(F_i)_{i}$ in $\Fl(\C^n)$.
Due to Peterson's statements in \cite{Pet}, the coordinate ring of the intersection $\C[\Pet_n \cap \Omega_e^\circ]$ is isomorphic to the quantum cohomology of the flag variety $QH^*(\Fl(\C^n))$ as $\C$-algebras:
\begin{align*}
\C[\Pet_n \cap \Omega_e^\circ] \cong QH^*(\Fl(\C^n)). 
\end{align*}
This incredible result discovered by Peterson is unpublished, so we also refer the reader to \cite{Kos, Rie} for the result above. 
As $\C$-vector spaces, the quantum cohomology $QH^*(\Fl(\C^n))$ is $\C[q_1,\ldots,q_{n-1}] \otimes_\C H^*(\Fl(\C^n))$ where we call $q_1,\ldots,q_{n-1}$ \emph{quantum parameters}. The product structure on $QH^*(\Fl(\C^n))$ is a certain deformation by quantum parameters of the ordinary cup product on $H^*(\Fl(\C^n))$.
More explicitly, Ciocan--Fontanine and Givental--Kim gave in \cite{Font95, GK} an efficient presentation of the quantum cohomology ring $QH^*(\Fl(\C^n))$ in terms of generators and relations as follows.
Let $\check M_n$ be the following matrix
\begin{align} \label{eq:cMn}
\check M_n \coloneqq \left(
 \begin{array}{@{\,}ccccc@{\,}}
     x_1 & q_1 & 0 & \cdots & 0 \\
     -1 & x_2 & q_2 & \ddots & \vdots \\ 
      0 & \ddots & \ddots & \ddots & 0 \\ 
      \vdots & \ddots & -1 & x_{n-1} & q_{n-1} \\
      0 & \cdots & 0 & -1 & x_n 
 \end{array}
 \right).
\end{align}
The \emph{quantized elementary symmetric polynomial} $\check E_i^{(n)} \ (1 \leq i \leq n)$ in the polynomial ring $\C[x_1,\ldots,x_n,q_1,\ldots,q_{n-1}]$ is defined by the coefficient of $\lambda^{n-i}$ for the characteristic polynomial of $\check M_n$ multiplied by $(-1)^i$, i.e. 
\begin{align*}
\det(\lambda I_n - \check M_n) = \lambda^n - \check E_1^{(n)} \lambda^{n-1} + \check E_2^{(n)} \lambda^{n-2} + \cdots + (-1)^n \check E_n^{(n)}.
\end{align*}
Note that by setting $q_s=0$ for all $1 \leq s \leq n-1$ we have that $\check E_i^{(n)}$ is the $i$-th elementary symmetric polynomial in the variables $x_1,\ldots,x_n$. 
Then it is known from \cite{Font95, GK} that there is an isomorphism of $\C$-algebras
\begin{align*} 
\QH^*(\Fl(\C^n)) \cong \C[x_1,\ldots,x_n,q_1,\ldots,q_{n-1}]/(\check E_1^{(n)}, \ldots, \check E_n^{(n)}).
\end{align*}
Combining Peterson's statement and the presentation above, we obtain the isomorphism 
\begin{align} \label{eq:intro_quantum Pet}
\C[\Pet_n \cap \Omega_e^\circ] \cong \C[x_1,\ldots,x_n,q_1,\ldots,q_{n-1}]/(\check E_1^{(n)}, \ldots, \check E_n^{(n)})
\end{align}
as $\C$-algebras.
In this paper we generalize this isomorphism to regular nilpotent Hessenberg varieties by further quantizing the right hand side above.

Consider a nondecreasing function $h: [n] \rightarrow [n]$ such that $h(j) \geq j$ for all $j=1,\ldots,n$ where $[n] \coloneqq \{1,2,\ldots,n\}$, which is called a \emph{Hessenberg function}. 
We frequently write a Hessenberg function $h$ as $h = (h(1), h(2), \ldots , h(n))$. 
The \emph{regular nilpotent Hessenberg variety} $\Hess(N,h)$ associated with a Hessenberg function $h$ is defined as 
\begin{align*}
\Hess(N,h) \coloneqq \{V_\bullet \in \Fl(\C^n) \mid NV_i \subset V_{h(i)} \ \textrm{for all} \ i=1,2,\ldots,n \}.
\end{align*}
This object is a generalization of the Peterson variety $\Pet_n$ since $\Hess(N,h)$ is equal to $\Pet_n$ whenever $h=(2,3,4,\ldots,n,n)$.
We also note that if $h=(n,n,\ldots,n)$, then $\Hess(N,h)=\Fl(\C^n)$ by definition.
Recall that the flag variety $\Fl(\C^n)$ can be identified with $\GL_n(\C)/B$ where $B$ is the set of upper triangular matrices in $\GL_n(\C)$ so that the first $j$ column vectors of a matrix $g \in \GL_n(\C)$ generate the $j$-th vector space $V_j$ for $j \in [n]$.
Under the identification $\Fl(\C^n) \cong \GL_n(\C)/B$, one can write 
\begin{align*} 
\Hess(N,h) = \{ gB \in \GL_n(\C)/B \mid g^{-1} N g \in H(h) \},
\end{align*}
where $H(h)$ is the set of matrices $(a_{ij})_{i,j \in [n]}$ such that $a_{ij} = 0$ if $i > h(j)$. 
(Note that a matrix $(a_{ij})_{i,j \in [n]}$ in $H(h)$ is not necessarily invertible.)
Since $\Omega_e^\circ$ is an affine open set in $\Fl(\C^n)$ which is naturally identified with the set of lower triangular unipotent matrices, the intersection $\Hess(N,h) \cap \Omega_e^\circ$ is described as 
\begin{align} \label{eq:intro_Hess(N,h) cap Omega}
\Hess(N,h) \cap \Omega_e^\circ = \left\{ g=\left(
 \begin{array}{@{\,}ccccc@{\,}}
     1 &  &  &  &  \\
     x_{21} & 1 &  &  &  \\ 
     x_{31} & x_{32} & 1 &  &  \\ 
     \vdots& \vdots & \ddots & \ddots & \\
     x_{n1} & x_{n2} & \cdots & x_{n \, n-1} & 1 
 \end{array}
 \right) \middle| (g^{-1}Ng)_{ij}=0 \ \textrm{for all} \atop \ j \in [n-1] \ \textrm{and} \ h(j) < i \leq n \right\}. 
\end{align}
Motivated by this, we set
\begin{align*} 
\ZZ(N,h)_e \coloneqq  \Spec \C[x_{ij} \mid 1 \leq j < i \leq n]/\big((g^{-1}Ng)_{ij} \mid j \in [n-1] \ \textrm{and} \ h(j) < i \leq n \big). 
\end{align*}
We remark that this scheme can be regarded as a zero scheme of some section of certain vector bundle over $\GL_n(\C)/B$, which is introduced in \cite{ADGH}. 
See Section~\ref{section:Regular nilpotent Hessenberg varieties} for the details. 

We now generalize the matrix $\check M_n$ in \eqref{eq:cMn} to the following matrix
\begin{align} \label{eq:intro_Mn}
M_n \coloneqq \left(
 \begin{array}{@{\,}ccccc@{\,}}
     x_1 & q_{12} & q_{13} & \cdots & q_{1n} \\
     -1 & x_2 & q_{23} & \cdots & q_{2n} \\ 
      0 & \ddots & \ddots & \ddots & \vdots \\ 
     \vdots & \ddots & -1 & x_{n-1} & q_{n-1 \, n} \\
     0 & \cdots & 0 & -1 & x_n 
 \end{array}
 \right).
\end{align}
Then we define the \emph{$q_{rs}$-quantized elementary symmetric polynomial} $E_i^{(n)} \ (1 \leq i \leq n)$ in the polynomial ring $\C[x_1,\ldots,x_n,q_{rs} \mid 1 \leq r < s \leq n]$ by the coefficient of $\lambda^{n-i}$ for the characteristic polynomial of $M_n$ multiplied by $(-1)^i$, namely 
\begin{align*}
\det(\lambda I_n - M_n) = \lambda^n - E_1^{(n)} \lambda^{n-1} + E_2^{(n)} \lambda^{n-2} + \cdots + (-1)^n E_n^{(n)}.
\end{align*}
Note that by setting $q_{rs}=0$ for $s-r >1$ and $q_{s \, s+1}=q_s$, our polynomial $E_i^{(n)}$ is the (classical) quantized elementary symmetric polynomial $\check E_i^{(n)}$ in $\C[x_1,\ldots,x_n,q_1,\ldots,q_{n-1}]$.
For a Hessenberg function $h: [n] \rightarrow [n]$, we define $\hE_i^{(n)}$ as the polynomial $E_i^{(n)}$ by setting $q_{rs}=0$ for all $2 \leq s \leq n$ and $1 \leq r \leq n-h(n+1-s)$:  
\begin{align*}
\hE_i^{(n)} \coloneqq E_i^{(n)}|_{q_{rs}=0 \ (2 \leq s \leq n \ \textrm{and} \ 1 \leq r \leq n-h(n+1-s))}. 
\end{align*}
We will pictorially explain which variables $q_{rs}$ are set to $0$ in the definition of $\hE_i^{(n)}$ in Example~\ref{example:qrs_(3,3,4,5,5)} and surrounding discussion. 
The main theorem of this paper is as follows:

\begin{theorem} \label{theorem:intro_main}
Let $h: [n] \rightarrow [n]$ be a Hessenberg function. 
Then there is an isomorphism of $\C$-algebras
\begin{align} \label{eq:intro_main} 
\Gamma(\ZZ(N,h)_e, \mathcal{O}_{\ZZ(N,h)_e}) \cong \frac{\C[x_1,\ldots,x_n, q_{rs} \mid 2 \leq s \leq n, n-h(n+1-s)<r<s]}{(\hE_1^{(n)}, \ldots, \, \hE_n^{(n)})}, 
\end{align}
where $\Gamma(\ZZ(N,h)_e, \mathcal{O}_{\ZZ(N,h)_e})$ is the set of global sections. 
\end{theorem}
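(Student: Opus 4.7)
The plan is to prove the theorem in two stages: first establish the case $h=(n,\ldots,n)$ as an isomorphism $\C[\Omega_e^\circ] \cong \C[x_i, q_{rs} \mid 1\le r<s\le n]/(E_1^{(n)},\ldots,E_n^{(n)})$, and then descend to arbitrary $h$ by observing that the defining Hessenberg relations of $\ZZ(N,h)_e$ on the left correspond, under this identification, to exactly the specializations $q_{rs}=0$ used to define $\hE_i^{(n)}$ on the right.

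For the full-flag case, the key observation is that for every $g \in \Omega_e^\circ$ the matrix $A := g^{-1}Ng$ is upper Hessenberg with $1$'s on the superdiagonal: $A_{ij}=0$ for $j>i+1$ and $A_{i,i+1}=1$. This is a direct row-by-row calculation using that $g$ and $g^{-1}$ are both lower triangular unipotent. Writing $a_i := A_{ii}$ and $b_{rs} := A_{rs}$ for $r>s$, the change of variables $x_i := -a_{n+1-i}$, $q_{rs} := -b_{n+1-r,\,n+1-s}$ (for $r<s$) converts $-w_0 A w_0$ into a matrix having the exact shape of $M_n$, where $w_0$ is the long permutation. Since $A$ is conjugate to $N$, the matrix $-w_0 A w_0$ is nilpotent with characteristic polynomial $\lambda^n$, so matching coefficients in $\det(\lambda I - M_n)=\lambda^n$ forces $E_i^{(n)}=0$ in the image. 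This produces a ring homomorphism
\[
\Phi: \C[x_i, q_{rs} \mid 1\le r<s\le n]/(E_1^{(n)},\ldots, E_n^{(n)}) \longrightarrow \C[\Omega_e^\circ],
\]
which is surjective via a triangular induction recovering each $x_{ij}$ as a polynomial in the $a_k, b_{rs}$'s (starting from $a_1 = x_{21}$ and proceeding down the rows of $A$).

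For general $h$, I would impose the defining relations $(g^{-1}Ng)_{ij}=0$ (for $h(j)<i\le n$, $j\in[n-1]$) on both sides. Under the correspondence $q_{rs} \leftrightarrow -A_{n+1-r,\,n+1-s}$, setting $r=n+1-i$, $s=n+1-j$ translates this Hessenberg constraint exactly into $s \in [2,n]$, $1\le r\le n-h(n+1-s)$, which is precisely the set of indices where $q_{rs}$ is set to zero in the definition of $\hE_i^{(n)}$. Quotienting both sides of $\Phi$ by these relations therefore yields the theorem.

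The main obstacle is the injectivity of $\Phi$ and its Hessenberg specializations. My plan is to show that $E_1^{(n)},\ldots,E_n^{(n)}$ form a regular sequence in $\C[x_i, q_{rs}]$ by specializing $q_{rs}\to 0$: then $M_n$ becomes lower bidiagonal with diagonal $x_1,\ldots,x_n$, so $E_i^{(n)}|_{q=0}$ reduces to the classical elementary symmetric polynomial $e_i(x_1,\ldots,x_n)$. Since $e_1,\ldots,e_n$ is a regular sequence in $\C[x_1,\ldots,x_n]$ (with the coinvariant ring as finite-dimensional quotient), a standard height argument lifts this to show $(E_1^{(n)},\ldots,E_n^{(n)})$ has codimension $n$ in $\C[x_i, q_{rs}]$. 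This gives matching Krull dimensions on both sides of $\Phi$, and since the target $\C[\Omega_e^\circ]$ is a polynomial ring, in particular a domain, a surjection from an equidimensional ring onto it must be injective. The same specialization argument, now restricted to the retained variables, shows the $\hE_i^{(n)}$'s form a regular sequence after the Hessenberg specialization, completing the proof for general $h$.
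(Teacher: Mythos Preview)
Your geometric setup is correct and more transparent than the paper's: for $g$ lower unipotent, $A=g^{-1}Ng$ is lower Hessenberg with $1$'s on the superdiagonal (you wrote ``upper Hessenberg'', but your formula $A_{ij}=0$ for $j>i+1$ is the right one), and $-w_0Aw_0$ has exactly the shape of $M_n$, so nilpotence of $A$ gives $E_i^{(n)}\mapsto 0$. Your surjectivity argument is also correct (indeed the rows of $g$ are $e_1^T, e_1^T A,\ldots,e_1^T A^{n-1}$, so $x_{ij}=(A^{i-1})_{1j}$ is a polynomial in the entries of $A$), and the index correspondence between the Hessenberg constraints and the specializations $q_{rs}=0$ is exactly right. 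Once the full-flag isomorphism is established, the reduction to general $h$ by quotienting both sides by corresponding ideals is immediate.

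The genuine gap is injectivity of $\Phi$ in the full-flag case. Your assertion that ``a surjection from an equidimensional ring onto a domain of the same Krull dimension must be injective'' is false: consider $\C[x,y]/(xy)\twoheadrightarrow\C[y]$ via $x\mapsto 0$. The source is a one-dimensional complete intersection (hence Cohen--Macaulay and equidimensional), the target a one-dimensional domain, yet the kernel is $(x)\ne 0$. Your regular-sequence argument shows only that $Q_n=\C[x_i,q_{rs}]/(E_i^{(n)})$ has Krull dimension $\binom{n}{2}$, not that it is a domain, so $\ker\Phi$ could a priori be a nonzero minimal prime. The paper closes this gap by introducing the grading $\deg x_s=2$, $\deg q_{rs}=2(s{-}r{+}1)$, $\deg x_{ij}=2(i{-}j)$, under which $E_i^{(n)}$ and $F_{i,j}$ are homogeneous and the map is graded; it then computes that both sides have identical Hilbert series (via exactly the regular-sequence facts you invoke), so the graded surjection is forced to be an isomorphism degree by degree. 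A fix closer to your approach: the Jacobian of $(E_1^{(n)},\ldots,E_n^{(n)})$ with respect to the variables $(x_1,q_{12},q_{13},\ldots,q_{1n})$ is lower unitriangular (since $\partial E_i^{(n)}/\partial q_{1i}=1$ and $\partial E_j^{(n)}/\partial q_{1i}=0$ for $j<i$), so $V(E_i^{(n)})$ is everywhere smooth of codimension $n$ and the ideal is radical; combined with your bijection onto the irreducible $\Omega_e^\circ$ this makes $Q_n$ a domain, after which your dimension argument does go through.
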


Note that we explicitly construct the isomorphism above. See Theorem~\ref{theorem:main1} and Proposition~\ref{proposition:inverse_map_varphih} for the correspondence.
In particular, we see that our quantum parameters $q_{rs}$'s correspond to polynomials which define regular nilpotent Hessenberg varieties (up to signs).  
See Corollary~\ref{corollary:main_commutative_diagram}.

As is well-known, the cohomology ring $H^*(\Fl(\C^n))$ is isomorphic to the quotient of the polynomial ring $\C[x_1,\ldots,x_n]$ by the ideal generated by elementary symmetric polynomials, so the following is a consequence of Theorem~\ref{theorem:intro_main}.

\begin{corollary}
There is an isomorphism of $\C$-algebras
\begin{align*}
\Gamma(\ZZ(N,id)_e, \mathcal{O}_{\ZZ(N,id)_e}) \cong H^*(\Fl(\C^n)).
\end{align*}
\end{corollary}

We say that a Hessenberg function $h: [n] \rightarrow [n]$ is \emph{indecomposable} if it satisfies $h(j)>j$ for all $j \in [n-1]$. 
It is known from \cite[Proposition~3.6]{ADGH} that if $h$ is indecomposable, then the affine scheme $\ZZ(N,h)_e$ is reduced.
Therefore, we can conclude the following result from Theorem~\ref{theorem:intro_main}.

\begin{corollary} \label{corollary:intro_main}
If $h: [n] \rightarrow [n]$ is an indecomposable Hessenberg function, then there is an isomorphism of $\C$-algebras
\begin{align*} 
\C[\Hess(N,h) \cap \Omega_e^\circ] \cong \frac{\C[x_1,\ldots,x_n, q_{rs} \mid 2 \leq s \leq n, n-h(n+1-s)<r<s]}{(\hE_1^{(n)}, \ldots, \, \hE_n^{(n)})}, 
\end{align*}
where $\C[\Hess(N,h) \cap \Omega_e^\circ]$ is the coordinate ring of the open set $\Hess(N,h) \cap \Omega_e^\circ$ in $\Hess(N,h)$. 
\end{corollary}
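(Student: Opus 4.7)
The plan is to derive Corollary~\ref{corollary:intro_main} as a direct consequence of Theorem~\ref{theorem:intro_main} combined with the reducedness statement \cite[Proposition~3.6]{ADGH}. The key observation I would start from is that the affine scheme $\ZZ(N,h)_e$ is, by its very definition, cut out in $\Spec \C[x_{ij} \mid 1 \leq j < i \leq n]$ by precisely the same ideal that defines the closed subset $\Hess(N,h) \cap \Omega_e^\circ$ inside the affine chart $\Omega_e^\circ$, as one sees by comparing the description of $\ZZ(N,h)_e$ with the equations appearing in \eqref{eq:intro_Hess(N,h) cap Omega}. Consequently, $\Hess(N,h) \cap \Omega_e^\circ$ and $\ZZ(N,h)_e$ coincide as closed subschemes of the affine space $\Omega_e^\circ$ up to possible nilpotents; that is, $\Hess(N,h) \cap \Omega_e^\circ$ is the reduced subscheme structure underlying $\ZZ(N,h)_e$.

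Next I would record the general identity
\begin{equation*}
\C[\Hess(N,h) \cap \Omega_e^\circ] \;\cong\; \Gamma(\ZZ(N,h)_e, \mathcal{O}_{\ZZ(N,h)_e})_{\mathrm{red}},
\end{equation*}
valid for every Hessenberg function $h$, where the right-hand side denotes the quotient of the global sections by its nilradical. The coordinate ring on the left is understood with respect to the reduced scheme structure that $\Hess(N,h) \cap \Omega_e^\circ$ inherits from the variety $\Hess(N,h)$, so this identity is immediate from the previous paragraph.

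Then I would invoke the hypothesis that $h$ is indecomposable, i.e.\ $h(j)>j$ for all $j \in [n-1]$. By \cite[Proposition~3.6]{ADGH} the scheme $\ZZ(N,h)_e$ is already reduced in this case, so passing to the reduction is the identity operation and
\begin{equation*}
\C[\Hess(N,h) \cap \Omega_e^\circ] \;\cong\; \Gamma(\ZZ(N,h)_e, \mathcal{O}_{\ZZ(N,h)_e}).
\end{equation*}
Substituting the explicit presentation of the right-hand side furnished by Theorem~\ref{theorem:intro_main} yields exactly the claimed isomorphism.

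I do not anticipate a genuine obstacle in this argument: everything is essentially bookkeeping once Theorem~\ref{theorem:intro_main} and the cited reducedness result are in hand. The only point that deserves a sentence of care in the write-up is the identification of the scheme structure on $\Hess(N,h) \cap \Omega_e^\circ$ with that of $(\ZZ(N,h)_e)_{\mathrm{red}}$, which is entirely formal from the definitions.
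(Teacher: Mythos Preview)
Your proposal is correct and follows essentially the same route as the paper: the paper deduces the corollary directly from Theorem~\ref{theorem:intro_main} together with the reducedness of $\ZZ(N,h)_e$ for indecomposable $h$ (stated there as \cite[Proposition~3.6]{ADGH}, and recorded in the body as Theorem~\ref{theorem:reduced}). The only difference is that you spell out the identification $\C[\Hess(N,h)\cap\Omega_e^\circ]\cong\Gamma(\ZZ(N,h)_e,\mathcal{O}_{\ZZ(N,h)_e})_{\mathrm{red}}$ more carefully than the paper does, which is harmless bookkeeping.
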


The isomorphism in Corollary~\ref{corollary:intro_main} is a natural generalization of \eqref{eq:intro_quantum Pet} since the Hessenberg function $h=(2,3,4,\ldots,n,n)$ which defines the Peterson variety $\Pet_n$ is indecomposable.
In particular, our proof gives an elementary proof for Peterson's statement via the presentation for the quantum cohomology $QH^*(\Fl(\C^n))$ given by \cite{Font95, GK}.

We next apply Corollary~\ref{corollary:intro_main} to the study of the singular locus of the open set $\Hess(N,h) \cap \Omega_e^\circ$ in $\Hess(N,h)$ for some Hessenberg functions $h$. 
There are partial results for studying singularities of Hessenberg varieties in \cite{AbeInsko, EPS, IY}. 
Indeed, an explicit presentation of the singular locus for the Peteson variety $\Pet_n$ is given by \cite{IY}. 
The singular locus for \emph{nilpotent} Hessenberg varieties of codimension one in the flag variety $\Fl(\C^n)$ is explicitly described in \cite{EPS}. 
Also, a recent paper \cite{AbeInsko} combinatorially determines which permutation flags in arbitrary regular nilpotent Hessenberg variety $\Hess(N,h)$ are singular points.
We focus on the following Hessenberg function
\begin{align} \label{eq:intro_hm}
h_m = (m,n,\ldots,n) \ \ \ \textrm{for} \ 2 \leq m \leq n-1.
\end{align}
We derive an explicit presentation of the singular locus of $\Hess(N,h_m)$ from Corollary~\ref{corollary:intro_main}.  
For this purpose, we first study the singular locus of $\Hess(N,h_2)$.
More precisely, we show that the singularity of $\Hess(N,h_2) \cap \Omega_e^\circ$ is related with a cyclic quotient singularity. 
We briefly explain the cyclic quotient singularity.
Let $\Cyclic_n$ be the cyclic group of order $n$ generated by $\zeta$ where $\zeta$ is a primitive $n$-th root of unity. 
Define the action of $\Cyclic_n$ on $\C^2$ by $\zeta \cdot (x,y)=(\zeta x, \zeta^{-1} y)$ for $\zeta \in \Cyclic_n$ and $(x,y) \in \C^2$.
Then, the quotient space $\C^2/\Cyclic_n$ is called the \emph{cyclic quotient singularity} or the \emph{type $A_{n-1}$-singularity}.

\begin{theorem} \label{theorem:intro_cyclic_quotient_h2}
There is an isomorphism 
\begin{align*}
\Hess(N,h_2) \cap \Omega_e^\circ \cong \C^2/\Cyclic_n \times \C^{\frac{1}{2}(n-1)(n-2)-1}
\end{align*}
where $h_2=(2,n,\ldots,n)$. 
\end{theorem}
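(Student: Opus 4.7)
The strategy is to work directly with the explicit defining equations of $\Hess(N,h_2) \cap \Omega_e^\circ$ given by \eqref{eq:intro_Hess(N,h) cap Omega}, reduce them to a single hypersurface relation via elementary recursions, and then apply a polynomial change of variables that identifies this relation with the defining equation of the type $A_{n-1}$ singularity. Because $h_2(j) = n$ for all $j \geq 2$, the only nontrivial conditions in \eqref{eq:intro_Hess(N,h) cap Omega} come from $j = 1$, giving exactly the $n - 2$ equations $(g^{-1}Ng)_{i,1} = 0$ for $3 \leq i \leq n$. Writing $v_k$ for the $k$-th column of the unipotent matrix $g$, these conditions are equivalent to the single geometric requirement $Nv_1 \in \Span_\C\{v_1, v_2\}$.

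Since $Nv_1 = (x_{21}, x_{31}, \ldots, x_{n1}, 0)^T$, comparing the first two coordinates of $Nv_1 = a\, v_1 + t\, v_2$ forces $a = x_{21}$ and $t = x_{31} - x_{21}^2$, while matching the remaining coordinates yields
\begin{align*}
x_{i+1,1} = a\, x_{i,1} + t\, x_{i,2} \quad (3 \leq i \leq n-1), \qquad a\, x_{n,1} + t\, x_{n,2} = 0.
\end{align*}
A routine induction on $k$ then gives the closed form $x_{k+1,1} = a^k + t\, R_k$ for $k = 2, \ldots, n-1$, where $R_k = a^{k-2} + a^{k-3} x_{3,2} + \cdots + a\, x_{k-1,2} + x_{k,2}$. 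Substituting into the last equation collapses the entire system to the single relation $a^n + t\, R_n = 0$, and the $\binom{n-2}{2}$ variables $x_{ij}$ with $3 \leq j < i \leq n$ never appear, contributing a free polynomial factor.

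The concluding step is the change of variables $w := a$, $u := -t$, $v := R_n$, which defines a $\C$-algebra isomorphism $\C[a, t, x_{3,2}, \ldots, x_{n,2}] \cong \C[w, u, v, x_{3,2}, \ldots, x_{n-1,2}]$ because $R_n$ is affine in $x_{n,2}$ with leading coefficient $1$, so $x_{n,2}$ is recovered from the new generators as $x_{n,2} = v - w^{n-2} - w^{n-3} x_{3,2} - \cdots - w\, x_{n-1,2}$. Under this isomorphism the relation $a^n + t\, R_n = 0$ transforms to $w^n - uv = 0$, so combining with the $\binom{n-2}{2}$ free variables from the previous paragraph and using the identity $(n-3) + \binom{n-2}{2} = \tfrac{1}{2}(n-1)(n-2) - 1$, we obtain
\begin{align*}
\Hess(N,h_2) \cap \Omega_e^\circ \cong \C^2/\Cyclic_n \times \C^{\frac{1}{2}(n-1)(n-2) - 1}.
\end{align*}
The main technical step is the inductive derivation of the closed-form expression for $x_{k+1,1}$; once this is in hand, the identification of the final relation with $uv = w^n$ is essentially forced by the shape of $R_n$ as a polynomial linear in $x_{n,2}$.
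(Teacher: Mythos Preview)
Your proof is correct and follows essentially the same route as the paper: your equations $x_{i+1,1} = a\,x_{i,1} + t\,x_{i,2}$ coincide with the paper's generators $\tilde F^{\langle 2\rangle}_{i,1}=0$ (Definition~\ref{definition:tildeFij}), your inductive closed form $x_{k+1,1}=a^k+tR_k$ plays the role of Lemma~\ref{lemma:cyclic} and Proposition~\ref{proposition:cyclic}, and your change of variables $(w,u,v)$ is exactly the paper's $(Z,X,Y)$. The only cosmetic difference is that you reach the simplified generators directly via the geometric condition $Nv_1\in\Span_\C\{v_1,v_2\}$ instead of passing through Proposition~\ref{proposition:Fij_ideal}.
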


Recall that $\Hess(N,h_m) \cap \Omega_e^\circ$ is given in \eqref{eq:intro_Hess(N,h) cap Omega}. 
As a corollary of Theorem~\ref{theorem:intro_cyclic_quotient_h2}, one can give the singular locus of $\Hess(N,h_2) \cap \Omega_e^\circ$ as the solution set of the equations $x_{i1}=0$ for $2 \leq i \leq n$ and $x_{n2}=0$. 
Combining this description and Corollary~\ref{corollary:intro_main}, we can explicitly describe the singular locus of $\Hess(N,h_m) \cap \Omega_e^\circ$ as follows. 

\begin{theorem} \label{theorem:intro_singular_locus_hm}
Let $h_m$ be the Hessenberg function defined in \eqref{eq:intro_hm} for $2 \leq m \leq n-1$.
Then, the singular locus of $\Hess(N,h_m) \cap \Omega_e^\circ$ is described as 
\begin{align*} 
\left\{ g=\left(
 \begin{array}{@{\,}ccccc@{\,}}
     1 &  &  &  &  \\
     x_{21} & 1 &  &  &  \\ 
     x_{31} & x_{32} & 1 &  &  \\ 
     \vdots& \vdots & \ddots & \ddots & \\
     x_{n1} & x_{n2} & \cdots & x_{n \, n-1} & 1 
 \end{array}
 \right) \middle| x_{i1} = 0 \ \textrm{for all} \ 2 \leq i \leq n \atop \ \textrm{and} \ x_{nj} = 0 \ \textrm{for all} \ 2 \leq j \leq m \right\}. 
\end{align*}
\end{theorem}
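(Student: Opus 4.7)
The plan is to combine the $m = 2$ case (handled by Theorem~\ref{theorem:intro_cyclic_quotient_h2}) with the quantum presentation of Corollary~\ref{corollary:intro_main} to treat general $m \geq 3$. As noted just before the theorem, Theorem~\ref{theorem:intro_cyclic_quotient_h2} identifies the singular locus of $\Hess(N, h_2) \cap \Omega_e^\circ$ with $\{x_{i1} = 0 \text{ for } 2 \leq i \leq n,\ x_{n2} = 0\}$, so my goal is to show that moving from $h_2$ to $h_m$ imposes the $m - 2$ further vanishing conditions $x_{nj} = 0$ for $3 \leq j \leq m$.

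The presentations provided by Corollary~\ref{corollary:intro_main} for $h_2$ and for $h_m$ share the same $x_1, \ldots, x_n$ and the same $q_{rs}$ for $s \leq n - 1$; the $h_m$ presentation carries exactly $m - 2$ extra quantum parameters $q_{n-m+1, n}, q_{n-m+2, n}, \ldots, q_{n-2, n}$, and the polynomials $\hmE_i^{(n)}$ specialize to their $h_2$ counterparts upon setting these extra $q$'s to zero. Since $\Hess(N, h_2) \cap \Omega_e^\circ$ is cut out of $\Hess(N, h_m) \cap \Omega_e^\circ$ by the $m - 2$ defining polynomials $(g^{-1}Ng)_{i, 1}$ for $3 \leq i \leq m$, the correspondence between quantum parameters and defining polynomials promised by Corollary~\ref{corollary:main_commutative_diagram} matches each extra $q_{rn}$ (up to sign) with one such $(g^{-1}Ng)_{i, 1}$.

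I would then apply the Jacobian criterion in the $q$-presentation. The Jacobian of $(\hmE_1^{(n)}, \ldots, \hmE_n^{(n)})$ is the Jacobian for the $h_2$ case augmented by $m - 2$ extra columns; by the cofactor identity $\partial \det(\lambda I - M_n)/\partial q_{rn} = -(-1)^{r+n} M_{r n}(\lambda)$, each new column consists of explicit signed minors of $M_n$. At a point outside $\Hess(N, h_2) \cap \Omega_e^\circ$ (where at least one extra $q_{rn}$ is nonzero), I expect these additional columns to produce a transverse direction so that the augmented Jacobian attains rank $n$, implying smoothness; hence the singular locus of $\Hess(N, h_m) \cap \Omega_e^\circ$ is contained in $\Hess(N, h_2) \cap \Omega_e^\circ$. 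On the latter, the extra $q_{rn}$'s vanish, and at a point of the $h_2$ singular locus (where $x_{i1} = 0$ for $i \geq 2$ makes the first column of $g$ trivial), the extra columns reduce to expressions whose leading linear parts detect the coordinates $x_{n 3}, x_{n 4}, \ldots, x_{n m}$ via the explicit inverse of Proposition~\ref{proposition:inverse_map_varphih}. Combining with the $h_2$ conditions yields the claimed description, and the reverse inclusion (the Jacobian drops rank on the claimed locus) is verified directly by substituting into the $\hmE_i^{(n)}$'s.

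The main obstacle will be the explicit matching between each extra parameter $q_{rn}$ and the specific coordinate $x_{nj}$ it controls, together with the accompanying verification that the cofactor minors of $M_n$, evaluated on the $h_2$ singular locus, indeed have leading linear parts in precisely those $x_{nj}$'s. This amounts to careful sign and index bookkeeping through the explicit maps of Theorem~\ref{theorem:main1} and Proposition~\ref{proposition:inverse_map_varphih}, and is where most of the technical work of the proof will concentrate.
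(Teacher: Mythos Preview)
Your overall framework---pass to the $q$-presentation via Corollary~\ref{corollary:intro_main}, apply the Jacobian criterion there, and reduce to the $m=2$ case---is the same as the paper's. The paper proceeds by induction from $h_m$ to $h_{m-1}$ rather than jumping directly to $h_2$, but that is a minor organizational difference.

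The genuine gap is the sentence ``I expect these additional columns to produce a transverse direction so that the augmented Jacobian attains rank $n$.'' This is precisely the crux of the argument, and it is not automatic. Nothing about having a nonzero $q_{rn}$ forces the corresponding column $\partial/\partial q_{rn}$ to be independent of the others at that point. What the paper actually proves (packaged as Proposition~\ref{proposition:singular_locus_hm_quantum}) is that at a singular point the \emph{last row} of the Jacobian must vanish identically: since the column of $\partial/\partial q_{rs}$ has the shape $(0,\ldots,0,1,*,\ldots,*)^t$ with the $1$ in position $s-r+1$, the first $n-1$ rows are always independent, so rank deficiency forces the $n$-th row to be a specific linear combination of them. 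From this one extracts, by comparing entries column by column (Claims~1--3 in the proof of Proposition~\ref{proposition:singular_locus_hm_quantum}), that the coefficients are $(-1)^{n-i+1}a_n^{n-i}$, that every $q_{rn}$ with $n-m+1\le r\le n-1$ vanishes, and finally that $a_n=0$. The last step is not purely algebraic: it passes back through $\varphi_{h_m}^{-1}$ and uses the geometric fact that the equations $F_{i,1}=0$ for all $i$ force $x_{21}=0$. Only after all of this do you know the singular locus sits inside $\Hess(N,h_2)\cap\Omega_e^\circ$.

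Your step~3 (``leading linear parts detect the coordinates $x_{n3},\ldots,x_{nm}$'') is similarly underspecified; in the paper this is absorbed into the same inductive machinery, where the extra condition at each step is identified as $\hmE_{n-m}^{(n-m)}=0$, which pulls back to $x_{nm}=0$. You should expect the bulk of the work to lie in the analogue of Claims~1--3, not in the bookkeeping you flagged at the end.
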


The explicit presentation in Theorem~\ref{theorem:intro_singular_locus_hm} is certain Schubert variety in the open set $\Omega_e^\circ$, as explained below.  
Let $\SS_n$ be the permutation group on $[n]$. 
For $w \in \SS_n$, the Schubert variety $X_w$ is defined by 
\begin{align*}
X_w=\{V_\bullet \in \Fl(\C^n) \mid \dim(V_p \cap F_q) \geq |\{i \in [p] \mid w(i) \leq q \}| \ \textrm{for all} \ p,q \in [n] \}
\end{align*}
where $F_\bullet=(F_i)_{i}$ is the standard flag, i.e. $F_i=\Span_\C\{e_1,\ldots,e_i \}$ and $e_1,\ldots,e_n$ are the standard basis of $\C^n$. 
For $2 \leq m \leq n-1$, we define the permutation $w_m \in \SS_n$ by
\begin{align*}
w_m \coloneqq 1 \ \ n-1 \ \ n-2 \ \cdots \ n-m+1 \ \ n \ \ n-m \ \ n-m-1 \ \cdots \ 2 
\end{align*}
in one-line notation. (See also \eqref{eq:_definition_wm}.)
Then one can see that 
\begin{align*}
X_{w_m} = \{V_\bullet \in \Fl(\C^n) \mid V_1 = F_1 \ \textrm{and} \ V_m \subset F_{n-1} \},
\end{align*} 
so we obtain the following result from Theorem~\ref{theorem:intro_singular_locus_hm}.

\begin{corollary} \label{corollary:intro_singular_locus_hm}
Let $2 \leq m \leq n-1$.
Then, the singular locus of $\Hess(N,h_m) \cap \Omega_e^\circ$ is equal to
\begin{align*}
\Sing(\Hess(N,h_m) \cap \Omega_e^\circ) = X_{w_m} \cap \Omega_e^\circ.
\end{align*}
\end{corollary}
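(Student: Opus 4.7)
The plan is to compare the explicit coordinate description of the singular locus given in Theorem~\ref{theorem:intro_singular_locus_hm} with the description $X_{w_m} = \{V_\bullet \in \Fl(\C^n) \mid V_1 = F_1 \text{ and } V_m \subset F_{n-1}\}$ recorded just before the corollary. Since both sides of the desired equality are subsets of $\Omega_e^\circ$ cut out by explicit linear equations in the standard coordinates $x_{ij}$ ($1 \leq j < i \leq n$), the problem reduces to matching these equations.

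First I would recall the standard identification of $\Omega_e^\circ$ with the set of lower triangular unipotent matrices $g = (g_{ij})$: the matrix $g$ corresponds to the flag $V_\bullet$ where $V_j$ is the span of the first $j$ columns of $g$. With this convention, the $j$-th column of $g$ equals $(0,\ldots,0,1,x_{j+1,j},\ldots,x_{n,j})^T$, with the entry $1$ in position $j$.

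Next I would translate each family of vanishing conditions appearing in Theorem~\ref{theorem:intro_singular_locus_hm}. The equations $x_{i1} = 0$ for $2 \leq i \leq n$ are equivalent to the first column of $g$ being $e_1$, hence to $V_1 = \Span_\C\{e_1\} = F_1$. Combined with $x_{n1} = 0$, the equations $x_{nj} = 0$ for $2 \leq j \leq m$ are equivalent to the $n$-th entries of columns $1,2,\ldots,m$ all vanishing, hence to $V_m \subset \Span_\C\{e_1,\ldots,e_{n-1}\} = F_{n-1}$. Combining these with the description of $X_{w_m}$ recalled above yields $\Sing(\Hess(N,h_m) \cap \Omega_e^\circ) = X_{w_m} \cap \Omega_e^\circ$.

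The argument is essentially bookkeeping in a single affine chart once Theorem~\ref{theorem:intro_singular_locus_hm} is in hand, so there is no substantive obstacle at this final step; the genuine work has been done in establishing that theorem (which in turn rests on Theorem~\ref{theorem:intro_cyclic_quotient_h2} and Corollary~\ref{corollary:intro_main}).
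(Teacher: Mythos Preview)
Your proposal is correct and follows essentially the same route as the paper: identify $X_{w_m}\cap\Omega_e^\circ$ with the coordinate locus $\{x_{i1}=0\ (2\le i\le n),\ x_{nj}=0\ (2\le j\le m)\}$ via the flag description $V_1=F_1$, $V_m\subset F_{n-1}$, and then invoke Theorem~\ref{theorem:intro_singular_locus_hm}. The paper's only additional ingredient is a citation to Gasharov--Reiner to justify the simplified description of $X_{w_m}$, which you take as given from the discussion preceding the corollary.
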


The paper is organized as follows. After reviewing the definition of Hessenberg varieties
and their defining equations in Section~\ref{section:Hessenberg varieties}, we focus on regular nilpotent Hessenberg varieties in Section~\ref{section:Regular nilpotent Hessenberg varieties}.
Then we state the main theorem (Theorem~\ref{theorem:main1}) in Section~\ref{section:The main theorem1}. 
In Section~\ref{section:Properties of $E_r^{(s)}$} we show that the homomorphism \eqref{eq:intro_main} is well-defined and surjective. 
In order to prove that it is in fact an isomorphism, we use the commutative algebra's tool of Hilbert series and regular sequences. More specifically, we define certain degrees for the variables $\{x_{ij} \mid 1 \leq j < i \leq n \}$ appeared in \eqref{eq:intro_Hess(N,h) cap Omega} and $\{x_1,\ldots,x_n, q_{rs} \mid 1 \leq r < s \leq n \}$ introduced in \eqref{eq:intro_Mn} so that the two sides of \eqref{eq:intro_main} are graded $\C$-algebras in Section~\ref{section:Hilbert series}.
We give a proof of our main theorem in Section~\ref{section:Proof of main theorem 1}. 
Next, turning our attention to the singular locus of $\Hess(N,h_m) \cap \Omega_e^\circ$ where $h_m$ is defined in \eqref{eq:intro_hm}, we give an explicit formula for partial derivatives $\frac{\partial}{\partial x_s} \, \hE_{i}^{(n)}$ and $\frac{\partial}{\partial q_{rs}} \, \hE_{i}^{(n)}$ in Section~\ref{section:Jacobian matrix}.
Then we relate the singularity of $\Hess(N,h_2) \cap \Omega_e^\circ$ to the cyclic quotient singularity (Theorem~\ref{theorem:cyclic_quotient_h2}) in Section~\ref{section:Cyclic quotient}.
One can see that this fact yields an explicit description for the singular locus of $\Hess(N,h_2) \cap \Omega_e^\circ$ (Corollary~\ref{corollary:singular_locus_h2}).
In Section~\ref{section:Singular locus of Hess(N,hm)} we generalize this result to the singular locus of $\Hess(N,h_m) \cap \Omega_e^\circ$ (Theorem~\ref{theorem:singular_locus_hm}) by using our main theorem together with the computations for partial derivatives $\frac{\partial}{\partial x_s} \, \hE_{i}^{(n)}$ and $\frac{\partial}{\partial q_{rs}} \, \hE_{i}^{(n)}$.
We also see that the singular locus of $\Hess(N,h_m) \cap \Omega_e^\circ$ is equal to the intersection of the Schubert variety $X_{w_m}$ and $\Omega_e^\circ$ (Corollary~\ref{corollary:singular_locus_hm}).

\bigskip
\noindent \textbf{Acknowledgements.} 
We are grateful to Hiraku Abe for valuable discussions. 
We are also grateful to Satoshi Murai for helpful comments on the commutative algebra arguments.
This research was partly supported by Osaka Central Advanced Mathematical Institute (MEXT Joint Usage/Research Center on Mathematics and Theoretical Physics).
The first author is supported in part by JSPS Grant-in-Aid for Young Scientists: 19K14508.

\bigskip

\section{Hessenberg varieties}
\label{section:Hessenberg varieties}

In this section we recall the definitions of Hessenberg varieties in type $A_{n-1}$ and their defining equations.
We use the notation $[n] \coloneqq \{1,2,\ldots,n\}$ throughout this paper.

Let $n$ be a positive integer.
A \emph{Hessenberg function} is a function $h: [n] \to [n]$ satisfying the following two conditions
\begin{enumerate}
\item $h(1) \leq h(2) \leq \cdots \leq h(n)$; 
\item $h(j) \geq j$ for all $j \in [n]$.
\end{enumerate}
Note that $h(n)=n$ by definition.
We frequently denote a Hessenberg function by listing its values in sequence, namely $h = (h(1), h(2), \ldots , h(n))$. 
It is useful to see a Hessenberg function $h$ pictorially by drawing a configuration of boxes on a square grid of size $n \times n$ whose shaded boxes consist of boxes in the $i$-th row and the $j$-th column such that $i \leq h(j)$ for $i, j \in [n]$.

\begin{example} \label{example:Hess_func_(3,3,4,5,5)}
Let $n=5$. Then, $h=(3,3,4,5,5)$ is a Hessenberg function and the configuration of the shaded boxes is shown in Figure~\ref{picture:Hessenberg_function}.
\begin{figure}[h]
\begin{center}
\begin{picture}(75,75)
\put(0,63){\colorbox{gray}}
\put(0,67){\colorbox{gray}}
\put(0,72){\colorbox{gray}}
\put(4,63){\colorbox{gray}}
\put(4,67){\colorbox{gray}}
\put(4,72){\colorbox{gray}}
\put(9,63){\colorbox{gray}}
\put(9,67){\colorbox{gray}}
\put(9,72){\colorbox{gray}}

\put(15,63){\colorbox{gray}}
\put(15,67){\colorbox{gray}}
\put(15,72){\colorbox{gray}}
\put(19,63){\colorbox{gray}}
\put(19,67){\colorbox{gray}}
\put(19,72){\colorbox{gray}}
\put(24,63){\colorbox{gray}}
\put(24,67){\colorbox{gray}}
\put(24,72){\colorbox{gray}}

\put(30,63){\colorbox{gray}}
\put(30,67){\colorbox{gray}}
\put(30,72){\colorbox{gray}}
\put(34,63){\colorbox{gray}}
\put(34,67){\colorbox{gray}}
\put(34,72){\colorbox{gray}}
\put(39,63){\colorbox{gray}}
\put(39,67){\colorbox{gray}}
\put(39,72){\colorbox{gray}}

\put(45,63){\colorbox{gray}}
\put(45,67){\colorbox{gray}}
\put(45,72){\colorbox{gray}}
\put(49,63){\colorbox{gray}}
\put(49,67){\colorbox{gray}}
\put(49,72){\colorbox{gray}}
\put(54,63){\colorbox{gray}}
\put(54,67){\colorbox{gray}}
\put(54,72){\colorbox{gray}}

\put(60,63){\colorbox{gray}}
\put(60,67){\colorbox{gray}}
\put(60,72){\colorbox{gray}}
\put(64,63){\colorbox{gray}}
\put(64,67){\colorbox{gray}}
\put(64,72){\colorbox{gray}}
\put(69,63){\colorbox{gray}}
\put(69,67){\colorbox{gray}}
\put(69,72){\colorbox{gray}}

\put(0,48){\colorbox{gray}}
\put(0,52){\colorbox{gray}}
\put(0,57){\colorbox{gray}}
\put(4,48){\colorbox{gray}}
\put(4,52){\colorbox{gray}}
\put(4,57){\colorbox{gray}}
\put(9,48){\colorbox{gray}}
\put(9,52){\colorbox{gray}}
\put(9,57){\colorbox{gray}}

\put(15,48){\colorbox{gray}}
\put(15,52){\colorbox{gray}}
\put(15,57){\colorbox{gray}}
\put(19,48){\colorbox{gray}}
\put(19,52){\colorbox{gray}}
\put(19,57){\colorbox{gray}}
\put(24,48){\colorbox{gray}}
\put(24,52){\colorbox{gray}}
\put(24,57){\colorbox{gray}}

\put(30,48){\colorbox{gray}}
\put(30,52){\colorbox{gray}}
\put(30,57){\colorbox{gray}}
\put(34,48){\colorbox{gray}}
\put(34,52){\colorbox{gray}}
\put(34,57){\colorbox{gray}}
\put(39,48){\colorbox{gray}}
\put(39,52){\colorbox{gray}}
\put(39,57){\colorbox{gray}}

\put(45,48){\colorbox{gray}}
\put(45,52){\colorbox{gray}}
\put(45,57){\colorbox{gray}}
\put(49,48){\colorbox{gray}}
\put(49,52){\colorbox{gray}}
\put(49,57){\colorbox{gray}}
\put(54,48){\colorbox{gray}}
\put(54,52){\colorbox{gray}}
\put(54,57){\colorbox{gray}}

\put(60,48){\colorbox{gray}}
\put(60,52){\colorbox{gray}}
\put(60,57){\colorbox{gray}}
\put(64,48){\colorbox{gray}}
\put(64,52){\colorbox{gray}}
\put(64,57){\colorbox{gray}}
\put(69,48){\colorbox{gray}}
\put(69,52){\colorbox{gray}}
\put(69,57){\colorbox{gray}}

\put(0,33){\colorbox{gray}}
\put(0,37){\colorbox{gray}}
\put(0,42){\colorbox{gray}}
\put(4,33){\colorbox{gray}}
\put(4,37){\colorbox{gray}}
\put(4,42){\colorbox{gray}}
\put(9,33){\colorbox{gray}}
\put(9,37){\colorbox{gray}}
\put(9,42){\colorbox{gray}}

\put(15,33){\colorbox{gray}}
\put(15,37){\colorbox{gray}}
\put(15,42){\colorbox{gray}}
\put(19,33){\colorbox{gray}}
\put(19,37){\colorbox{gray}}
\put(19,42){\colorbox{gray}}
\put(24,33){\colorbox{gray}}
\put(24,37){\colorbox{gray}}
\put(24,42){\colorbox{gray}}

\put(30,33){\colorbox{gray}}
\put(30,37){\colorbox{gray}}
\put(30,42){\colorbox{gray}}
\put(34,33){\colorbox{gray}}
\put(34,37){\colorbox{gray}}
\put(34,42){\colorbox{gray}}
\put(39,33){\colorbox{gray}}
\put(39,37){\colorbox{gray}}
\put(39,42){\colorbox{gray}}

\put(45,33){\colorbox{gray}}
\put(45,37){\colorbox{gray}}
\put(45,42){\colorbox{gray}}
\put(49,33){\colorbox{gray}}
\put(49,37){\colorbox{gray}}
\put(49,42){\colorbox{gray}}
\put(54,33){\colorbox{gray}}
\put(54,37){\colorbox{gray}}
\put(54,42){\colorbox{gray}}

\put(60,33){\colorbox{gray}}
\put(60,37){\colorbox{gray}}
\put(60,42){\colorbox{gray}}
\put(64,33){\colorbox{gray}}
\put(64,37){\colorbox{gray}}
\put(64,42){\colorbox{gray}}
\put(69,33){\colorbox{gray}}
\put(69,37){\colorbox{gray}}
\put(69,42){\colorbox{gray}}



\put(30,18){\colorbox{gray}}
\put(30,22){\colorbox{gray}}
\put(30,27){\colorbox{gray}}
\put(34,18){\colorbox{gray}}
\put(34,22){\colorbox{gray}}
\put(34,27){\colorbox{gray}}
\put(39,18){\colorbox{gray}}
\put(39,22){\colorbox{gray}}
\put(39,27){\colorbox{gray}}

\put(45,18){\colorbox{gray}}
\put(45,22){\colorbox{gray}}
\put(45,27){\colorbox{gray}}
\put(49,18){\colorbox{gray}}
\put(49,22){\colorbox{gray}}
\put(49,27){\colorbox{gray}}
\put(54,18){\colorbox{gray}}
\put(54,22){\colorbox{gray}}
\put(54,27){\colorbox{gray}}

\put(60,18){\colorbox{gray}}
\put(60,22){\colorbox{gray}}
\put(60,27){\colorbox{gray}}
\put(64,18){\colorbox{gray}}
\put(64,22){\colorbox{gray}}
\put(64,27){\colorbox{gray}}
\put(69,18){\colorbox{gray}}
\put(69,22){\colorbox{gray}}
\put(69,27){\colorbox{gray}}




\put(45,3){\colorbox{gray}}
\put(45,7){\colorbox{gray}}
\put(45,12){\colorbox{gray}}
\put(49,3){\colorbox{gray}}
\put(49,7){\colorbox{gray}}
\put(49,12){\colorbox{gray}}
\put(54,3){\colorbox{gray}}
\put(54,7){\colorbox{gray}}
\put(54,12){\colorbox{gray}}

\put(60,3){\colorbox{gray}}
\put(60,7){\colorbox{gray}}
\put(60,12){\colorbox{gray}}
\put(64,3){\colorbox{gray}}
\put(64,7){\colorbox{gray}}
\put(64,12){\colorbox{gray}}
\put(69,3){\colorbox{gray}}
\put(69,7){\colorbox{gray}}
\put(69,12){\colorbox{gray}}

\put(0,0){\framebox(15,15)}
\put(15,0){\framebox(15,15)}
\put(30,0){\framebox(15,15)}
\put(45,0){\framebox(15,15)}
\put(60,0){\framebox(15,15)}
\put(0,15){\framebox(15,15)}
\put(15,15){\framebox(15,15)}
\put(30,15){\framebox(15,15)}
\put(45,15){\framebox(15,15)}
\put(60,15){\framebox(15,15)}
\put(0,30){\framebox(15,15)}
\put(15,30){\framebox(15,15)}
\put(30,30){\framebox(15,15)}
\put(45,30){\framebox(15,15)}
\put(60,30){\framebox(15,15)}
\put(0,45){\framebox(15,15)}
\put(15,45){\framebox(15,15)}
\put(30,45){\framebox(15,15)}
\put(45,45){\framebox(15,15)}
\put(60,45){\framebox(15,15)}
\put(0,60){\framebox(15,15)}
\put(15,60){\framebox(15,15)}
\put(30,60){\framebox(15,15)}
\put(45,60){\framebox(15,15)}
\put(60,60){\framebox(15,15)}
\end{picture}
\end{center}
\vspace{-10pt}
\caption{The configuration corresponding to $h=(3,3,4,5,5)$.}
\label{picture:Hessenberg_function}
\end{figure}
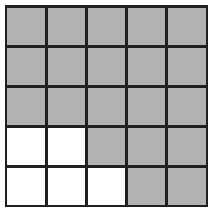
\end{example}

Let $\gl_n(\C)$ be the set of $n \times n$ matrices.
For a Hessenberg function $h$, we define 
\begin{align} \label{eq:Hessenberg space}
H(h) \coloneqq \{(a_{ij})_{i,j \in [n]} \in \gl_n(\C) \mid a_{ij} = 0 \ \textrm{if} \ i > h(j) \},
\end{align}
which is called the \emph{Hessenberg space associated to $h$}.

The (full) flag variety $\Fl(\C^n)$ is the set of nested complex linear subspaces of $\C^n$:
\begin{align*}
\Fl(\C^n) \coloneqq \{ V_\bullet \coloneqq (V_1 \subset V_2 \subset \dots \subset V_n = \C^n) \mid \dim_\C V_i = i \ \textrm{for all} \ i \in [n] \}.
\end{align*}
Let $B$ be the set of upper triangular matrices in the general linear group $\GL_n(\C)$. 
As is well-known, the flag variety $\Fl(\C^n)$ can be identified with $\GL_n(\C)/B$.
Indeed, each flag $V_\bullet \in \Fl(\C^n)$ is determined by a matrix $g$ whose first $j$ column vectors generate the $j$-th vector space $V_j$ for $j \in [n]$.
The correspondence $gB \mapsto V_{\bullet}$ above gives the identification $\Fl(\C^n) \cong \GL_n(\C)/B$. 
For a linear operator $\XX: \C^n \rightarrow \C^n$ and a Hessenberg function $h: [n] \to [n]$, the Hessenberg variety $\Hess(\XX,h)$ is defined to be the following subvariety of the flag variety $\Fl(\C^n)$:
\begin{align} \label{eq:Hessenberg_variety_flag}
\Hess(\XX,h) \coloneqq \{ V_\bullet \in \Fl(\C^n) \mid \XX V_i \subset V_{h(i)} \ \textrm{for all} \ i \in [n] \}.
\end{align}
Hessenberg varieties are introduced in \cite{dMPS, dMS}.
Note that if $h=(n,n,\ldots,n)$, then $\Hess(\XX,h)=\Fl(\C^n)$.
By identifying $\Fl(\C^n)$ with $\GL_n(\C)/B$, we can write the definition above as 
\begin{align} \label{eq:Hessenberg_variety}
\Hess(\XX,h) = \{ gB \in \GL_n(\C)/B \mid g^{-1} \XX g \in H(h) \},
\end{align}
where $H(h)$ is the Hessenberg space defined in \eqref{eq:Hessenberg space}. 

\begin{definition} \label{definition:FijX}
Let $\XX \in \gl_n(\C)$ and $i,j \in [n]$. 
We define a polynomial $F_{i,j}^{\XX}$ on $\GL_n(\C)$ by 
\begin{align*} 
F_{i,j}^{\XX}(g) \coloneqq \det (v_1 \, \ldots \, v_{i-1} \ \XX v_j \ v_{i+1} \, \ldots \, v_n),
\end{align*}
where $g=(v_1 \, \ldots \, v_n) \in \GL_n(\C)$ is the decomposition into column vectors.
In other words, the polynomial $F_{i,j}^{\XX}(g)$ is the determinant of the matrix obtained from $g$ by replacing the $i$-th column vector of $g$ to the $j$-th column vector of $\XX g$.
\end{definition}

\begin{lemma} \label{lemma:defining equation}
Let $\XX \in \gl_n(\C)$. 
For $g \in \GL_n(\C)$ and $i,j \in [n]$, we have
\begin{align*}
\frac{1}{\det(g)}F_{i,j}^{\XX}(g) = (g^{-1} \XX g)_{ij}
\end{align*}
where $(g^{-1} \XX g)_{ij}$ denotes the $(i,j)$-th entry of the matrix $g^{-1} \XX g$. 
In particular, we have
\begin{align*} 
\Hess(\XX,h) =\{ gB \in \GL_n(\C)/B \mid F_{i,j}^{\XX}(g)=0 \ \textrm{for all} \ 1 \leq j \leq n-1 \ \textrm{and} \ h(j) < i \leq n \}.
\end{align*}
\end{lemma}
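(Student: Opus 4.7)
The plan is to prove the identity $\frac{1}{\det(g)} F_{i,j}^{\XX}(g) = (g^{-1}\XX g)_{ij}$ by a direct multilinearity-of-determinant calculation, and then derive the characterization of $\Hess(\XX,h)$ as an immediate consequence of the definition \eqref{eq:Hessenberg_variety} combined with this identity.

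First I would write $g = (v_1 \, \ldots \, v_n)$ in its column decomposition and set $A \coloneqq g^{-1} \XX g$ with entries $a_{kj} = (g^{-1}\XX g)_{kj}$. From $\XX g = g A$, reading off the $j$-th column gives the key expansion
\begin{align*}
\XX v_j = \sum_{k=1}^n a_{kj}\, v_k.
\end{align*}
I would then substitute this into $F_{i,j}^{\XX}(g) = \det(v_1\, \ldots\, v_{i-1} \ \XX v_j \ v_{i+1}\, \ldots\, v_n)$ and use multilinearity of the determinant in the $i$-th column to get
\begin{align*}
F_{i,j}^{\XX}(g) = \sum_{k=1}^n a_{kj}\, \det(v_1\, \ldots\, v_{i-1} \ v_k \ v_{i+1}\, \ldots\, v_n).
\end{align*}
For $k \neq i$ the determinant on the right vanishes by the repeated-column alternating property, and for $k=i$ it equals $\det(g)$. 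Hence $F_{i,j}^{\XX}(g) = a_{ij}\, \det(g)$, and dividing by the nonzero scalar $\det(g)$ yields the first claim.

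For the second claim, I would invoke the description \eqref{eq:Hessenberg_variety}: $gB \in \Hess(\XX,h)$ if and only if $(g^{-1}\XX g)_{ij} = 0$ for every pair $(i,j)$ with $i > h(j)$. Since $\det(g) \neq 0$ on $\GL_n(\C)$, the first part shows this is equivalent to $F_{i,j}^{\XX}(g) = 0$ for the same pairs. Finally, the condition is automatically vacuous when $j = n$ because $h(n) = n$ forces the range $h(n) < i \leq n$ to be empty, so we may restrict to $1 \leq j \leq n-1$, as stated.

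I do not anticipate any real obstacle here; the only subtle point is noting that $F_{i,j}^{\XX}$ is defined on all of $\GL_n(\C)$ but the identity $F_{i,j}^{\XX}(g) = (g^{-1}\XX g)_{ij}\,\det(g)$ is a polynomial identity in the entries of $g$, so it holds without any denominator issues, and then \emph{dividing} by $\det(g)$ is legitimate precisely on $\GL_n(\C)$. One should also briefly remark that the vanishing condition $F_{i,j}^{\XX}(g) = 0$ descends to the coset $gB$ because right multiplication of $g$ by an upper triangular $b \in B$ rescales each $F_{i,j}^{\XX}$ by a nonzero constant (indeed $F_{i,j}^{\XX}(gb)$ equals $\det(b)$ times a triangular combination that preserves vanishing), so the equations cut out a well-defined subset of $\GL_n(\C)/B$.
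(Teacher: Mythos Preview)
Your proof is correct. The route differs slightly from the paper's: the paper introduces the adjugate $\tilde g=(c_{ji})$ and shows $F_{i,j}^{\XX}(g)=(\tilde g\, \XX g)_{ij}$ via the cofactor expansion along the $i$-th column, whereas you expand $\XX v_j$ in the basis $v_1,\ldots,v_n$ and invoke multilinearity and alternation directly. Both arguments hinge on the same determinantal identities; your version avoids naming cofactors and is marginally cleaner, while the paper's version makes the role of Cramer's rule more visible.

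One small remark: your final paragraph about the vanishing conditions descending to the coset $gB$ is unnecessary. You are proving that, for any representative $g$, the conditions $F_{i,j}^{\XX}(g)=0$ for $i>h(j)$ are \emph{equivalent} to the already well-defined membership $gB\in\Hess(\XX,h)$ from \eqref{eq:Hessenberg_variety}; independence of the representative then comes for free. Your sketch that $F_{i,j}^{\XX}(gb)$ is ``$\det(b)$ times a triangular combination'' is correct in spirit but would require a bit more care to state precisely (it is not a simple rescaling of each $F_{i,j}^{\XX}$ individually), so it is better to omit it.
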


\begin{proof}
Let $c_{ij}$ be the $(i,j)$ cofactor of $g$, namely $c_{ij}$ is obtained by multiplying the $(i,j)$ minor of $g$ by $(-1)^{i+j}$ . 
Set $\tilde g=(c_{ij})_{i,j \in [n]}^t=(c_{ji})_{i,j \in [n]}$. 
Since $g^{-1}=\frac{1}{\det(g)} \, \tilde g$, it suffices to show that 
\begin{align} \label{eq:cofactor}
F_{i,j}^{\XX}(g) = (\tilde g \XX g)_{ij}.
\end{align}
We write $g=(v_1 \, \ldots \, v_n)$ and $\XX v_j=(b_{1j},\ldots,b_{nj})^t$.
By the definition of $\tilde g$ we have
\begin{align*}
(\tilde g \cdot \XX g)_{ij} = \sum_{k=1}^n c_{ki} b_{kj} = \det (v_1 \, \ldots \, v_{i-1} \ \XX v_j \ v_{i+1} \,\ldots \, v_n)
\end{align*}
where we used the cofactor expansion along the $i$-th column for the last equality.
This yields \eqref{eq:cofactor} as desired.
The latter statement follows from the former statement and \eqref{eq:Hessenberg_variety}.
\end{proof}

\bigskip

\section{Regular nilpotent Hessenberg varieties}
\label{section:Regular nilpotent Hessenberg varieties}

In this section we review geometric properties for regular nilpotent Hessenberg varieties.

Let $N$ be a regular nilpotent element in $\gl_n(\C)$, i.e. a matrix whose Jordan canonical form consists of exactly one Jordan block with corresponding eigenvalue equal to $0$.
For a Hessenberg function $h$, $\Hess(N,h)$ is called the \emph{regular nilpotent Hessenberg variety}.
If $h=(2,3,4,\ldots,n,n)$, then $\Pet_n \coloneqq \Hess(N,h=(2,3,4,\ldots,n,n))$ is called the Peterson variety, which is an object of an intense study by Dale Peterson \cite{Pet}.
Surprisingly, the Peterson variety is related with the quantum cohomology of the flag variety (\cite{Kos,Rie}). 
We will explain the relation in Section~\ref{section:The main theorem1}.
For any $p \in \GL_n(\C)$, one can see that $\Hess(N,h) \cong \Hess(p^{-1}Np,h)$ which sends $gB$ to $p^{-1}gB$.
Thus, we may assume that $N$ is in Jordan form:
\begin{equation} \label{eq:regular nilpotent} 
N = 
\begin{pmatrix}
0 & 1 & &   \\
     & \ddots & \ddots & \\
     &   & 0 & 1 \\
     &  & & 0 \\ 
\end{pmatrix}. 
\end{equation}

We summarize geometric properties of $\Hess(N,h)$ as follows.

\begin{theorem} [\cite{AndTym, IY, Kos, SomTym}] \label{theorem:property_Hess(N,h)} 
Let $\Pet_n$ be the Peterson variety. 
Let $\Hess(N,h)$ be the regular nilpotent Hessenberg variety associated with a Hessenberg function $h$.
\begin{enumerate}
\item[(i)] The Peterson variety $\Pet_n$ is singular for $n \geq 3$. Also, $\Pet_n$ is normal if and only if $n \leq 3$.  
\item[(ii)] The complex dimension of $\Hess(N,h)$ is given by $\sum_{j=1}^n (h(j)-j)$. 
In particular, we have $\dim_\C \Pet_n = n-1$.
\item[(iii)] $\Hess(N,h)$ is irreducible.
\end{enumerate}
\end{theorem}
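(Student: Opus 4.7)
The plan is to handle parts (ii), (iii), and (i) in turn, since the dimension and irreducibility statements feed naturally into the analysis of the Peterson singularity. For (ii), I would work entirely inside the chart $\Omega_e^\circ$. As displayed in \eqref{eq:intro_Hess(N,h) cap Omega}, the intersection $\Hess(N,h) \cap \Omega_e^\circ$ sits in $\C^{\binom{n}{2}}$ and is cut out by the $\sum_{j=1}^{n-1}(n-h(j)) = \binom{n}{2} - \sum_{j=1}^n (h(j)-j)$ polynomial equations $(g^{-1}Ng)_{ij}=0$. The dimension formula falls out once these are shown to form a regular sequence; for this I would run the Hilbert-series comparison enabled by the grading of Section~\ref{section:Hilbert series}, checking that the Koszul-predicted series matches the one produced by the main theorem, which forces regularity. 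The formula $\dim_\C \Pet_n = n-1$ is then the special case $h=(2,3,\ldots,n,n)$.

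For (iii) I would combine irreducibility of the open chart with density. Irreducibility of $\Hess(N,h) \cap \Omega_e^\circ$ follows from the regular-sequence conclusion of (ii) together with reducedness (available under indecomposability by \cite{ADGH}, and handled in general by an induction on the triangular equations $(g^{-1}Ng)_{ij}$ using the fact that the leading row of $N$ allows each new equation to be arranged as a nonzerodivisor modulo the previous ones), upgrading the coordinate ring to a complete intersection domain. To show the chart is dense in the full variety, I would invoke the standard $\C^{\ast}$-action on $\Fl(\C^n)$ that rescales $N$: every point of $\Hess(N,h)$ flows under this action to the standard flag $F_\bullet$, which lies in $\Omega_e^\circ$, so no irreducible component can be disjoint from the chart.

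For (i), I would split into singularity and normality. For singularity with $n\geq 3$, I would compute the Jacobian of the Peterson defining equations at the origin of $\Omega_e^\circ$ (the standard flag) and directly verify that its rank is strictly smaller than the expected codimension $\binom{n}{2}-(n-1)$, exhibiting a singular point already at $F_\bullet$. For normality: $\Pet_2$ is smooth, and $\Pet_3$ is a surface with an isolated singularity whose defining equations let one check Serre's criterion $(R_1)+(S_2)$ by hand. For $n\geq 4$, I would draw on the Insko--Yong description to exhibit a positive-dimensional singular stratum whose codimension in $\Pet_n$ is one, violating $(R_1)$ and hence normality.

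The main obstacle I expect is the density step in (iii): nothing in the chart-level analysis of the paper forces $\Omega_e^\circ$ to be dense in $\Hess(N,h)$, so an external geometric input such as the $\C^{\ast}$-contraction is unavoidable. A secondary difficulty is ruling out normality for $n\geq 4$ in (i): the $n=3$ case is hands-on, but producing a codimension-one singular stratum for general $n$ requires a finer combinatorial analysis than the Jacobian-at-$F_\bullet$ argument provides.
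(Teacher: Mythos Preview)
The paper does not give its own proof of this theorem; it is stated with citations to \cite{AndTym, IY, Kos, SomTym} and used as background. So there is no in-paper argument to compare against, and your proposal stands or falls on its own.

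There is a genuine gap in your treatment of (iii). You write that ``irreducibility of $\Hess(N,h)\cap\Omega_e^\circ$ follows from the regular-sequence conclusion of (ii) together with reducedness\ldots upgrading the coordinate ring to a complete intersection domain.'' But reduced complete intersections need not be domains: already $\C[x,y]/(xy)$ is a reduced complete intersection that is reducible. Nothing in the Hilbert-series or regular-sequence machinery of Section~\ref{section:Hilbert series} rules out several components of the same dimension, so the chart-level irreducibility is not established by your argument. The actual proofs in the literature (e.g.\ \cite{AndTym}) proceed quite differently, via an affine paving of $\Hess(N,h)$ and the identification of a unique dense cell.

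Your density step also breaks down. The natural one-parameter subgroup acting on $\Hess(N,h)$ does contract points to $T$-fixed permutation flags, but not to the single flag $F_\bullet$: the Bia\l{}ynicki--Birula cells are indexed by all the $w_I$ appearing in the Tymoczko paving, not just $e$. So ``every point of $\Hess(N,h)$ flows under this action to the standard flag'' is false as stated, and without it you cannot conclude $\Omega_e^\circ$ meets every component. Since your dimension formula in (ii) is also only computed on the chart, the gap in (iii) propagates backwards: you have $\dim(\Hess(N,h)\cap\Omega_e^\circ)=\sum_j(h(j)-j)$, but not yet $\dim\Hess(N,h)$. The arguments for (i) are more plausible in outline, though the non-normality step for $n\ge 4$ ultimately leans on \cite{IY} rather than supplying an independent proof.
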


As a property of regular nilpotent Hessenberg varieties, a special case of $\Hess(N,h)$ can be decomposed into a product of smaller regular nilpotent Hessenberg varieties, as explained below.
To explain this, we first recall the following terminology from \cite[Definition~4.4]{Dre-Thesis}.

\begin{definition} \label{definition:indecomposable}
A Hessenberg function $h$ is \emph{decomposable} if $h(j)=j$ for some $j \in [n-1]$.
A Hessenberg function $h$ is \emph{indecomposable} if $h(j)>j$ for all $j \in [n-1]$.
Note that an indecomposable Hessenberg function $h$ satisfies $h(n-1)=h(n)=n$.
\end{definition}

If $h$ is a decomposable Hessenberg function, i.e. $h(j)=j$ for some $j \in [n-1]$, then the Hessenberg function $h$ can be decomposed into two smaller Hessenberg functions $h_1$ and $h_2$ defined by $h_1=(h(1),\ldots,h(j))$ and $h_2=(h(j+1)-j,\ldots,h(n)-j)$ as shown in Figure~\ref{picture:decomposition_Hessenberg_function}.

\begin{figure}[h]
\begin{center}
\begin{picture}(175,75)
\put(0,63){\colorbox{gray}}
\put(0,67){\colorbox{gray}}
\put(0,72){\colorbox{gray}}
\put(4,63){\colorbox{gray}}
\put(4,67){\colorbox{gray}}
\put(4,72){\colorbox{gray}}
\put(9,63){\colorbox{gray}}
\put(9,67){\colorbox{gray}}
\put(9,72){\colorbox{gray}}

\put(15,63){\colorbox{gray}}
\put(15,67){\colorbox{gray}}
\put(15,72){\colorbox{gray}}
\put(19,63){\colorbox{gray}}
\put(19,67){\colorbox{gray}}
\put(19,72){\colorbox{gray}}
\put(24,63){\colorbox{gray}}
\put(24,67){\colorbox{gray}}
\put(24,72){\colorbox{gray}}

\put(30,63){\colorbox{gray}}
\put(30,67){\colorbox{gray}}
\put(30,72){\colorbox{gray}}
\put(34,63){\colorbox{gray}}
\put(34,67){\colorbox{gray}}
\put(34,72){\colorbox{gray}}
\put(39,63){\colorbox{gray}}
\put(39,67){\colorbox{gray}}
\put(39,72){\colorbox{gray}}

\put(45,63){\colorbox{gray}}
\put(45,67){\colorbox{gray}}
\put(45,72){\colorbox{gray}}
\put(49,63){\colorbox{gray}}
\put(49,67){\colorbox{gray}}
\put(49,72){\colorbox{gray}}
\put(54,63){\colorbox{gray}}
\put(54,67){\colorbox{gray}}
\put(54,72){\colorbox{gray}}

\put(60,63){\colorbox{gray}}
\put(60,67){\colorbox{gray}}
\put(60,72){\colorbox{gray}}
\put(64,63){\colorbox{gray}}
\put(64,67){\colorbox{gray}}
\put(64,72){\colorbox{gray}}
\put(69,63){\colorbox{gray}}
\put(69,67){\colorbox{gray}}
\put(69,72){\colorbox{gray}}

\put(0,48){\colorbox{gray}}
\put(0,52){\colorbox{gray}}
\put(0,57){\colorbox{gray}}
\put(4,48){\colorbox{gray}}
\put(4,52){\colorbox{gray}}
\put(4,57){\colorbox{gray}}
\put(9,48){\colorbox{gray}}
\put(9,52){\colorbox{gray}}
\put(9,57){\colorbox{gray}}

\put(15,48){\colorbox{gray}}
\put(15,52){\colorbox{gray}}
\put(15,57){\colorbox{gray}}
\put(19,48){\colorbox{gray}}
\put(19,52){\colorbox{gray}}
\put(19,57){\colorbox{gray}}
\put(24,48){\colorbox{gray}}
\put(24,52){\colorbox{gray}}
\put(24,57){\colorbox{gray}}

\put(30,48){\colorbox{gray}}
\put(30,52){\colorbox{gray}}
\put(30,57){\colorbox{gray}}
\put(34,48){\colorbox{gray}}
\put(34,52){\colorbox{gray}}
\put(34,57){\colorbox{gray}}
\put(39,48){\colorbox{gray}}
\put(39,52){\colorbox{gray}}
\put(39,57){\colorbox{gray}}

\put(45,48){\colorbox{gray}}
\put(45,52){\colorbox{gray}}
\put(45,57){\colorbox{gray}}
\put(49,48){\colorbox{gray}}
\put(49,52){\colorbox{gray}}
\put(49,57){\colorbox{gray}}
\put(54,48){\colorbox{gray}}
\put(54,52){\colorbox{gray}}
\put(54,57){\colorbox{gray}}

\put(60,48){\colorbox{gray}}
\put(60,52){\colorbox{gray}}
\put(60,57){\colorbox{gray}}
\put(64,48){\colorbox{gray}}
\put(64,52){\colorbox{gray}}
\put(64,57){\colorbox{gray}}
\put(69,48){\colorbox{gray}}
\put(69,52){\colorbox{gray}}
\put(69,57){\colorbox{gray}}


\put(15,33){\colorbox{gray}}
\put(15,37){\colorbox{gray}}
\put(15,42){\colorbox{gray}}
\put(19,33){\colorbox{gray}}
\put(19,37){\colorbox{gray}}
\put(19,42){\colorbox{gray}}
\put(24,33){\colorbox{gray}}
\put(24,37){\colorbox{gray}}
\put(24,42){\colorbox{gray}}

\put(30,33){\colorbox{gray}}
\put(30,37){\colorbox{gray}}
\put(30,42){\colorbox{gray}}
\put(34,33){\colorbox{gray}}
\put(34,37){\colorbox{gray}}
\put(34,42){\colorbox{gray}}
\put(39,33){\colorbox{gray}}
\put(39,37){\colorbox{gray}}
\put(39,42){\colorbox{gray}}

\put(45,33){\colorbox{gray}}
\put(45,37){\colorbox{gray}}
\put(45,42){\colorbox{gray}}
\put(49,33){\colorbox{gray}}
\put(49,37){\colorbox{gray}}
\put(49,42){\colorbox{gray}}
\put(54,33){\colorbox{gray}}
\put(54,37){\colorbox{gray}}
\put(54,42){\colorbox{gray}}

\put(60,33){\colorbox{gray}}
\put(60,37){\colorbox{gray}}
\put(60,42){\colorbox{gray}}
\put(64,33){\colorbox{gray}}
\put(64,37){\colorbox{gray}}
\put(64,42){\colorbox{gray}}
\put(69,33){\colorbox{gray}}
\put(69,37){\colorbox{gray}}
\put(69,42){\colorbox{gray}}




\put(45,18){\colorbox{gray}}
\put(45,22){\colorbox{gray}}
\put(45,27){\colorbox{gray}}
\put(49,18){\colorbox{gray}}
\put(49,22){\colorbox{gray}}
\put(49,27){\colorbox{gray}}
\put(54,18){\colorbox{gray}}
\put(54,22){\colorbox{gray}}
\put(54,27){\colorbox{gray}}

\put(60,18){\colorbox{gray}}
\put(60,22){\colorbox{gray}}
\put(60,27){\colorbox{gray}}
\put(64,18){\colorbox{gray}}
\put(64,22){\colorbox{gray}}
\put(64,27){\colorbox{gray}}
\put(69,18){\colorbox{gray}}
\put(69,22){\colorbox{gray}}
\put(69,27){\colorbox{gray}}




\put(45,3){\colorbox{gray}}
\put(45,7){\colorbox{gray}}
\put(45,12){\colorbox{gray}}
\put(49,3){\colorbox{gray}}
\put(49,7){\colorbox{gray}}
\put(49,12){\colorbox{gray}}
\put(54,3){\colorbox{gray}}
\put(54,7){\colorbox{gray}}
\put(54,12){\colorbox{gray}}

\put(60,3){\colorbox{gray}}
\put(60,7){\colorbox{gray}}
\put(60,12){\colorbox{gray}}
\put(64,3){\colorbox{gray}}
\put(64,7){\colorbox{gray}}
\put(64,12){\colorbox{gray}}
\put(69,3){\colorbox{gray}}
\put(69,7){\colorbox{gray}}
\put(69,12){\colorbox{gray}}

\put(0,0){\framebox(15,15)}
\put(15,0){\framebox(15,15)}
\put(30,0){\framebox(15,15)}
\put(45,0){\framebox(15,15)}
\put(60,0){\framebox(15,15)}
\put(0,15){\framebox(15,15)}
\put(15,15){\framebox(15,15)}
\put(30,15){\framebox(15,15)}
\put(45,15){\framebox(15,15)}
\put(60,15){\framebox(15,15)}
\put(0,30){\framebox(15,15)}
\put(15,30){\framebox(15,15)}
\put(30,30){\framebox(15,15)}
\put(45,30){\framebox(15,15)}
\put(60,30){\framebox(15,15)}
\put(0,45){\framebox(15,15)}
\put(15,45){\framebox(15,15)}
\put(30,45){\framebox(15,15)}
\put(45,45){\framebox(15,15)}
\put(60,45){\framebox(15,15)}
\put(0,60){\framebox(15,15)}
\put(15,60){\framebox(15,15)}
\put(30,60){\framebox(15,15)}
\put(45,60){\framebox(15,15)}
\put(60,60){\framebox(15,15)}

\put(80,35){$\leadsto$}

\put(100,63){\colorbox{gray}}
\put(100,67){\colorbox{gray}}
\put(100,72){\colorbox{gray}}
\put(104,63){\colorbox{gray}}
\put(104,67){\colorbox{gray}}
\put(104,72){\colorbox{gray}}
\put(109,63){\colorbox{gray}}
\put(109,67){\colorbox{gray}}
\put(109,72){\colorbox{gray}}

\put(115,63){\colorbox{gray}}
\put(115,67){\colorbox{gray}}
\put(115,72){\colorbox{gray}}
\put(119,63){\colorbox{gray}}
\put(119,67){\colorbox{gray}}
\put(119,72){\colorbox{gray}}
\put(124,63){\colorbox{gray}}
\put(124,67){\colorbox{gray}}
\put(124,72){\colorbox{gray}}

\put(130,63){\colorbox{gray}}
\put(130,67){\colorbox{gray}}
\put(130,72){\colorbox{gray}}
\put(134,63){\colorbox{gray}}
\put(134,67){\colorbox{gray}}
\put(134,72){\colorbox{gray}}
\put(139,63){\colorbox{gray}}
\put(139,67){\colorbox{gray}}
\put(139,72){\colorbox{gray}}

\put(100,48){\colorbox{gray}}
\put(100,52){\colorbox{gray}}
\put(100,57){\colorbox{gray}}
\put(104,48){\colorbox{gray}}
\put(104,52){\colorbox{gray}}
\put(104,57){\colorbox{gray}}
\put(109,48){\colorbox{gray}}
\put(109,52){\colorbox{gray}}
\put(109,57){\colorbox{gray}}

\put(115,48){\colorbox{gray}}
\put(115,52){\colorbox{gray}}
\put(115,57){\colorbox{gray}}
\put(119,48){\colorbox{gray}}
\put(119,52){\colorbox{gray}}
\put(119,57){\colorbox{gray}}
\put(124,48){\colorbox{gray}}
\put(124,52){\colorbox{gray}}
\put(124,57){\colorbox{gray}}

\put(130,48){\colorbox{gray}}
\put(130,52){\colorbox{gray}}
\put(130,57){\colorbox{gray}}
\put(134,48){\colorbox{gray}}
\put(134,52){\colorbox{gray}}
\put(134,57){\colorbox{gray}}
\put(139,48){\colorbox{gray}}
\put(139,52){\colorbox{gray}}
\put(139,57){\colorbox{gray}}


\put(115,33){\colorbox{gray}}
\put(115,37){\colorbox{gray}}
\put(115,42){\colorbox{gray}}
\put(119,33){\colorbox{gray}}
\put(119,37){\colorbox{gray}}
\put(119,42){\colorbox{gray}}
\put(124,33){\colorbox{gray}}
\put(124,37){\colorbox{gray}}
\put(124,42){\colorbox{gray}}

\put(130,33){\colorbox{gray}}
\put(130,37){\colorbox{gray}}
\put(130,42){\colorbox{gray}}
\put(134,33){\colorbox{gray}}
\put(134,37){\colorbox{gray}}
\put(134,42){\colorbox{gray}}
\put(139,33){\colorbox{gray}}
\put(139,37){\colorbox{gray}}
\put(139,42){\colorbox{gray}}

\put(145,18){\colorbox{gray}}
\put(145,22){\colorbox{gray}}
\put(145,27){\colorbox{gray}}
\put(149,18){\colorbox{gray}}
\put(149,22){\colorbox{gray}}
\put(149,27){\colorbox{gray}}
\put(154,18){\colorbox{gray}}
\put(154,22){\colorbox{gray}}
\put(154,27){\colorbox{gray}}

\put(160,18){\colorbox{gray}}
\put(160,22){\colorbox{gray}}
\put(160,27){\colorbox{gray}}
\put(164,18){\colorbox{gray}}
\put(164,22){\colorbox{gray}}
\put(164,27){\colorbox{gray}}
\put(169,18){\colorbox{gray}}
\put(169,22){\colorbox{gray}}
\put(169,27){\colorbox{gray}}

\put(145,3){\colorbox{gray}}
\put(145,7){\colorbox{gray}}
\put(145,12){\colorbox{gray}}
\put(149,3){\colorbox{gray}}
\put(149,7){\colorbox{gray}}
\put(149,12){\colorbox{gray}}
\put(154,3){\colorbox{gray}}
\put(154,7){\colorbox{gray}}
\put(154,12){\colorbox{gray}}

\put(160,3){\colorbox{gray}}
\put(160,7){\colorbox{gray}}
\put(160,12){\colorbox{gray}}
\put(164,3){\colorbox{gray}}
\put(164,7){\colorbox{gray}}
\put(164,12){\colorbox{gray}}
\put(169,3){\colorbox{gray}}
\put(169,7){\colorbox{gray}}
\put(169,12){\colorbox{gray}}

\put(145,0){\framebox(15,15)}
\put(160,0){\framebox(15,15)}
\put(145,15){\framebox(15,15)}
\put(160,15){\framebox(15,15)}
\put(100,30){\framebox(15,15)}
\put(115,30){\framebox(15,15)}
\put(130,30){\framebox(15,15)}
\put(100,45){\framebox(15,15)}
\put(115,45){\framebox(15,15)}
\put(130,45){\framebox(15,15)}
\put(100,60){\framebox(15,15)}
\put(115,60){\framebox(15,15)}
\put(130,60){\framebox(15,15)}
\end{picture}
\end{center}
\vspace{-10pt}
\caption{The decomposition of $h=(2,3,3,5,5)$ into $h_1=(2,3,3)$ and $h_2=(2,2)$.}
\label{picture:decomposition_Hessenberg_function}
\end{figure}
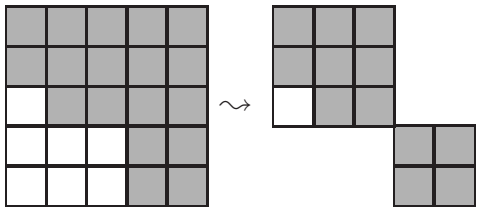

Then, every $V_\bullet \in \Hess(N,h)$ has $V_j = \Span_\C \{e_1,\ldots,e_j\}$ where $e_1,\ldots, e_n$ denote the standard basis of $\C^n$ and hence we have 
\begin{align} \label{eq:product_Hess} 
\Hess(N,h) \cong \Hess(N_1, h_1) \times \Hess(N_2, h_2)
\end{align}
where $N_1$ and $N_2$ are the regular nilpotent matrices in Jordan canonical form of size $j$ and $n-j$, respectively (\cite[Theorem~4.5]{Dre-Thesis}).

In a recent paper, Abe--Insko gave the condition that $\Hess(N,h)$ is normal as follows.

\begin{theorem} $($\cite[Theorem~1.3]{AbeInsko}$)$ \label{theorem:Hess(N,h)_normal} 
Let $h$ be an indecomposable Hessenberg function. 
Then the regular nilpotent Hessenberg variety $\Hess(N,h)$ is normal if and only if $h$ satisfies the condition that $h(i-1) > i$ or $h(i) > i + 1$ for all $1 < i < n-1$. 
\end{theorem}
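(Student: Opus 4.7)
The plan is to apply Serre's normality criterion using the explicit affine presentation of Corollary~\ref{corollary:intro_main}. For indecomposable $h$, $\Hess(N,h)$ is reduced and irreducible (Theorem~\ref{theorem:property_Hess(N,h)}(iii)), and the regular sequence property of $(\hE_1^{(n)},\ldots,\hE_n^{(n)})$ developed in Section~\ref{section:Hilbert series} makes the coordinate ring of $\Hess(N,h)\cap\Omega_e^\circ$ a complete intersection, hence Cohen--Macaulay. Covering $\Hess(N,h)$ by $T$-equivariant translates of this chart --- one around each $T$-fixed permutation flag, each with an analogous complete intersection presentation obtained by permuting the Hessenberg structure --- gives Cohen--Macaulayness globally, so condition $(S_2)$ holds. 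Normality is therefore equivalent to condition $(R_1)$: the singular locus has codimension at least two.

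I would then compute $\Sing(\Hess(N,h))$ locally on $\Hess(N,h)\cap\Omega_e^\circ$ via the Jacobian criterion, using the partial derivative formulas $\frac{\partial}{\partial x_s}\hE_i^{(n)}$ and $\frac{\partial}{\partial q_{rs}}\hE_i^{(n)}$ from Section~\ref{section:Jacobian matrix}. The contracting one-parameter subgroup $t \mapsto \operatorname{diag}(1,t,t^2,\ldots,t^{n-1})$ acts on $A_h \coloneqq \C[x_1,\ldots,x_n,q_{rs}]/(\hE_1^{(n)},\ldots,\hE_n^{(n)})$ with strictly positive weights on all generators, so $\Spec A_h$ is the attracting basin of the identity flag; combined with the $T$-equivariant covering above, this reduces the codimension computation of $\Sing(\Hess(N,h))$ to the corresponding computation for $\Spec A_h$. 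I would then argue that the failure of the combinatorial condition at some $1 < i < n - 1$ (i.e., $h(i-1) = i$ and $h(i) = i+1$) forces a pair of quantum variables to be absent from $\hE_i^{(n)}$ and $\hE_{i+1}^{(n)}$ in the definition of $\hE^{(n)}$, producing a codimension-one rank drop of the Jacobian along a Schubert-type subvariety in the spirit of Theorem~\ref{theorem:intro_singular_locus_hm}; conversely, when the condition holds at every $i$, enough extra quantum variables survive to force the rank-drop locus into codimension at least two.

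The main obstacle is the ``only if'' direction: one must produce a genuine codimension-one component of $\Sing(\Hess(N,h))$ rather than merely a locus along which the Jacobian drops rank. I would tackle this by localizing at the generic point of the candidate component and showing that the transversal singularity is isomorphic to the cyclic-quotient singularity $\C^2/\Cyclic_n$ of Theorem~\ref{theorem:intro_cyclic_quotient_h2} (which is non-normal for $n\geq 3$), with the pinch at index $i$ contributing exactly such a local model inherited through a local product decomposition near the identity flag. The ``if'' direction should reduce, via the surviving quantum variables, to a rank-two argument on the relevant submatrix of the Jacobian, proved by induction on $n$ with small indecomposable $h$'s serving as the base cases.
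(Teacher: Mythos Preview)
The paper does not prove this theorem; it is quoted verbatim from \cite{AbeInsko} as background and no argument is supplied here. So there is no ``paper's own proof'' to compare against.

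That said, your proposal contains a genuine error that would derail the ``only if'' direction. You claim that the cyclic quotient singularity $\C^2/\Cyclic_n$ is non-normal for $n\ge 3$, and you plan to deduce non-normality of $\Hess(N,h)$ by exhibiting this as a transversal slice. But $\C^2/\Cyclic_n$ \emph{is} normal: it is the hypersurface $\{XY=Z^n\}\subset\C^3$, which is a complete intersection (hence $S_2$) with an isolated singular point (hence $R_1$). More conceptually, any quotient of a smooth affine variety by a finite group is normal. So identifying a transversal $A_{n-1}$-singularity proves nothing about non-normality. What you actually need for ``only if'' is exactly what you backed away from: a codimension-one component of the singular locus. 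Since $\Hess(N,h)\cap\Omega_e^\circ$ is a complete intersection (by the regular-sequence argument you cite), the Jacobian criterion is an equivalence, and a codimension-one rank-drop locus \emph{is} a codimension-one singular component; your worry that this might be ``merely'' a Jacobian drop is misplaced. The real work is to show that when $h(i-1)=i$ and $h(i)=i+1$ for some $1<i<n-1$, the rank-drop locus has codimension exactly one, not larger.

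There is also a nontrivial gap in your global argument: the paper's presentation and Jacobian formulas are established only on the chart $\Omega_e^\circ$, and the other permutation charts are not simply $T$-translates with ``permuted Hessenberg structure.'' Passing from $(R_1)$ on $\Omega_e^\circ$ to $(R_1)$ on all of $\Hess(N,h)$ requires either an honest analysis of every affine patch or a separate argument (e.g.\ a contracting $\C^\times$-action whose fixed points all lie in $\Omega_e^\circ$, which is not the case here since $\Hess(N,h)$ has $T$-fixed points at many permutation flags). You should not treat this step as routine.
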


There is a natural partial order on Hessenberg functions. For two Hessenberg functions $h:[n] \rightarrow [n]$ and $h':[n] \rightarrow [n]$, we say $h' \subset h$ if $h'(j) \leq h(j)$ for all $j \in [n]$. 
Note that if $h' \subset h$, then $\Hess(N,h')$ is a closed subvariety of $\Hess(N,h)$ by the definition \eqref{eq:Hessenberg_variety_flag}.
Since $h=(2,3,4,\ldots,n,n)$ is the minimal Hessenberg function among indecomposable Hessenberg functions with respect to the partial order $\subset$, the Peterson variety $\Pet_n$ is the minimal Hessenberg variety among indecomposable regular nilpotent Hessenberg varieties with respect to the inclusion.

Let $h$ be a Hessenberg function.
One can see that the Hessenberg space $H(h)$ in \eqref{eq:Hessenberg space} is stable under the adjoint action of $B$, so this induces a $B$-action on the quotient space $\gl_n(\C)/H(h)$.  
We denote by $\overline{x}$ the image of $x \in \gl_n(\C)$ under the surjection $\gl_n(\C) \rightarrow \gl_n(\C)/H(h)$. 
In \cite{ADGH} we consider the vector bundle $\GL_n(\C) \times_B (\gl_n(\C)/H(h))$ over the flag variety $\GL_n(\C)/B$ and its section $s_\XX$ for $\XX \in \gl_n(\C)$ defined by
\begin{align*}
s_\XX: \GL_n(\C)/B \rightarrow \GL_n(\C) \times_B (\gl_n(\C)/H(h)); \ gB \mapsto [g,\overline{g^{-1} \XX g}],
\end{align*}
where we denote by $[g,\overline{x}]$ the image of $(g,\overline{x}) \in \GL_n(\C) \times (\gl_n(\C)/H(h))$ under the surjection $\GL_n(\C) \times (\gl_n(\C)/H(h)) \rightarrow \GL_n(\C) \times_B (\gl_n(\C)/H(h))$ such that $[g,\overline{x}]=[gb,\overline{b^{-1}xb}]$ for all $b \in B$. 
By the definition \eqref{eq:Hessenberg_variety} one can see that the zero set of $s_\XX$ is the Hessenberg variety $\Hess(\XX,h)$. 

In general, let $\pi:E \rightarrow X$ be a vector bundle of rank $r$ over a scheme $X$. If $s$ is a section of $E$, then the \emph{zero scheme} $\ZZ(s)$ of $s$ is defined as follows (cf.  \cite{Ful98}).  
Let $(U_i, \varphi_i)_i$ be a local trivialization of $E$, i.e. an open covering $\{U_i\}_i$ of $X$ and isomorphisms $\varphi_i$ of $\pi^{-1}(U_i)$ with $U_i \times \C^r$ over $U_i$ such that the composites $\varphi_i \circ \varphi_j^{-1}$ are linear. 
Let $s_i: U_i \rightarrow \C^r$ determine the section $s$ on $U_i$, $s_i=(s_{i_1},\ldots,s_{i_r})$, $s_{i_m}$ in the coordinate ring of $U_i$; then $\ZZ(s)$ is defined in $U_i$ by the ideal generated by $s_{i_1},\ldots,s_{i_r}$. 

Motivated by the discussion above, we use the following definition introduced in \cite{ADGH}.

\begin{definition} $($\cite[Definition~3.1]{ADGH}$)$
Let $N \in \gl_n(\C)$ be the regular nilpotent element in \eqref{eq:regular nilpotent}.
For a Hessenberg function $h$, let $\ZZ(N,h)$ denote the zero scheme in $\GL_n(\C)/B$ of the section $s_N$.
\end{definition}

Locally around $gB \in \GL_n(\C)/B$, the section $s_N$ is represented by a collection of regular functions, and the scheme $\ZZ(N, h)$ is locally the zero scheme of those functions. (See \cite[Lemma~3.4]{ADGH} for the detail.)

\begin{theorem} $($\cite[Proposition~3.6]{ADGH}$)$ \label{theorem:reduced}
If $h$ is an indecomposable Hessenberg function, then the zero scheme $\ZZ(N,h)$ of the section $s_N$ is reduced. 
\end{theorem}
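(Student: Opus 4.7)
The plan is to prove reducedness of $\ZZ(N,h)$ by invoking Serre's criterion $(R_0)+(S_1)$: I first show that the scheme is a local complete intersection of the expected codimension in $\Fl(\C^n)$, and then exhibit a single point at which the Jacobian of the defining equations attains maximal rank.

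For the $(S_1)$ half, I count defining equations against the codimension of the support. By Lemma~\ref{lemma:defining equation}, $\ZZ(N,h)$ is cut out locally in $\Fl(\C^n)$ by the functions $F_{i,j}^N$ with $j\in[n-1]$ and $h(j)<i\leq n$, a total of
\begin{align*}
\sum_{j=1}^{n-1}(n-h(j)) \;=\; n^2-\sum_{j=1}^n h(j)
\end{align*}
equations (using $h(n)=n$). Theorem~\ref{theorem:property_Hess(N,h)}(ii) gives $\dim\Hess(N,h)=\sum_j(h(j)-j)$, so the codimension of $\Hess(N,h)=\ZZ(N,h)_{\mathrm{red}}$ in $\Fl(\C^n)$ is $\binom{n}{2}-\sum_j(h(j)-j)=n^2-\sum_j h(j)$, matching the equation count exactly. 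Krull's Hauptidealsatz then forces $\ZZ(N,h)$ to be a local complete intersection in the smooth variety $\Fl(\C^n)$, hence Cohen--Macaulay, and in particular $(S_1)$ holds.

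For $(R_0)$, irreducibility of $\Hess(N,h)$ (Theorem~\ref{theorem:property_Hess(N,h)}(iii)) leaves a single generic point to check, so it suffices to exhibit one flag $V_\bullet^\ast\in\Hess(N,h)$ at which the Jacobian of $\{F_{i,j}^N:j\in[n-1],\,h(j)<i\leq n\}$ has maximal rank $n^2-\sum_j h(j)$; at such a point $\ZZ(N,h)$ is scheme-theoretically smooth, hence reduced in a neighbourhood, and Serre's criterion closes the argument. Producing this point is the hard step, and is the only place where indecomposability is actually used. That the hypothesis cannot be dropped is already transparent for $n=2$, $h=(1,2)$: on $\Omega_e^\circ$ one computes $(g^{-1}Ng)_{21}=-x_{21}^2$, so $\ZZ(N,h)\cap\Omega_e^\circ\cong \Spec\,\C[x_{21}]/(x_{21}^2)$ is genuinely non-reduced. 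I would produce the smooth point by fixing a permutation $w\in\SS_n$ with $wF_\bullet\in\Hess(N,h)$, expanding each $F_{i,j}^N$ to first order in the affine coordinates on $\Omega_w^\circ$, and showing that indecomposability $h(j)>j$ for $j<n$ forces the resulting linear forms to be linearly independent --- the expected picture is a triangular system in a suitable ordering of the $x_{k\ell}$, which rules out the square-type cancellations that the decomposable case exhibits. A combinatorics-free alternative would deform $N$ to a regular semisimple $S$ inside the family $\{s_X\}_{X\in\gl_n(\C)}$, whose fibre $\Hess(S,h)$ is smooth by de Mari--Procesi--Shayman, and transfer reducedness to the special fibre via flatness and upper-semicontinuity; but this route demands a delicate degeneration analysis, since the special fibre of such a family can genuinely be non-reduced when $h$ is decomposable.
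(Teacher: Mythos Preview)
This theorem is not proved in the present paper; it is cited from \cite[Proposition~3.6]{ADGH}, so there is no in-paper argument to compare against.

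Your strategy is correct in outline and is essentially the standard route (and the one taken in \cite{ADGH}): the equation count against Theorem~\ref{theorem:property_Hess(N,h)}(ii) does show that $\ZZ(N,h)$ is a local complete intersection in the smooth variety $\Fl(\C^n)$, hence Cohen--Macaulay and in particular $(S_1)$; and by irreducibility of the support (Theorem~\ref{theorem:property_Hess(N,h)}(iii)), $(R_0)$ reduces to exhibiting a single smooth point.

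The gap is that you never actually carry out this last step. You describe two possible routes---a triangular Jacobian at an unspecified permutation flag, or a degeneration to the regular semisimple case---but execute neither, and you yourself flag this as ``the hard step.'' Since this is where the entire content of the theorem lives, and the only place indecomposability is used, the proposal as written is incomplete. To close it concretely, work in the big Schubert cell $X_{w_0}^\circ$ parametrized by $u\mapsto uw_0B$ with $u$ upper-unitriangular. Then $g^{-1}Ng=w_0(u^{-1}Nu)w_0$ is strictly lower triangular, and the conditions $(g^{-1}Ng)_{ij}=0$ for $i>h(j)$ recursively solve a subset of the entries of $u$ as polynomials in the remaining ones; indecomposability $h(j)\geq j+1$ is exactly the condition that the subdiagonal entries of $g^{-1}Ng$ (which equal $1$ at $u=I$) are \emph{unconstrained}, and those units are what make the recursion go through. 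This exhibits $\ZZ(N,h)\cap X_{w_0}^\circ$ scheme-theoretically as the graph of a polynomial map, hence as an affine space of dimension $\sum_j(h(j)-j)$, and any point of it (for instance $w_0B$ itself) is then the smooth point you need.
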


We analyze the intersection of $\Hess(N,h)$ and the opposite Schubert cell $\Omega_e^\circ$ associated with the identity element $e$.
Recall that $\Omega_e^\circ$ is the $B^{-}$-orbit of the identity flag $eB/B$ in $\GL_n(\C)/B$ where $B^{-}$ denotes the set of lower triangular matrices in $\GL_n(\C)$. 
In particular, each flag $V_\bullet \in \Omega_e^\circ$ has $V_j$ spanned by the first $j$ columns of a matrix with $1$'s in the diagonal positions, and $0$'s to the right of these $1$'s (cf. \cite[Section~10.2]{Ful97}).
Thus, one can see that the opposite Schubert cell $\Omega_e^\circ$ can be regarded as the following affine space:
\begin{align*} 
\Omega_e^\circ \cong \left\{ \left.
\left(
 \begin{array}{@{\,}ccccc@{\,}}
     1 &  &  &  &  \\
     x_{21} & 1 &  &  &  \\ 
     x_{31} & x_{32} & 1 &  &  \\ 
     \vdots& \vdots & \ddots & \ddots & \\
     x_{n1} & x_{n2} & \cdots & x_{n \, n-1} & 1 
 \end{array}
 \right) \right| x_{ij} \in \C \ (1 \leq j < i \leq n) \right\} \cong \C^{\frac{1}{2}n(n-1)}. 
\end{align*}
Note that $\Omega_e^\circ$ is an affine open set of the flag variety $\GL_n(\C)/B$ and its coordinate ring is isomorphic to the polynomial ring $\C[x_{ij} \mid 1 \leq j < i \leq n]$. 
Thus, we may identify $\Omega_e^\circ$ as $\Spec \C[x_{ij} \mid 1 \leq j < i \leq n]$.
From now on, we write  
\begin{align*} 
F_{i,j} \coloneqq F_{i,j}^N(g) \ \textrm{for} \ 1 \leq j < i \leq n \ \textrm{where} \ g=\left(
 \begin{array}{@{\,}ccccc@{\,}}
     1 &  &  &  &  \\
     x_{21} & 1 &  &  &  \\ 
     x_{31} & x_{32} & 1 &  &  \\ 
     \vdots& \vdots & \ddots & \ddots & \\
     x_{n1} & x_{n2} & \cdots & x_{n \, n-1} & 1 
 \end{array}
 \right)
\end{align*}
for simplicity.
Since the first column vector of $Ng$ is $(x_{21},x_{31},x_{41},\ldots,x_{n1},0)^t$ and the $j$-th column vector of $Ng$ is $(\underbrace{0,\ldots,0}_{j-2},\underbrace{1,x_{j+1 \, j},\ldots,x_{nj},0}_{n-j+2})^t$ for $j \geq 2$, we can explicitly write 
\begin{align}  
F_{i,1} &= \left|
 \begin{array}{@{\,}ccccc@{\,}}
     1 & 0 & \cdots & 0 & x_{21} \\
     x_{21} & 1 & \ddots & \vdots & x_{31} \\ 
     \vdots & x_{32} & \ddots & 0 & \vdots \\ 
     \vdots& \vdots & \ddots & 1 & \vdots \\
     x_{i1} & x_{i2} & \cdots & x_{i \, i-1} & x_{i+1 \, 1} 
 \end{array}
 \right| \ \ \ \textrm{for} \ j=1; \label{eq:Fi1explicit} \\
F_{i,j} &= \left|
 \begin{array}{@{\,}ccccc@{\,}}
     1 & 0 & \cdots & 0 & 1 \\
     x_{j\,j-1} & 1 & \ddots & \vdots & x_{j+1\,j} \\ 
     x_{j+1\,j-1} & x_{j+1\,j} & \ddots & 0 & \vdots \\ 
     \vdots& \vdots & \ddots & 1 & \vdots \\
     x_{i\,j-1} & x_{ij} & \cdots & x_{i \, i-1} & x_{i+1 \, j} 
 \end{array}
 \right| \ \ \ \textrm{for} \ j \geq 2  \label{eq:Fijexplicit} 
\end{align}
from Definition~\ref{definition:FijX}.
Here, we take the convention that $x_{n+1\,j}=0$. 
The determinant of $g \in \Omega_e^\circ$ is $1$, so we have
\begin{align} \label{eq:FijN}
F_{i,j} = F_{i,j}^{N}(g) = (g^{-1}Ng)_{ij} \ \ \ \textrm{for} \ g \in \Omega_e^\circ
\end{align}
by Lemma~\ref{lemma:defining equation}.
We set 
\begin{align} \label{eq:scheme_intersection}
\ZZ(N,h)_e \coloneqq &\ZZ(N,h) \cap \Spec \C[x_{ij} \mid 1 \leq j < i \leq n] \\ 
 = &\Spec \C[x_{ij} \mid 1 \leq j < i \leq n]/((g^{-1}Ng)_{ij} \mid j \in [n-1] \ \textrm{and} \ h(j) < i \leq n). \notag 
\end{align}
By the discussion above, we have the following.

\begin{proposition} \label{proposition:coordinate_ring_Fij}
Let $h$ be a Hessenberg function. Then
\begin{align*} 
\ZZ(N,h)_e = \Spec \C[x_{ij} \mid 1 \leq j < i \leq n]/(F_{i,j} \mid j \in [n-1] \ \textrm{and} \ h(j) < i \leq n). 
\end{align*}
In other words, the set of global sections $\Gamma(\ZZ(N,h)_e, \mathcal{O}_{\ZZ(N,h)_e})$ is given by 
\begin{align} \label{eq:global_section_Fij} 
\Gamma(\ZZ(N,h)_e, \mathcal{O}_{\ZZ(N,h)_e}) \cong \C[x_{ij} \mid 1 \leq j < i \leq n]/(F_{i,j} \mid j \in [n-1] \ \textrm{and} \ h(j) < i \leq n).
\end{align}
In particular, if $h$ is indecomposable, then the coordinate ring $\C[\Hess(N,h) \cap \Omega_e^\circ]$ of the open set $\Hess(N,h) \cap \Omega_e^\circ$ in $\Hess(N,h)$ is 
\begin{align} \label{eq:coordinate_ring_Fij} 
\C[\Hess(N,h) \cap \Omega_e^\circ] = \C[x_{ij} \mid 1 \leq j < i \leq n]/(F_{i,j} \mid j \in [n-2] \ \textrm{and} \ h(j) < i \leq n) 
\end{align}
by Theorem~\ref{theorem:reduced}.
\end{proposition}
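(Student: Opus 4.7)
The plan is to deduce the statement directly from the identity $F_{i,j}=(g^{-1}Ng)_{ij}$ on $\Omega_e^\circ$ (equation \eqref{eq:FijN}) together with the definitions already in place; no new ideas are required.

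First I would note that every $g\in\Omega_e^\circ$ is lower unitriangular, hence $\det(g)=1$, so Lemma~\ref{lemma:defining equation} applied to $\XX=N$ gives $F_{i,j}^N(g)=(g^{-1}Ng)_{ij}$ for each pair $(i,j)$ with $1\le j<i\le n$. Writing $F_{i,j}:=F_{i,j}^N(g)$, this means the ideal $(F_{i,j}\mid j\in[n-1],\,h(j)<i\le n)$ inside the polynomial ring $\C[x_{ij}\mid 1\le j<i\le n]$ is literally the same ideal as $((g^{-1}Ng)_{ij}\mid j\in[n-1],\,h(j)<i\le n)$, which appears in the definition \eqref{eq:scheme_intersection} of $\ZZ(N,h)_e$. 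Substituting into \eqref{eq:scheme_intersection} yields the first displayed equality. Since $\ZZ(N,h)_e$ is an affine scheme (a closed subscheme of the affine space $\Omega_e^\circ$), its ring of global sections is exactly this quotient ring, which gives \eqref{eq:global_section_Fij}.

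For the last assertion, I would invoke Theorem~\ref{theorem:reduced}: if $h$ is indecomposable, the zero scheme $\ZZ(N,h)$ is reduced, and hence so is its open subscheme $\ZZ(N,h)_e=\Hess(N,h)\cap\Omega_e^\circ$. Consequently the coordinate ring $\C[\Hess(N,h)\cap\Omega_e^\circ]$ agrees with the quotient ring of \eqref{eq:global_section_Fij}. Finally, I would observe that the range $j\in[n-1]$ may be shrunk to $j\in[n-2]$ because indecomposability forces $h(n-1)=n$ by Definition~\ref{definition:indecomposable}, so the condition $h(n-1)<i\le n$ is vacuous and no generator $F_{i,n-1}$ occurs.

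There is no substantive obstacle here: the proposition is essentially a bookkeeping restatement of \eqref{eq:FijN} combined with Theorem~\ref{theorem:reduced}. The only point requiring a moment's care is the passage from $j\in[n-1]$ to $j\in[n-2]$ in the indecomposable case, which is immediate from $h(n-1)=n$.
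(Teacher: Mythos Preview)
Your proposal is correct and follows exactly the paper's approach: the proposition is stated immediately after equation \eqref{eq:FijN} with the phrase ``By the discussion above, we have the following,'' so the proof is precisely the substitution of $F_{i,j}=(g^{-1}Ng)_{ij}$ into the definition \eqref{eq:scheme_intersection}, together with Theorem~\ref{theorem:reduced} and the observation that $h(n-1)=n$ in the indecomposable case.
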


\bigskip

\section{The main theorem}
\label{section:The main theorem1}

It can be extracted from Peterson's statements in \cite{Pet} that the coordinate ring $\C[\Pet_n \cap \Omega_e^\circ]$ is isomorphic to the quantum cohomology ring of the flag variety $\QH^*(\Fl(\C^n))$. 
This result is unpublished, so we also refer the reader to \cite{Kos, Rie} for the result.
In this section we first review an explicit presentation for the quantum cohomology ring $\QH^*(\Fl(\C^n))$ given by \cite{Font95, GK}. 
Then we introduce a further quantization of the ring presentation for $\QH^*(\Fl(\C^n))$ in natural way. 
Our main theorem relates $\Gamma(\ZZ(N,h)_e, \mathcal{O}_{\ZZ(N,h)_e})$ and our quantized ring presentation.

The quantum cohomology ring of the flag variety $\QH^*(\Fl(\C^n))$ is given by 
\begin{align*}
\QH^*(\Fl(\C^n)) = \C[q_1,\ldots,q_{n-1}] \otimes_\C H^*(\Fl(\C^n))
\end{align*}
as $\C$-vector spaces. 
Here, $q_1,\ldots,q_{n-1}$ are called the \emph{quantum parameters}. 
The product structure of $\QH^*(\Fl(\C^n))$ is a certain deformation of the cup product in the ordinary cohomology $H^*(\Fl(\C^n))$.
In order to describe an explicit presentation for $\QH^*(\Fl(\C^n))$, we need quantized elementary symmetric polynomials. 
Consider the matrix
\begin{align*}
\check M_n \coloneqq \left(
 \begin{array}{@{\,}ccccc@{\,}}
     x_1 & q_1 & 0 & \cdots & 0 \\
     -1 & x_2 & q_2 & \ddots & \vdots \\ 
      0 & \ddots & \ddots & \ddots & 0 \\ 
      \vdots & \ddots & -1 & x_{n-1} & q_{n-1} \\
      0 & \cdots & 0 & -1 & x_n 
 \end{array}
 \right)
\end{align*}
and define \emph{quantized elementary symmetric polynomials} $\check E_1^{(n)}, \ldots, \check E_n^{(n)}$ in the polynomial ring $\C[x_1,\ldots,x_n,q_1,\ldots,q_{n-1}]$ by the following equation
\begin{align*}
\det(\lambda I_n - \check M_n) = \lambda^n - \check E_1^{(n)} \lambda^{n-1} + \check E_2^{(n)} \lambda^{n-2} + \cdots + (-1)^n \check E_n^{(n)}
\end{align*}
where $I_n$ is the identity matrix of size $n$.
By using the cofactor expansion along the $s$-th column for $\det(\lambda I_s - \check M_s)$, we have
\begin{align*}
\det(\lambda I_s - \check M_s) = (\lambda-x_s) \det(\lambda I_{s-1} - \check M_{s-1}) +q_{s-1} \det(\lambda I_{s-2} - \check M_{s-2}). 
\end{align*}
This implies the following recursive formula
\begin{align*}
\check E_r^{(s)} = \check E_r^{(s-1)} + \check E_{r-1}^{(s-1)} x_s + \check E_{r-2}^{(s-2)} q_{s-1} \ \ \ \textrm{for} \ 1 \leq r \leq s \leq n
\end{align*}
where we take the convention that $\check E_0^{(s-1)}=1, \check E_{-1}^{(s-2)}=0$ whenever $r=1$ and $\check E_{s}^{(s-1)}=0$ whenever $r=s$. 
By setting $q_s = 0$ for all $s \in [n-1]$, $\check E_i^{(n)}$ is the (ordinary) $i$-th elementary symmetric polynomial in the variables $x_1,\ldots,x_n$.

\begin{theorem} [\cite{Font95, GK}] \label{theorem:quantum cohomology of flag}
There is an isomorphism of $\C$-algebras
\begin{align} \label{eq:quantum cohomology of flag}
\QH^*(\Fl(\C^n)) \cong \C[x_1,\ldots,x_n,q_1,\ldots,q_{n-1}]/(\check E_1^{(n)}, \ldots, \check E_n^{(n)}).
\end{align}
\end{theorem}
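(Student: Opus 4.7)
The plan is to construct a surjective $\C$-algebra homomorphism
$$\Phi: \C[x_1,\ldots,x_n,q_1,\ldots,q_{n-1}]/(\check E_1^{(n)}, \ldots, \check E_n^{(n)}) \twoheadrightarrow \QH^*(\Fl(\C^n))$$
and then upgrade it to an isomorphism by a free-rank count over $\C[q_1,\ldots,q_{n-1}]$. At the level of polynomial rings, I would define $\Phi$ by sending $x_i$ to the first Chern class of the tautological line bundle $V_i/V_{i-1}$ (viewed as an element of $\QH^*$) and $q_i$ to the Novikov parameter attached to the simple coroot class $\alpha_i^\vee \in H_2(\Fl(\C^n);\Z)$. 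Surjectivity is immediate from the classical Borel presentation, since $x_1,\ldots,x_n$ generate $H^*(\Fl(\C^n))$ over $\C$ and $\QH^*(\Fl(\C^n)) \cong H^*(\Fl(\C^n)) \otimes_\C \C[q_1,\ldots,q_{n-1}]$ as $\C[q]$-modules.

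The main obstacle is to show that each $\check E_i^{(n)}$ is actually killed in $\QH^*$, that is, that $\Phi$ descends to the quotient. Two historical routes are available. Following Givental--Kim, one identifies $\check M_n$ with the Lax matrix of the open Toda chain and transports the vanishing through the quantum $D$-module / mirror symmetry correspondence, where the coefficients of $\det(\lambda I_n - \check M_n)$ are precisely the commuting conserved Hamiltonians that act trivially on the relevant oscillatory integrals. Following Ciocan--Fontanine, one instead argues by induction on $n$, using the recursion
$$\check E_r^{(s)} = \check E_r^{(s-1)} + \check E_{r-1}^{(s-1)} x_s + \check E_{r-2}^{(s-2)} q_{s-1}$$
together with the quantum Chevalley--Monk formula of Fulton--Woodward for multiplication by $x_s$ in $\QH^*$: the quantum correction in $x_s * \check E_{r-1}^{(s-1)}$ produces exactly the term $\check E_{r-2}^{(s-2)} q_{s-1}$ predicted by the recursion, propagating the classical vanishing of elementary symmetric polynomials to their quantum deformations. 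This is where the genuine enumerative (or integrable-systems) content of the theorem lies.

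To upgrade surjectivity to an isomorphism, I would compare $\C[q_1,\ldots,q_{n-1}]$-ranks. The target is free of rank $n!$ by the very construction of $\QH^*$. For the source, setting $q_1 = \cdots = q_{n-1} = 0$ recovers $\C[x_1,\ldots,x_n]/(e_1,\ldots,e_n) \cong H^*(\Fl(\C^n))$, of $\C$-dimension $n!$. Equipping both sides with compatible gradings in which $\deg x_i = 2$ and $\deg q_i = 4$, the Hilbert series agree term by term; a graded Nakayama argument then forces $\Phi$ to be an isomorphism of $\C[q]$-modules, and hence of $\C$-algebras, completing the proof.
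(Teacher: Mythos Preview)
The paper does not prove this theorem at all: it is stated as a background result with attribution to \cite{Font95, GK}, and the surrounding remarks merely record that the integral version appears in \cite{GK} and that analogous presentations for partial flag varieties are in \cite{AstSad, Font99, Kim}. So there is no ``paper's own proof'' to compare against; the authors treat this as a known input.

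Your outline is a faithful high-level summary of the two original arguments. A few small points of accuracy: the quantum Monk/Chevalley formula you invoke for the inductive route predates Fulton--Woodward in type $A$ (it is due to Fomin--Gelfand--Postnikov, and indeed Ciocan--Fontanine's 1995 paper relies on moduli-space computations rather than citing a later formula), so the attribution is slightly anachronistic. Also, your sign convention for $x_i$ (first Chern class of $V_i/V_{i-1}$ versus its dual) should be checked against the paper's; here $x_i$ is the first Chern class of the \emph{dual} line bundle, as noted in Remark~\ref{remark:usual cohomology}. Finally, the graded Nakayama/Hilbert series step is fine, but you should be explicit that the source is a finitely generated $\C[q]$-module before invoking Nakayama; this follows because $\check E_1^{(n)},\ldots,\check E_n^{(n)}$ reduce modulo $(q_1,\ldots,q_{n-1})$ to a regular sequence cutting out a zero-dimensional scheme, so the quotient is free of rank $n!$ over $\C[q]$ by a standard flatness/deformation argument. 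With those caveats, your sketch is correct and is essentially what one finds in the cited references.
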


\begin{remark}
The isomorphism above is stated in \cite[Theorem~1]{GK} for integral coefficients. 
An explicit presentation of the quantum cohomology ring of partial flag varieties is given by  
\cite{AstSad, Font99, Kim}. 
\end{remark}

\begin{remark} \label{remark:usual cohomology}
By setting $q_s = 0$ for all $s \in [n-1]$, the isomorphism \eqref{eq:quantum cohomology of flag}  yields the well-known presentation for the (ordinary) cohomology ring of $\Fl(\C^n)$. 
Note that $x_i$ is geometrically the first Chern class of the dual of the $i$-th tautological line bundle over $\Fl(\C^n)$. See for example \cite[Section~10.2, Proposition~3]{Ful97}.
\end{remark}

\begin{theorem}[D.~Peterson and \cite{Kos, Rie}] \label{theorem:Peterson} 
There is an isomorphism of $\C$-algebras
\begin{align*} 
\C[\Pet_n \cap \Omega_e^\circ] \cong \QH^*(\Fl(\C^n)). 
\end{align*}
\end{theorem}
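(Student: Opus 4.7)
The plan is to derive Peterson's statement as an immediate specialization of Corollary~\ref{corollary:intro_main} combined with the Ciocan--Fontanine/Givental--Kim presentation of Theorem~\ref{theorem:quantum cohomology of flag}. Since $\Pet_n = \Hess(N,h)$ for $h = (2,3,\ldots,n,n)$ and $h(j) = j+1 > j$ for every $j \in [n-1]$, the Peterson Hessenberg function is indecomposable in the sense of Definition~\ref{definition:indecomposable}. Corollary~\ref{corollary:intro_main} therefore gives
\begin{align*}
\C[\Pet_n \cap \Omega_e^\circ] \cong \frac{\C[x_1,\ldots,x_n,\,q_{rs} \mid 2 \leq s \leq n,\ n - h(n+1-s) < r < s]}{(\hE_1^{(n)}, \ldots, \hE_n^{(n)})}.
\end{align*}

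Next I would pin down the surviving quantum parameters on the right-hand side. For $2 \leq s \leq n$ the index $n+1-s$ lies in $[n-1]$, so the Peterson function evaluates to $h(n+1-s) = (n+1-s) + 1 = n+2-s$, whence $n - h(n+1-s) = s-2$. The constraint $s-2 < r < s$ then forces $r = s-1$, so the surviving quantum variables are precisely the superdiagonal parameters $q_{12}, q_{23}, \ldots, q_{n-1,\,n}$. Setting $q_s \coloneqq q_{s,\,s+1}$, the matrix $M_n$ of \eqref{eq:intro_Mn} specializes exactly to $\check M_n$ of \eqref{eq:cMn}, and therefore $\hE_i^{(n)} = \check E_i^{(n)}$ for each $i \in [n]$. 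Consequently the displayed quotient collapses to $\C[x_1,\ldots,x_n,q_1,\ldots,q_{n-1}]/(\check E_1^{(n)}, \ldots, \check E_n^{(n)})$, which by Theorem~\ref{theorem:quantum cohomology of flag} is isomorphic to $\QH^*(\Fl(\C^n))$, completing the chain.

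The main obstacle in this approach is not the reduction just sketched, which is a purely combinatorial bookkeeping exercise, but rather the work required upstream to establish Theorem~\ref{theorem:intro_main}: verifying that the obvious map to $\Gamma(\ZZ(N,h)_e,\mathcal{O}_{\ZZ(N,h)_e})$ is well-defined and surjective, and then using a Hilbert series / regular sequence comparison to upgrade surjectivity to an isomorphism. Granted that input, the present argument has the virtue of being elementary in the sense that it proceeds entirely by comparing explicit ring presentations, bypassing both the representation-theoretic setup of Kostant and the geometric/Lie-theoretic arguments of Rietsch used in the original treatments.
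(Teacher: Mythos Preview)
Your proposal is correct and coincides with the alternative proof that the paper itself advertises: Theorem~\ref{theorem:Peterson} is not proved directly in the paper (it is cited as a result of Peterson, with references to Kostant and Rietsch), but the Remark following the proof of Theorem~\ref{theorem:main1} and the Remark after \eqref{eq:maintheorem1-2} explicitly note that specializing the main theorem to $h=(2,3,\ldots,n,n)$ recovers \eqref{eq:Petiso}, which combined with Theorem~\ref{theorem:quantum cohomology of flag} yields Peterson's statement. Your index computation showing that only the superdiagonal parameters $q_{s-1,s}$ survive is precisely the reduction underlying that remark.
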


\begin{remark}
Although we restricted the above discussion to the case of the full flag variety,
the isomorphism above is in fact stated in \cite[Theorem~4.2]{Rie} for partial flag varieties. 
We also refer the reader to a recent paper \cite{C. H. Chow} for general Lie types. 
\end{remark}

By Theorems~\ref{theorem:quantum cohomology of flag} and \ref{theorem:Peterson} we obtain an isomorphism 
\begin{align} \label{eq:Petiso}
\C[\Pet_n \cap \Omega_e^\circ] \cong \C[x_1,\ldots,x_n,q_1,\ldots,q_{n-1}]/(\check E_1^{(n)}, \ldots, \check E_n^{(n)})
\end{align}
as $\C$-algebras.
One can see from \cite[Theorems~3.3 and 4.2]{Rie} that the isomorphism above sends $x_{ij}$ to $\check E_{i-j}^{(n-j)}$ under the presentation \eqref{eq:coordinate_ring_Fij} for $h=(2,3,4,\ldots,n,n)$. 
We generalize \eqref{eq:Petiso} to arbitrary Hessenberg function $h$.

\begin{definition} \label{definition:Eij}
Consider the matrix
\begin{align*}
M_n \coloneqq \left(
 \begin{array}{@{\,}ccccc@{\,}}
     x_1 & q_{12} & q_{13} & \cdots & q_{1n} \\
     -1 & x_2 & q_{23} & \cdots & q_{2n} \\ 
      0 & \ddots & \ddots & \ddots & \vdots \\ 
      \vdots & \ddots & -1 & x_{n-1} & q_{n-1 \, n} \\
      0 & \cdots & 0 & -1 & x_n 
 \end{array}
 \right)
\end{align*}
and define \emph{$q_{rs}$-quantized elementary symmetric polynomials} $E_1^{(n)}, \ldots, E_n^{(n)}$ in the polynomial ring $\C[x_1,\ldots,x_n,q_{rs} \mid 1 \leq r < s \leq n]$ by the following equation
\begin{align*}
\det(\lambda I_n - M_n) = \lambda^n - E_1^{(n)} \lambda^{n-1} + E_2^{(n)} \lambda^{n-2} + \cdots + (-1)^n E_n^{(n)}.
\end{align*}
In other words, $E_i^{(n)}$ is the coefficient of $\lambda^{n-i}$ for $\det(\lambda I_n - M_n)$ multiplied by $(-1)^i$. 
\end{definition}

\begin{remark}
If $q_{rs}=0$ for $s-r >1$ and $q_{s \, s+1}=q_s$, then the polynomial $E_i^{(n)}$ is equal to the (classical) quantized elementary symmetric polynomial $\check E_i^{(n)}$ in the polynomial ring $\C[x_1,\ldots,x_n,q_1,\ldots,q_{n-1}]$.
\end{remark}

\begin{lemma} \label{lemma:recursive_qij}
For $1 \leq r \leq s \leq n$, we have
\begin{align*}
E_r^{(s)} = E_r^{(s-1)} + E_{r-1}^{(s-1)} x_s + \sum_{k=1}^{r-1} E_{r-1-k}^{(s-1-k)} q_{s-k \, s} 
\end{align*}
with the convention that $E_0^{(*)}=1$ for arbitrary $*$, $\sum_{k=1}^{r-1} E_{r-1-k}^{(s-1-k)} q_{s-k \, s}=0$ whenever $r=1$, and $E_{s}^{(s-1)}=0$ whenever $r=s$. 
\end{lemma}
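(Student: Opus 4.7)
The plan is to mimic the classical derivation of the recursion for $\check E_r^{(s)}$ recalled just before this lemma: I will expand $\det(\lambda I_s - M_s)$ along the last column. In the classical tridiagonal case the expansion has only two nonzero terms; here the last column of $\lambda I_s - M_s$ carries all of $-q_{1s}, -q_{2s}, \ldots, -q_{s-1,s}$ together with $\lambda - x_s$, so I expect $s$ terms and hence a longer recursion.

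The main computational step is to evaluate, for each $k = 1, \ldots, s-1$, the minor obtained by deleting row $s-k$ and column $s$ from $\lambda I_s - M_s$. I will split the resulting $(s-1)\times(s-1)$ matrix into a $2\times 2$ block form by partitioning both the rows and the columns into the first $s-k-1$ indices and the last $k$ indices. The upper-left block is exactly $\lambda I_{s-k-1} - M_{s-k-1}$; the lower-left block vanishes because every entry of $\lambda I_s - M_s$ strictly below the subdiagonal is zero; and the lower-right $k \times k$ block is upper triangular with all diagonal entries equal to $1$ (these $1$'s are precisely the subdiagonal entries $(\lambda I_s - M_s)_{i+1,i}$, shifted into diagonal position by the deletion), hence has determinant $1$. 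Combined with the cofactor sign $(-1)^{(s-k)+s} = (-1)^k$ and the matrix entry $-q_{s-k,s}$, the Laplace expansion then yields
\begin{align*}
\det(\lambda I_s - M_s) = (\lambda - x_s)\det(\lambda I_{s-1} - M_{s-1}) + \sum_{k=1}^{s-1} (-1)^{k+1} q_{s-k,s}\, \det(\lambda I_{s-k-1} - M_{s-k-1}).
\end{align*}

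To conclude, I will extract the coefficient of $\lambda^{s-r}$ on both sides using $\det(\lambda I_t - M_t) = \sum_{j\geq 0}(-1)^j E_j^{(t)} \lambda^{t-j}$. The left-hand side contributes $(-1)^r E_r^{(s)}$; the term $(\lambda - x_s)\det(\lambda I_{s-1} - M_{s-1})$ contributes $(-1)^r\bigl(E_r^{(s-1)} + E_{r-1}^{(s-1)} x_s\bigr)$; and the $k$-th summand contributes $(-1)^{k+1}(-1)^{r-k-1}E_{r-k-1}^{(s-k-1)}q_{s-k,s} = (-1)^r E_{r-1-k}^{(s-1-k)}q_{s-k,s}$, which is nonzero only when $r-k-1 \geq 0$, i.e.\ $1 \leq k \leq r-1$. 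Dividing out the common sign $(-1)^r$ gives precisely the claimed recursion, and the stated boundary conventions for $r=1$ (empty sum) and $r=s$ (where $E_s^{(s-1)}=0$ holds automatically since the characteristic polynomial of $M_{s-1}$ has degree $s-1$) fall out at once. The only delicate point will be keeping track of signs and the admissible range of $k$; this is routine once the block-triangular structure of the minors is in place.
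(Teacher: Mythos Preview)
Your proof is correct and follows essentially the same approach as the paper: both expand $\det(\lambda I_s - M_s)$ along the last column and then extract the coefficient of $\lambda^{s-r}$. You are more explicit than the paper in justifying why the minor obtained by deleting row $s-k$ and column $s$ equals $\det(\lambda I_{s-k-1}-M_{s-k-1})$ via the block-triangular structure, but the argument is otherwise identical.
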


\begin{proof}
It follows from the cofactor expansion along the $s$-th column for $\det(\lambda I_s - M_s)$ that $\det(\lambda I_s - M_s)$ is equal to
\begin{align} \label{eq:proof_recursive}
&(-1)^{s+2} q_{1s} + (-1)^{s+3} q_{2s} \det(\lambda I_1 - M_1) + \cdots + (-1)^{s+s} q_{s-1 \, s} \det(\lambda I_{s-2} - M_{s-2}) \\
& \hspace{20pt} + (-1)^{s+s} (\lambda - x_s) \det(\lambda I_{s-1} - M_{s-1}). \notag
\end{align}
Since we have 
\begin{align*}
\det(\lambda I_k - M_k) = \lambda^k - E_1^{(k)} \lambda^{k-1} + E_2^{(k)} \lambda^{k-2} + \cdots + (-1)^k E_k^{(k)}
\end{align*}
for each $1 \leq k \leq s-1$ by the definition, the coefficient of $\lambda^{s-r}$ for \eqref{eq:proof_recursive} is given by
\begin{align*}
&(-1)^r q_{s-r+1 \, s} + (-1)^{r} q_{s-r+2 \, s} E_1^{(s-r+1)} + \cdots + (-1)^{r} q_{s-1 \, s} E_{r-2}^{(s-2)} \\
& \hspace{20pt} + (-x_s) (-1)^{r-1} E_{r-1}^{(s-1)} + (-1)^{r} E_r^{(s-1)}.  
\end{align*}
Namely, the coefficient of $\lambda^{s-r}$ for $\det(\lambda I_s - M_s)$ is 
\begin{align*}
(-1)^r \big(q_{s-r+1 \, s} + q_{s-r+2 \, s} E_1^{(s-r+1)} + \cdots + q_{s-1 \, s} E_{r-2}^{(s-2)} + x_s E_{r-1}^{(s-1)} + E_r^{(s-1)} \big),
\end{align*}
as desired.
\end{proof}

\begin{example}
Let $n=3$. Then the $E_r^{(s)}$ have the following form.
\begin{align*}
&E_0^{(1)} = 1, \ \ \ E_1^{(1)} = E_0^{(0)}x_1 =x_1, \\
&E_0^{(2)} = 1, \ \ \ E_1^{(2)} = E_1^{(1)} + E_0^{(1)}x_2 = x_1+x_2, \ \ \ E_2^{(2)} = E_1^{(1)}x_2 + E_0^{(0)}q_{12} = x_1x_2+q_{12}, \\
&E_0^{(3)} = 1, \ \ \ E_1^{(3)} = E_1^{(2)} + E_0^{(2)}x_3 = x_1+x_2+x_3, \\ 
&E_2^{(3)} = E_2^{(2)} + E_{1}^{(2)}x_3 + E_0^{(1)}q_{23} = x_1x_2+x_1x_3+x_2x_3+q_{12}+q_{23}, \\
&E_3^{(3)} = E_{2}^{(2)}x_3 + E_1^{(1)}q_{23} + E_0^{(0)}q_{13}= x_1x_2x_3+x_1q_{23}+x_3q_{12}+q_{13}. 
\end{align*}
\end{example} 

\begin{definition} \label{definition:hEi(n)}
Let $h: [n] \to [n]$ be a Hessenberg function. 
For each $1 \leq i \leq j \leq n$, we define $\hE_i^{(j)}$ as the polynomial $E_i^{(j)}$ by setting $q_{rs}=0$ for all $2 \leq s \leq n$ and $1 \leq r \leq n-h(n+1-s)$:  
\begin{align*}
\hE_i^{(j)} \coloneqq E_i^{(j)}|_{q_{rs}=0 \ (2 \leq s \leq n \ \textrm{and} \ 1 \leq r \leq n-h(n+1-s))}. 
\end{align*}
\end{definition}

The surviving variables in the polynomial $\hE_i^{(j)}$ are pictorially shown as follows.
Let $w_0$ be the longest element of the symmetric group $\SS_n$ on $n$ letters $[n]$. 
Consider the matrix
\begin{align} \label{eq:w0Mnw0}
w_0 M_n w_0 = \left(
 \begin{array}{@{\,}ccccc@{\,}}
     x_n & -1 & 0 & \cdots & 0 \\
     q_{n-1 \, n} & x_{n-1} & -1 & \ddots & \vdots \\ 
     \vdots  & \ddots & \ddots & \ddots & 0 \\ 
     q_{2n} & \cdots & q_{23} & x_2 & -1 \\
     q_{1n} & \cdots & q_{13} & q_{12} & x_1 
 \end{array}
 \right)
\end{align}
where $w_0$ is regarded as the permutation matrix. 
Then $w_0 M_n w_0 \in H(h)$ if and only if $q_{rs}=0$ for all $2 \leq s \leq n$ and $1 \leq r \leq n-h(n+1-s)$ where $H(h)$ is the Hessenberg space defined in \eqref{eq:Hessenberg space}.
In other words, the surviving variables in the polynomial $\hE_i^{(j)}$ are pictorially the variables $q_{rs}$ arranged in $w_0 M_n w_0$ such that $q_{rs}$ belongs to the configuration of the shaded boxes for the Hessenberg function $h$.

\begin{example} \label{example:qrs_(3,3,4,5,5)}
Let $n=5$ and $h=(3,3,4,5,5)$, which is depicted in Example~\ref{example:Hess_func_(3,3,4,5,5)}. 
Then $\hE_i^{(j)}$ is a polynomial in the variables $x_1,\ldots,x_5,q_{12},q_{23},q_{34},q_{35},q_{45}$ for $1 \leq i \leq j \leq 5$, as shown in Figure~\ref{picture:qrs}.
\begin{figure}[h]
\begin{center}
\begin{picture}(75,75)
\put(0,63){\colorbox{gray}}
\put(0,67){\colorbox{gray}}
\put(0,72){\colorbox{gray}}
\put(4,63){\colorbox{gray}}
\put(4,67){\colorbox{gray}}
\put(4,72){\colorbox{gray}}
\put(9,63){\colorbox{gray}}
\put(9,67){\colorbox{gray}}
\put(9,72){\colorbox{gray}}

\put(15,63){\colorbox{gray}}
\put(15,67){\colorbox{gray}}
\put(15,72){\colorbox{gray}}
\put(19,63){\colorbox{gray}}
\put(19,67){\colorbox{gray}}
\put(19,72){\colorbox{gray}}
\put(24,63){\colorbox{gray}}
\put(24,67){\colorbox{gray}}
\put(24,72){\colorbox{gray}}

\put(30,63){\colorbox{gray}}
\put(30,67){\colorbox{gray}}
\put(30,72){\colorbox{gray}}
\put(34,63){\colorbox{gray}}
\put(34,67){\colorbox{gray}}
\put(34,72){\colorbox{gray}}
\put(39,63){\colorbox{gray}}
\put(39,67){\colorbox{gray}}
\put(39,72){\colorbox{gray}}

\put(45,63){\colorbox{gray}}
\put(45,67){\colorbox{gray}}
\put(45,72){\colorbox{gray}}
\put(49,63){\colorbox{gray}}
\put(49,67){\colorbox{gray}}
\put(49,72){\colorbox{gray}}
\put(54,63){\colorbox{gray}}
\put(54,67){\colorbox{gray}}
\put(54,72){\colorbox{gray}}

\put(60,63){\colorbox{gray}}
\put(60,67){\colorbox{gray}}
\put(60,72){\colorbox{gray}}
\put(64,63){\colorbox{gray}}
\put(64,67){\colorbox{gray}}
\put(64,72){\colorbox{gray}}
\put(69,63){\colorbox{gray}}
\put(69,67){\colorbox{gray}}
\put(69,72){\colorbox{gray}}

\put(0,48){\colorbox{gray}}
\put(0,52){\colorbox{gray}}
\put(0,57){\colorbox{gray}}
\put(4,48){\colorbox{gray}}
\put(4,52){\colorbox{gray}}
\put(4,57){\colorbox{gray}}
\put(9,48){\colorbox{gray}}
\put(9,52){\colorbox{gray}}
\put(9,57){\colorbox{gray}}

\put(15,48){\colorbox{gray}}
\put(15,52){\colorbox{gray}}
\put(15,57){\colorbox{gray}}
\put(19,48){\colorbox{gray}}
\put(19,52){\colorbox{gray}}
\put(19,57){\colorbox{gray}}
\put(24,48){\colorbox{gray}}
\put(24,52){\colorbox{gray}}
\put(24,57){\colorbox{gray}}

\put(30,48){\colorbox{gray}}
\put(30,52){\colorbox{gray}}
\put(30,57){\colorbox{gray}}
\put(34,48){\colorbox{gray}}
\put(34,52){\colorbox{gray}}
\put(34,57){\colorbox{gray}}
\put(39,48){\colorbox{gray}}
\put(39,52){\colorbox{gray}}
\put(39,57){\colorbox{gray}}

\put(45,48){\colorbox{gray}}
\put(45,52){\colorbox{gray}}
\put(45,57){\colorbox{gray}}
\put(49,48){\colorbox{gray}}
\put(49,52){\colorbox{gray}}
\put(49,57){\colorbox{gray}}
\put(54,48){\colorbox{gray}}
\put(54,52){\colorbox{gray}}
\put(54,57){\colorbox{gray}}

\put(60,48){\colorbox{gray}}
\put(60,52){\colorbox{gray}}
\put(60,57){\colorbox{gray}}
\put(64,48){\colorbox{gray}}
\put(64,52){\colorbox{gray}}
\put(64,57){\colorbox{gray}}
\put(69,48){\colorbox{gray}}
\put(69,52){\colorbox{gray}}
\put(69,57){\colorbox{gray}}

\put(0,33){\colorbox{gray}}
\put(0,37){\colorbox{gray}}
\put(0,42){\colorbox{gray}}
\put(4,33){\colorbox{gray}}
\put(4,37){\colorbox{gray}}
\put(4,42){\colorbox{gray}}
\put(9,33){\colorbox{gray}}
\put(9,37){\colorbox{gray}}
\put(9,42){\colorbox{gray}}

\put(15,33){\colorbox{gray}}
\put(15,37){\colorbox{gray}}
\put(15,42){\colorbox{gray}}
\put(19,33){\colorbox{gray}}
\put(19,37){\colorbox{gray}}
\put(19,42){\colorbox{gray}}
\put(24,33){\colorbox{gray}}
\put(24,37){\colorbox{gray}}
\put(24,42){\colorbox{gray}}

\put(30,33){\colorbox{gray}}
\put(30,37){\colorbox{gray}}
\put(30,42){\colorbox{gray}}
\put(34,33){\colorbox{gray}}
\put(34,37){\colorbox{gray}}
\put(34,42){\colorbox{gray}}
\put(39,33){\colorbox{gray}}
\put(39,37){\colorbox{gray}}
\put(39,42){\colorbox{gray}}

\put(45,33){\colorbox{gray}}
\put(45,37){\colorbox{gray}}
\put(45,42){\colorbox{gray}}
\put(49,33){\colorbox{gray}}
\put(49,37){\colorbox{gray}}
\put(49,42){\colorbox{gray}}
\put(54,33){\colorbox{gray}}
\put(54,37){\colorbox{gray}}
\put(54,42){\colorbox{gray}}

\put(60,33){\colorbox{gray}}
\put(60,37){\colorbox{gray}}
\put(60,42){\colorbox{gray}}
\put(64,33){\colorbox{gray}}
\put(64,37){\colorbox{gray}}
\put(64,42){\colorbox{gray}}
\put(69,33){\colorbox{gray}}
\put(69,37){\colorbox{gray}}
\put(69,42){\colorbox{gray}}



\put(30,18){\colorbox{gray}}
\put(30,22){\colorbox{gray}}
\put(30,27){\colorbox{gray}}
\put(34,18){\colorbox{gray}}
\put(34,22){\colorbox{gray}}
\put(34,27){\colorbox{gray}}
\put(39,18){\colorbox{gray}}
\put(39,22){\colorbox{gray}}
\put(39,27){\colorbox{gray}}

\put(45,18){\colorbox{gray}}
\put(45,22){\colorbox{gray}}
\put(45,27){\colorbox{gray}}
\put(49,18){\colorbox{gray}}
\put(49,22){\colorbox{gray}}
\put(49,27){\colorbox{gray}}
\put(54,18){\colorbox{gray}}
\put(54,22){\colorbox{gray}}
\put(54,27){\colorbox{gray}}

\put(60,18){\colorbox{gray}}
\put(60,22){\colorbox{gray}}
\put(60,27){\colorbox{gray}}
\put(64,18){\colorbox{gray}}
\put(64,22){\colorbox{gray}}
\put(64,27){\colorbox{gray}}
\put(69,18){\colorbox{gray}}
\put(69,22){\colorbox{gray}}
\put(69,27){\colorbox{gray}}




\put(45,3){\colorbox{gray}}
\put(45,7){\colorbox{gray}}
\put(45,12){\colorbox{gray}}
\put(49,3){\colorbox{gray}}
\put(49,7){\colorbox{gray}}
\put(49,12){\colorbox{gray}}
\put(54,3){\colorbox{gray}}
\put(54,7){\colorbox{gray}}
\put(54,12){\colorbox{gray}}

\put(60,3){\colorbox{gray}}
\put(60,7){\colorbox{gray}}
\put(60,12){\colorbox{gray}}
\put(64,3){\colorbox{gray}}
\put(64,7){\colorbox{gray}}
\put(64,12){\colorbox{gray}}
\put(69,3){\colorbox{gray}}
\put(69,7){\colorbox{gray}}
\put(69,12){\colorbox{gray}}

\put(0,0){\framebox(15,15){\tiny $q_{15}$}}
\put(15,0){\framebox(15,15){\tiny $q_{14}$}}
\put(30,0){\framebox(15,15){\tiny $q_{13}$}}
\put(45,0){\framebox(15,15){\tiny $q_{12}$}}
\put(60,0){\framebox(15,15){\tiny $x_1$}}
\put(0,15){\framebox(15,15){\tiny $q_{25}$}}
\put(15,15){\framebox(15,15){\tiny $q_{24}$}}
\put(30,15){\framebox(15,15){\tiny $q_{23}$}}
\put(45,15){\framebox(15,15){\tiny $x_2$}}
\put(60,15){\framebox(15,15)}
\put(0,30){\framebox(15,15){\tiny $q_{35}$}}
\put(15,30){\framebox(15,15){\tiny $q_{34}$}}
\put(30,30){\framebox(15,15){\tiny $x_3$}}
\put(45,30){\framebox(15,15)}
\put(60,30){\framebox(15,15)}
\put(0,45){\framebox(15,15){\tiny $q_{45}$}}
\put(15,45){\framebox(15,15){\tiny $x_4$}}
\put(30,45){\framebox(15,15)}
\put(45,45){\framebox(15,15)}
\put(60,45){\framebox(15,15)}
\put(0,60){\framebox(15,15){\tiny $x_5$}}
\put(15,60){\framebox(15,15)}
\put(30,60){\framebox(15,15)}
\put(45,60){\framebox(15,15)}
\put(60,60){\framebox(15,15)}
\end{picture}
\end{center}
\vspace{-10pt}
\caption{The polynomial $\hE_i^{(j)} \in \C[x_1,\ldots,x_5,q_{12},q_{23},q_{34},q_{35},q_{45}]$ for $h=(3,3,4,5,5)$.}
\label{picture:qrs}
\end{figure}
\end{example}

\begin{remark}
In our setting the flag variety $\Fl(\C^n)$ is identified with $\GL_n(\C)/B$, while $\Fl(\C^n)$ is regarded as $\GL_n(\C)/B^{-}$ in \cite{Rie}.
Recall that the conjugation by $w_0$ gives an isomorphism $\GL_n(\C)/B \cong \GL_n(\C)/B^{-}$ since $B^{-}=w_0 B w_0$. 
This relation might affect the reason why we take the conjugation by $w_0$ in \eqref{eq:w0Mnw0}.
\end{remark}

We now state the main theorem of this paper.

\begin{theorem} \label{theorem:main1}
Let $h: [n] \to [n]$ be a Hessenberg function and $\ZZ(N,h)_e$ the intersection defined in \eqref{eq:scheme_intersection}.
Then there is an isomorphism of $\C$-algebras
\begin{align} \label{eq:maintheorem1-1}
\Gamma(\ZZ(N,h)_e, \mathcal{O}_{\ZZ(N,h)_e}) \cong \frac{\C[x_1,\ldots,x_n, q_{rs} \mid 2 \leq s \leq n, n-h(n+1-s)<r<s]}{(\hE_1^{(n)}, \ldots, \, \hE_n^{(n)})}, 
\end{align}
which sends $x_{ij}$ to $\hE_{i-j}^{(n-j)}$ under the presentation \eqref{eq:global_section_Fij}.
In particular, if $h$ is indecomposable, then there is an isomorphism of $\C$-algebras
\begin{align} \label{eq:maintheorem1-2}
\C[\Hess(N,h) \cap \Omega_e^\circ] \cong \frac{\C[x_1,\ldots,x_n, q_{rs} \mid 2 \leq s \leq n, n-h(n+1-s)<r<s]}{(\hE_1^{(n)}, \ldots, \, \hE_n^{(n)})}, 
\end{align}
which sends $x_{ij}$ to $\hE_{i-j}^{(n-j)}$ under the presentation \eqref{eq:coordinate_ring_Fij}. 
\end{theorem}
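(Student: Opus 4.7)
The plan is to construct the map of the statement going from the left-hand side to the right-hand side, verify it is well-defined and surjective by direct use of the recursion in Lemma~\ref{lemma:recursive_qij}, and then promote surjectivity to an isomorphism via a Hilbert-series comparison.

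I would start with the tentative ring homomorphism
$$\widetilde\psi_h \colon \C[x_{ij}\mid 1\le j<i\le n] \longrightarrow \frac{\C[x_1,\ldots,x_n,\,q_{rs}\mid 2\le s\le n,\ n-h(n+1-s)<r<s]}{(\hE_1^{(n)},\ldots,\hE_n^{(n)})}$$
defined by $x_{ij}\mapsto \hE_{i-j}^{(n-j)}$. By Proposition~\ref{proposition:coordinate_ring_Fij}, factoring $\widetilde\psi_h$ through $\Gamma(\ZZ(N,h)_e,\mathcal{O}_{\ZZ(N,h)_e})$ to obtain $\psi_h$ amounts to checking $\widetilde\psi_h(F_{i,j}) \in (\hE_1^{(n)},\ldots,\hE_n^{(n)})$ for all $j\in[n-1]$ and $h(j)<i\le n$. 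The key observation is that cofactor expansion along the last column of the determinantal formulas \eqref{eq:Fi1explicit}--\eqref{eq:Fijexplicit} mirrors exactly the recursive pattern of Lemma~\ref{lemma:recursive_qij}: iterating this expansion after substituting $x_{ab}\mapsto \hE_{a-b}^{(n-b)}$ expresses $\widetilde\psi_h(F_{i,j})$ as a $\C[x_*,q_*]$-combination of the $\hE_k^{(n)}$'s plus correction terms involving only $q_{rs}$ with $r\le n-h(n+1-s)$; the $\hE$-specialization of Definition~\ref{definition:hEi(n)} is designed precisely to kill these corrections when $h(j)<i$.

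Once $\psi_h$ is well-defined, surjectivity is immediate from the same recursion: the $r=1$ case yields $x_s=\hE_1^{(s)}-\hE_1^{(s-1)}$, hence every $x_s$ is in the image of $\psi_h$, and iterating on $r$ in Lemma~\ref{lemma:recursive_qij} solves each surviving $q_{rs}$ as a polynomial in the $\hE_i^{(j)}$'s. For injectivity I would equip $\C[x_{ij}]$ with the grading $\deg(x_{ij})=i-j$ and the $h$-pruned polynomial ring on the right with $\deg(x_s)=1$, $\deg(q_{rs})=s-r+1$. A direct degree count then shows $F_{i,j}$ is homogeneous of degree $i-j+1$ and $\hE_k^{(n)}$ is homogeneous of degree $k$, so $\psi_h$ is a degree-preserving graded surjection. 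To conclude it suffices to check termwise equality of the two graded Hilbert series. This reduces to verifying that (a) the $F_{i,j}$'s with $h(j)<i\le n$ form a regular sequence in $\C[x_{ij}]$; (b) the polynomials $\hE_1^{(n)},\ldots,\hE_n^{(n)}$ form a regular sequence in the $h$-pruned polynomial ring; and (c) the resulting rational Hilbert series match after applying the bookkeeping identity $\sum_{j=1}^{n-1}(n-h(j))=\binom{n}{2}-\sum_{j=1}^n(h(j)-j)$.

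The principal technical obstacle is step (b): the regular-sequence assertion for $\hE_1^{(n)},\ldots,\hE_n^{(n)}$ in a polynomial ring whose $q_{rs}$-variables have been pruned according to the Hessenberg shape in a combinatorially intricate way. The classical case $h=(2,3,\ldots,n,n)$, where only the subdiagonal $q_{s,s+1}$'s survive, is the regularity statement underlying \cite{Font95,GK}. For general $h$ my plan is a degeneration argument: sending all $q_{rs}\to 0$ specializes each $\hE_k^{(n)}$ to the ordinary $k$-th elementary symmetric polynomial in $x_1,\ldots,x_n$, which is a regular sequence in $\C[x_1,\ldots,x_n]$, and the dimension bookkeeping of step (c) together with flatness and upper-semicontinuity of Hilbert series should propagate regularity back to the generic fiber of the $q$-family. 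This is precisely where the combinatorial cutoff $n-h(n+1-s)<r$ on the surviving $q_{rs}$'s is essential, since it is what makes the special-fiber and generic-fiber dimensions agree.
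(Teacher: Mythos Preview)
Your overall strategy matches the paper's: construct the map, show surjectivity via the recursion of Lemma~\ref{lemma:recursive_qij}, and upgrade to an isomorphism by a graded Hilbert-series count. The paper sharpens your well-definedness sketch by proving the exact identity $\varphi(-F_{n+1-r,\,n+1-s})=q_{rs}$ in $Q_n$ (Proposition~\ref{proposition:key}, using Lemma~\ref{lemma:relation_qij}); this gives surjectivity and well-definedness in one stroke, since when $h(j)<i$ the index $(r,s)=(n+1-i,\,n+1-j)$ satisfies $r\le n-h(n+1-s)$, so that particular $q_{rs}$ is pruned and hence zero in the target. Your description in terms of ``correction terms'' is a bit off: there is nothing to correct, the image \emph{is} a single pruned $q_{rs}$ modulo the ideal.

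Where you diverge is your step~(b), and here you make life harder than necessary. No flatness or semicontinuity is needed. The paper's device (Lemma~\ref{lemma:regular_sequence2}) is to adjoin \emph{all} the monomials $q_{rs}$ to $E_1^{(n)},\ldots,E_n^{(n)}$ in the full polynomial ring $\C[x_1,\ldots,x_n,q_{rs}\mid 1\le r<s\le n]$. This produces a homogeneous sequence whose length equals the number of variables, and its common zero locus is $\{0\}$: setting every $q_{rs}=0$ specializes $E_k^{(n)}$ to the $k$-th elementary symmetric polynomial in $x_1,\ldots,x_n$, forcing all $x_i=0$. By the zero-set criterion (Lemma~\ref{lemma:regular_sequence_fact2}) the entire collection is regular, and since any subsequence of a regular sequence of homogeneous elements in a polynomial ring is again regular, both the $E_k^{(n)}$'s alone and the $E_k^{(n)}$'s together with only the pruned $q_{rs}$'s are regular. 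The Hilbert-series identity then follows mechanically (Proposition~\ref{proposition:Hilbert2}). The same argument works verbatim if you prefer to stay in the $h$-pruned ring from the start: adjoin the surviving $q_{rs}$'s to the $\hE_k^{(n)}$'s and apply the zero-set criterion. Your semicontinuity outline may be salvageable, but as written ``should propagate regularity back'' is not an argument, and the elementary route is available.
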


We will prove Theorem~\ref{theorem:main1} in Section~\ref{section:Proof of main theorem 1}. 
For this purpose, one first see that the homomorphism in \eqref{eq:maintheorem1-1} is well-defined and surjective in the next section.  

\begin{remark}
We will introduce certain degrees for the variables $\{x_{ij} \mid 1 \leq j < i \leq n \}$ and $\{x_1,\ldots,x_n, q_{rs} \mid 1 \leq r < s \leq n \}$ so that the two sides of \eqref{eq:maintheorem1-1} are graded  $\C$-algebras (see Section~\ref{section:Hilbert series} for the detail). 
We will prove that \eqref{eq:maintheorem1-1} is in fact an isomorphism as graded $\C$-algebras in Section~\ref{section:Proof of main theorem 1}. 
\end{remark}

\begin{remark}
The isomorphism \eqref{eq:maintheorem1-2} in the case of $h=(2,3,4,\ldots,n,n)$ is exactly \eqref{eq:Petiso}.
\end{remark}

\bigskip

\section{Properties of $E_r^{(s)}$}
\label{section:Properties of $E_r^{(s)}$}

In this section we see relations between $x_s$'s, $q_{rs}$'s, and $E_r^{(s)}$'s. 
Then we construct an explicit map from $\Gamma(\ZZ(N,h)_e, \mathcal{O}_{\ZZ(N,h)_e})$ to our quotient ring $\C[x_1,\ldots,x_n, q_{rs} \mid 2 \leq s \leq n, n-h(n+1-s)<r<s]/(\hE_1^{(n)}, \ldots, \, \hE_n^{(n)})$.  

\begin{lemma} \label{lemma:relation_qij}
For $1 \leq r < s \leq n$, we have 
\begin{align*}
q_{rs} = \left|
 \begin{array}{@{\,}cccccc@{\,}}
     1 & 0 & \cdots & \cdots & 0 & E_{1}^{(s)}-E_{1}^{(s-1)} \\
     E_1^{(s-1)} & 1 & 0 &  & \vdots & E_{2}^{(s)}-E_{2}^{(s-1)} \\ 
     E_2^{(s-1)}  & E_1^{(s-2)} & 1 & \ddots & \vdots & E_{3}^{(s)}-E_{3}^{(s-1)} \\ 
     \vdots & \ddots & \ddots & \ddots & 0 & \vdots \\
     \vdots &  & \ddots & E_1^{(r+1)} & 1 & E_{s-r}^{(s)}-E_{s-r}^{(s-1)} \\
     E_{s-r}^{(s-1)} & \cdots & \cdots & E_2^{(r+1)} & E_1^{(r)} & E_{s-r+1}^{(s)}-E_{s-r+1}^{(s-1)} 
 \end{array}
 \right| 
\end{align*}
in the polynomial ring $\C[x_1,\ldots,x_n,q_{rs} \mid 1 \leq r < s \leq n]$.
\end{lemma}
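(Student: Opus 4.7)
The formula is essentially Cramer's rule applied to the linear recurrence of Lemma~\ref{lemma:recursive_qij}. The plan is to view the recursions (for fixed $s$, as $m$ ranges over $1,\dots,s-r+1$) as a linear system whose unknowns are $x_s, q_{s-1\,s}, q_{s-2\,s}, \dots, q_{r\,s}$ and whose coefficient matrix is lower unitriangular, and then solve for $q_{r\,s}$.

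\textbf{Step 1: Rewriting the recursion.} By Lemma~\ref{lemma:recursive_qij}, for each $m$ with $1\le m\le s$,
\begin{align*}
E_m^{(s)}-E_m^{(s-1)} \;=\; E_{m-1}^{(s-1)}\,x_s \,+\, \sum_{k=1}^{m-1} E_{m-1-k}^{(s-1-k)}\,q_{s-k\,s}.
\end{align*}
I restrict to $m=1,2,\dots,s-r+1$ and read these $s-r+1$ equations as a linear system in the $s-r+1$ unknowns $x_s, q_{s-1\,s}, q_{s-2\,s}, \dots, q_{r\,s}$ (ordered in this way from left to right). Since the sum contributes $q_{s-k\,s}$ only when $k\le m-1$, and the coefficient of $q_{s-(m-1)\,s}$ in row $m$ is $E_0^{(s-m)}=1$, the coefficient matrix is exactly
\begin{align*}
A\;=\;\begin{pmatrix}
 1 & 0 & \cdots & \cdots & 0 \\
 E_1^{(s-1)} & 1 & 0 & & \vdots \\
 E_2^{(s-1)} & E_1^{(s-2)} & 1 & \ddots & \vdots \\
 \vdots & \ddots & \ddots & \ddots & 0 \\
 E_{s-r}^{(s-1)} & \cdots & E_2^{(r+1)} & E_1^{(r)} & 1
\end{pmatrix},
\end{align*}
which is lower unitriangular, hence $\det A=1$ and the system has a unique solution in the polynomial ring.

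\textbf{Step 2: Cramer's rule.} The right-hand side of the system is the column vector with entries $E_m^{(s)}-E_m^{(s-1)}$ for $m=1,\dots,s-r+1$. Applying Cramer's rule to solve for $q_{r\,s}$ (the last unknown) gives
\begin{align*}
q_{r\,s}\;=\;\frac{\det A'}{\det A}\;=\;\det A',
\end{align*}
where $A'$ is obtained from $A$ by replacing its last column with the right-hand side. This is exactly the determinant displayed in the lemma.

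\textbf{Expected obstacle.} There is no real mathematical obstacle here, only the risk of off-by-one errors in the indices when matching the matrix entries $E_{m-1-k}^{(s-1-k)}$ with the pattern in the stated determinant. The cleanest way to avoid mistakes will be to carefully verify two boundary cases: row $m=1$ (which must read $E_1^{(s)}-E_1^{(s-1)}=x_s$, so only the $(1,1)$ entry is nonzero on the left), and row $m=s-r+1$ (whose coefficients $E_{s-r}^{(s-1)},E_{s-r-1}^{(s-2)},\dots,E_1^{(r+1)},E_0^{(r)}=1$ match the last row of the displayed matrix). Once those anchor rows are confirmed, the remaining rows follow by the uniform formula of Lemma~\ref{lemma:recursive_qij}.
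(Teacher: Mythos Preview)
Your proposal is correct and follows essentially the same approach as the paper: the paper also rewrites the recursions from Lemma~\ref{lemma:recursive_qij} for $m=1,\dots,s-r+1$ as a lower unitriangular matrix equation in the unknowns $x_s,q_{s-1\,s},\dots,q_{r\,s}$ and then invokes Cramer's rule. Your identification of the coefficient matrix and the right-hand side matches the paper's exactly.
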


\begin{proof}
For $1 \leq s \leq n$, it follows from Lemma~\ref{lemma:recursive_qij} that 
\begin{align*}
\begin{cases}
E_1^{(s)} &= E_1^{(s-1)} + x_s \\
E_2^{(s)} &= E_2^{(s-1)} + E_{1}^{(s-1)} x_s + q_{s-1 \, s} \\
E_3^{(s)} &= E_3^{(s-1)} + E_{2}^{(s-1)} x_s + E_{1}^{(s-2)} q_{s-1 \, s} + q_{s-2 \, s} \\
&\vdots \\
E_{s-r+1}^{(s)} &= E_{s-r+1}^{(s-1)} + E_{s-r}^{(s-1)} x_s + E_{s-r-1}^{(s-2)} q_{s-1 \, s} + E_{s-r-2}^{(s-3)} q_{s-2 \, s} + \cdots + E_1^{(r)} q_{r+1 \, s} + q_{rs}. 
\end{cases}
\end{align*}
In other words, we have
\begin{align*}
\left(
 \begin{array}{@{\,}cccccc@{\,}}
     1 &  &  &  & & \\
     E_1^{(s-1)} & 1 & &  &  &  \\ 
     E_2^{(s-1)}  & E_1^{(s-2)} & 1 & & & \\ 
     \vdots & \ddots & \ddots & \ddots & & \\
     \vdots &  & \ddots & E_1^{(r+1)} & 1 & \\
     E_{s-r}^{(s-1)} & \cdots & \cdots & E_2^{(r+1)} & E_1^{(r)} & 1 
 \end{array}
 \right) \left(
 \begin{array}{@{\,}c@{\,}}
     x_s \\
     q_{s-1 \, s} \\ 
     q_{s-2 \, s} \\ 
     \vdots \\ 
     q_{r+1 \, s} \\
     q_{rs}  
 \end{array}
 \right)
= \left(
 \begin{array}{@{\,}c@{\,}}
     E_1^{(s)}-E_1^{(s-1)} \\
     E_2^{(s)}-E_2^{(s-1)} \\ 
     E_3^{(s)}-E_3^{(s-1)} \\ 
     \vdots \\ 
     E_{s-r}^{(s)}-E_{s-r}^{(s-1)} \\
     E_{s-r+1}^{(s)}-E_{s-r+1}^{(s-1)} \\
 \end{array}
 \right).
\end{align*}
By Cramer's rule we obtain the desired result. 
\end{proof}

Set 
\begin{align} \label{eq:Qn}
Q_n \coloneqq \C[x_1,\ldots,x_n, q_{rs} \mid  1 \leq r < s \leq n]/(E_1^{(n)}, \ldots, E_n^{(n)})
\end{align}
and define a map $\varphi$ from $\C[x_{ij} \mid 1 \leq j < i \leq n]$ to the ring $Q_n$ by 
\begin{align} \label{eq:varphi}
\varphi: \C[x_{ij} \mid 1 \leq j < i \leq n] \to Q_n; \ x_{ij} \mapsto E_{i-j}^{(n-j)}.
\end{align}

\begin{proposition} \label{proposition:key}
The map $\varphi$ above sends 
\begin{align*}
\varphi(x_{n-s+1 \, n-s} - x_{n-s+2 \, n-s+1}) &= x_s \ \ \ \textrm{for} \ s \in [n] \\ 
\varphi(-F_{n+1-r,n+1-s}) &= q_{rs} \ \ \ \textrm{for} \ 1 \leq r < s \leq n.
\end{align*} 
Here, we take the convention that $x_{n-s+2 \, n-s+1}=0$ whenever $s=1$ and $x_{n-s+1 \, n-s}=0$ whenever $s=n$.
In particular, $\varphi$ is surjective.
\end{proposition}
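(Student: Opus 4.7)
The plan is to verify the two explicit formulas; surjectivity of $\varphi$ then follows immediately since $Q_n$ is generated as a $\C$-algebra by $\{x_1,\ldots,x_n\}\cup\{q_{rs}\mid 1\le r<s\le n\}$ and both types of generators are exhibited as $\varphi$-images. The two formulas reduce, via the definition $\varphi(x_{ij})=E_{i-j}^{(n-j)}$, to the recursion of Lemma~\ref{lemma:recursive_qij} and the determinantal identity of Lemma~\ref{lemma:relation_qij}, respectively. The boundary case $s=n$ uses the defining relations $E_k^{(n)}=0$ of $Q_n$; away from this boundary everything holds already at the level of polynomial rings.

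The first formula is a one-line computation. Unwinding the definition of $\varphi$ and using the conventions $x_{1\,0}=x_{n+1\,n}=0$ gives
\[
\varphi(x_{n-s+1\,n-s}-x_{n-s+2\,n-s+1}) = E_1^{(s)}-E_1^{(s-1)},
\]
where we declare $E_1^{(0)}:=0$. The $r=1$ instance of Lemma~\ref{lemma:recursive_qij} reads $E_1^{(s)}=E_1^{(s-1)}+x_s$, giving $x_s$ for $1\le s\le n-1$; for $s=n$ the relation $E_1^{(n)}=0$ in $Q_n$ combined with the same recursion yields $-E_1^{(n-1)}=x_n$.

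For the second formula, I apply $\varphi$ entrywise to $F_{n+1-r,\,n+1-s}$ as presented in \eqref{eq:Fijexplicit} (for $s\le n-1$) or \eqref{eq:Fi1explicit} (for $s=n$), and compare with the matrix in Lemma~\ref{lemma:relation_qij}. In the generic case $s\le n-1$, the image is an $(s-r+2)\times(s-r+2)$ matrix whose leftmost column is $(1,E_1^{(s)},\ldots,E_{s-r+1}^{(s)})^t$, whose rightmost column is $(1,E_1^{(s-1)},\ldots,E_{s-r+1}^{(s-1)})^t$, and whose $k$-th middle column ($1\le k\le s-r$) consists of zeros above row $k$, a $1$ at row $k$, and the entries $E_m^{(s-k)}$ below. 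The crucial step is the column operation $(\text{rightmost})\mapsto(\text{rightmost})-(\text{leftmost})$, which zeros the top-right entry; cofactor expansion along the resulting top row $(1,0,\ldots,0)$ then leaves an $(s-r+1)\times(s-r+1)$ minor whose first $s-r$ columns match those of Lemma~\ref{lemma:relation_qij} column-by-column (under the shift $k\mapsto k-1$), and whose last column equals $-1$ times Lemma's last column $(E_k^{(s)}-E_k^{(s-1)})_{k}$. Hence $\varphi(F_{n+1-r,\,n+1-s})=-q_{rs}$. In the boundary case $s=n$ the matrix \eqref{eq:Fi1explicit} already has size $(s-r+1)\times(s-r+1)$ and no column operation is needed: its first $s-r$ columns under $\varphi$ coincide with those in Lemma~\ref{lemma:relation_qij}, while the last column $(E_1^{(n-1)},\ldots,E_{n-r+1}^{(n-1)})^t$ differs by a sign from Lemma's last column modulo the relations $E_k^{(n)}=0$, again yielding $\varphi(F_{n+1-r,1})=-q_{rn}$ in $Q_n$.

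The main technical burden is the column-by-column bookkeeping in the second formula, in particular tracking how the identity $\varphi(x_{j+k-1+m,\,j+k-1})=E_m^{(s-k)}$ (with its shifted counterpart when $j=1$) fits into the matrix of Lemma~\ref{lemma:relation_qij} after the column operation in the generic case. Once the two formulas are in place, surjectivity follows from the observation that $x_s$ and $q_{rs}$ generate $Q_n$ and all lie in $\varphi(\C[x_{ij}\mid 1\le j<i\le n])$.
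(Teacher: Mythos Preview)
Your proof is correct and follows essentially the same approach as the paper's. The paper performs the column operation in the opposite direction---subtracting the last column from the first and then swapping them---but this is equivalent to your subtraction of the first from the last followed by expansion along the top row; both isolate the $(s-r+1)\times(s-r+1)$ minor and produce the sign matching Lemma~\ref{lemma:relation_qij}.
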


\begin{proof}
First, one can see from Definition~\ref{definition:Eij} that 
\begin{align*}
E_1^{(s)} =x_1+\dots+x_s
\end{align*}
for any $s \in [n]$, which implies that $x_s=E_1^{(s)} -E_1^{(s-1)}$. 
By the definition of $\varphi$ we have $\varphi(x_{n-s+1 \, n-s} - x_{n-s+2 \, n-s+1}) = E_1^{(s)} -E_1^{(s-1)} = x_s$ for $s \in [n]$. (Note that if $s=n$, then we used the relation $E_1^{(n)}=0$ in the quotient ring $Q_n=\C[x_1,\ldots,x_n, q_{rs} \mid  1 \leq r < s \leq n]/(E_1^{(n)}, \ldots, E_n^{(n)})$.)

Next, it follows from \eqref{eq:Fi1explicit} that 
\begin{align} \label{eq:proofvarphi1}
\varphi(F_{n+1-r,1}) = \left|
 \begin{array}{@{\,}cccccc@{\,}}
     1 & 0 & \cdots & \cdots & 0 & E_1^{(n-1)} \\
     E_1^{(n-1)} & 1 & 0 &  & \vdots & E_2^{(n-1)} \\ 
     E_2^{(n-1)}  & E_1^{(n-2)} & 1 & \ddots & \vdots & \vdots \\ 
     \vdots & \ddots & \ddots & \ddots & 0 & \vdots \\
     \vdots &  & \ddots & E_1^{(r+1)} & 1 & E_{n-r}^{(n-1)} \\
     E_{n-r}^{(n-1)} & \cdots & \cdots & E_2^{(r+1)} & E_1^{(r)} & E_{n-r+1}^{(n-1)} 
 \end{array}
 \right|  
\end{align} 
for $1 \leq r \leq n-1$. 
On the other hand, by Lemma~\ref{lemma:relation_qij} we have 
\begin{align} \label{eq:proofvarphi2}
q_{rn} = \left|
 \begin{array}{@{\,}cccccc@{\,}}
     1 & 0 & \cdots & \cdots & 0 & -E_{1}^{(n-1)} \\
     E_1^{(n-1)} & 1 & 0 &  & \vdots & -E_{2}^{(n-1)} \\ 
     E_2^{(n-1)}  & E_1^{(n-2)} & 1 & \ddots & \vdots & -E_{3}^{(n-1)} \\ 
     \vdots & \ddots & \ddots & \ddots & 0 & \vdots \\
     \vdots &  & \ddots & E_1^{(r+1)} & 1 & -E_{n-r}^{(n-1)} \\
     E_{n-r}^{(n-1)} & \cdots & \cdots & E_2^{(r+1)} & E_1^{(r)} & -E_{n-r+1}^{(n-1)} 
 \end{array}
 \right| 
\end{align}
since $E_1^{(n)}=0, \ldots, E_{n-r+1}^{(n)}=0$ in the quotient ring $Q_n=\C[x_1,\ldots,x_n, q_{rs} \mid  1 \leq r < s \leq n]/(E_1^{(n)}, \ldots, E_n^{(n)})$. 
By \eqref{eq:proofvarphi1} and \eqref{eq:proofvarphi2} we have $\varphi(F_{n-r+1,1})=-q_{rn}$ for $1 \leq r \leq n-1$ as desired.

If $s <n$, then by \eqref{eq:Fijexplicit} we have 
\begin{align*} 
\varphi(F_{n+1-r,n+1-s}) &= \left|
 \begin{array}{@{\,}cccccc@{\,}}
     1 & 0 & \cdots & \cdots & 0 & 1 \\
     E_1^{(s)} & 1 & 0 &  & \vdots & E_1^{(s-1)} \\ 
     E_2^{(s)}  & E_1^{(s-1)} & 1 & \ddots & \vdots & E_2^{(s-1)} \\ 
     \vdots & E_2^{(s-1)} & \ddots & \ddots & 0 & \vdots \\
     \vdots & \vdots & \ddots & E_1^{(r+1)} & 1 & \vdots \\
     E_{s-r+1}^{(s)} & E_{s-r}^{(s-1)} & \cdots & E_2^{(r+1)} & E_1^{(r)} & E_{s-r+1}^{(s-1)} 
 \end{array}
 \right| \\
 &= \left|
 \begin{array}{@{\,}cccccc@{\,}}
     0 & 0 & \cdots & \cdots & 0 & 1 \\
     E_1^{(s)}-E_1^{(s-1)} & 1 & 0 &  & \vdots & E_1^{(s-1)} \\ 
     E_2^{(s)}-E_2^{(s-1)}  & E_1^{(s-1)} & 1 & \ddots & \vdots & E_2^{(s-1)} \\ 
     \vdots & E_2^{(s-1)} & \ddots & \ddots & 0 & \vdots \\
     \vdots & \vdots & \ddots & E_1^{(r+1)} & 1 & \vdots \\
     E_{s-r+1}^{(s)}-E_{s-r+1}^{(s-1)} & E_{s-r}^{(s-1)} & \cdots & E_2^{(r+1)} & E_1^{(r)} & E_{s-r+1}^{(s-1)} 
 \end{array}
 \right| \\ 
 & \qquad \textrm{(by subtracting the last column from the first column)} \\  
 &= -\left|
 \begin{array}{@{\,}cccccc@{\,}}
     1 & 0 & \cdots & \cdots & 0 & 0 \\
     E_1^{(s-1)} & 1 & 0 &  & \vdots & E_1^{(s)}-E_1^{(s-1)} \\ 
     E_2^{(s-1)}  & E_1^{(s-1)} & 1 & \ddots & \vdots & E_2^{(s)}-E_2^{(s-1)} \\ 
     \vdots & E_2^{(s-1)} & \ddots & \ddots & 0 & \vdots \\
     \vdots & \vdots & \ddots & E_1^{(r+1)} & 1 & \vdots \\
     E_{s-r+1}^{(s-1)} & E_{s-r}^{(s-1)} & \cdots & E_2^{(r+1)} & E_1^{(r)} & E_{s-r+1}^{(s)}-E_{s-r+1}^{(s-1)}
 \end{array}
 \right| \\ 
 & \quad \qquad \textrm{(by changing the first column and the last column)} \\  
  &= -\left|
 \begin{array}{@{\,}cccccc@{\,}}
     1 & 0 & \cdots & \cdots & 0 & E_{1}^{(s)}-E_{1}^{(s-1)} \\
     E_1^{(s-1)} & 1 & 0 &  & \vdots & E_{2}^{(s)}-E_{2}^{(s-1)} \\ 
     E_2^{(s-1)}  & E_1^{(s-2)} & 1 & \ddots & \vdots & E_{3}^{(s)}-E_{3}^{(s-1)} \\ 
     \vdots & \ddots & \ddots & \ddots & 0 & \vdots \\
     \vdots &  & \ddots & E_1^{(r+1)} & 1 & E_{s-r}^{(s)}-E_{s-r}^{(s-1)} \\
     E_{s-r}^{(s-1)} & \cdots & \cdots & E_2^{(r+1)} & E_1^{(r)} & E_{s-r+1}^{(s)}-E_{s-r+1}^{(s-1)} 
 \end{array}
 \right|  \\ 
 \end{align*} 
for $1 \leq r < s < n$, which is $-q_{rs}$ from Lemma~\ref{lemma:relation_qij}.
This completes the proof.
\end{proof}

It follows from Proposition~\ref{proposition:key} that the image of $\{-F_{i,j} \mid j \in [n-1] \ \textrm{and} \ h(j) < i \leq n \}$ under the map $\varphi$ in \eqref{eq:varphi} is $\{q_{rs} \mid 2 \leq s \leq n \ \textrm{and} \ 1 \leq r \leq n-h(n+1-s) \}$.
Thus, the surjective map $\varphi$ induces the surjective homomorphism 
\begin{align} 
&\varphi_h: \C[x_{ij} \mid 1 \leq j < i \leq n]/(F_{i,j} \mid j \in [n-1] \ \textrm{and} \ h(j) < i \leq n) \label{eq:varphih} \\
& \qquad \twoheadrightarrow Q_n/(q_{rs} \mid 2 \leq s \leq n \ \textrm{and} \ 1 \leq r \leq n-h(n+1-s)); \ x_{ij} \mapsto \, \hE_{i-j}^{(n-j)}. \notag 
\end{align}

\bigskip

\section{Hilbert series}
\label{section:Hilbert series}

In order to prove that $\varphi_h$ in \eqref{eq:varphih} is an isomorphism, we introduce certain degrees for the variables $\{x_{ij} \mid 1 \leq j < i \leq n \}$ and $\{x_1,\ldots,x_n, q_{rs} \mid 1 \leq r < s \leq n \}$ so that the two sides of \eqref{eq:varphih} are graded $\C$-algebras.
We then show that the two sides have identical Hilbert series.

Let $\C[x_{ij} \mid 1 \leq j < i \leq n]$ and $\C[x_1,\ldots,x_n,q_{rs} \mid 1 \leq r < s \leq n]$ be the polynomial rings equipped with a grading defined by
\begin{align}
\deg x_{ij} &= 2(i-j) \ \ \ \textrm{for} \ 1 \leq j < i \leq n; \label{eq:grading1} \\
\deg x_s &= 2 \ \ \ \textrm{for} \ s \in [n]; \label{eq:grading2} \\
\deg q_{rs} &= 2(s-r+1) \ \ \ \textrm{for} \ 1 \leq r < s \leq n. \label{eq:grading3} 
\end{align}

\begin{remark}
As mentioned in Remark~\ref{remark:usual cohomology}, $x_s$'s are
degree $2$ elements in the cohomology ring of the flag variety by forgetting quantum parameters.
Motivated by this fact, our definition for degrees are concentrated in even degrees.
\end{remark}

\begin{lemma} \label{lemma:grading1}
For $1 \leq r \leq s \leq n$, the polynomial $E_r^{(s)}$ is homogeneous of degree $2r$ in the polynomial ring $\C[x_1,\ldots,x_n,q_{rs} \mid 1 \leq r < s \leq n]$.
\end{lemma}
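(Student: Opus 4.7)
The plan is to prove Lemma~\ref{lemma:grading1} by double induction on $s$, using the recursive formula established in Lemma~\ref{lemma:recursive_qij}. The key observation is that the grading \eqref{eq:grading2}--\eqref{eq:grading3} is designed so that each summand in the recursion has the same weighted degree $2r$, which makes the induction almost mechanical.

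I would proceed as follows. First, handle the base cases: $E_0^{(s)} = 1$ is trivially homogeneous of degree $0$ for every $s$ by our convention, and $E_1^{(1)} = x_1$ has degree $2$ by \eqref{eq:grading2}. Next, I would fix $s \geq 2$ and assume by induction that $E_{r'}^{(s')}$ is homogeneous of degree $2r'$ for all pairs $(r', s')$ with $s' < s$. Applying Lemma~\ref{lemma:recursive_qij}, it suffices to check that each of the three kinds of terms
\begin{align*}
E_r^{(s-1)}, \quad E_{r-1}^{(s-1)} x_s, \quad E_{r-1-k}^{(s-1-k)} q_{s-k\,s} \ \ (1 \leq k \leq r-1)
\end{align*}
is homogeneous of degree $2r$. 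By induction the first has degree $2r$; the second has degree $2(r-1) + 2 = 2r$; and the third has degree
\[
2(r-1-k) + 2\bigl(s - (s-k) + 1\bigr) = 2(r-1-k) + 2(k+1) = 2r,
\]
where we used \eqref{eq:grading3} for $\deg q_{s-k\,s}$. Summing a finite set of homogeneous polynomials of the same degree yields a homogeneous polynomial of that degree, so $E_r^{(s)}$ is homogeneous of degree $2r$, completing the induction.

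There is essentially no obstacle here; the only thing one needs to take care of is the edge cases in the recursion (when $r=1$ the sum $\sum_{k=1}^{r-1}$ is empty, and when $r=s$ we have $E_s^{(s-1)} = 0$), but these are covered directly by the conventions stated in Lemma~\ref{lemma:recursive_qij}. The choice $\deg q_{rs} = 2(s-r+1)$ is precisely what makes the contribution of $q_{s-k\,s}$ equal to $2(k+1)$, i.e.\ exactly the deficit $2r - 2(r-1-k)$ coming from the index drop $E_{r-1-k}^{(s-1-k)}$; this is the structural reason the lemma holds.
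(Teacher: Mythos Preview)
Your proof is correct and follows essentially the same approach as the paper: induction on $s$ using the recursive formula of Lemma~\ref{lemma:recursive_qij}, verifying that each summand is homogeneous of degree $2r$ via the grading \eqref{eq:grading2}--\eqref{eq:grading3}. The paper's proof is more terse, but the content is identical.
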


\begin{proof}
We prove this by induction on $s$. 
The base case is $s=1$, which is clear since $E_1^{(1)}=x_1$. 
Now suppose that $s > 1$ and assume by induction that the claim is true for arbitrary $s'$ with
$s' \leq s-1$.
From Lemma~\ref{lemma:recursive_qij} we have
\begin{align*}
E_r^{(s)} = E_r^{(s-1)} + E_{r-1}^{(s-1)} x_s + \sum_{k=1}^{r-1} E_{r-1-k}^{(s-1-k)} q_{s-k \, s}. 
\end{align*}
By the inductive hypothesis with \eqref{eq:grading2} and \eqref{eq:grading3}, one can see that $E_r^{(s)}$ is homogeneous of degree $2r$.
\end{proof}

In order to see that the polynomial $F_{i,j}$ is homogeneous, we introduce the following polynomials.

\begin{definition} \label{definition:tildeFij}
Let $1 \leq j \leq n-1$ and $j \leq \mm_j <n$.
For $i>\mm_j$, we define polynomials $\tilde F^{\langle\mm_j\rangle}_{i,j}$ by 
\begin{align*} 
\tilde F^{\langle\mm_1\rangle}_{i,1} \coloneqq \left|
 \begin{array}{@{\,}cccccc@{\,}}
     1 & 0 & \cdots & \cdots & 0 & x_{21} \\
     x_{21} & 1 & \ddots &  & \vdots & x_{31} \\ 
     x_{31} & x_{32} & \ddots & \ddots & \vdots & \vdots \\ 
     \vdots& \vdots & \ddots & \ddots & 0 & \vdots \\
     x_{\mm_1 \,1} & x_{\mm_1\,2} & \cdots & x_{\mm_1\,\mm_1-1} & 1 & x_{\mm_1+1\,1} \\
     x_{i1} & x_{i2} & \cdots & x_{i \, \mm_1-1} & x_{i \, \mm_1} & x_{i+1 \, 1}   
 \end{array}
 \right|; 
\end{align*}
\begin{align*} 
\tilde F^{\langle\mm_j\rangle}_{i,j} \coloneqq \left|
 \begin{array}{@{\,}cccccc@{\,}}
     1 & 0 & \cdots & \cdots & 0 & 1 \\
     x_{j \, j-1} & 1 & \ddots &  & \vdots & x_{j+1 \, j} \\ 
     x_{j+1 \, j-1} & x_{j+1 \, j} & \ddots & \ddots & \vdots & \vdots \\ 
     \vdots& \vdots & \ddots & \ddots & 0 & \vdots \\
     x_{\mm_j\,j-1} & x_{\mm_j\,j} & \cdots & x_{\mm_j\,\mm_j-1} & 1 & x_{\mm_j+1\,j} \\
     x_{i \, j-1} & x_{i \, j} & \cdots & x_{i \, \mm_j-1} & x_{i \, \mm_j} & x_{i+1 \, j} 
 \end{array}
 \right| \ \ \ \textrm{for} \ j \geq 2.
\end{align*}
Here, we take the convention that $x_{n+1\,j}=0$ for $j \in [n-1]$.
\end{definition}

By \eqref{eq:Fi1explicit} and \eqref{eq:Fijexplicit} one has
\begin{align} \label{eq:Fij_and_tildeFij}
F_{i,j} = \tilde F^{\langle i-1 \rangle}_{i,j} 
\end{align}
for $1 \leq j < i \leq n$. 

\begin{lemma} \label{lemma:grading2}
For $1 \leq j < i \leq n$, the polynomial $F_{i,j}$ is homogeneous of degree $2(i-j+1)$ in the polynomial ring $\C[x_{ij} \mid 1 \leq j < i \leq n]$.
\end{lemma}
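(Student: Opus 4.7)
The plan is to read $F_{i,j}$ back into its determinantal origin and analyze the degree via a bigrading on rows and columns. Recall from Definition~\ref{definition:FijX} and \eqref{eq:FijN} that
\[
F_{i,j} = \det(v_1,\ldots,v_{i-1},\, N v_j,\, v_{i+1},\ldots,v_n),
\]
where $v_k$ is the $k$-th column of the unipotent lower-triangular $g \in \Omega_e^\circ$. Equivalently, one may simply read off this determinant from \eqref{eq:Fi1explicit}--\eqref{eq:Fijexplicit}. I would then assign weights to rows and columns and observe that every entry of the relevant matrix is either zero or homogeneous of degree twice the sum of its row and column weights.

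Concretely, I would attach to row $\ell\in[n]$ the weight $\ell$ and to the column $v_k$ the weight $-k$. With the grading $\deg x_{ab}=2(a-b)$, the entry of $v_k$ in row $\ell$ is $x_{\ell k}$ of degree $2(\ell-k)$ when $\ell>k$, the unit $1$ of degree $0$ when $\ell=k$, and $0$ when $\ell<k$; in the first two cases this equals $2(\ell+(-k))$, as required, and the vanishing entries contribute nothing to the Leibniz expansion. For the distinguished column $Nv_j$, the entry in row $\ell$ is $x_{\ell+1,j}$ (with the convention $x_{n+1,j}=0$ and $x_{j,j}=1$), which has degree $2(\ell+1-j)=2(\ell+(-(j-1)))$; hence this column naturally carries weight $-(j-1)$.

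Consequently, every nonzero term in the expansion of $F_{i,j}$ has the same degree, namely twice
\[
\underbrace{(1+2+\cdots+n)}_{\text{row weights}}\;+\;\underbrace{\bigl(-(1+2+\cdots+n)+i-(j-1)\bigr)}_{\text{column weights}}\;=\;i-j+1,
\]
where the correction $+i-(j-1)$ records that we replaced the column of weight $-i$ by $Nv_j$ of weight $-(j-1)$. This gives $\deg F_{i,j}=2(i-j+1)$ as claimed. I do not anticipate a serious obstacle: the only subtlety is the bookkeeping for the structural $1$'s and $0$'s in the columns $v_k$, but the $1$'s fit the weight pattern in degree $0$ and the $0$'s drop out of the Leibniz expansion, so both are harmless. (The same argument applied verbatim to the matrix in Definition~\ref{definition:tildeFij} shows that each $\tilde F^{\langle \mm_j\rangle}_{i,j}$ is homogeneous of degree $2(i-j+1)$ as well, which is presumably what gets used in the induction later.)
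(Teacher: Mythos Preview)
Your argument is correct and takes a genuinely different route from the paper. The paper proves the lemma by showing that each auxiliary polynomial $\tilde F^{\langle m_j\rangle}_{i,j}$ is homogeneous of degree $2(i-j+1)$, proceeding by induction on $m_j$: the base case $m_j=j$ is a direct calculation on a $2\times 2$ or $3\times 3$ determinant, and the inductive step uses the cofactor expansion $\tilde F^{\langle m_j\rangle}_{i,j}=\tilde F^{\langle m_j-1\rangle}_{i,j}-x_{i\,m_j}\tilde F^{\langle m_j-1\rangle}_{m_j,j}$. Your approach instead reads the grading as a row/column weight scheme and checks that every nonzero entry of the full $n\times n$ matrix is homogeneous of degree equal to twice (row weight $+$ column weight); the Leibniz expansion then forces homogeneity in one stroke. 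This is more conceptual---it is essentially the observation that $\deg x_{ab}=2(a-b)$ records the weights of a one-parameter torus acting by conjugation on lower-unipotent matrices, under which $N$ has weight $2$---and it avoids the auxiliary induction entirely. As you note, the same weight bookkeeping applies verbatim to the matrices in Definition~\ref{definition:tildeFij}, so you recover the homogeneity of $\tilde F^{\langle m_j\rangle}_{i,j}$ as well, which the paper's argument produces as its inductive statement. One tiny addition: for $\ell<j-1$ the entries of $Nv_j$ are zero, so you should also record the convention $x_{a,j}=0$ for $a<j$; this is harmless for the same reason the upper-triangular zeros in $v_k$ are.
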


\begin{proof}
It suffices to show that $\tilde F^{\langle\mm_j\rangle}_{i,j}$ is homogeneous of degree $2(i-j+1)$ for $j \leq \mm_j <i$ by \eqref{eq:Fij_and_tildeFij}. 
At first, we fix $j$.
We prove this statement by induction on $\mm_j$ with this fixed $j$.
The base case is $\mm_j =j$.
For arbitrary $i > j$, since we have 
\begin{align*}
\tilde F^{\langle 1 \rangle}_{i,1}&=\left|
 \begin{array}{@{\,}cc@{\,}}
     1 & x_{21} \\
     x_{i1} & x_{i+1 \, 1} 
 \end{array}
 \right| = x_{i+1 \, 1} - x_{i1}x_{21} \ \textrm{for} \ j=1; \\
\tilde F^{\langle j \rangle}_{i,j}&=\left|
 \begin{array}{@{\,}ccc@{\,}}
     1 & 0 & 1 \\
     x_{j \, j-1} & 1 & x_{j+1 \,j} \\
     x_{i \, j-1} & x_{ij} & x_{i+1 \, j} 
 \end{array}
 \right| = x_{i+1 \, j} + x_{j \, j-1}x_{ij} - x_{i \, j-1} - x_{j+1 \, j}x_{ij} \ \textrm{for} \ j>1,
\end{align*}
one can easily see from \eqref{eq:grading1} that $\tilde F^{\langle j \rangle}_{i,j}$ is homogeneous of degree $2(i-j+1)$.
This shows the base case.

We proceed to the inductive step. 
Suppose now that $\mm_j > j$ and that the claim holds for $\mm_j-1$ with any allowable choices of the first subscript $i'$ in $\tilde F^{\langle\mm_j -1\rangle}_{i',j}$. 
By the cofactor expansion along the second-to-last column, we have
\begin{align*} 
\tilde F^{\langle\mm_j\rangle}_{i,j} = \tilde F^{\langle\mm_j-1\rangle}_{i,j} - x_{i \, \mm_j} \tilde F^{\langle\mm_j-1\rangle}_{\mm_j,j}.
\end{align*}
By the inductive hypothesis and \eqref{eq:grading1}, the right hand side above is homogeneous of degree $2(i-j+1)$, as desired.
This completes the proof. 
\end{proof}

Recall from the end of Section~\ref{section:Properties of $E_r^{(s)}$} that we constructed the map $\varphi_h$ in \eqref{eq:varphih} from $\C[x_{ij} \mid 1 \leq j < i \leq n]/(F_{i,j} \mid j \in [n-1] \ \textrm{and} \ h(j) < i \leq n)$ to $Q_n/(q_{rs} \mid 2 \leq s \leq n \ \textrm{and} \ 1 \leq r \leq n-h(n+1-s))$.
One can see from Lemmas~\ref{lemma:grading1} and \ref{lemma:grading2} that these are graded $\C$-algebras.
(Note that $Q_n$ is also a graded $\C$-algebra.)
For the rest of this section, we prove that these graded $\C$-algebras have the same Hilbert series.

\begin{definition}
Let $R=\oplus_{i=0}^\infty R_i$ be a graded $\C$-algebra where each homogeneous component $R_i$ of degree $i$ is a finite-dimensional vector space over $\C$. Then its \emph{Hilbert series} is defined to be
\begin{align*}
\Hilb(R,t) \coloneqq \sum_{i=0}^{\infty} \dim_\C R_i \, t^i.
\end{align*}
A sequence of homogeneous polynomials $\theta_1,\ldots,\theta_p \in R$ of positive degrees is a \emph{regular sequence} in $R$ if $\theta_k$ is a non-zero divisor of $R/(\theta_1,\ldots,\theta_{k-1})$ for all $1 \leq k \leq p$.
\end{definition}

The following facts are well-known in commutative algebra. 
See \cite[Chapter~I, Section~5]{Stan96}. (See also \cite[Proposition~5.1]{FHM}.)

\begin{lemma} \label{lemma:regular_sequence_fact1}
Let $R=\oplus_{i=0}^\infty R_i$ be a graded $\C$-algebra with $\dim_\C R_i < \infty$ for each $i$.
A sequence of homogeneous polynomials $\theta_1,\ldots,\theta_p \in R$ of positive degrees is a regular sequence in $R$ if and only if the Hilbert series of $R/(\theta_1,\ldots,\theta_p)$ is given by
\begin{align*}
\Hilb(R/(\theta_1,\ldots,\theta_p),t) = \Hilb(R,t) \cdot \prod_{k=1}^p (1-t^{\deg \theta_k}). 
\end{align*}
\end{lemma}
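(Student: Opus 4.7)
The plan is to prove this by induction on $p$, with the base case $p=1$ being the crux of the argument; the full statement then follows by applying the one-variable case iteratively with $R$ replaced by $R/(\theta_1,\ldots,\theta_{k-1})$.

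First I would handle the base case $p=1$. Let $\theta \in R$ be homogeneous of degree $d>0$, and consider multiplication by $\theta$. This is a degree-$d$ map, so it is more naturally viewed as a degree-$0$ map $R(-d) \xrightarrow{\cdot \theta} R$, where $R(-d)$ denotes $R$ with the grading shifted so that $R(-d)_i = R_{i-d}$. I would then assemble the four-term exact sequence of graded $\C$-vector spaces
\begin{equation*}
0 \longrightarrow K \longrightarrow R(-d) \xrightarrow{\ \cdot \theta\ } R \longrightarrow R/(\theta) \longrightarrow 0,
\end{equation*}
where $K \coloneqq \{a \in R \mid \theta a = 0\}$ is the annihilator (with appropriate grading shift). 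Taking the alternating sum of Hilbert series degree-by-degree yields
\begin{equation*}
\Hilb(R/(\theta),t) = (1-t^d)\,\Hilb(R,t) + \Hilb(K,t).
\end{equation*}
Since each $\dim_\C R_i < \infty$ and $K_i \geq 0$, this identity makes the equivalence transparent: $\theta$ is a non-zero divisor (equivalently $K=0$) if and only if $\Hilb(R/(\theta),t) = (1-t^d)\Hilb(R,t)$.

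For the inductive step, assume the statement for sequences of length $p-1$ and set $\bar R \coloneqq R/(\theta_1,\ldots,\theta_{p-1})$. By definition, $(\theta_1,\ldots,\theta_p)$ is a regular sequence in $R$ if and only if $(\theta_1,\ldots,\theta_{p-1})$ is a regular sequence in $R$ and the image of $\theta_p$ is a non-zero divisor in $\bar R$. Applying the base case to $\bar R$ and the inductive hypothesis to $R$ gives
\begin{equation*}
\Hilb(R/(\theta_1,\ldots,\theta_p),t) = (1-t^{\deg \theta_p})\,\Hilb(\bar R, t) = \Hilb(R,t) \cdot \prod_{k=1}^{p}(1-t^{\deg \theta_k})
\end{equation*}
in the regular-sequence direction. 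For the converse, I would observe that at each step the Hilbert series of $\bar R / (\theta_p)$ always satisfies $\Hilb(\bar R/(\theta_p),t) \geq (1-t^{\deg \theta_p})\Hilb(\bar R,t)$ coefficient-wise (by the exact sequence above), so the product formula forces equality at every stage, forcing each $\theta_k$ to be a non-zero divisor modulo its predecessors.

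The only subtlety is keeping the grading-shift bookkeeping straight in the exact sequence, and confirming that the finite-dimensionality hypothesis $\dim_\C R_i < \infty$ lets one legitimately compare Hilbert series coefficient-by-coefficient; neither of these is a genuine obstacle, which is why the result is classical. Accordingly, I would simply cite \cite[Chapter~I, Section~5]{Stan96} (or \cite[Proposition~5.1]{FHM}) rather than reproduce the argument in full, and reserve the reasoning above for the reader's convenience.
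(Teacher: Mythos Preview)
The paper does not give its own proof of this lemma; it simply records it as a well-known fact and cites \cite[Chapter~I, Section~5]{Stan96} and \cite[Proposition~5.1]{FHM}. Your proposal does exactly the same at the end, so in that sense you are fully aligned with the paper.

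Since you also supply a sketch, one remark on it: the forward direction via the shifted exact sequence $0 \to K \to R(-d) \xrightarrow{\cdot\theta} R \to R/(\theta) \to 0$ is clean and correct, and the induction for that direction is fine. The converse direction for $p>1$, however, needs more care than your one-line ``the product formula forces equality at every stage'' suggests. The coefficient-wise inequality $\Hilb(\bar R/(\theta_p),t) \succeq (1-t^{\deg\theta_p})\Hilb(\bar R,t)$ holds at each individual step, but these inequalities do not chain together naively: multiplying an inequality of formal power series by a factor $(1-t^{d_k})$ with a negative coefficient can reverse it. Establishing the global inequality $\Hilb(R/(\theta_1,\ldots,\theta_p),t)\succeq \Hilb(R,t)\prod_k(1-t^{d_k})$ directly (so that equality at the end propagates backward) typically goes through the Koszul complex or an equivalent device, which is exactly what the cited references do. Since you explicitly defer to those references rather than claim the sketch as a complete proof, this is not a defect in your write-up---just something to be aware of if you ever expand the argument.
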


\begin{lemma} \label{lemma:regular_sequence_fact2}
Let $R$ be a polynomial ring $\C[y_1,\ldots,y_n]$. 
A sequence of homogeneous polynomials $\theta_1,\ldots,\theta_n \in R$ of positive degrees is a regular sequence in $R$ if and only if the solution set of the equations $\theta_1=0,\ldots,\theta_n=0$ in $\C^n$ consists only of the origin $\{0\}$.
\end{lemma}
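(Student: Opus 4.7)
\textbf{Proof plan for Lemma~\ref{lemma:regular_sequence_fact2}.}
The plan is to exploit the fact that a polynomial ring is Cohen--Macaulay so that being a regular sequence of the maximal possible length $n$ is equivalent to the quotient being a zero-dimensional (Artinian) ring, and then translate the latter condition geometrically via the Nullstellensatz and the homogeneity hypothesis. The base observation is that $\Hilb(R,t) = 1/(1-t)^n$ for $R = \C[y_1,\ldots,y_n]$.

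For the forward direction, assume $\theta_1,\ldots,\theta_n$ is a regular sequence of positive-degree homogeneous elements. By Lemma~\ref{lemma:regular_sequence_fact1} we obtain
\begin{align*}
\Hilb\bigl(R/(\theta_1,\ldots,\theta_n),\, t\bigr)
\;=\; \frac{\prod_{k=1}^{n}(1 - t^{\deg \theta_k})}{(1-t)^n}
\;=\; \prod_{k=1}^{n} \bigl(1 + t + \cdots + t^{\deg \theta_k -1}\bigr),
\end{align*}
which is a polynomial in $t$. Hence $R/(\theta_1,\ldots,\theta_n)$ is a finite-dimensional $\C$-vector space, so it is an Artinian ring and the affine variety $V(\theta_1,\ldots,\theta_n) \subset \C^n$ is a finite set. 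Because each $\theta_k$ is homogeneous of positive degree, the variety $V(\theta_1,\ldots,\theta_n)$ is a cone (stable under scalar multiplication by $\C^\times$), so the only possibility for a finite cone is $\{0\}$.

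For the reverse direction, assume $V(\theta_1,\ldots,\theta_n) = \{0\}$. By Hilbert's Nullstellensatz, the radical of the ideal $I = (\theta_1,\ldots,\theta_n)$ equals the irrelevant maximal ideal $(y_1,\ldots,y_n)$. Thus each $y_i$ is nilpotent modulo $I$, and $R/I$ is generated as a $\C$-vector space by finitely many monomials, hence is Artinian of Krull dimension $0$. Since $R$ has Krull dimension $n$ and the quotient has dimension $n - n = 0$, the sequence $\theta_1,\ldots,\theta_n$ is a homogeneous system of parameters for $R$. The polynomial ring $R$ is Cohen--Macaulay, so any homogeneous system of parameters is automatically a regular sequence; this is the main classical input and is the only step that is not purely formal.

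The principal obstacle is precisely the invocation of Cohen--Macaulayness in the reverse direction: one needs to know that in $R = \C[y_1,\ldots,y_n]$ a length-$n$ homogeneous system of parameters is regular. If one prefers an elementary argument avoiding that theorem, one can instead note that for graded Cohen--Macaulay quotients the Hilbert series factors as in Lemma~\ref{lemma:regular_sequence_fact1}, and then invert the direction of Lemma~\ref{lemma:regular_sequence_fact1} by showing inductively that $\theta_k$ is a non-zero-divisor on $R/(\theta_1,\ldots,\theta_{k-1})$; any graded associated prime of that quotient would have to contain some $\theta_k$ and thus be killed by passing to $R/I$, forcing regularity via the dimension count. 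Either route is standard commutative algebra, and the geometric content is already carried by the Nullstellensatz plus the homogeneity observation.
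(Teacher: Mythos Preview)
Your proof is correct and follows the standard route through Cohen--Macaulayness of the polynomial ring. Note, however, that the paper does not actually prove this lemma: it is stated as a well-known fact in commutative algebra with a citation to \cite[Chapter~I, Section~5]{Stan96} (and \cite[Proposition~5.1]{FHM}), so there is no in-paper argument to compare against. Your write-up supplies exactly the kind of justification those references contain: the forward direction via Lemma~\ref{lemma:regular_sequence_fact1} and the cone observation, and the reverse direction via the Nullstellensatz and the fact that a homogeneous system of parameters in a Cohen--Macaulay graded ring is automatically regular. The only comment is that your final paragraph's ``elementary alternative'' is a bit vague as stated (the sketch about associated primes would need more care to be a self-contained argument), but since your main argument already invokes Cohen--Macaulayness cleanly, that aside is unnecessary.
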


We remark that the number of the homogeneous polynomials $\theta_1,\ldots,\theta_n$ is equal to the number of the variables $y_1,\ldots,y_n$ in the polynomial ring $\C[y_1,\ldots,y_n]$ in Lemma~\ref{lemma:regular_sequence_fact2}. 
By using two lemmas above, we compute the Hilbert series of $\C[x_{ij} \mid 1 \leq j < i \leq n]/(F_{i,j} \mid j \in [n-1] \ \textrm{and} \ h(j) < i \leq n)$ and $Q_n/(q_{rs} \mid 2 \leq s \leq n \ \textrm{and} \ 1 \leq r \leq n-h(n+1-s))$.

\begin{lemma} \label{lemma:regular_sequence1}
The polynomials $F_{i,j} \ (1 \leq j < i \leq n)$ form a regular sequence in the polynomial ring $\C[x_{ij} \mid 1 \leq j < i \leq n]$.
\end{lemma}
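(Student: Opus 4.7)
The plan is to invoke Lemma~\ref{lemma:regular_sequence_fact2} directly. First observe that the number of polynomials $\{F_{i,j} \mid 1 \leq j < i \leq n\}$ is $\binom{n}{2}$, which equals the number of variables of the polynomial ring $R = \C[x_{ij} \mid 1 \leq j < i \leq n]$. By Lemma~\ref{lemma:grading2}, each $F_{i,j}$ is homogeneous of positive degree $2(i-j+1)$. Hence Lemma~\ref{lemma:regular_sequence_fact2} reduces the claim to the set-theoretic statement that
\begin{align*}
\{g \in \Omega_e^\circ \mid F_{i,j}(g) = 0 \ \textrm{for all} \ 1 \leq j < i \leq n\} = \{I\},
\end{align*}
where $I \in \Omega_e^\circ$ corresponds to the origin of $\C^{\binom{n}{2}}$.

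Next I would translate this vanishing into a geometric condition on the flag $gF_\bullet$. By \eqref{eq:FijN}, the simultaneous vanishing $F_{i,j}(g) = 0$ for all $j<i$ is equivalent to $g^{-1}Ng$ being upper triangular, i.e. to $N$ preserving each vector space $gF_i$ spanned by the first $i$ columns of $g$. In other words, the flag $gF_\bullet$ is $N$-stable.

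The final step is to observe that, because $N$ is the regular nilpotent Jordan block in \eqref{eq:regular nilpotent} satisfying $Ne_k = e_{k-1}$ (with $Ne_1=0$), a short induction on $k$ shows that the only $N$-invariant $k$-dimensional subspace is $\ker N^k = \Span_\C\{e_1,\ldots,e_k\} = F_k$: any $v \in V$ with maximal index of non-vanishing coordinate equal to $m$ generates $F_m$ under iterated application of $N$, and a $V$ of dimension $k$ cannot fit inside $F_{k-1}$. Therefore the standard flag $F_\bullet$ is the unique $N$-stable complete flag. Consequently $gF_\bullet = F_\bullet$, so $g$ is upper triangular; combined with $g$ being lower unitriangular, this forces $g = I$, completing the proof.

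There is essentially no obstacle: once the counting of equations and variables matches and Lemma~\ref{lemma:regular_sequence_fact2} is invoked, the remaining content is the classical fact that a regular nilpotent operator fixes a unique complete flag.
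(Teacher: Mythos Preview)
Your proof is correct and follows essentially the same approach as the paper: both invoke Lemma~\ref{lemma:regular_sequence_fact2} and reduce to showing that the common zero locus of the $F_{i,j}$ in $\Omega_e^\circ$ is the single point $eB$, which is precisely $\Hess(N,id)\cap\Omega_e^\circ$. The only difference is that the paper cites this last fact directly, whereas you unpack it by giving the standard argument that a regular nilpotent operator stabilizes a unique complete flag.
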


\begin{proof}
By Lemma~\ref{lemma:regular_sequence_fact2} it is enough to show that the solution set of the equations $F_{i,j}=0 \ (1 \leq j < i \leq n)$ in $\C^{\frac{1}{2}n(n-1)}$ (with the variables $x_{ij} \ (1 \leq j < i \leq n)$) consists only of the origin $\{0\}$.
The intersection of the zero set of $F_{i,j}$ for all $1 \leq j < i \leq n$ is $\Hess(N,id) \cap \Omega_e^\circ$ by Lemma~\ref{lemma:defining equation}. 
However, since $\Hess(N,id)$ consists only of the point $\{eB \}$, this means that the equations $F_{i,j}=0 \ (1 \leq j<i \leq n)$ implies that $x_{ij}=0$ for all $1 \leq j < i \leq n$, as desired. 
\end{proof}

\begin{proposition} \label{proposition:Hilbert1}
The Hilbert series of $\C[x_{ij} \mid 1 \leq j < i \leq n]/(F_{i,j} \mid j \in [n-1] \ \textrm{and} \ h(j) < i \leq n)$ equipped with a grading in \eqref{eq:grading1} is equal to 
\begin{align*}
&\Hilb(\C[x_{ij} \mid 1 \leq j < i \leq n]/(F_{i,j} \mid j \in [n-1] \ \textrm{and} \ h(j) < i \leq n),t) \\
= &\prod_{1 \leq j \leq n-1 \atop j < i \leq h(j)} \frac{1}{1-t^{2(i-j+1)}} \cdot \prod_{k=1}^{n-1} (1+t^2+t^4+\cdots+t^{2k}).
\end{align*}
\end{proposition}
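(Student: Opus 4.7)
The plan is to establish that the subset $S \coloneqq \{F_{i,j} \mid j \in [n-1],\ h(j)<i\leq n\}$ is a regular sequence in $\C[x_{ij} \mid 1\leq j<i\leq n]$, apply Lemma~\ref{lemma:regular_sequence_fact1}, and then simplify the resulting product by two telescoping computations.

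For the regularity step: by Lemma~\ref{lemma:regular_sequence1} the full collection $\{F_{i,j} \mid 1\leq j<i\leq n\}$ is a regular sequence, and by Lemma~\ref{lemma:grading2} each $F_{i,j}$ is homogeneous of positive degree $2(i-j+1)$. It is a standard fact in commutative algebra that in a Noetherian graded ring any permutation of a homogeneous regular sequence of positive-degree elements remains a regular sequence (equivalently, the Koszul complex vanishing is symmetric in its inputs). Reordering so that the elements of $S$ appear first, the initial segment $S$ is then a regular sequence.

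Applying Lemma~\ref{lemma:regular_sequence_fact1} together with $\Hilb(\C[x_{ij} \mid 1\leq j<i\leq n], t) = \prod_{1\leq j<i\leq n}\tfrac{1}{1-t^{2(i-j)}}$, we obtain
\begin{align*}
\Hilb(R/I,t) \;=\; \prod_{1\leq j<i\leq n}\frac{1}{1-t^{2(i-j)}} \cdot \prod_{\substack{j\in[n-1] \\ h(j)<i\leq n}}\bigl(1-t^{2(i-j+1)}\bigr),
\end{align*}
where $R = \C[x_{ij}]$ and $I = (F_{i,j} \mid j\in[n-1],\, h(j)<i\leq n)$. I would then split the denominator for each fixed $j$ into $i\in\{j+1,\ldots,h(j)\}$ and $i\in\{h(j)+1,\ldots,n\}$. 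The ``above $h(j)$'' portion combined with the numerator becomes $\prod_{i=h(j)+1}^{n}\frac{1-t^{2(i-j+1)}}{1-t^{2(i-j)}}$, which telescopes to $\frac{1-t^{2(n-j+1)}}{1-t^{2(h(j)-j+1)}}$. For the ``at most $h(j)$'' portion, I would multiply and divide by $\prod_{i=j+1}^{h(j)}(1-t^{2(i-j+1)})$ and use the auxiliary telescoping $\prod_{i=j+1}^{h(j)}\frac{1-t^{2(i-j+1)}}{1-t^{2(i-j)}}=\frac{1-t^{2(h(j)-j+1)}}{1-t^2}$. The factors $1-t^{2(h(j)-j+1)}$ cancel across the two telescoping pieces, and beyond $\prod_{j=1}^{n-1}\prod_{i=j+1}^{h(j)}\frac{1}{1-t^{2(i-j+1)}}$ one is left with $\prod_{j=1}^{n-1}\frac{1-t^{2(n-j+1)}}{1-t^2}$, which reindexes via $k=n-j$ to $\prod_{k=1}^{n-1}(1+t^2+\cdots+t^{2k})$, matching the claimed formula.

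The main obstacle is the regular-sequence claim for $S$. One cannot invoke Lemma~\ref{lemma:regular_sequence_fact2} directly since $|S|=\sum_{j\in[n-1]}(n-h(j))$ is in general strictly less than the number $\binom{n}{2}$ of variables, and regularity is not preserved under taking arbitrary subsets of sequences in general. The permutation invariance of homogeneous regular sequences is precisely what rescues the argument. Once this is in place, the remaining Hilbert-series manipulation is routine telescoping bookkeeping.
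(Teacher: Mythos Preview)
Your proof is correct and follows essentially the same approach as the paper: establish that the relevant $F_{i,j}$ form a regular sequence (the paper simply asserts that a subsequence of a regular sequence is regular, while you justify this more carefully via permutation invariance of homogeneous regular sequences), apply Lemma~\ref{lemma:regular_sequence_fact1}, and then simplify the resulting product. The only cosmetic difference is in the simplification step: the paper proves the key identity $\prod_{1\le j<i\le n}\frac{1}{1-t^{2(i-j)}}=\prod_{1\le j<i\le n}\frac{1}{1-t^{2(i-j+1)}}\cdot\prod_{k=1}^{n-1}(1+t^2+\cdots+t^{2k})$ by a global multiset comparison of exponents, whereas you obtain the same cancellation by a per-$j$ telescoping argument.
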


\begin{proof}
Since a subsequence of a regular sequence is also a regular sequence from the definition of a regular sequence, by Lemma~\ref{lemma:regular_sequence1} the polynomials $F_{i,j} \ (j \in [n-1],  h(j) < i \leq n)$ form a regular sequence in the polynomial ring $\C[x_{ij} \mid 1 \leq j < i \leq n]$.
Thus, it follows from Lemma~\ref{lemma:regular_sequence_fact1} that 
\begin{align} \label{eq:Hilbert1_proof1}
&\Hilb(\C[x_{ij} \mid 1 \leq j < i \leq n]/(F_{i,j} \mid j \in [n-1] \ \textrm{and} \ h(j) < i \leq n),t) \\
= &\Hilb(\C[x_{ij} \mid 1 \leq j < i \leq n],t) \cdot \prod_{1 \leq j \leq n-1 \atop h(j) < i \leq n} (1-t^{\deg F_{i,j}}) \notag \\
= &\prod_{1 \leq j < i \leq n} \frac{1}{1-t^{2(i-j)}} \cdot \prod_{1 \leq j \leq n-1 \atop h(j) < i \leq n} (1-t^{2(i-j+1)}) \ \ \ \ \ \textrm{(by Lemma~\ref{lemma:grading2})}. \notag
\end{align}
Here, we note that 
\begin{align*} 
\prod_{1 \leq j < i \leq n} \frac{1}{1-t^{2(i-j)}} \cdot (1-t^2)^{n-1} = \prod_{1 \leq j < i \leq n} \frac{1}{1-t^{2(i-j+1)}} \cdot (1-t^4)(1-t^6) \cdots (1-t^{2n}). 
\end{align*}
In fact, exponents appeared on the left hand side and exponents on the right hand side are described as numbers in shaded boxes of the left figure and the right figure in Figure~\ref{picture:proofHilbert}, respectively.
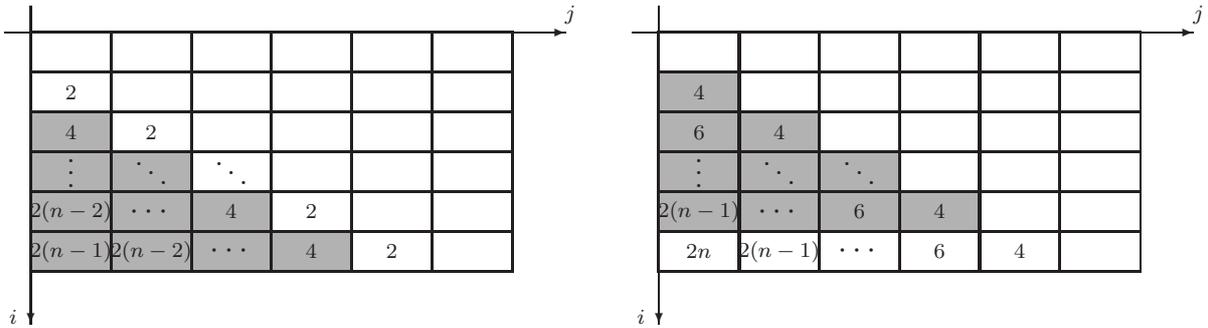
\begin{figure}[h]
\begin{center}
\begin{picture}(430,90)

\put(0,48){\colorbox{gray}}
\put(0,52){\colorbox{gray}}
\put(0,57){\colorbox{gray}}
\put(4,48){\colorbox{gray}}
\put(4,52){\colorbox{gray}}
\put(4,57){\colorbox{gray}}
\put(9,48){\colorbox{gray}}
\put(9,52){\colorbox{gray}}
\put(9,57){\colorbox{gray}}
\put(15,48){\colorbox{gray}}
\put(15,52){\colorbox{gray}}
\put(15,57){\colorbox{gray}}
\put(19,48){\colorbox{gray}}
\put(19,52){\colorbox{gray}}
\put(19,57){\colorbox{gray}}
\put(24,48){\colorbox{gray}}
\put(24,52){\colorbox{gray}}
\put(24,57){\colorbox{gray}}


\put(0,33){\colorbox{gray}}
\put(0,37){\colorbox{gray}}
\put(0,42){\colorbox{gray}}
\put(4,33){\colorbox{gray}}
\put(4,37){\colorbox{gray}}
\put(4,42){\colorbox{gray}}
\put(9,33){\colorbox{gray}}
\put(9,37){\colorbox{gray}}
\put(9,42){\colorbox{gray}}
\put(15,33){\colorbox{gray}}
\put(15,37){\colorbox{gray}}
\put(15,42){\colorbox{gray}}
\put(19,33){\colorbox{gray}}
\put(19,37){\colorbox{gray}}
\put(19,42){\colorbox{gray}}
\put(24,33){\colorbox{gray}}
\put(24,37){\colorbox{gray}}
\put(24,42){\colorbox{gray}}

\put(30,33){\colorbox{gray}}
\put(30,37){\colorbox{gray}}
\put(30,42){\colorbox{gray}}
\put(34,33){\colorbox{gray}}
\put(34,37){\colorbox{gray}}
\put(34,42){\colorbox{gray}}
\put(39,33){\colorbox{gray}}
\put(39,37){\colorbox{gray}}
\put(39,42){\colorbox{gray}}
\put(45,33){\colorbox{gray}}
\put(45,37){\colorbox{gray}}
\put(45,42){\colorbox{gray}}
\put(49,33){\colorbox{gray}}
\put(49,37){\colorbox{gray}}
\put(49,42){\colorbox{gray}}
\put(54,33){\colorbox{gray}}
\put(54,37){\colorbox{gray}}
\put(54,42){\colorbox{gray}}


\put(0,18){\colorbox{gray}}
\put(0,22){\colorbox{gray}}
\put(0,27){\colorbox{gray}}
\put(4,18){\colorbox{gray}}
\put(4,22){\colorbox{gray}}
\put(4,27){\colorbox{gray}}
\put(9,18){\colorbox{gray}}
\put(9,22){\colorbox{gray}}
\put(9,27){\colorbox{gray}}
\put(15,18){\colorbox{gray}}
\put(15,22){\colorbox{gray}}
\put(15,27){\colorbox{gray}}
\put(19,18){\colorbox{gray}}
\put(19,22){\colorbox{gray}}
\put(19,27){\colorbox{gray}}
\put(24,18){\colorbox{gray}}
\put(24,22){\colorbox{gray}}
\put(24,27){\colorbox{gray}}

\put(30,18){\colorbox{gray}}
\put(30,22){\colorbox{gray}}
\put(30,27){\colorbox{gray}}
\put(34,18){\colorbox{gray}}
\put(34,22){\colorbox{gray}}
\put(34,27){\colorbox{gray}}
\put(39,18){\colorbox{gray}}
\put(39,22){\colorbox{gray}}
\put(39,27){\colorbox{gray}}
\put(45,18){\colorbox{gray}}
\put(45,22){\colorbox{gray}}
\put(45,27){\colorbox{gray}}
\put(49,18){\colorbox{gray}}
\put(49,22){\colorbox{gray}}
\put(49,27){\colorbox{gray}}
\put(54,18){\colorbox{gray}}
\put(54,22){\colorbox{gray}}
\put(54,27){\colorbox{gray}}

\put(60,18){\colorbox{gray}}
\put(60,22){\colorbox{gray}}
\put(60,27){\colorbox{gray}}
\put(64,18){\colorbox{gray}}
\put(64,22){\colorbox{gray}}
\put(64,27){\colorbox{gray}}
\put(69,18){\colorbox{gray}}
\put(69,22){\colorbox{gray}}
\put(69,27){\colorbox{gray}}
\put(75,18){\colorbox{gray}}
\put(75,22){\colorbox{gray}}
\put(75,27){\colorbox{gray}}
\put(79,18){\colorbox{gray}}
\put(79,22){\colorbox{gray}}
\put(79,27){\colorbox{gray}}
\put(84,18){\colorbox{gray}}
\put(84,22){\colorbox{gray}}
\put(84,27){\colorbox{gray}}


\put(0,3){\colorbox{gray}}
\put(0,7){\colorbox{gray}}
\put(0,12){\colorbox{gray}}
\put(4,3){\colorbox{gray}}
\put(4,7){\colorbox{gray}}
\put(4,12){\colorbox{gray}}
\put(9,3){\colorbox{gray}}
\put(9,7){\colorbox{gray}}
\put(9,12){\colorbox{gray}}
\put(15,3){\colorbox{gray}}
\put(15,7){\colorbox{gray}}
\put(15,12){\colorbox{gray}}
\put(19,3){\colorbox{gray}}
\put(19,7){\colorbox{gray}}
\put(19,12){\colorbox{gray}}
\put(24,3){\colorbox{gray}}
\put(24,7){\colorbox{gray}}
\put(24,12){\colorbox{gray}}

\put(30,3){\colorbox{gray}}
\put(30,7){\colorbox{gray}}
\put(30,12){\colorbox{gray}}
\put(34,3){\colorbox{gray}}
\put(34,7){\colorbox{gray}}
\put(34,12){\colorbox{gray}}
\put(39,3){\colorbox{gray}}
\put(39,7){\colorbox{gray}}
\put(39,12){\colorbox{gray}}
\put(45,3){\colorbox{gray}}
\put(45,7){\colorbox{gray}}
\put(45,12){\colorbox{gray}}
\put(49,3){\colorbox{gray}}
\put(49,7){\colorbox{gray}}
\put(49,12){\colorbox{gray}}
\put(54,3){\colorbox{gray}}
\put(54,7){\colorbox{gray}}
\put(54,12){\colorbox{gray}}

\put(60,3){\colorbox{gray}}
\put(60,7){\colorbox{gray}}
\put(60,12){\colorbox{gray}}
\put(64,3){\colorbox{gray}}
\put(64,7){\colorbox{gray}}
\put(64,12){\colorbox{gray}}
\put(69,3){\colorbox{gray}}
\put(69,7){\colorbox{gray}}
\put(69,12){\colorbox{gray}}
\put(75,3){\colorbox{gray}}
\put(75,7){\colorbox{gray}}
\put(75,12){\colorbox{gray}}
\put(79,3){\colorbox{gray}}
\put(79,7){\colorbox{gray}}
\put(79,12){\colorbox{gray}}
\put(84,3){\colorbox{gray}}
\put(84,7){\colorbox{gray}}
\put(84,12){\colorbox{gray}}

\put(90,3){\colorbox{gray}}
\put(90,7){\colorbox{gray}}
\put(90,12){\colorbox{gray}}
\put(94,3){\colorbox{gray}}
\put(94,7){\colorbox{gray}}
\put(94,12){\colorbox{gray}}
\put(99,3){\colorbox{gray}}
\put(99,7){\colorbox{gray}}
\put(99,12){\colorbox{gray}}
\put(105,3){\colorbox{gray}}
\put(105,7){\colorbox{gray}}
\put(105,12){\colorbox{gray}}
\put(109,3){\colorbox{gray}}
\put(109,7){\colorbox{gray}}
\put(109,12){\colorbox{gray}}
\put(114,3){\colorbox{gray}}
\put(114,7){\colorbox{gray}}
\put(114,12){\colorbox{gray}}


\put(0,0){\framebox(30,15){\tiny $2(n-1)$}}
\put(30,0){\framebox(30,15){\tiny $2(n-2)$}}
\put(60,0){\framebox(30,15){$\cdots$}}
\put(90,0){\framebox(30,15){\tiny $4$}}
\put(120,0){\framebox(30,15){\tiny $2$}}
\put(150,0){\framebox(30,15)}
\put(0,15){\framebox(30,15){\tiny $2(n-2)$}}
\put(30,15){\framebox(30,15){$\cdots$}}
\put(60,15){\framebox(30,15){\tiny $4$}}
\put(90,15){\framebox(30,15){\tiny $2$}}
\put(120,15){\framebox(30,15)}
\put(150,15){\framebox(30,15)}
\put(0,30){\framebox(30,15){\vspace{6pt}$\vdots$}}
\put(30,30){\framebox(30,15){\vspace{6pt}$\ddots$}}
\put(60,30){\framebox(30,15){\vspace{6pt}$\ddots$}}
\put(90,30){\framebox(30,15)}
\put(120,30){\framebox(30,15)}
\put(150,30){\framebox(30,15)}
\put(0,45){\framebox(30,15){\tiny $4$}}
\put(30,45){\framebox(30,15){\tiny $2$}}
\put(60,45){\framebox(30,15)}
\put(90,45){\framebox(30,15)}
\put(120,45){\framebox(30,15)}
\put(150,45){\framebox(30,15)}
\put(0,60){\framebox(30,15){\tiny $2$}}
\put(30,60){\framebox(30,15)}
\put(60,60){\framebox(30,15)}
\put(90,60){\framebox(30,15)}
\put(120,60){\framebox(30,15)}
\put(150,60){\framebox(30,15)}
\put(0,75){\framebox(30,15)}
\put(30,75){\framebox(30,15)}
\put(60,75){\framebox(30,15)}
\put(90,75){\framebox(30,15)}
\put(120,75){\framebox(30,15)}
\put(150,75){\framebox(30,15)}

\put(0,100){\vector(0,-1){120}}
\put(-10,90){\vector(1,0){210}}

\put(-8,-20){\tiny $i$}
\put(200,95){\tiny $j$}

\hspace{-65pt}

\put(300,63){\colorbox{gray}}
\put(300,67){\colorbox{gray}}
\put(300,72){\colorbox{gray}}
\put(304,63){\colorbox{gray}}
\put(304,67){\colorbox{gray}}
\put(304,72){\colorbox{gray}}
\put(309,63){\colorbox{gray}}
\put(309,67){\colorbox{gray}}
\put(309,72){\colorbox{gray}}
\put(315,63){\colorbox{gray}}
\put(315,67){\colorbox{gray}}
\put(315,72){\colorbox{gray}}
\put(319,63){\colorbox{gray}}
\put(319,67){\colorbox{gray}}
\put(319,72){\colorbox{gray}}
\put(324,63){\colorbox{gray}}
\put(324,67){\colorbox{gray}}
\put(324,72){\colorbox{gray}}

\put(300,48){\colorbox{gray}}
\put(300,52){\colorbox{gray}}
\put(300,57){\colorbox{gray}}
\put(304,48){\colorbox{gray}}
\put(304,52){\colorbox{gray}}
\put(304,57){\colorbox{gray}}
\put(309,48){\colorbox{gray}}
\put(309,52){\colorbox{gray}}
\put(309,57){\colorbox{gray}}
\put(315,48){\colorbox{gray}}
\put(315,52){\colorbox{gray}}
\put(315,57){\colorbox{gray}}
\put(319,48){\colorbox{gray}}
\put(319,52){\colorbox{gray}}
\put(319,57){\colorbox{gray}}
\put(324,48){\colorbox{gray}}
\put(324,52){\colorbox{gray}}
\put(324,57){\colorbox{gray}}

\put(330,48){\colorbox{gray}}
\put(330,52){\colorbox{gray}}
\put(330,57){\colorbox{gray}}
\put(334,48){\colorbox{gray}}
\put(334,52){\colorbox{gray}}
\put(334,57){\colorbox{gray}}
\put(339,48){\colorbox{gray}}
\put(339,52){\colorbox{gray}}
\put(339,57){\colorbox{gray}}
\put(345,48){\colorbox{gray}}
\put(345,52){\colorbox{gray}}
\put(345,57){\colorbox{gray}}
\put(349,48){\colorbox{gray}}
\put(349,52){\colorbox{gray}}
\put(349,57){\colorbox{gray}}
\put(354,48){\colorbox{gray}}
\put(354,52){\colorbox{gray}}
\put(354,57){\colorbox{gray}}

\put(300,33){\colorbox{gray}}
\put(300,37){\colorbox{gray}}
\put(300,42){\colorbox{gray}}
\put(304,33){\colorbox{gray}}
\put(304,37){\colorbox{gray}}
\put(304,42){\colorbox{gray}}
\put(309,33){\colorbox{gray}}
\put(309,37){\colorbox{gray}}
\put(309,42){\colorbox{gray}}
\put(315,33){\colorbox{gray}}
\put(315,37){\colorbox{gray}}
\put(315,42){\colorbox{gray}}
\put(319,33){\colorbox{gray}}
\put(319,37){\colorbox{gray}}
\put(319,42){\colorbox{gray}}
\put(324,33){\colorbox{gray}}
\put(324,37){\colorbox{gray}}
\put(324,42){\colorbox{gray}}

\put(330,33){\colorbox{gray}}
\put(330,37){\colorbox{gray}}
\put(330,42){\colorbox{gray}}
\put(334,33){\colorbox{gray}}
\put(334,37){\colorbox{gray}}
\put(334,42){\colorbox{gray}}
\put(339,33){\colorbox{gray}}
\put(339,37){\colorbox{gray}}
\put(339,42){\colorbox{gray}}
\put(345,33){\colorbox{gray}}
\put(345,37){\colorbox{gray}}
\put(345,42){\colorbox{gray}}
\put(349,33){\colorbox{gray}}
\put(349,37){\colorbox{gray}}
\put(349,42){\colorbox{gray}}
\put(354,33){\colorbox{gray}}
\put(354,37){\colorbox{gray}}
\put(354,42){\colorbox{gray}}

\put(360,33){\colorbox{gray}}
\put(360,37){\colorbox{gray}}
\put(360,42){\colorbox{gray}}
\put(364,33){\colorbox{gray}}
\put(364,37){\colorbox{gray}}
\put(364,42){\colorbox{gray}}
\put(369,33){\colorbox{gray}}
\put(369,37){\colorbox{gray}}
\put(369,42){\colorbox{gray}}
\put(375,33){\colorbox{gray}}
\put(375,37){\colorbox{gray}}
\put(375,42){\colorbox{gray}}
\put(379,33){\colorbox{gray}}
\put(379,37){\colorbox{gray}}
\put(379,42){\colorbox{gray}}
\put(384,33){\colorbox{gray}}
\put(384,37){\colorbox{gray}}
\put(384,42){\colorbox{gray}}

\put(300,18){\colorbox{gray}}
\put(300,22){\colorbox{gray}}
\put(300,27){\colorbox{gray}}
\put(304,18){\colorbox{gray}}
\put(304,22){\colorbox{gray}}
\put(304,27){\colorbox{gray}}
\put(309,18){\colorbox{gray}}
\put(309,22){\colorbox{gray}}
\put(309,27){\colorbox{gray}}
\put(315,18){\colorbox{gray}}
\put(315,22){\colorbox{gray}}
\put(315,27){\colorbox{gray}}
\put(319,18){\colorbox{gray}}
\put(319,22){\colorbox{gray}}
\put(319,27){\colorbox{gray}}
\put(324,18){\colorbox{gray}}
\put(324,22){\colorbox{gray}}
\put(324,27){\colorbox{gray}}

\put(330,18){\colorbox{gray}}
\put(330,22){\colorbox{gray}}
\put(330,27){\colorbox{gray}}
\put(334,18){\colorbox{gray}}
\put(334,22){\colorbox{gray}}
\put(334,27){\colorbox{gray}}
\put(339,18){\colorbox{gray}}
\put(339,22){\colorbox{gray}}
\put(339,27){\colorbox{gray}}
\put(345,18){\colorbox{gray}}
\put(345,22){\colorbox{gray}}
\put(345,27){\colorbox{gray}}
\put(349,18){\colorbox{gray}}
\put(349,22){\colorbox{gray}}
\put(349,27){\colorbox{gray}}
\put(354,18){\colorbox{gray}}
\put(354,22){\colorbox{gray}}
\put(354,27){\colorbox{gray}}

\put(360,18){\colorbox{gray}}
\put(360,22){\colorbox{gray}}
\put(360,27){\colorbox{gray}}
\put(364,18){\colorbox{gray}}
\put(364,22){\colorbox{gray}}
\put(364,27){\colorbox{gray}}
\put(369,18){\colorbox{gray}}
\put(369,22){\colorbox{gray}}
\put(369,27){\colorbox{gray}}
\put(375,18){\colorbox{gray}}
\put(375,22){\colorbox{gray}}
\put(375,27){\colorbox{gray}}
\put(379,18){\colorbox{gray}}
\put(379,22){\colorbox{gray}}
\put(379,27){\colorbox{gray}}
\put(384,18){\colorbox{gray}}
\put(384,22){\colorbox{gray}}
\put(384,27){\colorbox{gray}}

\put(390,18){\colorbox{gray}}
\put(390,22){\colorbox{gray}}
\put(390,27){\colorbox{gray}}
\put(394,18){\colorbox{gray}}
\put(394,22){\colorbox{gray}}
\put(394,27){\colorbox{gray}}
\put(399,18){\colorbox{gray}}
\put(399,22){\colorbox{gray}}
\put(399,27){\colorbox{gray}}
\put(405,18){\colorbox{gray}}
\put(405,22){\colorbox{gray}}
\put(405,27){\colorbox{gray}}
\put(409,18){\colorbox{gray}}
\put(409,22){\colorbox{gray}}
\put(409,27){\colorbox{gray}}
\put(414,18){\colorbox{gray}}
\put(414,22){\colorbox{gray}}
\put(414,27){\colorbox{gray}}

\put(300,0){\framebox(30,15){\tiny $2n$}}
\put(330,0){\framebox(30,15){\tiny $2(n-1)$}}
\put(360,0){\framebox(30,15){$\cdots$}}
\put(390,0){\framebox(30,15){\tiny $6$}}
\put(420,0){\framebox(30,15){\tiny $4$}}
\put(450,0){\framebox(30,15)}
\put(300,15){\framebox(30,15){\tiny $2(n-1)$}}
\put(330,15){\framebox(30,15){$\cdots$}}
\put(360,15){\framebox(30,15){\tiny $6$}}
\put(390,15){\framebox(30,15){\tiny $4$}}
\put(420,15){\framebox(30,15)}
\put(450,15){\framebox(30,15)}
\put(300,30){\framebox(30,15){\vspace{6pt}$\vdots$}}
\put(330,30){\framebox(30,15){\vspace{6pt}$\ddots$}}
\put(360,30){\framebox(30,15){\vspace{6pt}$\ddots$}}
\put(390,30){\framebox(30,15)}
\put(420,30){\framebox(30,15)}
\put(450,30){\framebox(30,15)}
\put(300,45){\framebox(30,15){\tiny $6$}}
\put(330,45){\framebox(30,15){\tiny $4$}}
\put(360,45){\framebox(30,15)}
\put(390,45){\framebox(30,15)}
\put(420,45){\framebox(30,15)}
\put(450,45){\framebox(30,15)}
\put(300,60){\framebox(30,15){\tiny $4$}}
\put(330,60){\framebox(30,15)}
\put(360,60){\framebox(30,15)}
\put(390,60){\framebox(30,15)}
\put(420,60){\framebox(30,15)}
\put(450,60){\framebox(30,15)}
\put(300,75){\framebox(30,15)}
\put(330,75){\framebox(30,15)}
\put(360,75){\framebox(30,15)}
\put(390,75){\framebox(30,15)}
\put(420,75){\framebox(30,15)}
\put(450,75){\framebox(30,15)}

\put(300,100){\vector(0,-1){120}}
\put(290,90){\vector(1,0){210}}

\put(292,-20){\tiny $i$}
\put(500,95){\tiny $j$}
\end{picture}
\end{center}
\vspace{10pt}
\caption{The values $2(i-j)$ and $2(i-j+1)$ for $1 \leq j < i \leq n$.}
\label{picture:proofHilbert}
\end{figure}

This equality leads us to the equality
\begin{align*} 
\prod_{1 \leq j < i \leq n} \frac{1}{1-t^{2(i-j)}} = \prod_{1 \leq j < i \leq n} \frac{1}{1-t^{2(i-j+1)}} \cdot \prod_{k=1}^{n-1} (1+t^2+t^4+ \cdots +t^{2k}),
\end{align*}
so by \eqref{eq:Hilbert1_proof1} one has
\begin{align*} 
&\Hilb(\C[x_{ij} \mid 1 \leq j < i \leq n]/(F_{i,j} \mid j \in [n-1] \ \textrm{and} \ h(j) < i \leq n),t) \\
= &\prod_{1 \leq j < i \leq n} \frac{1}{1-t^{2(i-j+1)}} \cdot \prod_{k=1}^{n-1} (1+t^2+t^4+ \cdots +t^{2k}) \cdot \prod_{1 \leq j \leq n-1 \atop h(j) < i \leq n} (1-t^{2(i-j+1)}) \\
= &\prod_{1 \leq j \leq n-1 \atop j < i \leq h(j)} \frac{1}{1-t^{2(i-j+1)}} \cdot \prod_{k=1}^{n-1} (1+t^2+t^4+\cdots+t^{2k}),
\end{align*}
as desired.
\end{proof}

\begin{lemma} \label{lemma:regular_sequence2}
The polynomials $E_1^{(n)},\ldots,E_n^{(n)}, q_{rs} \ (1 \leq r<s \leq n)$ form a regular sequence in the polynomial ring $\C[x_1,\ldots,x_n, q_{rs} \mid  1 \leq r < s \leq n]$.
\end{lemma}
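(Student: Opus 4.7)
The plan is to apply Lemma~\ref{lemma:regular_sequence_fact2}. First I would check the hypotheses: the polynomial ring $\C[x_1,\ldots,x_n, q_{rs} \mid 1 \leq r < s \leq n]$ has exactly $n + \binom{n}{2}$ variables, and the candidate sequence consists of exactly $n + \binom{n}{2}$ polynomials (the $n$ polynomials $E_1^{(n)}, \ldots, E_n^{(n)}$ together with the $\binom{n}{2}$ polynomials $q_{rs}$). Each $E_i^{(n)}$ is homogeneous of positive degree $2i$ by Lemma~\ref{lemma:grading1}, and each $q_{rs}$ is homogeneous of positive degree $2(s-r+1)$ by \eqref{eq:grading3}. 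So Lemma~\ref{lemma:regular_sequence_fact2} applies, and it is enough to prove that the common zero locus of these polynomials in affine space consists only of the origin.

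To verify this, I would specialize step by step. Imposing the $\binom{n}{2}$ equations $q_{rs}=0$ turns the matrix $M_n$ of Definition~\ref{definition:Eij} into the lower bidiagonal matrix with diagonal entries $x_1,\ldots,x_n$ and $-1$'s on the subdiagonal. Consequently,
\begin{align*}
\det(\lambda I_n - M_n)\big|_{q_{rs}=0} = \prod_{i=1}^n (\lambda - x_i),
\end{align*}
so under this specialization $E_i^{(n)}$ becomes the ordinary $i$-th elementary symmetric polynomial $e_i(x_1,\ldots,x_n)$. The remaining equations $e_i(x_1,\ldots,x_n) = 0$ for all $i \in [n]$ force $\prod_{i=1}^n (\lambda - x_i) = \lambda^n$, hence $x_1 = \cdots = x_n = 0$. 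This shows the common zero locus equals $\{0\}$, and the conclusion of the lemma follows.

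I do not expect any substantive obstacle here: the argument reduces, via specialization, to the classical fact that the ordinary elementary symmetric polynomials $e_1,\ldots,e_n$ cut out only the origin in $\C^n$, which is immediate from the Newton--Vieta relation between roots and coefficients. The key point making this approach work cleanly is the numerical coincidence that the count of generators matches the number of variables, so that Lemma~\ref{lemma:regular_sequence_fact2} is available and we need only check the zero set rather than an iterative non-zero-divisor condition.
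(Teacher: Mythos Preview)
Your proposal is correct and follows essentially the same approach as the paper: both apply Lemma~\ref{lemma:regular_sequence_fact2}, set all $q_{rs}=0$ to reduce $E_i^{(n)}$ to the ordinary elementary symmetric polynomial $e_i$, and then argue that $e_1=\cdots=e_n=0$ forces $x_1=\cdots=x_n=0$. The only cosmetic difference is that you deduce this last step via the factorization $\prod_i(\lambda-x_i)=\lambda^n$, whereas the paper argues inductively (from $e_n=x_1\cdots x_n=0$ some $x_i$ vanishes, etc.); both are standard and equally valid.
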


\begin{proof}
From Lemma~\ref{lemma:regular_sequence_fact2} it suffices to show that the solution set of the equations $E_1^{(n)}=0,\ldots,E_n^{(n)}=0, q_{rs}=0 \ (1 \leq r<s \leq n)$ in $\C^{\frac{1}{2}n(n+1)}$ (with the variables $x_1,\ldots,x_n, q_{rs} \ (1 \leq r < s \leq n)$) consists only of the origin $\{0\}$.
Since $q_{rs}=0$ for all $1 \leq r<s \leq n$, $E_1^{(n)}=0,\ldots,E_n^{(n)}=0$ implies that $e_1(x_1, \ldots, x_n)=0,\ldots,e_n(x_1, \ldots, x_n)=0$ where $e_i(x_1, \ldots, x_n)$ is the (ordinary) $i$-th elementary symmetric polynomial in the variables $x_1,\ldots,x_n$. 
Then one can easily see that $x_i=0$ for all $i \in [n]$. 
In fact, $x_1x_2 \cdots x_n = e_n(x_1, \ldots, x_n) = 0$ implies that some $x_i$ must be equal to $0$. 
Without loss of generality, we may assume that $x_n = 0$. 
This implies that $e_i(x_1, \ldots, x_{n-1}) = 0$ for all $i \in [n-1]$.
Proceeding in this manner, we conclude that $x_i=0$ for all $i \in [n]$.
Thus, the equations $E_1^{(n)}=0,\ldots,E_n^{(n)}=0, q_{rs}=0 \ (1 \leq r<s \leq n)$ implies that $x_i=0$ for all $i \in [n]$ and $q_{rs}=0$ for all $1 \leq r<s \leq n$, as desired. 
\end{proof}

\begin{proposition} \label{proposition:Hilbert2}
The Hilbert series of $Q_n/(q_{rs} \mid 2 \leq s \leq n \ \textrm{and} \ 1 \leq r \leq n-h(n+1-s))$ equipped with a grading in \eqref{eq:grading2} and \eqref{eq:grading3} is equal to 
\begin{align*}
&\Hilb(Q_n/(q_{rs} \mid 2 \leq s \leq n \ \textrm{and} \ 1 \leq r \leq n-h(n+1-s)),t) \\
= &\prod_{1 \leq j \leq n-1 \atop j < i \leq h(j)} \frac{1}{1-t^{2(i-j+1)}} \cdot \prod_{k=1}^{n-1} (1+t^2+t^4+\cdots+t^{2k}).
\end{align*}
\end{proposition}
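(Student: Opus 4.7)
The plan is to present the left-hand quotient as a concrete quotient of the polynomial ring $R \coloneqq \C[x_1,\ldots,x_n, q_{rs} \mid 1 \leq r<s \leq n]$ and compute its Hilbert series via Lemma~\ref{lemma:regular_sequence_fact1}, in parallel with Proposition~\ref{proposition:Hilbert1}. Writing $S$ for the set of pairs $(r,s)$ with $2 \leq s \leq n$ and $1 \leq r \leq n-h(n+1-s)$, by definition of $Q_n$ one has
\begin{align*}
Q_n/(q_{rs} \mid (r,s) \in S) = R/\big(E_1^{(n)}, \ldots, E_n^{(n)}, q_{rs} \mid (r,s) \in S\big).
\end{align*}
Lemma~\ref{lemma:regular_sequence2} guarantees that $E_1^{(n)}, \ldots, E_n^{(n)}$ together with the full collection of $q_{rs}$'s is a regular sequence in $R$. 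I would then extract the subsequence $E_1^{(n)}, \ldots, E_n^{(n)}, \{q_{rs} \mid (r,s) \in S\}$, which is again a regular sequence by the same subsequence argument used in the proof of Proposition~\ref{proposition:Hilbert1}.

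Applying Lemma~\ref{lemma:regular_sequence_fact1}, together with $\deg E_i^{(n)} = 2i$ from Lemma~\ref{lemma:grading1} and $\deg q_{rs} = 2(s-r+1)$ from \eqref{eq:grading3}, I would obtain $\Hilb(R,t)$ multiplied by $\prod_{i=1}^n (1-t^{2i})$ and by $\prod_{(r,s) \in S}(1-t^{2(s-r+1)})$. Next, the change of indices $(r,s) \leftrightarrow (i,j) \coloneqq (n+1-r,\, n+1-s)$ restricts to a bijection between $S$ and the set $\{(i,j) \mid j \in [n-1],\, h(j) < i \leq n\}$, and preserves the exponent $s-r+1 = i-j+1$. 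Under this reindexing, the factors coming from $(r,s) \in S$ cancel exactly against the corresponding $(i,j)$-factors in the denominator of $\Hilb(R,t)$, leaving only the indices $j \in [n-1],\, j < i \leq h(j)$. Finally, the elementary identity $\prod_{i=1}^n(1-t^{2i})/(1-t^2)^n = \prod_{k=1}^{n-1}(1+t^2+\cdots+t^{2k})$ produces the claimed formula.

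The main subtlety here is the justification of the subsequence step: Lemma~\ref{lemma:regular_sequence2} gives a regular sequence in a fixed order, and extracting only the $q_{rs}$'s with $(r,s) \in S$ ultimately rests on the standard fact that homogeneous regular sequences of positive degree in a graded polynomial ring over a field are permutable. Beyond that, the remainder is careful bookkeeping of indices, mirroring the diagrammatic cancellation in Figure~\ref{picture:proofHilbert} but carried out on the $q_{rs}$ side of the correspondence.
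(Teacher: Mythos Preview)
Your proof is correct and follows essentially the same approach as the paper: the paper splits the computation into two applications of Lemma~\ref{lemma:regular_sequence_fact1} (first for $E_1^{(n)},\ldots,E_n^{(n)}$ in $R$ to compute $\Hilb(Q_n,t)$, then for the $q_{rs}$'s in $Q_n$), whereas you collapse these into a single application to the combined subsequence in $R$, but the underlying regular-sequence argument, index substitution $(r,s)\mapsto(n+1-r,n+1-s)$, and cancellation are identical. Your explicit remark about permutability of homogeneous regular sequences is a welcome clarification of what the paper leaves implicit when it invokes ``a subsequence of a regular sequence is also a regular sequence.''
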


\begin{proof}
Recall from the definition \eqref{eq:Qn} that 
\begin{align*} 
Q_n = \C[x_1,\ldots,x_n, q_{rs} \mid  1 \leq r < s \leq n]/(E_1^{(n)}, \ldots, E_n^{(n)}).
\end{align*}
Lemma~\ref{lemma:regular_sequence2} implies that the monomials $q_{rs} \ (2 \leq s \leq n, 1 \leq r \leq n-h(n+1-s))$ form a regular sequence in $Q_n$ by the definition of a regular sequence.
Hence by Lemma~\ref{lemma:regular_sequence_fact1}, we have 
\begin{align} \label{eq:Hilbert2_proof1}
&\Hilb(Q_n/(q_{rs} \mid 2 \leq s \leq n \ \textrm{and} \ 1 \leq r \leq n-h(n+1-s)),t) \\
= &\Hilb(Q_n,t) \cdot \prod_{2 \leq s \leq n \atop 1 \leq r \leq n-h(n+1-s)} (1-t^{\deg q_{rs}}) \notag \\
= &\Hilb(Q_n,t) \cdot \prod_{2 \leq s \leq n \atop 1 \leq r \leq n-h(n+1-s)} (1-t^{2(s-r+1)}). \notag
\end{align}
Since a subsequence of a regular sequence is again a regular sequence from the definition of a regular sequence, the polynomials $E_1^{(n)},\ldots,E_n^{(n)}$ form a regular sequence in the polynomial ring $\C[x_1,\ldots,x_n, q_{rs} \mid  1 \leq r < s \leq n]$ by Lemma~\ref{lemma:regular_sequence2}. 
By using Lemma~\ref{lemma:regular_sequence_fact1} again, one has 
\begin{align} \label{eq:Hilbert2_proof2}
\Hilb(Q_n,t) =& \Hilb(\C[x_1,\ldots,x_n, q_{rs} \mid  1 \leq r < s \leq n],t) \cdot \prod_{k=1}^n (1-t^{\deg E_k^{(n)}}) \\
= &\frac{1}{(1-t^2)^n} \cdot \prod_{1 \leq r < s \leq n} \frac{1}{1-t^{2(s-r+1)}} \cdot \prod_{k=1}^n (1-t^{2k}) \ \ \ \ \ \textrm{(by Lemma~\ref{lemma:grading1})} \notag \\
=& \prod_{1 \leq r < s \leq n} \frac{1}{1-t^{2(s-r+1)}} \cdot \prod_{k=1}^{n-1} (1+t^2+t^4+ \cdots +t^{2k}). \notag
\end{align}
By \eqref{eq:Hilbert2_proof1} and \eqref{eq:Hilbert2_proof2} we obtain
\begin{align*} 
&\Hilb(Q_n/(q_{rs} \mid 2 \leq s \leq n \ \textrm{and} \ 1 \leq r \leq n-h(n+1-s)),t) \\
= & \prod_{1 \leq r < s \leq n} \frac{1}{1-t^{2(s-r+1)}} \cdot \prod_{k=1}^{n-1} (1+t^2+t^4+ \cdots +t^{2k}) \cdot \prod_{2 \leq s \leq n \atop 1 \leq r \leq n-h(n+1-s)} (1-t^{2(s-r+1)}) \\
= & \prod_{1 \leq j < i \leq n} \frac{1}{1-t^{2(i-j+1)}} \cdot \prod_{k=1}^{n-1} (1+t^2+t^4+ \cdots +t^{2k}) \cdot \prod_{1 \leq j \leq n-1 \atop h(j)+1 \leq i \leq n} (1-t^{2(i-j+1)}) \\
& \hspace{100pt} (\textrm{by setting} \ i=n+1-r \ \textrm{and} \ j=n+1-s \ \textrm{in the third product}) \\
= &\prod_{1 \leq j \leq n-1 \atop j < i \leq h(j)} \frac{1}{1-t^{2(i-j+1)}} \cdot \prod_{k=1}^{n-1} (1+t^2+t^4+\cdots+t^{2k}),
\end{align*}
as desired.
\end{proof}

\bigskip

\section{Proof of Theorem~\ref{theorem:main1}}
\label{section:Proof of main theorem 1}

We now prove Theorem~\ref{theorem:main1}.

\begin{proof}[Proof of Theorem~\ref{theorem:main1}]
We first note that there exists a canonical isomorphism
\begin{align*}
&Q_n/(q_{rs} \mid 2 \leq s \leq n \ \textrm{and} \ 1 \leq r \leq n-h(n+1-s)) \\
\cong & \C[x_1,\ldots,x_n, q_{rs} \mid 2 \leq s \leq n, n-h(n+1-s)<r<s]/(\hE_1^{(n)}, \ldots, \, \hE_n^{(n)})
\end{align*}
by Definition~\ref{definition:hEi(n)} and \eqref{eq:Qn}.
Under the identification above and the presentation in \eqref{eq:global_section_Fij}, we can rewrite 
$\varphi_h$ in \eqref{eq:varphih} as 
\begin{align*} 
\varphi_h: \Gamma(\ZZ(N,h)_e, \mathcal{O}_{\ZZ(N,h)_e}) \twoheadrightarrow \frac{\C[x_1,\ldots,x_n, q_{rs} \mid 2 \leq s \leq n, n-h(n+1-s)<r<s]}{(\hE_1^{(n)}, \ldots, \, \hE_n^{(n)})},
\end{align*}
which is defined by 
\begin{align*}
\varphi_h(x_{ij}) = \, \hE_{i-j}^{(n-j)}.
\end{align*} 
Here, by slightly abuse of notation we used the same symbol $\varphi_h$ for the map above.
The map $\varphi_h$ is surjective and this preserves the gradings on both graded $\C$-algebras  from \eqref{eq:grading1} and Lemma~\ref{lemma:grading1}. 
It follows from Propositions~\ref{proposition:Hilbert1} and \ref{proposition:Hilbert2} that the two sides of $\varphi_h$ have identical Hilbert series.
Therefore, we conclude that $\varphi_h$ is an isomorphism.
\end{proof}

\begin{remark}
Our proof gives an alternative proof of the isomorphism \eqref{eq:Petiso}.
\end{remark}

\begin{corollary}
There is an isomorphism of $\C$-algebras
\begin{align*}
\Gamma(\ZZ(N,id)_e, \mathcal{O}_{\ZZ(N,id)_e}) \cong H^*(\Fl(\C^n)).
\end{align*}
\end{corollary}

\begin{proof}
Applying the isomorphism \eqref{eq:maintheorem1-1} to the case when $h=id$, we obtain
\begin{align*} 
\Gamma(\ZZ(N,id)_e, \mathcal{O}_{\ZZ(N,id)_e}) \cong \C[x_1,\ldots,x_n]/(e_1(x_1,\ldots,x_n), \ldots, e_n(x_1,\ldots,x_n)), 
\end{align*}
where $e_i(x_1, \ldots, x_n)$ denotes the $i$-th elementary symmetric polynomial in the variables $x_1,\ldots,x_n$. 
As is well-known, the right hand side above is a presentation for the cohomology ring $H^*(\Fl(\C^n))$ (e.g.  \cite[Section~10.2, Proposition~3]{Ful97}). 
\end{proof}

We constructed the isomorphism of graded $\C$-algebras
\begin{align} \label{eq:varphih_iso} 
\varphi_h: \Gamma(\ZZ(N,h)_e, \mathcal{O}_{\ZZ(N,h)_e}) \xrightarrow{\cong} \frac{\C[x_1,\ldots,x_n, q_{rs} \mid 2 \leq s \leq n, n-h(n+1-s)<r<s]}{(\hE_1^{(n)}, \ldots, \, \hE_n^{(n)})},
\end{align}
which sends $x_{ij}$ to $\hE_{i-j}^{(n-j)}$ for all $1 \leq j < i \leq n$ from the homomorphism $\varphi$ in \eqref{eq:varphi}. 
The following result follows from Proposition~\ref{proposition:key}. 

\begin{proposition} \label{proposition:inverse_map_varphih}
The inverse map of $\varphi_h$ in \eqref{eq:varphih_iso} is given by
\begin{align*}
\varphi_h^{-1}(x_s) &= x_{n-s+1 \, n-s} - x_{n-s+2 \, n-s+1} \ \ \ \textrm{for} \ s \in [n] \\ 
\varphi_h^{-1}(q_{rs}) &= -F_{n+1-r,n+1-s} \ \ \ \textrm{for} \ 2 \leq s \leq n \ \textrm{and} \ n-h(n+1-s)<r<s
\end{align*}
with the convention that $x_{n-s+2 \, n-s+1}=0$ whenever $s=1$ and $x_{n-s+1 \, n-s}=0$ whenever $s=n$.
\end{proposition}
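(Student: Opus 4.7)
The plan is to read off the formulas for $\varphi_h^{-1}$ directly from Proposition~\ref{proposition:key}, leveraging the fact that $\varphi_h$ has already been shown to be an isomorphism in Theorem~\ref{theorem:main1}. Recall that $\varphi_h$ was built as the map induced on quotients by $\varphi \colon \C[x_{ij} \mid 1 \leq j < i \leq n] \to Q_n$, and Proposition~\ref{proposition:key} establishes exactly that $\varphi$ sends $x_{n-s+1\,n-s} - x_{n-s+2\,n-s+1}$ to $x_s$ and $-F_{n+1-r,n+1-s}$ to $q_{rs}$ in $Q_n$ (with the stated boundary conventions). Descending to the quotients, these identities remain valid in $\C[x_1,\ldots,x_n, q_{rs} \mid 2 \leq s \leq n,\ n-h(n+1-s)<r<s]/(\hE_1^{(n)}, \ldots, \hE_n^{(n)})$.

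Concretely, I would first verify that for the index ranges in the statement, the candidate preimages are genuine elements of $\Gamma(\ZZ(N,h)_e, \mathcal{O}_{\ZZ(N,h)_e})$. The condition $r > n - h(n+1-s)$ is equivalent to $n+1-r \leq h(n+1-s)$, so the pair $(i,j) = (n+1-r,\, n+1-s)$ does not satisfy $h(j) < i$; hence $F_{n+1-r,n+1-s}$ is \emph{not} among the generators of the defining ideal of $\ZZ(N,h)_e$, and $-F_{n+1-r,n+1-s}$ is a well-defined (and generally nonzero) class in the coordinate ring. The linear expressions $x_{n-s+1\,n-s} - x_{n-s+2\,n-s+1}$ are obviously well-defined, with the stipulated conventions at $s=1$ and $s=n$.

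Next, combining the two displays in Proposition~\ref{proposition:key} with the quotient identification used in the proof of Theorem~\ref{theorem:main1}, one concludes that
\[
\varphi_h\bigl(x_{n-s+1\,n-s} - x_{n-s+2\,n-s+1}\bigr) = x_s, \qquad \varphi_h\bigl(-F_{n+1-r,n+1-s}\bigr) = q_{rs}
\]
for all admissible $(r,s)$. Since the target algebra is generated as a $\C$-algebra by the classes of the $x_s$ and of the surviving $q_{rs}$, the assignments in the proposition determine a well-defined $\C$-algebra homomorphism from the target back into $\Gamma(\ZZ(N,h)_e, \mathcal{O}_{\ZZ(N,h)_e})$ whose composition with $\varphi_h$ acts as the identity on a generating set. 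Because $\varphi_h$ is bijective, this candidate must equal $\varphi_h^{-1}$.

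There is essentially no technical obstacle: all of the analytic content has been absorbed into Proposition~\ref{proposition:key} (via Cramer's rule and the determinantal identities for $F_{i,j}$) and into the Hilbert-series comparison behind Theorem~\ref{theorem:main1}. The only point requiring any care is the bookkeeping around the boundary conventions at $s = 1$ and $s = n$, which matches the convention $x_{n+1\,j}=0$ already used throughout Section~\ref{section:Regular nilpotent Hessenberg varieties}.
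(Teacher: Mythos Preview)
Your proposal is correct and follows exactly the paper's own approach: the paper simply states that the result follows from Proposition~\ref{proposition:key}, and you have unpacked that one-line justification with the appropriate well-definedness checks and the observation that $\varphi_h$ is already known to be bijective by Theorem~\ref{theorem:main1}.
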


We conclude the following result from the discussion above. 

\begin{corollary} \label{corollary:main_commutative_diagram}
Let $h:[n] \rightarrow [n]$ be a Hessenberg function.
Then the following commutative diagram holds
\begin{align*} 
  \begin{CD}
     \C[x_{ij} \mid 1 \leq j < i \leq n] @>{\varphi}>{\cong}> \displaystyle\frac{\C[x_1,\ldots,x_n, q_{rs} \mid 1 \leq r<s \leq n]}{(E_1^{(n)}, \ldots, \, E_n^{(n)})} \\
  @VV{F_{i,j}=0 \ (1 \leq j \leq n-1 \ {\rm and} \ h(j) < i \leq n)}V    @VV{q_{rs}=0 \ (2 \leq s \leq n \ {\rm and} \ 1 \leq r \leq n-h(n+1-s))}V \\
     \Gamma(\ZZ(N,h)_e, \mathcal{O}_{\ZZ(N,h)_e})   @>{\varphi_{h}}>{\cong}> \displaystyle\frac{\C[x_1,\ldots,x_n, q_{rs} \mid 2 \leq s \leq n, n-h(n+1-s)<r<s]}{(\hE_1^{(n)}, \ldots, \, \hE_n^{(n)})}
  \end{CD}
\end{align*}
where both vertical arrows denote the canonical surjective maps under the presentation \eqref{eq:global_section_Fij}. 
In particular, if $h$ is indecomposable, then we obtain 
the following commutative diagram 
\begin{align*} 
  \begin{CD}
     \C[\Fl(\C^n) \cap \Omega_e^\circ] @>{\varphi}>{\cong}> \displaystyle\frac{\C[x_1,\ldots,x_n, q_{rs} \mid 1 \leq r<s \leq n]}{(E_1^{(n)}, \ldots, \, E_n^{(n)})} \\
  @VV{F_{i,j}=0 \ (1 \leq j \leq n-2 \ {\rm and} \ h(j) < i \leq n)}V    @VV{q_{rs}=0 \ (3 \leq s \leq n \ {\rm and} \ 1 \leq r \leq n-h(n+1-s))}V \\
     \C[\Hess(N,h) \cap \Omega_e^\circ]   @>{\varphi_{h}}>{\cong}> \displaystyle\frac{\C[x_1,\ldots,x_n, q_{rs} \mid 2 \leq s \leq n, n-h(n+1-s)<r<s]}{(\hE_1^{(n)}, \ldots, \, \hE_n^{(n)})}
  \end{CD}
\end{align*}
where the left vertical arrow is induced from the inclusion $\Hess(N,h) \cap \Omega_e^\circ \hookrightarrow \Fl(\C^n) \cap \Omega_e^\circ$. 
Note that both vertical arrows are surjective. 
\end{corollary}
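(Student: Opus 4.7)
The plan is to deduce both commutative diagrams from Proposition~\ref{proposition:key} and Theorem~\ref{theorem:main1}; the proof is essentially a matter of matching up indices. First I would verify that the homomorphism $\varphi$ carries the ideal $(F_{i,j} \mid j \in [n-1],\, h(j) < i \leq n)$ onto the ideal $(q_{rs} \mid 2 \leq s \leq n,\, 1 \leq r \leq n-h(n+1-s))$ inside $Q_n$. Proposition~\ref{proposition:key} gives $\varphi(-F_{n+1-r,n+1-s}) = q_{rs}$ for all $1 \leq r < s \leq n$, and under the substitution $i = n+1-r$, $j = n+1-s$ the index condition ``$j \in [n-1]$ and $h(j) < i \leq n$'' translates precisely into ``$2 \leq s \leq n$ and $1 \leq r \leq n - h(n+1-s)$''. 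Hence $\varphi$ descends to a well-defined homomorphism between the quotients, and by construction this descended map agrees with the $\varphi_h$ of \eqref{eq:varphih}, which yields commutativity of the first square.

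For the horizontal arrows, the bottom one is exactly the isomorphism of Theorem~\ref{theorem:main1}, while the top one I would obtain by specializing Theorem~\ref{theorem:main1} to the maximal Hessenberg function $h_{\max} = (n,n,\ldots,n)$: in that case both vertical maps become identities (both generating sets are empty, since $h_{\max}(j) = n$ for all $j$ and $n - h_{\max}(n+1-s) = 0$ for all $s$), so the isomorphism produced by Theorem~\ref{theorem:main1} is literally $\varphi$. Both vertical arrows in either diagram are tautologically surjective, being quotient maps.

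For the second diagram (indecomposable $h$), the only additional inputs are Theorem~\ref{theorem:reduced}, which supplies the identification $\Gamma(\ZZ(N,h)_e, \mathcal{O}_{\ZZ(N,h)_e}) \cong \C[\Hess(N,h) \cap \Omega_e^\circ]$, and the observation that $\C[\Fl(\C^n) \cap \Omega_e^\circ] = \C[x_{ij} \mid 1 \leq j < i \leq n]$ since $\Omega_e^\circ \cong \C^{n(n-1)/2}$. The index range on the left shrinks from $j \in [n-1]$ to $j \in [n-2]$ because indecomposability forces $h(n-1) = n$, making the $j = n-1$ generators vacuous; correspondingly on the right, $n - h(n-1) = 0$ eliminates all $s = 2$ generators, changing the range to $3 \leq s \leq n$. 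With these identifications, the second diagram is an immediate consequence of the first.

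I do not anticipate any real obstacle: the entire nontrivial content has been absorbed into Proposition~\ref{proposition:key} (matching the generators of the two ideals) and Theorem~\ref{theorem:main1} (the horizontal isomorphisms). The work of the corollary reduces to a careful bookkeeping of indices under the involution $(i,j) \leftrightarrow (n+1-r, n+1-s)$ and the recognition that the maximal Hessenberg function $h_{\max}$ yields the top horizontal isomorphism as a special case of the main theorem.
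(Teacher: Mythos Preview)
Your proposal is correct and follows essentially the same route as the paper, which simply states that the corollary follows ``from the discussion above'' (i.e., from Proposition~\ref{proposition:key}, the construction of $\varphi_h$ in \eqref{eq:varphih}, and Theorem~\ref{theorem:main1}). Your observation that the top isomorphism is the special case $h=(n,\ldots,n)$ of Theorem~\ref{theorem:main1} is a clean way to justify that $\varphi$ is an isomorphism, something the paper leaves implicit.
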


\bigskip

\section{Jacobian matrix}
\label{section:Jacobian matrix}

It is an interesting and challenging problem to find an explicit description of the singular locus of (regular nilpotent) Hessenberg varieties $\Hess(N,h)$. 
There are already partial results for the problem stated above in \cite{AbeInsko, EPS, IY}. 
For the rest of the paper, we will analyze the singular locus of $\Hess(N,h) \cap \Omega_e^\circ$ for some Hessenberg function $h$ as an application of our result. 
The isomorphism \eqref{eq:maintheorem1-2} in Theorem~\ref{theorem:main1} yields that 
if $h$ is indecomposable, then the singular locus of the open set $\Hess(N,h) \cap \Omega_e^\circ$ in $\Hess(N,h)$ is isomorphic to the singular locus of the zero set defined by $n$ polynomials $\hE_1^{(n)}, \ldots, \, \hE_n^{(n)}$ in $\C^{n+\sum_{j=1}^n (h(j)-j))}$ with the variables $x_1,\ldots,x_n, q_{rs} \ (2 \leq s \leq n, n-h(n+1-s)<r<s)$. 
In this section we give an explicit formula for partial derivatives $\partial E_i^{(n)}/\partial x_s \ (1 \leq s \leq n)$ and $\partial E_i^{(n)}/\partial q_{rs} \ (1 \leq r < s \leq n)$ for each $i \in [n]$. 

For positive integers $a$ and $b$ with $1 \leq a \leq b \leq n$, we set 
\begin{align*}
[a, b] \coloneqq \{a, a + 1, \ldots , b\}.
\end{align*}

As in Definition~\ref{definition:Eij}, we introduce the following polynomials.

\begin{definition} \label{definition:Eijab}
Let $M_{[a,b]}$ be the matrix of size $(b-a+1) \times (b-a+1)$ defined as
\begin{align*}
M_{[a,b]} \coloneqq \left(
 \begin{array}{@{\,}ccccc@{\,}}
     x_a & q_{a \, a+1} & q_{a \, a+2} & \cdots & q_{a \, b} \\
     -1 & x_{a+1} & q_{a+1 \, a+2} & \cdots & q_{a+1 \, b} \\ 
      0 & \ddots & \ddots & \ddots & \vdots \\ 
      \vdots & \ddots & -1 & x_{b-1} & q_{b-1 \, b} \\
      0 & \cdots & 0 & -1 & x_b 
 \end{array}
 \right).
\end{align*}
We define polynomials $E_1^{[a,b]}, \ldots, E_{b-a+1}^{[a,b]} \in \C[x_a,\ldots,x_b,q_{rs} \mid a \leq r < s \leq b]$ by 
\begin{align*}
\det(\lambda I_{b-a+1} - M_{[a,b]}) = \lambda^{b-a+1} - E_1^{[a,b]} \lambda^{b-a} + E_2^{[a,b]} \lambda^{b-a-1} + \cdots + (-1)^{b-a+1} E_{b-a+1}^{[a,b]}.
\end{align*}
\end{definition}

Note that if $a=1$, then we obtain
\begin{align} \label{eq:Ei1b}
E_i^{[1,b]} = E_i^{(b)} \ \ \ \textrm{for each} \ i \in [b]
\end{align}
by the definition. 
In what follows, we use the symbol
\begin{align*}
[a,a-1] \coloneqq \emptyset \ \ \ \textrm{for each} \ a \in [n+1]
\end{align*}
and we take the following convention 
\begin{align} \label{eq:convention_Eab}
E_i^{[a,b]} \coloneqq
\begin{cases} 
1 \ \ \ &\textrm{if} \ i=0, \\
0 \ \ \ &\textrm{if} \ i > b-a+1 
\end{cases}
\end{align}
for $b \geq a-1$.
By the same argument for the proof of Lemma~\ref{lemma:recursive_qij}, one can prove the lemma below.
We leave the detail to the reader.

\begin{lemma} \label{lemma:recursive_qijab}
Let $1 \leq a \leq b \leq n$. 
For $1 \leq i \leq b-a+1$, we have
\begin{align*}
E_i^{[a,b]} = E_i^{[a,b-1]} + E_{i-1}^{[a,b-1]} x_b + \sum_{k=1}^{i-1} E_{i-1-k}^{[a,b-1-k]} q_{b-k \, b} 
\end{align*}
with the convention that $\sum_{k=1}^{i-1} E_{i-1-k}^{[a,b-1-k]} q_{b-k \, b}=0$ whenever $i=1$. 
\end{lemma}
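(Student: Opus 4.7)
The plan is to adapt the cofactor-expansion proof of Lemma~\ref{lemma:recursive_qij} to the shifted interval $[a,b]$. First I would expand $\det(\lambda I_{b-a+1} - M_{[a,b]})$ along its last column, whose entries are $-q_{a+\ell-1\,b}$ for $\ell = 1, \ldots, b-a$ and $\lambda - x_b$ in position $b-a+1$. This gives
\begin{align*}
\det(\lambda I_{b-a+1} - M_{[a,b]}) &= (\lambda - x_b)\, \det(\lambda I_{b-a} - M_{[a,b-1]}) \\
&\quad + \sum_{\ell=1}^{b-a}(-1)^{\ell + b-a+1}(-q_{a+\ell-1\,b})\, D_\ell,
\end{align*}
where $D_\ell$ is the minor obtained by deleting row $\ell$ and the last column.

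Next I would exploit the upper Hessenberg shape of $M_{[a,b]}$ to simplify each $D_\ell$. After deleting row $\ell$ and the last column, the resulting matrix is block upper triangular: the top-left $(\ell-1)\times(\ell-1)$ block equals $\lambda I_{\ell-1} - M_{[a, a+\ell-2]}$, and the bottom-right block is upper triangular with all diagonal entries equal to $1$ (these being the subdiagonal $1$'s of the original matrix that have been shifted onto the diagonal by the row deletion). Hence $D_\ell = \det(\lambda I_{\ell-1} - M_{[a, a+\ell-2]})$, and a small example (mirroring the one implicit in the proof of Lemma~\ref{lemma:recursive_qij}) confirms the sign conventions.

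Finally, I would substitute the defining expansions $\det(\lambda I_c - M_{[a, a+c-1]}) = \sum_{j=0}^{c} (-1)^j E_j^{[a,a+c-1]} \lambda^{c-j}$ everywhere and compare coefficients of $\lambda^{b-a+1-i}$ on both sides. The contribution of the $(\lambda - x_b)$ term yields $(-1)^i\big(E_i^{[a,b-1]} + x_b E_{i-1}^{[a,b-1]}\big)$, while the $\ell$-th term in the sum contributes (after reindexing via $k = b-a-\ell+1$, so that $q_{a+\ell-1\,b} = q_{b-k\,b}$) exactly $(-1)^i\, q_{b-k\,b}\, E_{i-k-1}^{[a, b-k-1]}$, with the range $1 \leq k \leq i-1$ emerging from the constraints $\ell \geq 1$ and $j := i-k-1 \geq 0$. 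Dividing both sides by $(-1)^i$ produces the claimed recursion. The main obstacle is purely bookkeeping: since the argument is structurally identical to Lemma~\ref{lemma:recursive_qij}, the only subtlety is tracking the shifted lower endpoint $a$ consistently through the sign computations and the reindexing.
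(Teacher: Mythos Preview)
Your proposal is correct and follows exactly the approach the paper intends: the paper itself omits the proof, stating only that it proceeds ``by the same argument for the proof of Lemma~\ref{lemma:recursive_qij},'' and your cofactor expansion along the last column with the block-triangular analysis of the minors is precisely that argument, carried out with the shifted lower index $a$.
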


\begin{lemma} \label{lemma:partial_derivatives}
\begin{enumerate}
\item Let $s \in [n]$. 
For $i \in [n]$, we have
\begin{align*}
\frac{\partial}{\partial x_s} E_i^{(n)} = \sum_{k=0}^{i-1} E_{i-1-k}^{[1,s-1]} E_{k}^{[s+1,n]}.
\end{align*}
\item Let $1 \leq r < s \leq n$. Then
\begin{align*}
\frac{\partial}{\partial q_{rs}} E_{i+s-r}^{(n)} = \begin{cases}
0 & \textrm{if} \ 1-(s-r) \leq i \leq 0, \\
\sum_{k=0}^{i-1} E_{i-1-k}^{[1,r-1]} E_{k}^{[s+1,n]} & \textrm{if} \ 1 \leq i \leq n-(s-r).
\end{cases}
\end{align*}
\end{enumerate}
\end{lemma}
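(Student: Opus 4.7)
My plan is to differentiate the identity $\det(\lambda I_n - M_n) = \sum_{i=0}^n (-1)^i E_i^{(n)} \lambda^{n-i}$ with respect to each variable using the classical rule $\partial\det(A)/\partial a_{ij} = (-1)^{i+j}\,\mathrm{minor}_{ij}(A)$, and then match coefficients of $\lambda^{n-i}$. Since each variable $x_s$ or $q_{rs}$ appears in exactly one entry of $M_n$, each partial derivative will be a single (signed) minor of $\lambda I_n - M_n$, which I will factor using the block structure of this matrix.

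For part (i), the variable $x_s$ appears only at the $(s,s)$ entry of $M_n$, as $\lambda - x_s$ in $\lambda I_n - M_n$, so $\partial \det(\lambda I_n - M_n)/\partial x_s = -(\text{$(s,s)$-minor})$. After deleting row $s$ and column $s$, the only subdiagonal $1$ linking the top and bottom halves of $\lambda I_n - M_n$ (the $1$ at position $(s+1,s)$) has its column removed, so the $(n-s)\times(s-1)$ lower-left block vanishes; the two diagonal blocks are $\lambda I_{s-1} - M_{[1,s-1]}$ and $\lambda I_{n-s} - M_{[s+1,n]}$. Expanding both determinants as in Definition~\ref{definition:Eijab} and reading off the coefficient of $\lambda^{n-i}$ will yield (i).

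For part (ii), the variable $q_{rs}$ appears only at $(r,s)$ of $M_n$, as $-q_{rs}$ in $\lambda I_n - M_n$, so $\partial\det(\lambda I_n - M_n)/\partial q_{rs} = -(-1)^{r+s} \cdot (\text{$(r,s)$-minor})$. The key technical step will be to show that this minor equals $\det(\lambda I_{r-1} - M_{[1,r-1]}) \cdot \det(\lambda I_{n-s} - M_{[s+1,n]})$. After removing row $r$ and column $s$, the top-left $(r-1)\times(r-1)$ corner is $\lambda I_{r-1} - M_{[1,r-1]}$ and the block directly below it vanishes as in~(i). The crucial observation is that the $(s-r)\times(s-r)$ strip formed by rows $r+1,\ldots,s$ and columns $r, r+1,\ldots,s-1$ of the minor is unit upper triangular: its column indexed by $r$ is just $e_1$ (only the subdiagonal $1$ at $(r+1,r)$ survives the removals), its column indexed by $r+1$ is $(\lambda - x_{r+1})e_1 + e_2$, and so on, culminating in its column indexed by $s-1$ whose entry at the last row of the strip is the $1$ from $(s, s-1)$. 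Everything beneath this strip is zero, and the remaining lower-right $(n-s)\times(n-s)$ block is $\lambda I_{n-s} - M_{[s+1,n]}$. Comparing the $\lambda^{n-i}$ coefficients, and tracking the sign $(-1)^{r+s+1}$ together with the degree shift forced by $\deg q_{rs} = 2(s-r+1)$, will give the claimed formula when $i \ge 1$; when $1-(s-r)\le i \le 0$, the summation set $\{(j,k)\in\Z_{\ge 0}^2 : j+k = i-1\}$ is empty, so the derivative vanishes.

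The main obstacle will be establishing the unit upper triangular structure of the middle strip in (ii); once that is verified, everything else reduces to routine cofactor expansion and coefficient matching.
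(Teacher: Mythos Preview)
Your proposal is correct and follows essentially the same approach as the paper: differentiate the characteristic-polynomial identity, recognize the partial derivative as a signed minor of $\lambda I_n - M_n$, factor that minor as $\det(\lambda I_{r-1}-M_{[1,r-1]})\cdot\det(\lambda I_{n-s}-M_{[s+1,n]})$ using the block structure, and compare coefficients of $\lambda^{n-i}$. Your explicit identification of the unit upper-triangular $(s-r)\times(s-r)$ middle block is exactly the content the paper suppresses behind the phrase ``similar arguments as in the previous case,'' so if anything your write-up is more detailed at that step.
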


\begin{proof}
(1) By Definition~\ref{definition:Eij} one has
\begin{align} \label{eq:proof_partial_derivatives}
\det(\lambda I_n - M_n) = \lambda^n - E_1^{(n)} \lambda^{n-1} + E_2^{(n)} \lambda^{n-2} + \cdots + (-1)^n E_n^{(n)}.
\end{align}
We think of $\lambda$ as a variable in the equality above and we partial differentiate the both sides with respect to $x_s$. 
Then $\frac{\partial}{\partial x_s} E_i^{(n)}$ is equal to the coefficient of $\lambda^{n-i}$ for $\frac{\partial}{\partial x_s}\det(\lambda I_n - M_n)$ multiplied by $(-1)^{i}$. 
Since the variable $x_s$ appears in only the $(s,s)$-th entry of the matrix $(\lambda I_n - M_n)$, from the cofactor expansion along the $s$-th column for $\det(\lambda I_n - M_n)$ we can write 
\begin{align*}
\det(\lambda I_n - M_n) = (\lambda-x_s) \det(\lambda I_{s-1} - M_{[1,s-1]}) \det(\lambda I_{n-s} - M_{[s+1,n]}) + F
\end{align*}
for some polynomial $F \in \C[x_1,\ldots,\widehat{x_s},\ldots,x_n,q_{ij} \mid 1 \leq i < j \leq n]$. 
Here, the caret sign $\widehat{}$ over $x_s$ means that the entry is omitted.
Hence, we obtain
\begin{align*}
\frac{\partial}{\partial x_s}\det(\lambda I_n - M_n) &= - \det(\lambda I_{s-1} - M_{[1,s-1]}) \det(\lambda I_{n-s} - M_{[s+1,n]}) \\
&= - \left(\sum_{u=0}^{s-1} (-1)^u E_u^{[1,s-1]} \lambda^{s-1-u} \right) \left( \sum_{v=0}^{n-s} (-1)^v E_v^{[s+1,n]} \lambda^{n-s-v} \right) \\
& \hspace{30pt} \textrm{(by Definition~\ref{definition:Eijab})} \\
&= - \sum_{u=0}^{s-1} \sum_{v=0}^{n-s} (-1)^{u+v} E_u^{[1,s-1]}E_v^{[s+1,n]} \lambda^{n-1-(u+v)} \\ 
&= - \sum_{\ell=0}^{n-1} \left( \sum_{k=0}^{\ell} (-1)^{\ell} E_{\ell-k}^{[1,s-1]}E_k^{[s+1,n]} \right) \lambda^{n-1-\ell}. 
\end{align*}
Therefore, we conclude that the coefficient of $\lambda^{n-i}$ for $\frac{\partial}{\partial x_s}\det(\lambda I_n - M_n)$ multiplied by $(-1)^{i}$ is equal to
\begin{align*}
\frac{\partial}{\partial x_s} E_i^{(n)} = \sum_{k=0}^{i-1} E_{i-1-k}^{[1,s-1]}E_k^{[s+1,n]}, 
\end{align*}
as desired.

(2) We partial differentiate the both sides of \eqref{eq:proof_partial_derivatives} with respect to $q_{rs}$, then $\frac{\partial}{\partial q_{rs}} E_j^{(n)}$ is equal to the coefficient of $\lambda^{n-j}$ for $\frac{\partial}{\partial q_{rs}}\det(\lambda I_n - M_n)$ multiplied by $(-1)^{j}$. 
Since the variable $q_{rs}$ appears in only the $(r,s)$-th entry of the matrix $(\lambda I_n - M_n)$, one can see from similar arguments as in the previous case that
\begin{align*}
\frac{\partial}{\partial q_{rs}}\det(\lambda I_n - M_n) &= (-1)^{r+s} \frac{\partial}{\partial q_{rs}} (-q_{rs}) \det(\lambda I_{r-1} - M_{[1,r-1]}) \det(\lambda I_{n-s} - M_{[s+1,n]}) \\
&= (-1)^{r+s+1} \left(\sum_{u=0}^{r-1} (-1)^u E_u^{[1,r-1]} \lambda^{r-1-u} \right) \left( \sum_{v=0}^{n-s} (-1)^v E_v^{[s+1,n]} \lambda^{n-s-v} \right) \\
&= (-1)^{r+s+1} \sum_{u=0}^{r-1} \sum_{v=0}^{n-s} (-1)^{u+v} E_u^{[1,r-1]}E_v^{[s+1,n]} \lambda^{n-1-(u+v+s-r)} \\ 
&= (-1)^{r+s+1} \sum_{\ell=s-r}^{n-1} \left( \sum_{k=0}^{\ell-s+r} (-1)^{\ell-s+r} E_{\ell-s+r-k}^{[1,r-1]}E_k^{[s+1,n]} \right) \lambda^{n-1-\ell}. 
\end{align*}
Thus, the coefficient of $\lambda^{n-j}$ for $\frac{\partial}{\partial q_{rs}}\det(\lambda I_n - M_n)$ multiplied by $(-1)^{j}$ is 
\begin{align*}
\frac{\partial}{\partial q_{rs}} E_j^{(n)} = 
\begin{cases}
0 \ \ \ & \textrm{if} \ 1 \leq j \leq s-r, \\
\sum_{k=0}^{j-1-s+r} E_{j-1-s+r-k}^{[1,r-1]}E_k^{[s+1,n]} \ \ \ & \textrm{if} \ s-r+1 \leq j \leq n, 
\end{cases}
\end{align*}
as desired. 
This completes the proof. 
\end{proof}

In what follows, we set 
\begin{align} \label{eq:qss}
q_{ss} \coloneqq x_s \ \ \ \textrm{for} \ s \in [n],
\end{align}
and we see the Jacobian matrix $\big(\frac{\partial}{\partial q_{rs}} E_{i}^{(n)} \big)_{i,(r,s)}$ below.

\begin{example} \label{ex:Jacobi_n=34Flag}
Let $n=3$. Then the Jacobian matrix $\big(\frac{\partial}{\partial q_{rs}} E_{i}^{(3)} \big)_{i,(r,s)}$ is described as 
\begin{center}
\begin{picture}(300,110)
\put(0,70){$E_1^{(3)}$}
\put(0,40){$E_2^{(3)}$}
\put(0,10){$E_3^{(3)}$}
\put(45,95){$\frac{\partial}{\partial x_{1}}$}
\put(100,95){$\frac{\partial}{\partial x_{2}}$}
\put(155,95){$\frac{\partial}{\partial x_{3}}$}
\put(190,95){$\frac{\partial}{\partial q_{12}}$}
\put(225,95){$\frac{\partial}{\partial q_{23}}$}
\put(250,95){$\frac{\partial}{\partial q_{13}}$}
\put(25,40){\(
\left(
\begin{array}{cccccc}
1 & 1 & 1 & 0 & 0 & 0 \\
 &  &  &  &  &  \\
E_1^{[2,3]} & E_1^{[1,1]}+E_1^{[3,3]} & E_1^{[1,2]} & 1 & 1 & 0 \\
 &  &  &  &  &  \\
E_2^{[2,3]} & E_1^{[1,1]}E_1^{[3,3]} & E_2^{[1,2]} & E_1^{[3,3]} & E_1^{[1,1]} & 1
\end{array}
\right)
\)}
\end{picture}
\end{center}
by Lemma~\ref{lemma:partial_derivatives}.
Also, the Jacobian matrix $\big(\frac{\partial}{\partial q_{rs}} E_{i}^{(4)} \big)_{i,(r,s)}$ for $n=4$ is given by 
\begin{center}
\begin{picture}(450,105)
\put(0,73){\tiny $E_1^{(4)}$}
\put(0,52){\tiny $E_2^{(4)}$}
\put(0,31){\tiny $E_3^{(4)}$}
\put(0,10){\tiny $E_4^{(4)}$}
\put(35,91){$\frac{\partial}{\partial x_{1}}$}
\put(91,91){$\frac{\partial}{\partial x_{2}}$}
\put(175,91){$\frac{\partial}{\partial x_{3}}$}
\put(232,91){$\frac{\partial}{\partial x_{4}}$}
\put(262,91){$\frac{\partial}{\partial q_{12}}$}
\put(310,91){$\frac{\partial}{\partial q_{23}}$}
\put(355,91){$\frac{\partial}{\partial q_{34}}$}
\put(386,91){$\frac{\partial}{\partial q_{13}}$}
\put(417,91){$\frac{\partial}{\partial q_{24}}$}
\put(440,91){$\frac{\partial}{\partial q_{14}}$}
\put(20,41){\tiny \(
\left(
\begin{array}{cccccccccc}
1 & 1 & 1 & 1 & 0 & 0 & 0 & 0 & 0 & 0 \\
 &  &  &  &  &  &  &  &  &  \\
E_1^{[2,4]} & E_1^{[1,1]}+E_1^{[3,4]} & E_1^{[1,2]}+E_1^{[4,4]} & E_1^{[1,3]} & 1 & 1 & 1 & 0 & 0 & 0 \\
 &  &  &  &  &  &  &  &  &  \\
E_2^{[2,4]} & E_1^{[1,1]}E_1^{[3,4]}+E_2^{[3,4]} & E_2^{[1,2]}+E_1^{[1,2]}E_1^{[4,4]} & E_2^{[1,3]} & E_1^{[3,4]} & E_1^{[1,1]}+E_1^{[4,4]} & E_1^{[1,2]} & 1 & 1 & 0 \\
 &  &  &  &  &  &  &  &  &  \\
E_3^{[2,4]} & E_1^{[1,1]}E_2^{[3,4]} & E_2^{[1,2]}E_1^{[4,4]} & E_3^{[1,3]} & E_2^{[3,4]} & E_1^{[1,1]}E_1^{[4,4]} & E_2^{[1,2]} & E_1^{[4,4]} & E_1^{[1,1]} & 1
\end{array}
\right).
\)}
\end{picture}
\end{center}
The Jacobian matrices above have a full rank. 
In general, one can verify from Lemma~\ref{lemma:partial_derivatives} that the rank of the Jacobian matrix $\big(\frac{\partial}{\partial q_{rs}} E_{i}^{(n)} \big)_{i,(r,s)}$ is full for arbitrary $n$.
This fact also follows from the well-known fact that $\Fl(\C^n) \cap \Omega_e^\circ$ is smooth and Theorem~\ref{theorem:main1} for the case when $h=(n,\ldots,n)$.
\end{example}

Let $h: [n] \to [n]$ be a Hessenberg function. 
As in Definition~\ref{definition:hEi(n)}, for $0 \leq i \leq n$ and $a,b$ with $b \geq a-1$, we define 
\begin{align} \label{eq:hEiab}
\hE_i^{[a,b]} \coloneqq E_i^{[a,b]}|_{q_{rs}=0 \ (2 \leq s \leq n \ \textrm{and} \ 1 \leq r \leq n-h(n+1-s))}. 
\end{align}
Note that we take the convention in \eqref{eq:convention_Eab}.
By the definition \eqref{eq:hEiab} it is straightforward to see that for arbitrary Hessenberg function $h: [n] \to [n]$, $i \in [n]$, and $(r,s)$ with $2 \leq s \leq n, n-h(n+1-s)<r \leq s$, 
\begin{align*} 
\left. \frac{\partial}{\partial q_{rs}} \, \hE_{i}^{(n)} = \left( \frac{\partial}{\partial q_{rs}} E_{i}^{(n)} \right) \right|_{q_{uv}=0 \ (2 \leq v \leq n \ \textrm{and} \ 1 \leq u \leq n-h(n+1-v))}.
\end{align*}
Combining this and Lemma~\ref{lemma:partial_derivatives}, we have that for $1 \leq r \leq s \leq n$, 
\begin{align} \label{eq:partial_derivatives}
\frac{\partial}{\partial q_{rs}} \, \hE_{i+s-r}^{(n)} = \begin{cases}
0 & \textrm{if} \ 1-(s-r) \leq i \leq 0, \\
1 & \textrm{if} \ i = 1, \\
\sum_{k=0}^{i-1} \, \hE_{i-1-k}^{[1,r-1]} \, \hE_{k}^{[s+1,n]} & \textrm{if} \ 2 \leq i \leq n-(s-r). 
\end{cases}
\end{align}

For $2 \leq m \leq n-1$, we define a Hessenberg function $h_m:[n] \rightarrow [n]$ by 
\begin{align} \label{eq:hm}
h_m=(m,n,\ldots,n).
\end{align}
The zero set of $\{\hmE_1^{(n)}, \ldots, \, \hmE_n^{(n)}\}$ in $\C^{\frac{1}{2}n(n+1)-(n-m)}$ with the variables $x_1.\ldots,x_n,q_{rs} \ (2 \leq s \leq n, n-h_m(n+1-s)<r < s)$ is denoted by $V\big(\, \hmE_1^{(n)}, \ldots, \, \hmE_n^{(n)}\big)$, i.e. 
\begin{align} \label{eq:zero_set_hmE}
V\big(\, \hmE_1^{(n)}, \ldots, \, \hmE_n^{(n)}\big) \coloneqq \{ (a,p) \in \C^{\frac{1}{2}n(n+1)-(n-m)} \mid \, \hmE_i^{(n)}(a,p)=0 \ \textrm{for} \ i \in [n] \}
\end{align}
where $(a,p) \coloneqq (a_1,\ldots,a_n,p_{rs})_{2 \leq s \leq n, n-h_m(n+1-s)<r < s}$.

\begin{proposition} \label{proposition:singular_locus_hm_quantum}
Let $2 \leq m \leq n-1$ and $h_m$ be the Hessenberg function in \eqref{eq:hm}. 
Then, the singular locus of $V\big(\, \hmE_1^{(n)}, \ldots, \, \hmE_n^{(n)}\big)$ in \eqref{eq:zero_set_hmE} is given by the solution set of the equations 
\begin{align*}
\frac{\partial}{\partial q_{rs}} \, \hmE_{n}^{(n)} = 0 \ \ \ \textrm{for all} \ 2 \leq s \leq n \ \textrm{and} \ n-h_m(n+1-s)<r \leq s.
\end{align*}
Here, we recall our convention \eqref{eq:qss} that $q_{rs}=x_s$ whenever $r=s$.
\end{proposition}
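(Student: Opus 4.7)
The plan is to study the Jacobian matrix $J := \bigl(\partial \hmE_i^{(n)}/\partial q_{rs}\bigr)$ with rows indexed by $i \in [n]$ and columns indexed by the allowed variables $q_{rs}$ (with the convention $q_{ss} = x_s$); by the Jacobi criterion, a point $(a, p) \in V := V\bigl(\hmE_1^{(n)}, \ldots, \hmE_n^{(n)}\bigr)$ is singular iff $\operatorname{rank} J(a, p) < n$. The easy direction of the proposition is immediate: if every partial derivative $\partial \hmE_n^{(n)}/\partial q_{rs}$ vanishes at $(a, p)$, then the $n$-th row of $J(a, p)$ is zero, whence $\operatorname{rank} J(a, p) \leq n - 1 < n$.

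The starting point for the converse is to exhibit an $(n-1)\times(n-1)$ minor of $J$ that equals $1$ identically. Specializing Lemma~\ref{lemma:partial_derivatives}(2) to $r = 1$ collapses the sum, since $\hmE_0^{[1, 0]} = 1$ and $\hmE_\ell^{[1, 0]} = 0$ for $\ell \geq 1$, yielding
\begin{align*}
\frac{\partial \hmE_j^{(n)}}{\partial q_{1, s}} \;=\; \hmE_{j - s}^{[s+1, n]} \qquad (1 \leq s \leq n - 1),
\end{align*}
which is $1$ for $j = s$ and $0$ for $j < s$. Each $q_{1, s}$ with $s \leq n - 1$ is an allowed variable for $h_m$, so the submatrix of $J$ with rows $i = 1, \ldots, n-1$ and columns $q_{1, 1} = x_1, q_{1, 2}, \ldots, q_{1, n-1}$ is lower triangular with $1$'s on the diagonal and has determinant identically $1$. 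Consequently the first $n - 1$ rows of $J$ are linearly independent everywhere on the ambient affine space, so $\operatorname{rank} J \geq n - 1$, and a point of $V$ is singular exactly when the $n$-th row of $J$ lies in the linear span of the first $n - 1$ rows.

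Suppose now that $(a, p) \in V$ is singular. The triangular system above then produces unique scalars $\alpha_1, \ldots, \alpha_{n-1}$ --- each an explicit polynomial in the values $\hmE_j^{[s+1, n]}(a, p)$, beginning with $\alpha_{n-1} = \hmE_1^{[n, n]}(a, p) = x_n(a, p)$ --- such that row~$n$ agrees with $\sum_i \alpha_i \cdot \text{row } i$ on the columns $q_{1, s}$. Since rows $1, \ldots, n-1$ are linearly independent, the same relation must then hold on every other column. Imposing it via Lemma~\ref{lemma:partial_derivatives}(2) on the remaining columns ($x_s$ for $s \geq 2$, $q_{r, r+1}$ for $r \geq 2$, $q_{r, s}$ for $2 \leq r < s \leq n - 1$, and the ``extra'' columns $q_{r, n}$ for $n - m + 1 \leq r \leq n - 1$) produces a system of polynomial identities at $(a, p)$.

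The principal obstacle is to show that this system, combined with the defining relations $\hmE_1^{(n)} = \cdots = \hmE_n^{(n)} = 0$ on $V$, forces all $\alpha_i$ to vanish (and hence row~$n$ of $J(a, p)$ is zero, as desired). Direct computation in small cases ($n = 3, m = 2$ and $n = 4, m \in \{2, 3\}$) indicates a clean cascade: compatibility on the $q_{r, r+1}$ and $q_{r, n}$ columns successively kills certain $q_{rs}$'s and simplifies the coefficients to the form $\alpha_i = (-1)^{n-i+1}\, x_n^{n-i}$; the deepest defining relation $\hmE_n^{(n)}(a, p) = 0$ --- a Cayley--Hamilton-type identity for $M_n$ --- then forces $x_n(a, p) = 0$, from which $\alpha_{n-1} = \cdots = \alpha_1 = 0$ in turn. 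Carrying out this elimination uniformly in $n$ and $m$, organized around the telescoping structure of the $\hmE_i^{[s+1,n]}$, is the technical heart of the argument.
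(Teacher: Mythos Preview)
Your overall strategy coincides with the paper's: show the first $n-1$ rows of the Jacobian are everywhere independent, write row~$n$ at a singular point as $\sum_i c_i \cdot (\text{row } i)$, pin down the $c_i$, and prove they all vanish. The paper structures the hard direction as three claims. Claim~1 establishes $c_i = (-1)^{n-i+1} a_n^{\,n-i}$ directly, by reading off the columns $q_{r,\,n-1}$ for $r = 1,\ldots,n-1$ rather than your lower-triangular system on the columns $q_{1,s}$; the paper's choice yields the closed form immediately via a short descending induction, whereas your system only determines the $c_i$ implicitly and you would then need further column compatibilities to reach the same closed form. Claim~2 uses the columns $q_{1,\,i-1}$ together with Claim~1 to show $p_{i,n} = 0$ for all $n-m+1 \leq i \leq n-1$; this is your ``kills certain $q_{rs}$'s''.

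The genuine divergence is Claim~3, where the paper does \emph{not} extract $a_n = 0$ from the defining relations on the quantum side. Instead it passes through the isomorphism $\varphi_{h_m}$ of Theorem~\ref{theorem:main1}: Claim~2 plus Proposition~\ref{proposition:inverse_map_varphih} give $F_{i,1}(b) = 0$ for $2 \leq i \leq m$, the defining equations of $\Hess(N,h_m)$ supply $F_{i,1}(b) = 0$ for $m+1 \leq i \leq n$, so the image $b$ lies in $\Hess(N,h_1)$ with $h_1 = (1,n,\ldots,n)$, and decomposability forces $b_{21} = 0$, hence $a_n = 0$. Your proposed route is in fact more elementary and does work. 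Once every $q_{i,n}$ vanishes (Claim~2 together with the built-in vanishing for $h_m$), the last column of $M_n$ becomes $(0,\ldots,0,x_n)^t$, so $\hmE_i^{(n)} = \hmE_i^{(n-1)} + x_n\,\hmE_{i-1}^{(n-1)}$ for every $i$; the relations $\hmE_i^{(n)}(a,p) = 0$ then give $\hmE_k^{(n-1)}(a,p) = (-a_n)^k$ by a one-line recursion for $1 \leq k \leq n-1$, and finally $0 = \hmE_n^{(n)}(a,p) = a_n\,\hmE_{n-1}^{(n-1)}(a,p) = (-1)^{n-1} a_n^{\,n}$, whence $a_n = 0$. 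So your ``Cayley--Hamilton'' intuition is correct and yields a self-contained argument that avoids invoking the main theorem; the paper's geometric detour is not needed here.
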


Before we prove Proposition~\ref{proposition:singular_locus_hm_quantum}, we give an example of the singular locus of the zero set $V\big(\, \hmE_1^{(3)}, \, \hmE_2^{(3)}, \, \hmE_3^{(3)}\big)$ for $m=2$ by using Proposition~\ref{proposition:singular_locus_hm_quantum}.

\begin{example} \label{ex:singular_locus_n=3}
Consider the case when $h=(2,3,3)$ for $n=3$. 
The Jacobian matrix $\big(\frac{\partial}{\partial q_{rs}} \, \hE_{i}^{(3)} \big)_{i,(r,s) \neq (1,3)}$ is obtained from $\big(\frac{\partial}{\partial q_{rs}} E_{i}^{(3)} \big)_{i,(r,s)}$ by forgetting the quantum parameter $q_{13}$. 
As seen in Example~\ref{ex:Jacobi_n=34Flag}, the Jacobian matrix $\big(\frac{\partial}{\partial q_{rs}} \, \hE_{i}^{(3)} \big)_{i,(r,s) \neq (1,3)}$ is shown as 
\begin{center}
\begin{picture}(300,110)
\put(0,70){$\hE_1^{(3)}$}
\put(0,40){$\hE_2^{(3)}$}
\put(0,10){$\hE_3^{(3)}$}
\put(53,95){$\frac{\partial}{\partial x_{1}}$}
\put(118,95){$\frac{\partial}{\partial x_{2}}$}
\put(183,95){$\frac{\partial}{\partial x_{3}}$}
\put(222,95){$\frac{\partial}{\partial q_{12}}$}
\put(263,95){$\frac{\partial}{\partial q_{23}}$}
\put(30,40){\(
\left(
\begin{array}{ccccc}
1 & 1 & 1 & 0 & 0 \\
 &  &  &  &  \\
\hE_1^{[2,3]} & \hE_1^{[1,1]}+ \, \hE_1^{[3,3]} & \hE_1^{[1,2]} & 1 & 1 \\
 &  &  &  &  \\
\hE_2^{[2,3]} & \hE_1^{[1,1]} \, \hE_1^{[3,3]} & \hE_2^{[1,2]} & \hE_1^{[3,3]} & \hE_1^{[1,1]} 
\end{array}
\right).
\)}
\end{picture}
\end{center}
It follows from Proposition~\ref{proposition:singular_locus_hm_quantum} that the singular locus of $V\big(\, \hE_1^{(3)}, \, \hE_2^{(3)}, \, \hE_3^{(3)}\big)$ is given by the solution set of the equations $\hE_2^{[2,3]}=\, \hE_1^{[1,1]} \, \hE_1^{[3,3]}=\, \hE_2^{[1,2]}=\, \hE_1^{[3,3]}=\, \hE_1^{[1,1]}=0$.
The resulting solution is as follows:
\begin{align*}
x_1=0, \ x_3=0, \ q_{12}=0, \ q_{23}=0.
\end{align*}
Then it follows from \eqref{eq:maintheorem1-2} and Proposition~\ref{proposition:inverse_map_varphih} that the singular locus of $\Pet_3 \cap \Omega_e^\circ$ is given by the solution set of the equations $x_{32}=0, \ x_{21}=0, \ F_{3,2}=0, \ F_{2,1}=0$. This is equivalent to 
\begin{align*}
x_{21}=0, \ x_{32}=0, \ x_{31}=0.
\end{align*}
Hence, the singular locus of $\Pet_3 \cap \Omega_e^\circ$ is $\{eB \}$.
Note that the singular locus of the Peterson variety $\Pet_n$ is given by \cite{IY}. 
We will explain this in Appendix~\ref{section:Singular locus of Peterson variety}.
\end{example}

\begin{proof}[Proof of Proposition~\ref{proposition:singular_locus_hm_quantum}]
Recall that a point $(a,p)\coloneqq(a_1,\ldots,a_n,p_{rs})_{2 \leq s \leq n, n-h_m(n+1-s)<r < s}$ in $\C^{\frac{1}{2}n(n+1)-(n-m)}$ is a singular point of $V\big(\, \hmE_1^{(n)}, \ldots, \, \hmE_n^{(n)}\big)$ if and only if the rank of the Jacobian matrix $\big(\frac{\partial}{\partial q_{rs}} \, \hmE_{i}^{(n)}(a,p) \big)_{i,(r,s)}$ is not full.
It is clear that the Jacobian matrix $\big(\frac{\partial}{\partial q_{rs}} \, \hmE_{i}^{(n)}(a,p) \big)_{i,(r,s)}$ is not full if the $n$-th row vector is zero. 
Thus, it is enough to prove that if a point $(a,p)$ in $\C^{\frac{1}{2}n(n+1)-(n-m)}$ is a singular point of $V\big(\, \hmE_1^{(n)}, \ldots, \, \hmE_n^{(n)}\big)$, then the $n$-th row vector $\big( \frac{\partial}{\partial q_{rs}} \, \hmE_{n}^{(n)}(a,p) \big)_{(r,s)}$ of the Jacobian matrix is zero. 

Since the column vector with respect to $\frac{\partial}{\partial q_{rs}}$ of the Jacobian matrix $\big(\frac{\partial}{\partial q_{rs}} \, \hmE_{i}^{(n)}(a,p) \big)_{i,(r,s)}$ is of the form $(\underbrace{0,\ldots,0}_{s-r},1,*,\ldots,*)^t$ by \eqref{eq:partial_derivatives}, the first $n-1$ row vectors of the Jacobian matrix $\big(\frac{\partial}{\partial q_{rs}} \, \hmE_{i}^{(n)}(a,p) \big)_{i,(r,s)}$ are linearly independent.  
By the assumption that $\big(\frac{\partial}{\partial q_{rs}} \, \hmE_{i}^{(n)}(a,p) \big)_{i,(r,s)}$ does not have full rank, the $n$-th row vector must be written as a linear combination of the first $n-1$ row vectors, i.e. 
\begin{align} \label{eq:proof_singular_locus_hm_quantum}
\left(\frac{\partial}{\partial q_{rs}} \, \hmE_{n}^{(n)}(a,p) \right)_{(r,s)} = \sum_{i=1}^{n-1} c_i \left(\frac{\partial}{\partial q_{rs}} \, \hmE_{i}^{(n)}(a,p) \right)_{(r,s)}
\end{align}
for some $c_1,\ldots,c_{n-1} \in \C$. 
We note that the pair $(r,s)$ can be taken as $2 \leq s \leq n$ and $n-h_m(n+1-s)<r \leq s$ in the equality above.
Recall that we denote the singular point $(a_1,\ldots,a_n,p_{rs})_{2 \leq s \leq n, n-h_m(n+1-s)<r < s}$ of $V\big(\, \hmE_1^{(n)}, \ldots, \, \hmE_n^{(n)}\big)$ by $(a,p)$.

\smallskip

\noindent
\textbf{Claim~1. The coefficients $c_i$ of \eqref{eq:proof_singular_locus_hm_quantum} must be $(-1)^{n-i+1} a_n^{n-i}$ for $i \in [n-1]$.} 

We prove Claim~1 by descending induction on $i$. 
The base case is $i=n-1$. 
Comparing the $(r,s)=(1,n-1)$-th component of \eqref{eq:proof_singular_locus_hm_quantum}, we have 
\begin{align*}
\hmE_1^{[n,n]}(a,p) = c_1 \cdot 0 + c_2 \cdot 0 + \cdots + c_{n-2} \cdot 0 + c_{n-1} \cdot 1 
\end{align*}
by \eqref{eq:partial_derivatives}.
Since $\hmE_1^{[n,n]}=x_n$, the equality above implies that $a_n=c_{n-1}$.
This shows the base case.

We now assume that $i<n-1$ and the assertion of the claim holds for arbitrary $k$ with $k \geq i+1$.
It follows from \eqref{eq:partial_derivatives} that the $(r,s)=(n-i,n-1)$-th component of \eqref{eq:proof_singular_locus_hm_quantum} gives 
\begin{align*}
&\hmE_{n-i-1}^{[1,n-i-1]}(a,p) \cdot \,\hmE_1^{[n,n]}(a,p) \\
=&c_i \cdot 1 + c_{i+1} \cdot \left( \hmE_1^{[1,n-i-1]}(a,p)+\,\hmE_1^{[n,n]}(a,p) \right) \\
&+ c_{i+2} \cdot \left( \hmE_2^{[1,n-i-1]}(a,p)+\,\hmE_1^{[1,n-i-1]}(a,p)\cdot\,\hmE_1^{[n,n]}(a,p) \right) \\
&+ c_{i+3} \cdot \left( \hmE_3^{[1,n-i-1]}(a,p)+\,\hmE_2^{[1,n-i-1]}(a,p)\cdot\,\hmE_1^{[n,n]}(a,p) \right) \\
&+ \cdots + c_{n-1} \cdot \left( \hmE_{n-i-1}^{[1,n-i-1]}(a,p)+\,\hmE_{n-i-2}^{[1,n-i-1]}(a,p)\cdot\,\hmE_1^{[n,n]}(a,p) \right). 
\end{align*}
Since $\hmE_1^{[n,n]}(a,p)=a_n$ and $c_k=(-1)^{n-k+1} a_n^{n-k}$ for all $i+1 \leq k \leq n-1$ by our inductive assumption, $\hmE_{n-i-1}^{[1,n-i-1]}(a,p) \cdot a_n$ is equal to
\begin{align*}
&c_i + \sum_{k=1}^{n-i-1} (-1)^{n-i-k+1} a_n^{n-i-k} \left( \hmE_k^{[1,n-i-1]}(a,p)+\,\hmE_{k-1}^{[1,n-i-1]}(a,p)\cdot a_n \right) \\
= & c_i + (-1)^{n-i} a_n^{n-i-1} \cdot a_n + a_n \cdot \,\hmE_{n-i-1}^{[1,n-i-1]}(a,p).
\end{align*}
This yields that $c_i=(-1)^{n-i+1} a_n^{n-i}$ and we proved Claim~1.

It follows from Claim~1 and \eqref{eq:proof_singular_locus_hm_quantum} that
\begin{align} \label{eq:proof_singular_locus_hm_quantum_2}
\left(\frac{\partial}{\partial q_{rs}} \, \hmE_{n}^{(n)}(a,p) \right)_{(r,s)} = \sum_{i=1}^{n-1} (-1)^{n-i+1} a_n^{n-i} \left(\frac{\partial}{\partial q_{rs}} \, \hmE_{i}^{(n)}(a,p) \right)_{(r,s)}
\end{align}
for all $2 \leq s \leq n$ and $n-h_m(n+1-s)<r \leq s$.

\smallskip

\noindent
\textbf{Claim~2. It holds that $p_{in}=0$ for all $n-m+1 \leq i \leq n-1$.} 

We show Claim~2 by descending induction on $i$. 
The base case is $i=n-1$. 
The $(r,s)=(1,n-2)$-th component of \eqref{eq:proof_singular_locus_hm_quantum_2} is equal to 
\begin{align*}
\hmE_2^{[n-1,n]}(a,p) = -a_n^2 \cdot 1 + a_n \cdot \, \hmE_1^{[n-1,n]}(a,p) 
\end{align*}
from \eqref{eq:partial_derivatives}. 
Since $\hmE_2^{[n-1,n]}(a,p)=a_{n-1}a_n+p_{n-1 \, n}$ and $\hmE_1^{[n-1,n]}(a,p)=a_{n-1}+a_n$, we have $p_{n-1 \, n}=0$, which proves the base case.

Suppose now that $i < n-1$ and that the claim holds for any $k$ with $k \geq i+1$, i.e. $p_{kn}=0$ for all $i+1 \leq k \leq n-1$. 
By \eqref{eq:partial_derivatives} the $(r,s)=(1,i-1)$-th component of \eqref{eq:proof_singular_locus_hm_quantum_2} is 
\begin{align} \label{eq:proof_singular_locus_hm_quantum_3}
\hmE_{n-i+1}^{[i,n]}(a,p) = &(-1)^{n-i+2}a_n^{n-i+1} \cdot 1 + (-1)^{n-i+1}a_n^{n-i} \cdot \, \hmE_1^{[i,n]}(a,p) \\
&+ (-1)^{n-i}a_n^{n-i-1} \cdot \, \hmE_2^{[i,n]}(a,p) +\cdots+a_n \cdot \, \hmE_{n-i}^{[i,n]}(a,p). \notag
\end{align}
Here, we note that 
\begin{align*}
\det (\lambda I_{n-i+1} - M_{[i,n]})|_{q_{kn}=0 \ (i+1 \leq k \leq n-1)} = (-1)^{n-i} (-q_{in}) + \det (\lambda I_{n-i} - M_{[i,n-1]}) \cdot (\lambda-x_n)
\end{align*}
by the cofactor expansion along the last column. 
The left hand side is written as
\begin{align*}
\sum_{\ell=0}^{n-i+1} \big( (-1)^\ell E_\ell^{[i,n]}|_{q_{kn}=0 \ (i+1 \leq k \leq n-1)} \big) \lambda^{n-i+1-\ell}
\end{align*}
and the right hand side is 
\begin{align*}
\lambda^{n-i+1} + \sum_{\ell=1}^{n-i} \big( (-1)^\ell (x_n E_{\ell-1}^{[i,n-1]} + E_\ell^{[i,n-1]}) \big) \lambda^{n-i+1-\ell} + (-1)^{n-i+1} (x_n E_{n-i}^{[i,n-1]} + q_{in})
\end{align*}
by definition.
Thus, we obtain that  
\begin{align*}
E_\ell^{[i,n]}|_{q_{kn}=0 \ (i+1 \leq k \leq n-1)} &= x_n E_{\ell-1}^{[i,n-1]} + E_\ell^{[i,n-1]} \ \ \ \textrm{for} \ 1 \leq \ell \leq n-i; \\
E_{n-i+1}^{[i,n]}|_{q_{kn}=0 \ (i+1 \leq k \leq n-1)} &=x_n E_{n-i}^{[i,n-1]} + q_{in}. 
\end{align*}
In particular, by our inductive hypothesis $p_{kn}=0$ for all $i+1 \leq k \leq n-1$, one has
\begin{align*}
\hmE_\ell^{[i,n]}(a,p) &= a_n \, \hmE_{\ell-1}^{[i,n-1]}(a,p) + \, \hmE_\ell^{[i,n-1]}(a,p) \ \ \ \textrm{for} \ 1 \leq \ell \leq n-i; \\
\hmE_{n-i+1}^{[i,n]}(a,p) &=a_n \, \hmE_{n-i}^{[i,n-1]}(a,p) + p_{in}. 
\end{align*}
Substituting these equalities to \eqref{eq:proof_singular_locus_hm_quantum_3}, the left hand side of 
\eqref{eq:proof_singular_locus_hm_quantum_3} is 
\begin{align*} 
a_n \, \hmE_{n-i}^{[i,n-1]}(a,p) + p_{in}.
\end{align*}
On the other hand, the right hand side of 
\eqref{eq:proof_singular_locus_hm_quantum_3} is 
\begin{align*}
&(-1)^{n-i+2}a_n^{n-i+1} + \sum_{k=1}^{n-i} (-1)^{n-i-k+2} a_n^{n-i-k+1} \cdot \left(a_n \,  \hmE_{k-1}^{[i,n-1]}(a,p)+\,\hmE_{k}^{[i,n-1]}(a,p) \right) \\
= & a_n \cdot \, \hmE_{n-i}^{[i,n-1]}(a,p).
\end{align*}
Hence, we obtain that $p_{in}=0$ as desired. This proves Claim~2.

\smallskip

\noindent
\textbf{Claim~3. We have $a_n=0$.} 

It follows from Theorem~\ref{theorem:main1} that 
\begin{align*}
\Hess(N,h_m) \cap \Omega_e^\circ \cong V\big(\, \hmE_1^{(n)}, \ldots, \, \hmE_n^{(n)}\big).
\end{align*}
We denote by $b=(b_{ij})_{1 \leq j < i \leq n} \in \Hess(N,h_m) \cap \Omega_e^\circ \subset \C^{\frac{1}{2}n(n-1)}$ the image of the singular point $(a,p)$ of $V\big(\, \hmE_1^{(n)}, \ldots, \, \hmE_n^{(n)}\big)$ under the isomorphism above. 
One can see from Proposition~\ref{proposition:inverse_map_varphih} and Claim~2 that 
\begin{align} \label{eq:proof_claim3_1}
F_{i,1}(b)=-q_{n+1-i \, n}(a,p)=-p_{n+1-i \, n}=0 \ \ \ \textrm{for} \ 2 \leq i \leq m.
\end{align}
Since $b \in \Hess(N,h_m) \cap \Omega_e^\circ$, we also have 
\begin{align} \label{eq:proof_claim3_2}
F_{i,1}(b)=0 \ \ \ \textrm{for} \ m+1 \leq i \leq n
\end{align}
by Lemma~\ref{lemma:defining equation}. 
It follows from \eqref{eq:proof_claim3_1}, \eqref{eq:proof_claim3_2}, and Lemma~\ref{lemma:defining equation} again that the point $b=(b_{ij})_{1 \leq j < i \leq n}$ belongs to $\Hess(N,h_1) \cap \Omega_e^\circ$ where $h_1$ is the decomposable Hessenberg function defined by $h_1 \coloneqq (1,n,\ldots,n)$. 
As seen in Definition~\ref{definition:indecomposable} and surrounding
discussion, every flag $V_\bullet \in \Hess(N,h_1)$ has $V_1 = \C \cdot (1,0,\ldots,0)^t$ which implies that $b_{i1}=0$ for all $2 \leq i \leq n$. 
It then follows from Proposition~\ref{proposition:inverse_map_varphih} that 
\begin{align*} 
a_n=x_n(a,p)=-x_{21}(b)=-b_{21}=0, 
\end{align*}
as desired. 
We have proven Claim~3.

Combining Claim~3 and \eqref{eq:proof_singular_locus_hm_quantum_2}, we conclude that the $n$-th row vector $\big( \frac{\partial}{\partial q_{rs}} \, \hmE_{n}^{(n)}(a,p) \big)_{(r,s)}$ of the Jacobian matrix is zero. 
This completes the proof. 
\end{proof}

By using Proposition~\ref{proposition:singular_locus_hm_quantum}, we will explicitly describe the singular locus of $\Hess(N,h_m) \cap \Omega_e^\circ$ in Section~\ref{section:Singular locus of Hess(N,hm)}.
For this purpose, we will first study the singularity of $\Hess(N,h_2) \cap \Omega_e^\circ$ in next section.

\bigskip

\section{Cyclic quotient singularity}
\label{section:Cyclic quotient}

In this section we analyze the singularity of $\Hess(N,h_2) \cap \Omega_e^\circ$ where $h_2=(2,n,\ldots,n)$. 
In fact, we see that the singularity is related with a cyclic quotient singularity. 
Then we can explicitly describe the singular locus of $\Hess(N,h_2) \cap \Omega_e^\circ$. 

First, we study relations between $F_{i,j}$'s and $\tilde F^{\langle\mm_j\rangle}_{i,j}$'s, which are 
defined in \eqref{eq:Fi1explicit} and \eqref{eq:Fijexplicit}, and Definition~\ref{definition:tildeFij}, respectively. 

\begin{lemma} \label{lemma:Fij_tildeFij}
Let $1 \leq j \leq n-1$ and $j \leq \mm_j <n$. 
For $i>\mm_j$, we have
\begin{align*} 
F_{i,j} = \tilde F^{\langle\mm_j\rangle}_{i,j} - \sum_{\ell=\mm_j+1}^{i-1} x_{i\ell} F_{\ell,j}
\end{align*}
where we take the convention that $\sum_{\ell=\mm_j+1}^{\mm_j} x_{i\ell} F_{\ell,j}=0$.
\end{lemma}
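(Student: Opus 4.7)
The plan is to argue by descending induction on $\mm_j$, starting from the base case $\mm_j = i-1$ and decreasing down to $\mm_j = j$. The base case is immediate: by \eqref{eq:Fij_and_tildeFij} we have $\tilde F^{\langle i-1\rangle}_{i,j} = F_{i,j}$, and the sum in the statement is empty by convention, so the equality holds with nothing to prove.

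For the inductive step the key ingredient is the one-step recurrence
\begin{align*}
\tilde F^{\langle \mm_j\rangle}_{i,j} = \tilde F^{\langle \mm_j - 1\rangle}_{i,j} - x_{i\,\mm_j}\, F_{\mm_j, j}
\end{align*}
valid for $j+1 \leq \mm_j \leq i-1$. I would derive this by cofactor expansion of $\tilde F^{\langle \mm_j\rangle}_{i,j}$ along its second-to-last column, which consists of zeros above the diagonal entry $1$ (in the penultimate row) and the single further entry $x_{i\,\mm_j}$ in the last row. Deleting the row and column through this diagonal $1$ produces $\tilde F^{\langle \mm_j - 1\rangle}_{i,j}$ by construction, while deleting the last row and the same column produces $\tilde F^{\langle \mm_j - 1\rangle}_{\mm_j, j}$, which equals $F_{\mm_j, j}$ again by \eqref{eq:Fij_and_tildeFij}. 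A parity check of the row and column indices yields the signs $+$ and $-$ as displayed. This is essentially the computation already used inside the proof of Lemma \ref{lemma:grading2}.

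Given the recurrence, the induction closes by bookkeeping: substituting it into the identity at level $\mm_j$ replaces $\tilde F^{\langle \mm_j\rangle}_{i,j}$ by $\tilde F^{\langle \mm_j - 1\rangle}_{i,j} - x_{i\,\mm_j} F_{\mm_j, j}$, and the new $-x_{i\,\mm_j} F_{\mm_j, j}$ term absorbs into the existing sum over $\ell \geq \mm_j + 1$ to produce the sum over $\ell \geq \mm_j$ required at level $\mm_j - 1$. The case $j = 1$ is handled identically using the matrix in \eqref{eq:Fi1explicit} in place of \eqref{eq:Fijexplicit}, since its second-to-last column has the same structural shape. The only care needed is in the sign computation of the cofactor expansion and in the identification of the two resulting minors with $\tilde F^{\langle \mm_j - 1\rangle}_{i,j}$ and $F_{\mm_j, j}$; neither step poses a substantive obstacle.
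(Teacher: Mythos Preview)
Your proof is correct and essentially identical to the paper's. Both arguments rest on the one-step recurrence $\tilde F^{\langle \mm_j\rangle}_{i,j} = \tilde F^{\langle \mm_j - 1\rangle}_{i,j} - x_{i\,\mm_j}\, F_{\mm_j, j}$ obtained by cofactor expansion along the second-to-last column, followed by an induction; the paper packages this as an auxiliary identity $\tilde F^{\langle i-k \rangle}_{i,j} = \tilde F^{\langle \mm_j \rangle}_{i,j} - \sum_{\ell=\mm_j+1}^{i-k} x_{i\ell} F_{\ell,j}$ with descending induction on $k$, which is just a reparametrization of your descending induction on $\mm_j$.
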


\begin{proof}
It suffices to show that  
\begin{align} \label{eq:Fi1proof2} 
\tilde F^{\langle i-k \rangle}_{i,j} = \tilde F^{\langle \mm_j \rangle}_{i,j} - \sum_{\ell=\mm_j+1}^{i-k} x_{i\ell} F_{\ell,j} \ \ \ \textrm{for} \ 1 \leq k \leq i-\mm_j. 
\end{align}
Indeed, \eqref{eq:Fi1proof2} for $k=1$ is the desired equality by \eqref{eq:Fij_and_tildeFij}.
We prove \eqref{eq:Fi1proof2} by descending induction on $k$.
The base case is $k=i-\mm_j$, which is clear.
Now suppose that $k < i-\mm_j$ and assume by induction that the claim is true for $k+1$, i.e. 
 \begin{align} \label{eq:Fi1proof3} 
\tilde F^{\langle i-k-1 \rangle}_{i,j} = \tilde F^{\langle \mm_j \rangle}_{i,j} - \sum_{\ell=\mm_j+1}^{i-k-1} x_{i\ell} F_{\ell,j}. 
\end{align}
Then we show \eqref{eq:Fi1proof2}. 
The left hand side of \eqref{eq:Fi1proof2} is equal to 
\begin{align*}
 \tilde F^{\langle i-k-1 \rangle}_{i,j} - x_{i\,i-k} F_{i-k,j} 
\end{align*}
by using the cofactor expansion along the second-to-last column.
Combining this with the inductive hypothesis \eqref{eq:Fi1proof3}, we have proven \eqref{eq:Fi1proof2}.
\end{proof}

\begin{proposition} \label{proposition:Fij_ideal}
Let $1 \leq j \leq n-1$ and $j \leq \mm_j<n$.
For $i>\mm_j$, the ideal 
\begin{align*}
(F_{\mm_j+1,j},F_{\mm_j+2,j},\ldots,F_{i,j})
\end{align*}
is equal to the ideal
\begin{align*}
(\tilde F^{\langle\mm_j\rangle}_{\mm_j+1,j},\tilde F^{\langle\mm_j\rangle}_{\mm_j+2,j},\ldots,\tilde F^{\langle\mm_j\rangle}_{i,j})
\end{align*}
in the polynomial ring $\C[x_{ij} \mid 1 \leq j < i \leq n]$. 
In particular, if $h:[n] \to [n]$ is an indecomposable Hessenberg function such that $h \neq (n,\ldots,n)$, then we have 
\begin{align} \label{eq:coordinate_ring_tildeFij}
\C[\Hess(N,h) \cap \Omega_e^\circ] \cong \C[x_{ij} \mid 1 \leq j < i \leq n]/(\tilde F^{\langle h(j) \rangle}_{i,j} \mid j \in J_h \ \textrm{and} \ h(j) < i \leq n), 
\end{align}
where $J_h \coloneqq \{j \in [n-2] \mid h(j) <n \}$.
\end{proposition}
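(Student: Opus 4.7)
The plan is to prove the two ideal statements by induction on $i$, using Lemma~\ref{lemma:Fij_tildeFij} as the key identity that lets one convert between $F_{i,j}$ and $\tilde F^{\langle\mm_j\rangle}_{i,j}$ modulo lower-index $F_{\ell,j}$'s. I would first prove the ideal equality in the main statement, and then deduce the presentation in \eqref{eq:coordinate_ring_tildeFij} as a direct consequence, using the indecomposability of $h$ only to identify the right set of $j$'s that actually contribute.

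For the ideal equality, I would proceed by induction on $i \geq \mm_j+1$ with $j$ and $\mm_j$ fixed. The base case $i=\mm_j+1$ is immediate from Lemma~\ref{lemma:Fij_tildeFij}, since the sum $\sum_{\ell=\mm_j+1}^{\mm_j} x_{i\ell}F_{\ell,j}$ is empty by convention and so $F_{\mm_j+1,j}=\tilde F^{\langle\mm_j\rangle}_{\mm_j+1,j}$. For the inductive step, rewriting Lemma~\ref{lemma:Fij_tildeFij} as
\begin{align*}
\tilde F^{\langle\mm_j\rangle}_{i,j} = F_{i,j} + \sum_{\ell=\mm_j+1}^{i-1} x_{i\ell}F_{\ell,j}
\end{align*}
gives the inclusion $(\tilde F^{\langle\mm_j\rangle}_{\mm_j+1,j},\ldots,\tilde F^{\langle\mm_j\rangle}_{i,j}) \subset (F_{\mm_j+1,j},\ldots,F_{i,j})$; the reverse inclusion follows by solving the same identity for $F_{i,j}$ and invoking the inductive hypothesis on $F_{\mm_j+1,j},\ldots,F_{i-1,j}$. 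This argument is essentially a triangular change of generators, so the only thing to check carefully is the index bookkeeping at the boundary $\ell=\mm_j+1$.

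For the second assertion, I would start from the presentation \eqref{eq:coordinate_ring_Fij} in Proposition~\ref{proposition:coordinate_ring_Fij}, which for indecomposable $h$ reads
\begin{align*}
\C[\Hess(N,h) \cap \Omega_e^\circ] \cong \C[x_{ij}\mid 1\leq j<i\leq n]/(F_{i,j}\mid j\in[n-2],\, h(j)<i\leq n).
\end{align*}
Note that when $h(j)=n$ the index range $h(j)<i\leq n$ is empty, so only the $j\in J_h$ contribute generators to the ideal. Writing that ideal as the sum
\begin{align*}
\sum_{j\in J_h} (F_{h(j)+1,j}, F_{h(j)+2,j},\ldots,F_{n,j})
\end{align*}
and applying the ideal equality proven above with $\mm_j=h(j)$ to each summand yields exactly the ideal in \eqref{eq:coordinate_ring_tildeFij}. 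The assumption $h \neq (n,\ldots,n)$ is only used to ensure $J_h$ is nonempty and that the statement is non-vacuous.

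I do not expect a substantial obstacle here: the main identity is already packaged in Lemma~\ref{lemma:Fij_tildeFij}, so the whole proposition is essentially a triangular/inductive restatement followed by the reorganization of generators. The one bit demanding care is checking that the convention $\sum_{\ell=\mm_j+1}^{\mm_j}=0$ is applied consistently so that the induction correctly opens at $i=\mm_j+1$.
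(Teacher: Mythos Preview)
Your proposal is correct and matches the paper's proof essentially line for line: both argue by induction on $i$ with base case $F_{\mm_j+1,j}=\tilde F^{\langle\mm_j\rangle}_{\mm_j+1,j}$, use Lemma~\ref{lemma:Fij_tildeFij} to swap the top generator in the inductive step, and then deduce \eqref{eq:coordinate_ring_tildeFij} from \eqref{eq:coordinate_ring_Fij} by setting $\mm_j=h(j)$. The only cosmetic difference is that the paper performs the inductive step as a single chain of ideal equalities rather than separating the two inclusions.
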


\begin{proof}
We prove the first statement by induction on $i$.
The base case $i = \mm_j+1$ is clear since $F_{\mm_j+1,j}=\tilde F^{\langle\mm_j\rangle}_{\mm_j+1,j}$ by \eqref{eq:Fij_and_tildeFij}. 
We proceed to the inductive step. Suppose that $i > \mm_j+1$ and that the claim holds for $i-1$.
Then we have
\begin{align*}
(F_{\mm_j+1,j},\ldots,F_{i-1,j},F_{i,j}) &=(F_{\mm_j+1,j},\ldots,F_{i-1,j}, \tilde F^{\langle\mm_j\rangle}_{i,j}) \ \ \ \textrm{(from Lemma~\ref{lemma:Fij_tildeFij})} \\
&=(\tilde F^{\langle\mm_j\rangle}_{\mm_j+1,j},\ldots,\tilde F^{\langle\mm_j\rangle}_{i-1,j},\tilde F^{\langle\mm_j\rangle}_{i,j})  \ \ \ \textrm{(by the inductive assumption)}
\end{align*}
as desired.
The isomorphism \eqref{eq:coordinate_ring_tildeFij} follows from \eqref{eq:coordinate_ring_Fij} and the former statement by setting $\mm_j=h(j)$.
\end{proof}

\begin{example} \label{example:mj=j,j+1}
Let $\mm_j=j+1$.
Then one has 
\begin{align}
&\tilde F^{\langle 2 \rangle}_{i,1}=\left|
 \begin{array}{@{\,}ccc@{\,}}
     1 & 0 & x_{21} \\
     x_{21} & 1 & x_{31} \\
     x_{i1} & x_{i2} & x_{i+1 \, 1} 
 \end{array}
 \right| \ \textrm{for} \ i > 2; \label{eqFi1m1=2def} \\
&\tilde F^{\langle j+1 \rangle}_{i,j}=\left|
 \begin{array}{@{\,}cccc@{\,}}
     1 & 0 & 0 & 1 \\
     x_{j \, j-1} & 1 & 0 & x_{j+1 \, j} \\
     x_{j+1 \, j-1} & x_{j+1 \, j} & 1 & x_{j+2 \, j} \\
     x_{i \, j-1} & x_{ij} & x_{i \, j+1} & x_{i+1 \, j} 
 \end{array}
 \right| \ \textrm{for} \ 2 \leq j \leq n-2 \ \textrm{and} \ i > j+1. \notag  
\end{align}
By \eqref{eq:coordinate_ring_tildeFij}, the coordinate ring $\C[\Pet_n \cap \Omega_e^\circ]$ is 
\begin{align} \label{eq:coordinate_ring_PetOmega} 
\C[\Pet_n \cap \Omega_e^\circ] \cong \C[x_{ij} \mid 1 \leq j < i \leq n]/(\tilde F^{\langle j+1 \rangle}_{i,j} \ \textrm{for all} \ 1 \leq j \leq n-2 \ \textrm{and} \ j+1 < i \leq n). 
\end{align}
\end{example}

We now explain the cyclic quotient singularity which is also called the type $A$-singularity.
Let $\zeta$ be a primitive $n$-th root of unity and $\Cyclic_n$ the cyclic group of order $n$ generated by $\zeta$. 
Consider the action of $\Cyclic_n$ on $\C^2$ defined by $\zeta \cdot (x,y)=(\zeta x, \zeta^{-1} y)$ for $\zeta \in \Cyclic_n$ and $(x,y) \in \C^2$.
This induces the action of $\Cyclic_n$ on the polynomial ring $\C[x,y]$ (which is the coordinate ring of $\C^2$) and it is given by $\zeta \cdot x=\zeta x$ and $\zeta \cdot y=\zeta^{-1} y$.
As is well-known, the $\Cyclic_n$-invariants in $\C[x,y]$ is isomorphic to  
\begin{align*}
\C[\C^2/\Cyclic_n] \cong \C[x,y]^{\Cyclic_n} \cong \C[X,Y,Z]/(XY-Z^n),
\end{align*}
which sends $X$ to $x^n$, $Y$ to $y^n$, and $Z$ to $xy$. 
The quotient space $\C^2/\Cyclic_n$ is called the \emph{cyclic quotient singularity} or the \emph{type $A_{n-1}$-singularity}.

\begin{example} \label{example:n=3cyclic}
Let $n=3$. 
As seen in Example~\ref{example:mj=j,j+1}, the coordinate ring of $\Pet_3 \cap \Omega_e^\circ$ is given by
\begin{align*} 
\C[\Pet_3 \cap \Omega_e^\circ] \cong \C[x_{21}, x_{31}, x_{32}]/(\tilde F^{\langle 2 \rangle}_{3,1}). 
\end{align*}
Here, one can compute $\tilde F^{\langle 2 \rangle}_{3,1}$ as
\begin{align*}
\tilde F^{\langle 2 \rangle}_{3,1} &=\left|
 \begin{array}{@{\,}ccc@{\,}}
     1 & 0 & x_{21} \\
     x_{21} & 1 & x_{31} \\
     x_{31} & x_{32} & 0 
 \end{array}
 \right|
 =x_{21}^2x_{32}-x_{21}x_{31}-x_{31}x_{32} \\
 &=-x_{21}^3+(x_{21}^2-x_{31})(x_{21}+x_{32}).
\end{align*} 
Thus, we have 
\begin{align*}
\C[\Pet_3 \cap \Omega_e^\circ] \cong \C[X,Y,Z]/(XY-Z^3),
\end{align*}
which sends $X$ to $x_{21}^2-x_{31}$, $Y$ to $x_{21}+x_{32}$, and $Z$ to $x_{21}$. 
The ring isomorphism above yields the isomorphism $\Pet_3 \cap \Omega_e^\circ \cong \C^2/\Cyclic_3$.
\end{example}

In Example~\ref{example:n=3cyclic}, we constructed the polynomial $XY-Z^n$ from $\tilde F^{\langle 2 \rangle}_{3,1}$ for $n=3$.
We now generalize this fact to arbitrary $n$. 
More specifically, we construct the polynomial $XY-Z^n$ from $\tilde F^{\langle 2 \rangle}_{i,1} \ (2 < i \leq n)$ defined in \eqref{eqFi1m1=2def}. 
By the cofactor expansion along the second column, we have
\begin{align} \label{eq:Fi1m1=2}
&\tilde F^{\langle 2 \rangle}_{i,1}=\left|
 \begin{array}{@{\,}cc@{\,}}
     1 & x_{21} \\
     x_{i1} & x_{i+1 \, 1} 
 \end{array}
 \right| - x_{i2}\left|
 \begin{array}{@{\,}cc@{\,}}
     1 & x_{21} \\
     x_{21} & x_{31} 
 \end{array}
 \right| \ \textrm{for} \ i > 2.
\end{align}

\begin{lemma} \label{lemma:cyclic}
For $2 < i \leq n$, we have 
\begin{align*}
&\tilde F^{\langle 2 \rangle}_{n,1}+x_{21}\tilde F^{\langle 2 \rangle}_{n-1,1}+x_{21}^2\tilde F^{\langle 2 \rangle}_{n-2,1}+\cdots+x_{21}^{n-i} \tilde F^{\langle 2 \rangle}_{i,1} \\
=& \left|
 \begin{array}{@{\,}cc@{\,}}
     1 & x_{21}^{n-i+1} \\
     x_{i1} & 0 
 \end{array}
 \right| - (x_{n2}+x_{21}x_{n-1 \, 2}+x_{21}^2x_{n-2 \, 2}+\cdots+x_{21}^{n-i}x_{i2})\left|
 \begin{array}{@{\,}cc@{\,}}
     1 & x_{21} \\
     x_{21} & x_{31} 
 \end{array}
 \right|.
\end{align*}
\end{lemma}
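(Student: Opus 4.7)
The plan is to substitute the cofactor expansion \eqref{eq:Fi1m1=2} into the left hand side and recognize that the resulting expression splits cleanly into a telescoping piece and a common-factor piece that directly match the two determinants on the right hand side.

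First, I would rewrite \eqref{eq:Fi1m1=2} in the convenient form
\begin{align*}
\tilde F^{\langle 2 \rangle}_{k,1} = (x_{k+1\,1} - x_{21}x_{k1}) - x_{k2} \left|
 \begin{array}{@{\,}cc@{\,}}
     1 & x_{21} \\
     x_{21} & x_{31}
 \end{array}
 \right|
\end{align*}
valid for $2 < k \leq n$ (with the convention $x_{n+1\,1}=0$ so that the $k=n$ case is covered). Substituting this into the sum $\sum_{\ell=0}^{n-i} x_{21}^\ell \tilde F^{\langle 2 \rangle}_{n-\ell,1}$, the terms carrying the factor $x_{k2}$ immediately collect into
\begin{align*}
-\,(x_{n2}+x_{21}x_{n-1\,2}+\cdots+x_{21}^{n-i}x_{i2}) \left|
 \begin{array}{@{\,}cc@{\,}}
     1 & x_{21} \\
     x_{21} & x_{31}
 \end{array}
 \right|,
\end{align*}
which is exactly the second term of the right hand side.

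Next I would examine the remaining part, namely $\sum_{\ell=0}^{n-i} x_{21}^\ell (x_{n-\ell+1\,1} - x_{21}x_{n-\ell\,1})$. Writing out the terms in order, the $-x_{21}^{\ell+1}x_{n-\ell\,1}$ contribution at index $\ell$ cancels with the $x_{21}^{\ell+1}x_{n-\ell\,1}$ contribution coming from the index $\ell+1$ through the $x_{n-(\ell+1)+1\,1}$ term. After all internal cancellations, only the boundary contributions survive: the top boundary is $x_{21}^0 \cdot x_{n+1\,1} = 0$, and the bottom boundary is $-x_{21}^{n-i+1}x_{i1}$. This surviving expression equals $-x_{21}^{n-i+1}x_{i1}$, which is precisely $\left|\begin{smallmatrix} 1 & x_{21}^{n-i+1} \\ x_{i1} & 0\end{smallmatrix}\right|$, matching the first term of the right hand side.

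Combining the two pieces yields the claimed identity. There is no real obstacle here beyond the bookkeeping of the telescoping indices; the only delicate point is using the convention $x_{n+1\,1}=0$ so that the $\ell=0$ summand produces no stray term, and keeping the sign conventions consistent when rewriting the surviving monomial $-x_{21}^{n-i+1}x_{i1}$ as a $2\times 2$ determinant.
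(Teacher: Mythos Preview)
Your proof is correct. Both your argument and the paper's rely on the cofactor expansion \eqref{eq:Fi1m1=2} and the same underlying cancellation, but the paper packages it as a descending induction on $i$ (adding one term $x_{21}^{n-i}\tilde F^{\langle 2\rangle}_{i,1}$ at each step and simplifying), whereas you substitute all terms at once and recognize the telescoping directly. Your presentation is slightly more transparent, since the induction step in the paper is really just one step of the telescope; conversely, the inductive framing makes the bookkeeping a bit more mechanical. Either way, the only subtlety---which you correctly flag---is the convention $x_{n+1\,1}=0$ killing the top boundary term.
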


\begin{proof}
We prove this by descending induction on $i$. 
The base case $i=n$ is the equality \eqref{eq:Fi1m1=2} for $i=n$. 
Suppose now that $i < n$ and that the claim holds for $i+1$.
Then we have
\begin{align*}
&\tilde F^{\langle 2 \rangle}_{n,1}+x_{21}\tilde F^{\langle 2 \rangle}_{n-1,1}+x_{21}^2\tilde F^{\langle 2 \rangle}_{n-2,1}+\cdots+x_{21}^{n-i-1} \tilde F^{\langle 2 \rangle}_{i+1,1}+x_{21}^{n-i} \tilde F^{\langle 2 \rangle}_{i,1} \\
=& \left|
 \begin{array}{@{\,}cc@{\,}}
     1 & x_{21}^{n-i} \\
     x_{i+1 \, 1} & 0 
 \end{array}
 \right| - (x_{n2}+x_{21}x_{n-1 \, 2}+x_{21}^2x_{n-2 \, 2}+\cdots+x_{21}^{n-i-1}x_{i+1 \, 2})\left|
 \begin{array}{@{\,}cc@{\,}}
     1 & x_{21} \\
     x_{21} & x_{31} 
 \end{array}
 \right| + x_{21}^{n-i} \tilde F^{\langle 2 \rangle}_{i,1} \\
 & \hspace{80pt} \textrm{(by our descending induction hypothesis)} \\
=& \left|
 \begin{array}{@{\,}cc@{\,}}
     1 & x_{21}^{n-i} \\
     x_{i+1 \, 1} & 0 
 \end{array}
 \right| - (x_{n2}+x_{21}x_{n-1 \, 2}+x_{21}^2x_{n-2 \, 2}+\cdots+x_{21}^{n-i-1}x_{i+1 \, 2})\left|
 \begin{array}{@{\,}cc@{\,}}
     1 & x_{21} \\
     x_{21} & x_{31} 
 \end{array}
 \right| \\
 &+ x_{21}^{n-i} \left|
 \begin{array}{@{\,}cc@{\,}}
     1 & x_{21} \\
     x_{i1} & x_{i+1 \, 1} 
 \end{array}
 \right| - x_{21}^{n-i} x_{i2}\left|
 \begin{array}{@{\,}cc@{\,}}
     1 & x_{21} \\
     x_{21} & x_{31} 
 \end{array}
 \right| \hspace{30pt} \textrm{(by \eqref{eq:Fi1m1=2})} \\
=& \left|
 \begin{array}{@{\,}cc@{\,}}
     1 & x_{21}^{n-i+1} \\
     x_{i1} & 0 
 \end{array}
 \right| - (x_{n2}+x_{21}x_{n-1 \, 2}+x_{21}^2x_{n-2 \, 2}+\cdots+x_{21}^{n-i}x_{i2})\left|
 \begin{array}{@{\,}cc@{\,}}
     1 & x_{21} \\
     x_{21} & x_{31} 
 \end{array}
 \right|
\end{align*}
as desired. 
\end{proof}

\begin{proposition} \label{proposition:cyclic}
We set
\begin{align*}
\begin{cases}
X= x_{21}^2-x_{31} \\
Y= x_{21}^{n-2}+x_{32}x_{21}^{n-3}+\cdots+x_{n-1 \, 2}x_{21}+x_{n2} \\
Z= x_{21}.
\end{cases}
\end{align*}
Then one can write
\begin{align*}
\tilde F^{\langle 2 \rangle}_{n,1}+x_{21}\tilde F^{\langle 2 \rangle}_{n-1,1}+x_{21}^2\tilde F^{\langle 2 \rangle}_{n-2,1}+\cdots+x_{21}^{n-3} \tilde F^{\langle 2 \rangle}_{3,1} = XY-Z^n.
\end{align*}
\end{proposition}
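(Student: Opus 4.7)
The plan is to derive Proposition~\ref{proposition:cyclic} as a direct consequence of Lemma~\ref{lemma:cyclic} by specializing $i=3$ and then performing a short algebraic simplification. Taking $i=3$ in Lemma~\ref{lemma:cyclic}, the left-hand side is exactly the sum $\tilde F^{\langle 2 \rangle}_{n,1}+x_{21}\tilde F^{\langle 2 \rangle}_{n-1,1}+\cdots+x_{21}^{n-3} \tilde F^{\langle 2 \rangle}_{3,1}$ that appears in the proposition, and the right-hand side becomes
\[
\begin{vmatrix} 1 & x_{21}^{n-2} \\ x_{31} & 0 \end{vmatrix} - \bigl(x_{n2}+x_{21}x_{n-1\,2}+\cdots+x_{21}^{n-3}x_{32}\bigr)\begin{vmatrix} 1 & x_{21} \\ x_{21} & x_{31} \end{vmatrix}.
\]

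Next I would evaluate the two $2\times 2$ determinants: the first equals $-x_{21}^{n-2}x_{31}$, and the second equals $x_{31}-x_{21}^2 = -X$. Observe that the parenthesized coefficient of the second determinant is precisely $Y - x_{21}^{n-2}$, since by definition $Y = x_{21}^{n-2}+x_{32}x_{21}^{n-3}+\cdots+x_{n-1\,2}x_{21}+x_{n2}$. Substituting yields
\[
\sum_{k=0}^{n-3} x_{21}^{k}\,\tilde F^{\langle 2 \rangle}_{n-k,1} = -x_{21}^{n-2}x_{31} + X\bigl(Y - x_{21}^{n-2}\bigr).
\]

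Finally, the identity $-x_{21}^{n-2}x_{31} - x_{21}^{n-2}X = -x_{21}^{n-2}x_{31} - x_{21}^{n-2}(x_{21}^{2} - x_{31}) = -x_{21}^{n}$ reduces the right-hand side to $XY - x_{21}^{n} = XY - Z^{n}$, which is the claimed formula. There is essentially no obstacle here beyond careful bookkeeping of signs and the exponents of $x_{21}$ appearing in $Y$; the structural work has already been done in Lemma~\ref{lemma:cyclic}, so Proposition~\ref{proposition:cyclic} is a one-line specialization followed by three lines of arithmetic.
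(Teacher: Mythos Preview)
Your proof is correct and follows essentially the same approach as the paper: both specialize Lemma~\ref{lemma:cyclic} at $i=3$, evaluate the two $2\times 2$ determinants, and perform the same short algebraic simplification to reach $XY-Z^n$. The only cosmetic difference is the order in which the terms are regrouped.
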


\begin{proof}
By using Lemma~\ref{lemma:cyclic} for $i=3$, we obtain
\begin{align*}
&\tilde F^{\langle 2 \rangle}_{n,1}+x_{21}\tilde F^{\langle 2 \rangle}_{n-1,1}+x_{21}^2\tilde F^{\langle 2 \rangle}_{n-2,1}+\cdots+x_{21}^{n-3} \tilde F^{\langle 2 \rangle}_{3,1} \\
=& \left|
 \begin{array}{@{\,}cc@{\,}}
     1 & x_{21}^{n-2} \\
     x_{31} & 0 
 \end{array}
 \right| - (x_{n2}+x_{21}x_{n-1 \, 2}+x_{21}^2x_{n-2 \, 2}+\cdots+x_{21}^{n-3}x_{32})\left|
 \begin{array}{@{\,}cc@{\,}}
     1 & x_{21} \\
     x_{21} & x_{31} 
 \end{array}
 \right| \\
=& -x_{21}^{n-2}x_{31} - (x_{n2}+x_{21}x_{n-1 \, 2}+x_{21}^2x_{n-2 \, 2}+\cdots+x_{21}^{n-3}x_{32})(x_{31}-x_{21}^2) \\
=& -x_{21}^n+x_{21}^{n-2}(x_{21}^2-x_{31}) + (x_{n2}+x_{21}x_{n-1 \, 2}+x_{21}^2x_{n-2 \, 2}+\cdots+x_{21}^{n-3}x_{32})(x_{21}^2-x_{31})\\
=& -x_{21}^n + (x_{n2}+x_{21}x_{n-1 \, 2}+x_{21}^2x_{n-2 \, 2}+\cdots+x_{21}^{n-3}x_{32}+x_{21}^{n-2})(x_{21}^2-x_{31})\\
=&XY-Z^n.
\end{align*}
\end{proof}

\begin{theorem} \label{theorem:cyclic_quotient_h2}
If $h_2=(2,n,\ldots,n)$, then there is an isomorphism of $\C$-algebras
\begin{align*} 
\C[\Hess(N,h_2) \cap \Omega_e^\circ] \cong \frac{\C[X,Y,Z]}{(XY-Z^n)} \otimes \C[x_{32},x_{42},\ldots, x_{n-1 \, 2}] \otimes \C[x_{ij} \mid 3 \leq j < i \leq n]
\end{align*}
which sends 
\begin{equation}  
\begin{array}{l} \label{eq:correspondence cyclic}
X \mapsto x_{21}^2-x_{31}; \\
Y \mapsto x_{21}^{n-2}+x_{32}x_{21}^{n-3}+\cdots+x_{n-1 \, 2}x_{21}+x_{n2}; \\
Z \mapsto x_{21}; \\
x_{i2} \mapsto x_{i2} \ {\rm for} \ 3 \leq i \leq n-1; \\
x_{ij} \mapsto x_{ij} \ {\rm for} \ 3 \leq j < i \leq n. 
\end{array}
\end{equation}
In other words, 
\begin{align*}
\Hess(N,h_2) \cap \Omega_e^\circ \cong \C^2/\Cyclic_n \times \C^{\frac{1}{2}(n-1)(n-2)-1}.
\end{align*}
\end{theorem}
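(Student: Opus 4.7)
The plan is to reduce the presentation of $\C[\Hess(N,h_2) \cap \Omega_e^\circ]$ given by Corollary~\ref{corollary:intro_main} and \eqref{eq:coordinate_ring_tildeFij} to a form in which $XY - Z^n = 0$ is the only nontrivial relation, after which the desired tensor decomposition is immediate. Since $h_2 = (2, n, \ldots, n)$ is indecomposable for $n \geq 3$ (we may assume $n \geq 3$ as $m \leq n-1$) and satisfies $J_{h_2} = \{1\}$, the presentation in \eqref{eq:coordinate_ring_tildeFij} reads
\begin{align*}
\C[\Hess(N,h_2) \cap \Omega_e^\circ] \cong \C[x_{ij} \mid 1 \leq j < i \leq n]/(\tilde F^{\langle 2 \rangle}_{i,1} \mid 3 \leq i \leq n).
\end{align*}
By \eqref{eq:Fi1m1=2}, each defining polynomial simplifies to the triangular form
\begin{align*}
\tilde F^{\langle 2 \rangle}_{i,1} = x_{i+1,1} - Z\, x_{i,1} + X\, x_{i,2} \qquad (3 \leq i \leq n),
\end{align*}
where $Z = x_{21}$, $X = x_{21}^2 - x_{31}$, and $x_{n+1,1} := 0$.

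Next I will perform two polynomial changes of variables on the ambient polynomial ring: replace $x_{31}$ by $X$ (with inverse $x_{31} = Z^2 - X$) and replace $x_{n,2}$ by $Y = Z^{n-2} + x_{32} Z^{n-3} + \cdots + x_{n-1,2} Z + x_{n,2}$. Both are triangular and hence give polynomial isomorphisms of ambient rings matching the assignment \eqref{eq:correspondence cyclic}. Invoking Proposition~\ref{proposition:cyclic}, the ideal $(\tilde F^{\langle 2 \rangle}_{3,1}, \ldots, \tilde F^{\langle 2 \rangle}_{n,1})$ equals the ideal $(\tilde F^{\langle 2 \rangle}_{3,1}, \ldots, \tilde F^{\langle 2 \rangle}_{n-1,1}, XY - Z^n)$, because the specific linear combination of the $\tilde F^{\langle 2 \rangle}_{i,1}$'s given there equals $XY - Z^n$ on the nose.

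For $3 \leq i \leq n-1$, the relation $\tilde F^{\langle 2 \rangle}_{i,1}$ is linear in $x_{i+1,1}$ with unit coefficient, so it can be used to inductively eliminate $x_{41}, x_{51}, \ldots, x_{n,1}$ as polynomials in the remaining variables. What survives is
\begin{align*}
\C[Z, X, Y, x_{i,2}\ (3 \leq i \leq n-1), x_{ij}\ (3 \leq j < i \leq n)] \big/ (XY - Z^n),
\end{align*}
which factors as
$\C[X,Y,Z]/(XY - Z^n) \otimes \C[x_{32},\ldots,x_{n-1,2}] \otimes \C[x_{ij} \mid 3 \leq j < i \leq n]$. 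This yields the first isomorphism, and the classical identification $\C^2/\Cyclic_n \cong \Spec \C[X,Y,Z]/(XY - Z^n)$, together with the dimension count $(n-3) + \binom{n-2}{2} = \tfrac{1}{2}n(n-3) = \tfrac{1}{2}(n-1)(n-2) - 1$ for the complementary factor, gives the geometric statement.

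The computation is essentially mechanical once Proposition~\ref{proposition:cyclic} is available, since it packages the nontrivial algebraic identity. The only points requiring care will be verifying that the two changes of variables are genuine polynomial automorphisms (not merely birational) and that the triangular elimination is carried out in the correct order $x_{41} \to x_{51} \to \cdots \to x_{n,1}$ so that each variable is solved in terms of variables that will persist in the final presentation.
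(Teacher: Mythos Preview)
Your proof is correct and follows essentially the same route as the paper's: both start from the presentation \eqref{eq:coordinate_ring_tildeFij}, use Proposition~\ref{proposition:cyclic} to trade $\tilde F^{\langle 2\rangle}_{n,1}$ for $XY-Z^n$ in the ideal, eliminate $x_{41},\ldots,x_{n1}$ via the triangular relations $\tilde F^{\langle 2\rangle}_{i,1}$ for $3\le i\le n-1$, and then make the polynomial change of variables \eqref{eq:correspondence cyclic}. The only difference is the order in which you perform the change of variables and the elimination, which is immaterial; one minor remark is that the citation to Corollary~\ref{corollary:intro_main} at the outset is unnecessary, since you in fact use only \eqref{eq:coordinate_ring_tildeFij}.
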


\begin{remark}
We have seen the case when $n=3$ in Example~\ref{example:n=3cyclic}.
\end{remark}

\begin{proof}[Proof of Theorem~\ref{theorem:cyclic_quotient_h2}]
By \eqref{eq:coordinate_ring_tildeFij} we have
\begin{align} \label{eq:proof_main2_1}
\C[\Hess(N,h_2) \cap \Omega_e^\circ] \cong \C[x_{ij} \mid 1 \leq j < i \leq n]/(\tilde F^{\langle 2 \rangle}_{3,1}, \tilde F^{\langle 2 \rangle}_{4,1}, \ldots, \tilde F^{\langle 2 \rangle}_{n,1}).
\end{align}
Put
\begin{align*} 
P_n \coloneqq (x_{21}^2-x_{31})(x_{21}^{n-2}+x_{32}x_{21}^{n-3}+\cdots+x_{n-1 \, 2}x_{21}+x_{n2})-x_{21}^n.
\end{align*}
It then follows from Proposition~\ref{proposition:cyclic} that 
\begin{align} \label{eq:proof_main2_2} 
\C[x_{ij} \mid 1 \leq j < i \leq n]/(\tilde F^{\langle 2 \rangle}_{3,1}, \ldots, \tilde F^{\langle 2 \rangle}_{n-1,1}, \tilde F^{\langle 2 \rangle}_{n,1}) \cong \C[x_{ij} \mid 1 \leq j < i \leq n]/(\tilde F^{\langle 2 \rangle}_{3,1}, \ldots, \tilde F^{\langle 2 \rangle}_{n-1,1}, P_n). 
\end{align}
By the definition \eqref{eqFi1m1=2def}, $\tilde F^{\langle 2 \rangle}_{i,1}=0$ if and only if
\begin{align*} 
x_{i+1 \, 1}=x_{21}x_{i1}+x_{31}x_{i2}-x_{21}^2x_{i2}
\end{align*}
for $3 \leq i \leq n-1$.
These equalities for $i=n-1,n-2,\ldots,3$ lead us to the isomorphism
\begin{align} \label{eq:proof_main2_3}
\C[x_{ij} \mid 1 \leq j < i \leq n]/(\tilde F^{\langle 2 \rangle}_{3,1}, \ldots, \tilde F^{\langle 2 \rangle}_{n-1,1}, P_n) \cong \C[x_{21},x_{31},x_{ij} \mid 2 \leq j < i \leq n]/(P_n). 
\end{align}
It is straightforward to see that
\begin{align} \label{eq:proof_main2_4}
& \C[x_{21},x_{31},x_{ij} \mid 2 \leq j < i \leq n]/(P_n) \\
\cong &\C[X,Y,Z]/(XY-Z^n) \otimes \C[x_{32},x_{42},\ldots, x_{n-1 \, 2}] \otimes \C[x_{ij} \mid 3 \leq j < i \leq n], \notag
\end{align}
which sends 
\begin{equation*}  
\begin{array}{l} 
X \mapsto x_{21}^2-x_{31}; \\
Y \mapsto x_{21}^{n-2}+x_{32}x_{21}^{n-3}+\cdots+x_{n-1 \, 2}x_{21}+x_{n2}; \\
Z \mapsto x_{21}; \\
x_{i2} \mapsto x_{i2} \ {\rm for} \ 3 \leq i \leq n-1; \\
x_{ij} \mapsto x_{ij} \ {\rm for} \ 3 \leq j < i \leq n. 
\end{array}
\end{equation*}
In fact, the inverse map is given by
\begin{equation*}  
\begin{array}{l} 
x_{21} \mapsto Z; \\
x_{31} \mapsto -X+Z^2; \\
x_{n2} \mapsto Y-Z^{n-2}-x_{32}Z^{n-3}-\cdots-x_{n-1 \, 2}Z; \\
x_{i2} \mapsto x_{i2} \ {\rm for} \ 3 \leq i \leq n-1; \\
x_{ij} \mapsto x_{ij} \ {\rm for} \ 3 \leq j < i \leq n.  
\end{array}
\end{equation*}
Combining  \eqref{eq:proof_main2_1}, \eqref{eq:proof_main2_2}, \eqref{eq:proof_main2_3}, and  \eqref{eq:proof_main2_4}, we conclude that 
\begin{align*} 
\C[\Hess(N,h_2) \cap \Omega_e^\circ] \cong \frac{\C[X,Y,Z]}{(XY-Z^n)} \otimes \C[x_{32},x_{42},\ldots, x_{n-1 \, 2}] \otimes \C[x_{ij} \mid 3 \leq j < i \leq n].
\end{align*}
We complete the proof. 
\end{proof}

For a polynomial $f \in \C[\Omega_e^\circ] \cong \C[x_{ij} \mid 1 \leq j < i \leq n]$, we denote the zero set of $f$ by
\begin{align} \label{zero_set_Vf}
V(f)\coloneqq\{g \in \Omega_e^\circ \cong \C^{\frac{1}{2}n(n-1)} \mid f(g) = 0\}.
\end{align}

\begin{corollary} \label{corollary:singular_locus_h2}
The singular locus of $\Hess(N,h_2) \cap \Omega_e^\circ$ is given by
\begin{align*}
\Sing(\Hess(N,h_2) \cap \Omega_e^\circ) = \bigcap_{i=2}^n V(x_{i1}) \cap V(x_{n2}). 
\end{align*}
\end{corollary}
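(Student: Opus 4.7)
The approach is to translate the singular locus of the explicit decomposition
\[
\Hess(N,h_2) \cap \Omega_e^\circ \cong \C^2/\Cyclic_n \times \C^{\frac{1}{2}(n-1)(n-2)-1}
\]
provided by Theorem~\ref{theorem:cyclic_quotient_h2} back into the $x_{ij}$-coordinates on $\Omega_e^\circ$. Since the second factor is smooth and the cyclic quotient singularity $\C^2/\Cyclic_n = \Spec \C[X,Y,Z]/(XY-Z^n)$ has its unique singular point at the origin $X=Y=Z=0$ (for $n \geq 3$), the singular locus of the product is $\{0\} \times \C^{\frac{1}{2}(n-1)(n-2)-1}$.

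First I would use the explicit correspondence~\eqref{eq:correspondence cyclic} to pull back the conditions $X=Y=Z=0$. In the tensor-product presentation, the variables $x_{32},\ldots,x_{n-1\,2}$ together with $x_{ij}$ ($3 \leq j < i \leq n$) play the role of free coordinates on the smooth factor and so stay unconstrained. The three conditions $Z=0$, $X=0$, $Y=0$ become, respectively, $x_{21}=0$, $x_{31}=x_{21}^2=0$, and $x_{n2} = -\bigl(x_{21}^{n-2}+x_{32}x_{21}^{n-3}+\cdots+x_{n-1\,2}x_{21}\bigr) = 0$.

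Next, I would re-introduce the variables $x_{41},\ldots,x_{n1}$ that were eliminated during the proof of Theorem~\ref{theorem:cyclic_quotient_h2}. These were removed using the defining relations $\tilde F^{\langle 2 \rangle}_{i,1}=0$ of $\Hess(N,h_2) \cap \Omega_e^\circ$, which read
\[
x_{i+1\,1} = x_{21}x_{i1} + x_{31}x_{i2} - x_{21}^2 x_{i2} \qquad (3 \leq i \leq n-1).
\]
Substituting $x_{21}=0$ and $x_{31}=0$ and performing a straightforward induction on $i$ yields $x_{i1}=0$ for all $4 \leq i \leq n$. Combining this with the previous paragraph shows that the singular locus is cut out precisely by $x_{i1}=0$ for $2 \leq i \leq n$ together with $x_{n2}=0$, which is the claimed description.

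There is no substantial obstacle: the argument is essentially a bookkeeping exercise. The only care needed is to remember which variables were eliminated when passing to the tensor-product form in Theorem~\ref{theorem:cyclic_quotient_h2}, and to re-introduce them using the triangular recursion above so as to state the singular locus directly inside $\Omega_e^\circ$.
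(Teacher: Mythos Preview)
Your proposal is correct and follows essentially the same approach as the paper: both use Theorem~\ref{theorem:cyclic_quotient_h2} to identify the singular locus with $\{X=Y=Z=0\}$, translate back via the correspondence~\eqref{eq:correspondence cyclic} to obtain $x_{21}=x_{31}=x_{n2}=0$, and then use the triangular recursion $\tilde F^{\langle 2 \rangle}_{i,1}=0$ to recover $x_{i1}=0$ for the remaining indices. The paper phrases the middle step slightly more algebraically (observing that the ideal $(X,Y,Z)$ maps to $(x_{21},x_{31},x_{n2})$), but the content is identical.
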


\begin{proof}
By Theorem~\ref{theorem:cyclic_quotient_h2} we have 
\begin{align*}
\Hess(N,h_2) \cap \Omega_e^\circ \cong \C^2/\Cyclic_n \times \C^{\frac{1}{2}(n-1)(n-2)-1} \cong V(XY-Z^n).
\end{align*}
Here, $V(XY-Z^n)$ denotes the hypersurface defined by the equation $XY-Z^n=0$ in $\C^{\frac{1}{2}(n-1)(n-2)+2}$ with the variables $X,Y,Z,x_{32},x_{42},\ldots, x_{n-1 \, 2}, x_{ij} \ (3 \leq j < i \leq n)$. 
The singular locus of $V(XY-Z^n)$ is the solution set of the equations 
\begin{align*}
X = 0, \ Y=0, \ Z=0.
\end{align*}
It follows from the correspondence \eqref{eq:correspondence cyclic} that the image of the ideal $(X,Y,Z)$ under the isomorphism $\C[\Hess(N,h_2) \cap \Omega_e^\circ] \cong \frac{\C[X,Y,Z]}{(XY-Z^n)} \otimes \C[x_{32},x_{42},\ldots, x_{n-1 \, 2}] \otimes \C[x_{ij} \mid 3 \leq j < i \leq n]$ is 
\begin{align*}
(x_{21}^2-x_{31}, x_{21}^{n-2}+x_{32}x_{21}^{n-3}+\cdots+x_{n-1 \, 2}x_{21}+x_{n2}, x_{21}) = (x_{31}, x_{n2}, x_{21})
\end{align*}
in $\C[\Hess(N,h_2) \cap \Omega_e^\circ]$. 
Hence by the presentation \eqref{eq:coordinate_ring_tildeFij}, the singular locus of $\Hess(N,h_2)$ is the solution set of the equations 
\begin{align} \label{eq:proof_singular_locus_h2_1}
\begin{cases}
x_{21}=0 \\
x_{31}=0 \\
x_{n2}=0 \\
\tilde F^{\langle 2 \rangle}_{i,1}=0 \ \ \ \textrm{for} \ 3 \leq i \leq n-1.
\end{cases}
\end{align} 
Note that the equation $\tilde F^{\langle 2 \rangle}_{n,1}=0$ can be removed above since $\tilde F^{\langle 2 \rangle}_{n,1}=0$ is derived from the equations \eqref{eq:proof_singular_locus_h2_1} by Proposition~\ref{proposition:cyclic}. 
It then follows from the equality $\tilde F^{\langle 2 \rangle}_{i,1} = x_{i+1 \, 1}+x_{21}^2x_{i2}-x_{21}x_{i1}-x_{31}x_{i2}$ for $3 \leq i \leq n-1$ that the solution set of the equations \eqref{eq:proof_singular_locus_h2_1} is given by $x_{i1}=x_{n2}=0$ for $2 \leq i \leq n$, as desired. 
\end{proof}

\bigskip

\section{Singular locus of $\Hess(N,h_m) \cap \Omega_e^\circ$}
\label{section:Singular locus of Hess(N,hm)}

We now give an explicit description for the singular locus of $\Hess(N,h_m) \cap \Omega_e^\circ$ where $h_m$ is defined in \eqref{eq:hm}. 

\begin{theorem} \label{theorem:singular_locus_hm}
Let $2 \leq m \leq n-1$.
Then, the singular locus of $\Hess(N,h_m) \cap \Omega_e^\circ$ is described as
\begin{align*}
\Sing(\Hess(N,h_m) \cap \Omega_e^\circ) = \bigcap_{i=2}^n V(x_{i1}) \cap \bigcap_{j=2}^{m} V(x_{nj}), 
\end{align*}
where the notation $V(f)$ is defined in \eqref{zero_set_Vf}. 
\end{theorem}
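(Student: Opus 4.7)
The plan is to pass to the quantum side via the isomorphism $\varphi_{h_m}$ of Corollary~\ref{corollary:main_commutative_diagram} (applicable because $h_m$ is indecomposable for $2\le m\le n-1$), reducing the problem to the singular locus of the affine variety $V(\hmE_1^{(n)},\ldots,\hmE_n^{(n)})$. By Proposition~\ref{proposition:singular_locus_hm_quantum}, this singular locus is cut out inside $V$ by the equations $\tfrac{\partial}{\partial q_{rs}}\hmE_n^{(n)}=0$ for all admissible $(r,s)$, with the convention $q_{ss}=x_s$. The task is thus to solve this polynomial system explicitly and translate the solutions back through Proposition~\ref{proposition:inverse_map_varphih}.

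For the containment $\subseteq$, Claims 2 and 3 in the proof of Proposition~\ref{proposition:singular_locus_hm_quantum} already yield $a_n=0$ and $p_{in}=0$ for $n-m+1\le i\le n-1$ at any singular $(a,p)$. Via Proposition~\ref{proposition:inverse_map_varphih}, these become $x_{21}(g)=0$ and $F_{j,1}(g)=0$ for $2\le j\le m$; combining with the ambient defining equations $F_{j,1}=0$ for $m+1\le j\le n$ places $g$ in $\Hess(N,(1,n,\ldots,n))\cap\Omega_e^\circ$, which by the product decomposition~\eqref{eq:product_Hess} forces $V_1=\C e_1$ and hence $x_{i1}(g)=0$ for all $2\le i\le n$. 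To extract the remaining conditions $x_{nj}(g)=0$ for $2\le j\le m$, I will use the further singularity equations $\tfrac{\partial}{\partial q_{r,n}}\hmE_n^{(n)}\big|_{(a,p)}=0$ for $r=n-m+1,\ldots,n-1$. By Lemma~\ref{lemma:partial_derivatives}(2) together with the convention $E_k^{[n+1,n]}=\delta_{k,0}$, each such equation collapses to $\hmE_{r-1}^{(r-1)}(a,p)=0$. Since $\varphi_{h_m}(x_{nj})=\hmE_{n-j}^{(n-j)}$, setting $r=n-j+1$ gives exactly $x_{nj}(g)=0$ for $2\le j\le m$.

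For the reverse containment $\supseteq$, given $g$ in the claimed set, the explicit formula~\eqref{eq:Fi1explicit} directly shows $F_{i,1}(g)=0$ for every $2\le i\le n$, so $g\in\Hess(N,h_m)\cap\Omega_e^\circ$. At the quantum image $(a,p)=\varphi_{h_m}(g)$ one reads off $a_n=0$, $p_{rn}=0$ for all $r$, $\hmE_k^{[1,n-1]}(a,p)=0$ for $1\le k\le n-1$ (from $x_{i1}=0$), and $\hmE_k^{[1,k]}(a,p)=0$ for $n-m\le k\le n-2$ (from $x_{nj}=0$). By Proposition~\ref{proposition:singular_locus_hm_quantum}, singularity of $g$ reduces to checking $\tfrac{\partial}{\partial q_{rs}}\hmE_n^{(n)}\big|_{(a,p)}=0$ for every admissible $(r,s)$. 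For $s=n$ this simplifies as above to $\hmE_{r-1}^{(r-1)}(a,p)=0$, which holds. For $s<n$, a cofactor expansion along the (zero) last column of $M_{[s+1,n]}(a,p)$ gives $\hmE_k^{[s+1,n]}(a,p)=0$ for every $k\ge n-s$, while a nonzero $\hmE_{n-s+r-1-k}^{[1,r-1]}$ requires $k\ge n-s$ by the convention on $\hmE^{[1,r-1]}$; these two ranges are incompatible, so every term in the sum formula of Lemma~\ref{lemma:partial_derivatives}(2) vanishes.

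The main technical step will be the range/degree bookkeeping in the reverse containment: one needs to verify that the two conditions on $k$ allowing a nonzero summand in Lemma~\ref{lemma:partial_derivatives}(2) cannot be met simultaneously at $(a,p)$. Once that cancellation is in place, both containments close cleanly and the theorem follows.
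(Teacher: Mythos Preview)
Your argument is correct, and it differs meaningfully from the paper's proof. The paper proceeds by induction on $m$: the base case $m=2$ is Corollary~\ref{corollary:singular_locus_h2}, which in turn rests on the cyclic quotient singularity computation of Theorem~\ref{theorem:cyclic_quotient_h2}; the inductive step then shows that the singularity equations for $h_m$ are equivalent to those for $h_{m-1}$ together with the single extra condition $q_{n-m+1\,n}=0$, and translates this via Corollary~\ref{corollary:main_commutative_diagram} into $\Sing(\Hess(N,h_m)\cap\Omega_e^\circ)=\Sing(\Hess(N,h_{m-1})\cap\Omega_e^\circ)\cap V(x_{nm})$.

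Your route is direct and avoids both the induction and all of Section~\ref{section:Cyclic quotient}. For $\subseteq$ you reuse Claims~2 and~3 from the proof of Proposition~\ref{proposition:singular_locus_hm_quantum} (as the paper does implicitly through the inductive structure), but then you read off the remaining conditions $x_{nj}=0$ in one stroke from the $s=n$ equations, using that $\partial\hmE_n^{(n)}/\partial q_{rn}=\hmE_{r-1}^{(r-1)}$ and $\varphi_{h_m}(x_{nj})=\hmE_{n-j}^{(n-j)}$. For $\supseteq$ your key observation is that at the proposed locus the last column of the specialized $M_{[s+1,n]}$ vanishes, so $\hmE_{n-s}^{[s+1,n]}(a,p)=\det M_{[s+1,n]}(a,p)=0$; combined with the degree convention \eqref{eq:convention_Eab} for $\hmE^{[1,r-1]}$ this kills every term in \eqref{eq:partial_derivatives}. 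The tradeoff: your proof is shorter and shows that Theorem~\ref{theorem:singular_locus_hm} is logically independent of the $A_{n-1}$-singularity description, while the paper's inductive proof makes the incremental structure $\Sing(h_m)=\Sing(h_{m-1})\cap V(x_{nm})$ explicit and motivates the role of Section~\ref{section:Cyclic quotient} as the geometric anchor for $m=2$.
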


\begin{proof}
We prove this by induction on $m$. 
The base case is $m=2$, which follows from Corollary~\ref{corollary:singular_locus_h2}. 
Suppose now that $m > 2$ and that the claim holds for
$m-1$. 
It follows from Proposition~\ref{proposition:singular_locus_hm_quantum} that the singular locus of $V\big(\, \hmE_1^{(n)}, \ldots, \, \hmE_n^{(n)}\big)$ is given by the solution set of the equations 
\begin{align} \label{eq:proof_singular_locus_hm_1}
\frac{\partial}{\partial q_{rs}} \, \hmE_{n}^{(n)} = 0 \ \ \ \textrm{for all} \ 2 \leq s \leq n \ \textrm{and} \ n-h_m(n+1-s)<r \leq s.
\end{align}
By Lemma~\ref{lemma:recursive_qijab} we have 
\begin{align} \label{eq:proof_singular_locus_hm_2}
\begin{cases}
E_1^{[n,n]} &= x_n \\
E_2^{[n-1,n]} &= E_{1}^{[n-1,n-1]} x_n + q_{n-1 \, n} \\
E_3^{[n-2,n]} &= E_{2}^{[n-2,n-1]} x_n + E_{1}^{[n-2,n-2]} q_{n-1 \, n} + q_{n-2 \, n} \\
&\vdots \\
E_{m}^{[n-m+1,n]} &= E_{m-1}^{[n-m+1,n-1]} x_n + \cdots + E_1^{[n-m+1,n-m+1]} q_{n-m+2 \, n} + q_{n-m+1 \, n}. 
\end{cases}
\end{align}
Since it holds that 
\begin{align*}
\hmE_i^{[n-i+1,n]}=\frac{\partial}{\partial q_{1 \, n-i}} \, \hmE_{n}^{(n)} = 0 \ \ \ \textrm{for all} \ 1 \leq i \leq m
\end{align*}
by \eqref{eq:partial_derivatives} and \eqref{eq:proof_singular_locus_hm_1}, the equalities \eqref{eq:proof_singular_locus_hm_2} lead us to the following equation
\begin{align*}
\left(
 \begin{array}{@{\,}ccccc@{\,}}
     1 &  &  &  & \\
     \hmE_1^{[n-1,n-1]} & 1 & &  &  \\ 
     \hmE_2^{[n-2,n-1]}  & \hmE_1^{[n-2,n-2]} & 1 & & \\ 
     \vdots & \vdots & \ddots & \ddots & \\
     \hmE_{m-1}^{[n-m+1,n-1]} & \hmE_{m-2}^{[n-m+1,n-2]} & \cdots & \hmE_1^{[n-m+1,n-m+1]} & 1 
 \end{array}
 \right) \left(
 \begin{array}{@{\,}c@{\,}}
     x_n \\
     q_{n-1 \, n} \\ 
     q_{n-2 \, n} \\ 
     \vdots \\ 
     q_{n-m+1 \, n}  
 \end{array}
 \right)
= \left(
 \begin{array}{@{\,}c@{\,}}
     0 \\
     0 \\ 
     0 \\ 
     \vdots \\ 
     0 \\
 \end{array}
 \right),
\end{align*}
which yields that $x_n = q_{n-1 \, n} = q_{n-2 \, n} = \cdots = q_{n-m+1 \, n} = 0$.
Since we have $q_{n-m+1 \, n} = 0$ and 
\begin{align*} 
\left. \left( \frac{\partial}{\partial q_{rs}} \, \hmE_{n}^{(n)} \right) \right|_{q_{n-m+1 \, n}=0} = \frac{\partial}{\partial q_{rs}} \, ^{h_{m-1}}E_{n}^{(n)}
\end{align*}
for $2 \leq s \leq n$ and $n-h_{m-1}(n+1-s)<r \leq s$, the equations \eqref{eq:proof_singular_locus_hm_1} imply that  
\begin{align} \label{eq:proof_singular_locus_hm_1_rewrite}
\begin{cases}
\frac{\partial}{\partial q_{rs}} \, ^{h_{m-1}}E_{n}^{(n)} &= 0 \ \ \ \textrm{for all} \ 2 \leq s \leq n \ \textrm{and} \ n-h_{m-1}(n+1-s)<r \leq s \\
\frac{\partial}{\partial q_{n-m+1 \, n}} \, \hmE_{n}^{(n)} &= 0 \\
q_{n-m+1 \, n} &= 0.
\end{cases}
\end{align}
Conversely, the equations \eqref{eq:proof_singular_locus_hm_1_rewrite} yield \eqref{eq:proof_singular_locus_hm_1}. 
In fact, since one can write $\, ^{h_{m}}E_{n}^{(n)} = \, ^{h_{m-1}}E_{n}^{(n)}+q_{n-m+1 \, n} \cdot F$ for some polynomial $F$ by Definition~\ref{definition:hEi(n)}, we have $\frac{\partial}{\partial q_{rs}} \, ^{h_{m}}E_{n}^{(n)} = \frac{\partial}{\partial q_{rs}} \, ^{h_{m-1}}E_{n}^{(n)}+q_{n-m+1 \, n} \cdot \frac{\partial}{\partial q_{rs}} F$ for all $2 \leq s \leq n$ and $n-h_{m-1}(n+1-s)<r \leq s$.  
Hence, \eqref{eq:proof_singular_locus_hm_1} is equivalent to \eqref{eq:proof_singular_locus_hm_1_rewrite}. 
One has 
\begin{align*}
\frac{\partial}{\partial q_{n-m+1 \, n}} \, \hmE_{n}^{(n)} = \, \hmE_{n-m}^{[1,n-m]} = \, \hmE_{n-m}^{(n-m)} 
\end{align*}
from \eqref{eq:Ei1b} and \eqref{eq:partial_derivatives}, so the singular locus of $V\big(\, \hmE_1^{(n)}, \ldots, \, \hmE_n^{(n)}\big)$ is given by the solution set of the equations 
\begin{align*} 
\begin{cases}
\frac{\partial}{\partial q_{rs}} \, ^{h_{m-1}}E_{n}^{(n)} &= 0 \ \ \ \textrm{for all} \ 2 \leq s \leq n \ \textrm{and} \ n-h_{m-1}(n+1-s)<r \leq s \\
\hmE_{n-m}^{(n-m)} &= 0 \\
q_{n-m+1 \, n} &= 0.
\end{cases}
\end{align*}
By Corollary~\ref{corollary:main_commutative_diagram} the following commutative diagram holds   
\begin{align} \label{eq:proof_singular_locus_hm_3}
  \begin{CD}
     \C[\Fl(\C^n) \cap \Omega_e^\circ] @>{\varphi}>{\cong}> \displaystyle\frac{\C[x_1,\ldots,x_n, q_{rs} \mid 1 \leq r<s \leq n]}{(E_1^{(n)}, \ldots, \, E_n^{(n)})} \\
  @VV{F_{i,1}=0 \ (m+1 \leq i \leq n)}V    @VV{q_{rn}=0 \ (1 \leq r \leq n-m)}V \\
     \C[\Hess(N,h_m) \cap \Omega_e^\circ]   @>{\varphi_{h_m}}>{\cong}> \displaystyle\frac{\C[x_1,\ldots,x_n, q_{rs} \mid 2 \leq s \leq n, n-h_m(n+1-s)<r<s]}{(\hmE_1^{(n)}, \ldots, \, \hmE_n^{(n)})}
  \end{CD}
\end{align}
where both vertial arrows are surjective.  
The ideal defining the singular locus of the zero set $V\big(\, \hmE_1^{(n)}, \ldots, \, \hmE_n^{(n)}\big)$ is
\begin{align*}
\left(\frac{\partial}{\partial q_{rs}} \, ^{h_{m-1}}E_{n}^{(n)} \middle| \ 2 \leq s \leq n, n-h_{m-1}(n+1-s)<r \leq s \right) + (\hmE_{n-m}^{(n-m)})+(q_{n-m+1 \, n}). 
\end{align*}
The image of the ideal above under the isomorphism $\varphi_{h_m}^{-1}$ in \eqref{eq:proof_singular_locus_hm_3} is equal to
\begin{align*}
\left(\varphi^{-1}\left(\frac{\partial}{\partial q_{rs}} \, ^{h_{m-1}}E_{n}^{(n)}\right) \middle| \ 2 \leq s \leq n, n-h_{m-1}(n+1-s)<r \leq s \right) + (x_{nm})+(F_{m,1}) 
\end{align*}
in $\C[\Hess(N,h_m) \cap \Omega_e^\circ]$ by the definition \eqref{eq:varphih_iso} and Proposition~\ref{proposition:inverse_map_varphih}. 
Hence, the singular locus of $\Hess(N,h_m) \cap \Omega_e^\circ$ is the solution set of the equations 
\begin{align} \label{eq:proof_singular_locus_hm_4} 
\begin{cases}
\varphi^{-1}\left(\frac{\partial}{\partial q_{rs}} \, ^{h_{m-1}}E_{n}^{(n)} \right) = 0 \ \ \ \textrm{for all} \ 2 \leq s \leq n \ \textrm{and} \ n-h_{m-1}(n+1-s)<r \leq s \\
F_{m,1} = 0 \\
F_{i,1} = 0 \ \ \ \textrm{for all} \ m+1 \leq i \leq n 
\end{cases}
\end{align}
and 
\begin{align*} 
x_{nm} = 0 
\end{align*}
by Lemma~\ref{lemma:defining equation}. 
Since the singular locus of $\Hess(N,h_{m-1}) \cap \Omega_e^\circ$ is the solution set of the equations \eqref{eq:proof_singular_locus_hm_4} from Proposition~ \ref{proposition:singular_locus_hm_quantum}, Theorem~\ref{theorem:main1}, and Lemma~\ref{lemma:defining equation} again, we can describe 
\begin{align*}
\Sing(\Hess(N,h_m) \cap \Omega_e^\circ) = \Sing(\Hess(N,h_{m-1}) \cap \Omega_e^\circ) \cap V(x_{nm}). 
\end{align*}
One can see from our inductive assumption that $\Sing(\Hess(N,h_{m-1}) \cap \Omega_e^\circ)= \bigcap_{i=2}^n V(x_{i1}) \cap \bigcap_{j=2}^{m-1} V(x_{nj})$, so we complete the proof.
\end{proof}

Let $\SS_n$ be the permutation group on $[n]$. 
For $w \in \SS_n$, the Schubert cell $X_w^\circ$ is defined to be the $B$-orbit of the permutation flag $wB$ in the flag variety $\GL_n(\C)/B$. 
The Schubert variety $X_w$ is defined by the closure of the Schubert cell $X_w^\circ$, i.e. $X_w=\overline{BwB/B}$.
We put $F_{i} \coloneqq \Span_\C \{e_1,\ldots,e_i \}$ for $i \in [n]$ where $e_1,\ldots, e_n$ are the standard basis for $\C^n$. 
Under the identification $\Fl(\C^n) \cong \GL_n(\C)/B$, one can describe the Schubert variety $X_w$ as
\begin{align*}
X_w=\{V_\bullet \in \Fl(\C^n) \mid \dim(V_p \cap F_q) \geq r_w(p,q) \ \textrm{for all} \ p,q \in [n] \}
\end{align*}
where $r_w(p,q)=|\{i \in [p] \mid w(i) \leq q \}|$ (e.g. \cite[Section~10.5]{Ful97}). 
For $2 \leq m \leq n-1$, we define the permutation $w_m \in \SS_n$ as 
\begin{align} \label{eq:_definition_wm}
w_m(i)=\begin{cases} 1 \ &{\rm if} \ i=1, \\
n+1-i \ &{\rm if} \ 2 \leq i \leq m, \\
n \ &{\rm if} \ i=m+1, \\
n+2-i \ &{\rm if} \ m+2 \leq i \leq n. 
\end{cases}
\end{align}
Then one can verify from \cite{GasRei} that the Schubert variety $X_{w_m}$ is described as
\begin{align*}
X_{w_m} = \{V_\bullet \in \Fl(\C^n) \mid \dim(V_1 \cap F_1) \geq 1 \ \textrm{and} \ \dim(V_m \cap F_{n-1}) \geq m \}.
\end{align*} 
In other words, $V_\bullet \in X_{w_m}$ if and only if $V_1 = F_1=\Span_\C \{e_1 \}$ and $V_m \subset F_{n-1}=\Span_\C \{e_1,\ldots,e_{n-1} \}$.
In particular, we have  
\begin{align*}
X_{w_m} \cap \Omega_e^\circ = \bigcap_{i=2}^n V(x_{i1}) \cap \bigcap_{j=2}^{m} V(x_{nj}).
\end{align*}
Hence, we obtain the following result from Theorem~\ref{theorem:singular_locus_hm}.

\begin{corollary} \label{corollary:singular_locus_hm}
Let $2 \leq m \leq n-1$.
Then, the singular locus of $\Hess(N,h_m) \cap \Omega_e^\circ$ is equal to
\begin{align*}
\Sing(\Hess(N,h_m) \cap \Omega_e^\circ) = X_{w_m} \cap \Omega_e^\circ.
\end{align*}
\end{corollary}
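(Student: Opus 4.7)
The proof is essentially a direct comparison of two explicit descriptions, so my plan is simply to combine Theorem~\ref{theorem:singular_locus_hm} with the coordinate form of $X_{w_m} \cap \Omega_e^\circ$ set up in the discussion immediately preceding the corollary. By Theorem~\ref{theorem:singular_locus_hm} we already have
\begin{align*}
\Sing(\Hess(N,h_m) \cap \Omega_e^\circ) = \bigcap_{i=2}^n V(x_{i1}) \cap \bigcap_{j=2}^{m} V(x_{nj}),
\end{align*}
so the proof reduces to verifying that $X_{w_m} \cap \Omega_e^\circ$ equals the right-hand side.

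To make sure nothing is being swept under the rug, I would carry out the two ingredients feeding into this identification. First, I would establish the Schubert-variety description of $X_{w_m}$ via rank conditions $\dim(V_p \cap F_q) \geq r_{w_m}(p,q) \coloneqq |\{i \in [p] \mid w_m(i) \leq q\}|$. Using the one-line notation in \eqref{eq:_definition_wm}, direct inspection of the diagram of $w_m$ shows that the only nontrivial essential rank conditions (in the sense of Fulton) are $r_{w_m}(1,1) = 1$ and $r_{w_m}(m, n-1) = m$. The Gasharov--Reiner characterization then yields
\begin{align*}
X_{w_m} = \{V_\bullet \in \Fl(\C^n) \mid V_1 = F_1 \text{ and } V_m \subset F_{n-1}\}.
\end{align*}

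Second, I would translate these two geometric conditions into coordinate equations on $\Omega_e^\circ$. For a matrix $g \in \Omega_e^\circ$ in the lower-triangular unipotent form of \eqref{eq:intro_Hess(N,h) cap Omega}, the first column vector is $(1, x_{21}, \ldots, x_{n1})^t$, so $V_1 = F_1$ is equivalent to $x_{i1} = 0$ for all $2 \leq i \leq n$. Similarly, since $m \leq n-1$, the first $m$ columns of $g$ lie in $F_{n-1}$ precisely when their $n$-th-row entries vanish, i.e., $x_{nj} = 0$ for $1 \leq j \leq m$; the $j=1$ case is subsumed by the previous condition. Combining gives
\begin{align*}
X_{w_m} \cap \Omega_e^\circ = \bigcap_{i=2}^n V(x_{i1}) \cap \bigcap_{j=2}^{m} V(x_{nj}),
\end{align*}
and the corollary follows. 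The only step that is not purely mechanical is the identification of the essential rank conditions for $w_m$ from its one-line notation; the coordinate translation and the final comparison are routine.
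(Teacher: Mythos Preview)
Your proposal is correct and follows essentially the same approach as the paper: combine Theorem~\ref{theorem:singular_locus_hm} with the explicit description of $X_{w_m} \cap \Omega_e^\circ$ obtained by reducing the rank conditions to $V_1 = F_1$ and $V_m \subset F_{n-1}$ (citing \cite{GasRei}) and then reading these off in the coordinates of $\Omega_e^\circ$. The paper records this as a one-line deduction from the displayed equality $X_{w_m} \cap \Omega_e^\circ = \bigcap_{i=2}^n V(x_{i1}) \cap \bigcap_{j=2}^{m} V(x_{nj})$; you have simply made the verification of that equality more explicit.
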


\begin{remark}
Let $A$ be a nilpotent matrix (not necessarily regular nilpotent).
Then it is known from \cite[Theorem~5]{EPS} that the singular locus of $\Hess(A,h_{n-1})$ is 
\begin{align*}
\Sing(\Hess(A,h_{n-1})) = \Hess(A,h=(1,n-1,\ldots,n-1,n)). 
\end{align*}
Consider the regular nilpotent case.
The Hessenberg function $h=(1,n-1,\ldots,n-1,n)$ is decomposable, so every flag $V_\bullet \in \Hess(N,h)$ has $V_{1} = \Span_\C \{e_1\}$ and $V_{n-1} = \Span_\C \{e_1,\ldots,e_{n-1}\}$ (see Definition~\ref{definition:indecomposable} and surrounding discussion). 
Hence the result of \cite[Theorem~5]{EPS} for $A=N$ gives 
\begin{align*}
\Sing(\Hess(N,h_{n-1})) = X_{w_{n-1}}.
\end{align*}
\end{remark}

\bigskip

\appendix

\section{Singular locus of Peterson variety}
\label{section:Singular locus of Peterson variety}

The singular locus of the Peterson variety $\Pet_n$ has been studied in \cite{IY}.
Combining the result of \cite{IY} and some result in \cite{AHKZ}, we describe the decomposition for the singular locus of the Peterson variety into irreducible components.

It is known that the flag variety $\Fl(\C^n)$ has a decomposition into Schubert cells
\begin{align} \label{eq:decomposition_flag}
\Fl(\C^n) = \bigsqcup_{w \in \SS_n} X_w^\circ,
\end{align}
where each $X_w^\circ$ is isomorphic to $\C^{\ell(w)}$ and $\ell(w)$ denotes the length of $w$ (e.g. \cite{Ful97}). 
Tymoczko generalized this fact to the Hessenberg varieties in \cite{Tym06}. 
In what follows, we explain the work in \cite{Tym06} for the case of the Peterson variety $\Pet_n$.

Let $I$ be a subset of $[n-1]$. 
We may regard $[n-1]$ as the set of vertices of the Dynkin diagram in type $A_{n-1}$.
Then, $I \subset [n-1]$ can be decompose into the connected components of the Dynkin diagram of type $A_{n-1}$:
\begin{align*} 
I = I_1 \sqcup I_2 \sqcup \cdots \sqcup I_m.
\end{align*}
In other words, each $I_j \ (1 \leq j \leq m)$ denotes a maximal consecutive subset of $[n-1]$. 
To each connected component $I_j$, one can assign the longest element $w_0^{(I_j)}$ of the permutation subgroup $\SS_{I_j}$ on $I_j$.
Then, we define the permutation $w_I \in \SS_n$ by  
\begin{align*}
w_I \coloneqq w_0^{(I_1)} w_0^{(I_2)} \cdots w_0^{(I_m)}. 
\end{align*}

\begin{example}\label{eg: wJ}
Let $n=9$ and $I=\{1,2,3\} \sqcup \{6,7\}$. 
Then, the one-line notation of $w_I$ is 
\begin{align*}
w_I=432158769.
\end{align*}
\end{example}

The Schubert cell $X_v^\circ$ intersects with the Peterson variety $\Pet_n$ if and only if $v=w_I$ for some $I \subset [n-1]$ by \cite{Tym06} (see also \cite[Lemma~3.5]{AHKZ}). 
We set 
\begin{align*}
X_I^\circ \coloneqq X_{w_I}^\circ \cap \Pet_n \ \textrm{and} \ X_I \coloneqq X_{w_I} \cap \Pet_n = \overline{X_{w_I}^\circ} \cap \Pet_n. 
\end{align*}
By intersecting with the Peterson variety $\Pet_n$, the decomposition in \eqref{eq:decomposition_flag} yields that 
\begin{align*} 
\Pet_n = \bigsqcup_{J \subset [n-1]} X_J^\circ.
\end{align*}
It is known from \cite{Tym06} that $X_J^\circ \cong \C^{|J|}$ for any $J \subset [n-1]$. 
In general, it follows from \cite[Equation~(3.7)]{AHKZ} that for each $I \subset [n-1]$, we have
\begin{align} \label{eq:decomposition_XI}
X_I = \bigsqcup_{J \subset I} X_J^\circ.
\end{align}

It is known that $X_I$ is a regular nilpotent Hessenberg variety for a certain Hessenberg function $h_I$ as described below. 
For $I \subset [n-1]$, we define a Hessenberg function $h_I : [n] \rightarrow [n]$ by
\begin{align*}
h_I(i) \coloneqq \begin{cases}
i+1 \ \ \ &\textrm{if} \ i \in I, \\
i \ \ \ &\textrm{if} \ i \notin I. 
\end{cases}
\end{align*}
Note that if $I=[n-1]$, then $h_I=(2,3,4,\ldots,n,n)$ is the Hessenberg function for the Peterson case. Otherwise, $h_I$ is decomposable (Definition~\ref{definition:indecomposable}). 

\begin{proposition} $($\cite[Proposition~3.4]{AHKZ}$)$ \label{proposition:XIHess(NhI)}
For a subset $I$ of $[n-1]$, we have
\begin{align*}
X_I = \overline{X_I^\circ} = \Hess(N,h_I)
\end{align*}
where $N$ is the regular nilpotent element defined in \eqref{eq:regular nilpotent}. 
\end{proposition}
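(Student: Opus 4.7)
The plan is to first establish $\overline{X_I^\circ} = \Hess(N, h_I)$ via an inclusion plus a dimension/irreducibility count, and then to deduce $X_I = \overline{X_I^\circ}$ from the cell decomposition \eqref{eq:decomposition_XI} together with monotonicity of Hessenberg varieties in $h$.

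The heart of the argument is the inclusion $X_I^\circ \subset \Hess(N, h_I)$. Given $V_\bullet \in X_{w_I}^\circ \cap \Pet_n$, I want $N V_i \subset V_{h_I(i)}$ for every $i$. For $i \in I$ this is automatic from $V_\bullet \in \Pet_n$ since $h_I(i) = i+1$, and for $i = n$ it is trivial. For $i \in [n-1] \setminus I$ the point is the block structure of $w_I$: the longest element $w_0^{(I_j)}$ acts as the reverse on the ``thickened component'' $\{a_j, \ldots, b_j+1\}$ of $I_j = \{a_j, \ldots, b_j\}$ and fixes every other index, so a short case analysis (before, between, or after blocks) shows $w_I(\{1,\ldots,i\}) = \{1,\ldots,i\}$ precisely when $i \notin I$. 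For such $i$ the rank condition for the Schubert cell reads $\dim(V_i \cap F_i) = r_{w_I}(i,i) = i$, which forces $V_i = F_i$; then $N V_i = N F_i = F_{i-1} \subset F_i = V_i = V_{h_I(i)}$, as required.

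By Theorem~\ref{theorem:property_Hess(N,h)} the variety $\Hess(N, h_I)$ is irreducible of dimension $\sum_{j=1}^n (h_I(j) - j) = |I|$, while $X_I^\circ \cong \C^{|I|}$ is irreducible of the same dimension; the inclusion $\overline{X_I^\circ} \subset \Hess(N, h_I)$ obtained from the previous step is therefore an equality. To finish, \eqref{eq:decomposition_XI} gives $X_I = \bigsqcup_{J \subset I} X_J^\circ$, and applying the first step with $J$ in place of $I$ yields $X_J^\circ \subset \Hess(N, h_J)$. Since $J \subset I$ implies $h_J \leq h_I$ pointwise, the definition \eqref{eq:Hessenberg_variety_flag} gives $\Hess(N, h_J) \subset \Hess(N, h_I) = \overline{X_I^\circ}$, so $X_I \subset \overline{X_I^\circ}$ and all three sets coincide.

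The main obstacle is the first step: recognizing that for $i \notin I$ the permutation $w_I$ stabilizes $\{1, \ldots, i\}$ setwise thanks to its block-diagonal structure, and translating this into the equality $V_i = F_i$ via the Schubert cell rank conditions. Once this observation is in hand, everything else reduces to dimension and irreducibility bookkeeping combined with the already-established cell decomposition \eqref{eq:decomposition_XI}.
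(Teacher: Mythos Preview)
The paper does not supply its own proof of this proposition; it is quoted verbatim from \cite[Proposition~3.4]{AHKZ} and used as a black box in the appendix. So there is no in-paper argument to compare yours against.

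Your proof is correct. The key observation---that $w_I$ stabilises $\{1,\dots,i\}$ setwise exactly when $i\notin I$, because each longest-element factor $w_0^{(I_j)}$ reverses the thickened block $\{a_j,\dots,b_j+1\}$---is precisely the combinatorial content needed, and your translation into $V_i=F_i$ via the Schubert-cell rank condition is right. The dimension/irreducibility count then closes the argument cleanly.

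One small simplification: your final step, establishing $X_I\subset\overline{X_I^\circ}$, routes through the cell decomposition \eqref{eq:decomposition_XI}, which in this paper is itself imported from \cite{AHKZ}. You can bypass that citation entirely by running your first-step argument directly for the Schubert \emph{variety} rather than the cell: for $V_\bullet\in X_{w_I}\cap\Pet_n$ and $i\notin I$, the variety condition already gives $\dim(V_i\cap F_i)\ge r_{w_I}(i,i)=i$, hence $V_i=F_i$, and the rest goes through unchanged. This shows $X_I\subset\Hess(N,h_I)=\overline{X_I^\circ}$ in one stroke, and since $X_I$ is closed and contains $X_I^\circ$ the reverse inclusion is immediate. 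That makes your argument entirely self-contained relative to this paper.
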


It follows from Theorem~\ref{theorem:property_Hess(N,h)} and Proposition~\ref{proposition:XIHess(NhI)} that $X_I$ is irreducible for any $I \subset [n-1]$. 
For positive integers $a,b$ with $a \leq b$, we denote by $[a, b]$ the set $\{a, a + 1, \ldots , b\}$.
The singular locus of the Peterson variety $\Pet_n$ is described in \cite{IY} as follows.

\begin{theorem} $($\cite[Theorem~4]{IY}$)$ \label{theorem:singular_locus}
The singular locus of $\Pet_n$ is given by
\begin{align*}
\Sing(\Pet_n) = \bigsqcup_{J \subset [n-1] \atop J \neq [n-1], [2,n-1],[1,n-2]} X_J^\circ. 
\end{align*}
\end{theorem}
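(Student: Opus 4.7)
The plan is to combine the cell decomposition $\Pet_n = \bigsqcup_{J \subset [n-1]} X_J^\circ$ recalled from \eqref{eq:decomposition_XI} with a tangent-space calculation at each $\mathbb{C}^*$-fixed point. The Peterson variety is preserved by the $\mathbb{C}^*$-action coming from conjugation by $\mathrm{diag}(t^{n-1},\ldots,t,1)$ (which rescales $N$ by a single scalar), its fixed locus is $\{w_J B : J \subset [n-1]\}$, and each $X_J^\circ$ is the attracting cell of $w_J B$ in the Bia{\l}ynicki--Birula decomposition. Granting that smoothness is constant on each cell (which can be justified through Tymoczko's description of the cells as orbits of unipotent subgroups of $\GL_n(\C)$ that stabilize $\Pet_n$), the problem reduces to deciding smoothness at each $w_J B$. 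Combined with the upward-closure property for smoothness in $J$, coming from the closedness of the singular locus and the closure relation $X_{J'}^\circ \subset \overline{X_J^\circ}$ iff $J' \subset J$, it then suffices to verify smoothness at the three candidate smooth cells $J \in \{[n-1], [2,n-1], [1,n-2]\}$ and singularity at the $n-2$ maximal singular cells $J \in \{[2,n-2]\} \cup \{[n-1] \setminus \{k\} : 2 \leq k \leq n-2\}$ (these being the maximal elements of the set of $J$ not in the smooth list).

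For the local model at $w_J B$, I would work in the translated chart $w_J \cdot \Omega_e^\circ$; left multiplication by $w_J^{-1}$ identifies it with $\Omega_e^\circ$ at the cost of replacing $N$ by the conjugate $X_J := w_J^{-1} N w_J$. By Lemma~\ref{lemma:defining equation}, $\Pet_n \cap w_J \cdot \Omega_e^\circ$ is cut out by the $\binom{n-1}{2}$ polynomials $F_{i,j}^{X_J}$ for $j \in [n-1]$ and $j+1 < i \leq n$, which matches the codimension of $\Pet_n$ in the chart. Smoothness at $w_J B$ is therefore equivalent to the Jacobian of these equations at the origin having full rank $\binom{n-1}{2}$; that Jacobian is the linear map $g - I \mapsto \bigl([X_J, g - I]_{ij}\bigr)_{j+1 < i}$, whose entries are read off combinatorially from the entries of $X_J$ (which are in turn determined by $w_J$ and the superdiagonal structure of $N$).

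The main obstacle is the Jacobian rank analysis as $J$ varies. For $J = [n-1]$ the computation coincides with Theorem~\ref{theorem:main1} at $w_0 B$, while for $J \in \{[1,n-2], [2,n-1]\}$ the matrix $X_J$ differs from $N$ by a single additional off-superdiagonal entry, and one verifies that this perturbation does not destroy the full rank of the Jacobian. For the interior-gap cells $J = [n-1] \setminus \{k\}$ with $2 \leq k \leq n-2$, one exhibits a concrete row dependency in the Jacobian reflecting the way the index set $[n]$ decouples across the gap at $k$. The most delicate case is $J = [2,n-2]$, where $X_J$ has two simultaneous boundary interactions that must be tracked together; here I would adapt the strategy of Section~\ref{section:Singular locus of Hess(N,hm)} by developing an analog of Theorem~\ref{theorem:main1} adapted to the chart $w_{[2,n-2]} \cdot \Omega_e^\circ$, introducing appropriate quantum parameters so that the Jacobian acquires the triangular shape of Lemma~\ref{lemma:partial_derivatives}, and then deploying the argument of Proposition~\ref{proposition:singular_locus_hm_quantum} to pin down the rank drop.
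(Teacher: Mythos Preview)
This theorem is not proved in the paper under review; it is quoted from \cite{IY} and then used to derive Proposition~\ref{proposition:decomposition_singular_locus}. So there is no in-paper proof to compare against, and I comment on your outline on its own merits.

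The overall strategy---reduce to the $\C^*$-fixed points and compute Jacobians there---is natural and close in spirit to \cite{IY}, but the reduction step has a genuine gap. You claim smoothness is constant on each cell $X_J^\circ$, justified by the cells being ``orbits of unipotent subgroups of $\GL_n(\C)$ that stabilize $\Pet_n$.'' The obvious candidate is the unipotent centralizer $U=\{I+\sum_i c_iN^i\}$, but it does \emph{not} act transitively on every cell. For $n=4$ and $J=\{1,3\}$, the cell $X_J^\circ\cong\C^2$ is parametrized by $(a,d)$ via $V_1=\Span_\C(ae_1+e_2)$, $V_2=F_2$, $V_3=F_2+\Span_\C(de_3+e_4)$, whereas the $U$-orbit of $w_JB$ is only the diagonal $a=d$ (the single parameter $c_1$ in $u=I+c_1N+\cdots$ controls both $V_1$ and $V_3$ simultaneously). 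Enlarging beyond $U$ does not help: for instance $g=I+aE_{12}$ with $a\neq 0$ sends the Peterson flag $\Span_\C(e_3)\subset\Span_\C(e_2,e_3)\subset F_3$ to a non-Peterson flag, so such $g$ do not stabilize $\Pet_4$.

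Without constancy on cells your downward-closure step collapses: knowing that $w_JB$ is singular for each maximal non-special $J$ does not force all of $X_J^\circ$---let alone the smaller cells $X_{J'}^\circ$ with $J'\subsetneq J$---to lie in the singular locus. The $\C^*$-flow gives only the forward implication (a singular point in $X_J^\circ$ forces $w_JB$ to be singular, since the singular locus is closed and $\C^*$-stable), not the converse you need. Insko--Yong close this gap by computing the patch ideal in each chart $w_J\cdot\Omega_e^\circ$ and determining the singular locus \emph{throughout} the patch via the Jacobian criterion, not merely at the origin; your plan would need a comparable analysis. A minor side remark: for $J=[1,n-2]$ one finds $X_Je_j=e_{j+1}$ for $j\le n-2$, $X_Je_{n-1}=0$, $X_Je_n=e_1$, a subdiagonal pattern rather than ``$N$ plus one extra entry,'' so the local computation there is less parallel to the $e$-chart than you suggest.
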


\begin{lemma} \label{lemma:singular_locus}
We have
\begin{align*}
\{J \subset [n-1] \mid J \neq [n-1], [2,n-1],[1,n-2] \} = \bigcup_{j=2}^{n-2} \{ J \subset [n-1] \mid J \not\ni j \}  \cup \{J \subset [2,n-2] \}. 
\end{align*}
\end{lemma}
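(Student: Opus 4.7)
The plan is to prove the claimed set equality by a straightforward double inclusion, using the structural observation that $[2,n-2]$ sits inside each of the three "excluded" subsets $[n-1]$, $[2,n-1]$, and $[1,n-2]$. This is what makes the two descriptions of the complement match: missing any element of $[2,n-2]$ is enough to disqualify $J$ from being one of the three, and the only way to contain all of $[2,n-2]$ while avoiding all three is to equal $[2,n-2]$ itself.

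For the inclusion $\supseteq$, I would check each of the two families on the right. If $J$ satisfies $j \notin J$ for some $j \in [2,n-2]$, then since $j$ belongs to $[n-1]$, to $[2,n-1]$, and to $[1,n-2]$, the set $J$ differs from all three. If instead $J \subseteq [2,n-2]$, then $1 \notin J$ and $n-1 \notin J$, so $J$ cannot equal $[1,n-2]$ (which contains $1$), $[2,n-1]$ (which contains $n-1$), or $[n-1]$ (which contains both).

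For the inclusion $\subseteq$, take any $J \subseteq [n-1]$ with $J \notin \{[n-1], [2,n-1], [1,n-2]\}$ and split into two cases. If $[2,n-2] \not\subseteq J$, pick any $j \in [2,n-2] \setminus J$ and $J$ lands in the first family. If instead $[2,n-2] \subseteq J$, then $J$ is determined by the two binary choices of whether $1 \in J$ and whether $n-1 \in J$, giving exactly the four possibilities $[2,n-2]$, $[1,n-2]$, $[2,n-1]$, $[n-1]$; the hypothesis rules out the last three, forcing $J = [2,n-2]$, which lies in the second family.

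No step here is serious, so there is no real obstacle; the only mild subtlety is the small-$n$ boundary. For $n=3$, the interval $[2,n-2]$ is empty, so the indexed union $\bigcup_{j=2}^{n-2}$ is vacuous and the second family on the right reduces to $\{\emptyset\}$; one checks directly that the left-hand side also equals $\{\emptyset\}$, since the three excluded subsets $\{1,2\}, \{2\}, \{1\}$ exhaust every nonempty subset of $[n-1]$. I would dispatch this edge case in a sentence and then carry out the two-inclusion argument above in the generic range $n \geq 4$.
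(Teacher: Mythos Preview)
Your proposal is correct and follows essentially the same double-inclusion argument as the paper; the only cosmetic difference is that the paper organizes the $\subseteq$ direction by the cardinality of $J$ (splitting $|J|\leq n-3$ versus $|J|=n-2$) rather than by whether $[2,n-2]\subseteq J$, and the paper does not single out the $n=3$ boundary case.
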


\begin{proof}
We first show that the left hand side is included in the right hand side.
For this, we take a subset $J$ of $[n-1]$ such that $J \neq [n-1], [2,n-1],[1,n-2]$. 
Note that $|J| \leq n-2$.

\noindent
\textbf{Case (i): } Suppose that $|J| \leq n-3$.
If $J$ contains $[2,n-2]$, then $J=[2,n-2]$ since $|J| \leq n-3$.
Otherwise, we have $J \not\ni j$ for some $2 \leq j \leq n-2$.
In both cases, $J$ belongs to the right hand side.

\noindent
\textbf{Case (ii): } Suppose that $|J| = n-2$.
Since $J \neq [2,n-1],[1,n-2]$, we see that $J = [n-1] \setminus \{j \}$ for some $2 \leq j \leq n-2$, which belongs to the right hand side.

\noindent
Hence, we proved that the left hand side is included in the right hand side.

Conversely, let $J$ be a subset of $[n-1]$ appeared in the right hand side. 
If $J \not\ni j$ for some $2 \leq j \leq n-2$, then we have that $J \neq [n-1], [2,n-1],[1,n-2]$. 
If $J \subset [2,n-2]$, then it is clear that $J \neq [n-1], [2,n-1],[1,n-2]$. 
Thus, the right hand side is included in the left hand side, so the equality holds.
\end{proof}

The following proposition gives the decomposition for the singular locus of the Peterson variety into irreducible components.

\begin{proposition} \label{proposition:decomposition_singular_locus}
The singular locus of $\Pet_n$ is decomposed into irreducible components as follows:
\begin{align*}
\Sing(\Pet_n) &= \left( \bigcup_{2 \leq j \leq n-2} X_{[n-1] \setminus \{j\}} \right) \cup X_{[2,n-2]} \\
&=\left( \bigcup_{2 \leq j \leq n-2} \Hess(N,h_{[n-1] \setminus \{j\}}) \right) \cup \Hess(N,h_{[2,n-2]}).
\end{align*}
\end{proposition}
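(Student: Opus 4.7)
The plan is to combine the Insko--Yong description of $\Sing(\Pet_n)$ from Theorem~\ref{theorem:singular_locus} with the set-theoretic identity from Lemma~\ref{lemma:singular_locus}, and then translate cells into their closures via \eqref{eq:decomposition_XI} and Proposition~\ref{proposition:XIHess(NhI)}. Concretely, I would first take Theorem~\ref{theorem:singular_locus}, which gives
\[
\Sing(\Pet_n) = \bigsqcup_{J \subset [n-1] \atop J \neq [n-1],\, [2,n-1],\, [1,n-2]} X_J^\circ,
\]
and rewrite the index set using Lemma~\ref{lemma:singular_locus}. This turns the index set into $\bigcup_{j=2}^{n-2} \{J \subset [n-1] \mid J \not\ni j\} \cup \{J \subset [2,n-2]\}$, so the union (no longer disjoint) becomes
\[
\Sing(\Pet_n) = \bigcup_{j=2}^{n-2} \Bigl( \bigsqcup_{J \subset [n-1],\, J \not\ni j} X_J^\circ \Bigr) \cup \bigsqcup_{J \subset [2,n-2]} X_J^\circ.
\]

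Next, I would recognize each inner disjoint union via \eqref{eq:decomposition_XI}: $\bigsqcup_{J \subset [n-1] \setminus \{j\}} X_J^\circ = X_{[n-1] \setminus \{j\}}$ and $\bigsqcup_{J \subset [2,n-2]} X_J^\circ = X_{[2,n-2]}$. This already gives the first displayed equality of the proposition. The second equality then follows directly from Proposition~\ref{proposition:XIHess(NhI)}, which identifies $X_I$ with $\Hess(N,h_I)$.

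Finally, to justify the word "irreducible components" I would verify two things. Irreducibility of each piece is immediate from Theorem~\ref{theorem:property_Hess(N,h)}(iii) via Proposition~\ref{proposition:XIHess(NhI)}. For the non-containment between pieces, I would argue combinatorially using \eqref{eq:decomposition_XI}: since $X_I \supset X_{I'}$ forces $I \supset I'$ at the level of indexing subsets of $[n-1]$, one checks that $[n-1] \setminus \{j\} \not\supset [n-1] \setminus \{j'\}$ for distinct $j,j' \in [2,n-2]$, and that $[n-1] \setminus \{j\} \not\supset [2,n-2]$ (since $j \in [2,n-2]$ but $j \notin [n-1] \setminus \{j\}$), and conversely $[2,n-2] \not\supset [n-1] \setminus \{j\}$ on cardinality grounds. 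By irreducibility, $X_{[2,n-2]}$ cannot be absorbed into the finite union of the other closed irreducible pieces either.

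I do not expect any real obstacle here: the proof is essentially a bookkeeping argument, assembling Theorem~\ref{theorem:singular_locus}, Lemma~\ref{lemma:singular_locus}, \eqref{eq:decomposition_XI}, Proposition~\ref{proposition:XIHess(NhI)}, and Theorem~\ref{theorem:property_Hess(N,h)}(iii). The only mildly subtle point is the non-containment check that certifies the listed $X_I$'s are genuinely distinct irreducible components; this reduces to the trivial subset comparisons above.
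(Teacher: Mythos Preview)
Your proposal is correct and follows essentially the same route as the paper: combine Theorem~\ref{theorem:singular_locus} with Lemma~\ref{lemma:singular_locus}, then apply \eqref{eq:decomposition_XI} and Proposition~\ref{proposition:XIHess(NhI)}. The only difference is that you explicitly verify the non-containment among the listed $X_I$'s to justify the phrase ``irreducible components,'' whereas the paper leaves this implicit.
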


\begin{proof}
By Theorem~\ref{theorem:singular_locus} and Lemma~\ref{lemma:singular_locus} we have 
\begin{align*}
\Sing(\Pet_n) = \left(\bigcup_{2 \leq j \leq n-2} \bigsqcup_{J \subset [n-1] \setminus \{j\}} X_J^\circ \right) \cup \left( \bigsqcup_{J \subset [2,n-2]} X_J^\circ \right). 
\end{align*}
By using the decomposition in \eqref{eq:decomposition_XI}, the right hand side above is equal to 
\begin{align*}
\left( \bigcup_{2 \leq j \leq n-2} X_{[n-1] \setminus \{j\}} \right) \cup X_{[2,n-2]}, 
\end{align*} 
as desired. 
Also, this coincides with $\left( \bigcup_{2 \leq j \leq n-2} \Hess(N,h_{[n-1] \setminus \{j\}}) \right) \cup \Hess(N,h_{[2,n-2]})$ by Proposition~\ref{proposition:XIHess(NhI)}.
\end{proof}

\bigskip

\end{document}